\definecolor{gray}{gray}{0.7}
\definecolor{Gray}{gray}{0.3}
\numberwithin{equation}{section}
\theoremstyle{break}
 \newtheorem{theorem}{Theorem}[section]
 \newtheorem{proposition}[theorem]{Proposition}
 \newtheorem{corollary}[theorem]{Corollary}
 \newtheorem{lemma}[theorem]{Lemma}
 \theoremstyle{definition}
 \newtheorem{definition}[theorem]{Definition}
 \newtheorem{remark}[theorem]{Remark}
 \newtheorem{example}[theorem]{Example}
\newtheorem*{acknowledgement}{Acknowledgment}
\def\C{\mathbb C}
\def\R{\mathbb R}
\def\Q{\mathbb Q}
\def\Z{\mathbb Z}
\def\O{\mathcal{O}}
\def\Cpp{\mathrm{C}_{++}}
\def\N{N}
\def\Eff{\mathrm{Eff}}
\def\PE{\mathrm{PE}}
\def\Nef{\mathrm{Nef}}
\def\Big{\mathrm{Big}}
\def\Amp{\mathrm{NA}}
\def\NA{\mathrm{NA}}
\DeclareMathOperator{\mult}{mult}
\DeclareMathOperator{\pr}{pr}
\DeclareMathOperator{\length}{length}
\DeclareMathOperator{\maxdeg}{maxdeg}
\DeclareMathOperator{\red}{red}
\DeclareMathOperator{\rank}{rank}
\DeclareMathOperator{\Int}{Int}
\DeclareMathOperator{\WDiv}{WDiv}
\DeclareMathOperator{\CDiv}{CDiv}
\DeclareMathOperator{\Supp}{Supp}
\DeclareMathOperator{\Enu}{\mathcal{N}}
\newcommand{\into}{\hookrightarrow}
\def\t{t}
\renewcommand{\t}[1]{{}^t\!#1}
\begin{document}
\title[An integral version of Zariski decompositions on normal surfaces]{An integral version of Zariski decompositions on normal surfaces}
\author [M. Enokizono]{Makoto Enokizono}
\address{Department of Mathematics, Faculty of Science and Technology, 
Tokyo University of Science, 
2641 Yamazaki, Noda, Chiba 278-8510, Japan}
\email{\url{enokizono_makoto@ma.noda.tus.ac.jp}}

\keywords{Zariski decomposition, vanishing theorem, Reider-type theorem, extension theorem} 

\begin{abstract}
We show that any pseudo-effective divisor on a normal surface decomposes uniquely into its ``integral positive'' part and ``integral negative'' part, which is an integral analog of Zariski decompositions.
By using this decomposition, we give three applications:
a vanishing theorem of divisors on surfaces (a generalization of Kawamata-Viehweg and Miyaoka vanishing theorems), Reider-type theorems of adjoint linear systems on surfaces (including a log version and a relative version of the original one) and extension theorems of morphisms defined on curves on surfaces (generalizations of Serrano and Paoletti's results).
\end{abstract}

\maketitle

\setcounter{tocdepth}{1}

\tableofcontents

%%%%%%%%%%%%%%%%%%%%%%%%%%%%%%%%%%%%%%%%%%%%%%%%%%%%%
\section{Introduction}
\label{sec:Introduction}
%%%%%%%%%%%%%%%%%%%%%%%%%%%%%%%%%%%%%%%%%%%%%%%%%%%%%%

In 1962, Zariski showed the following decomposition theorem \cite{Zar}:

\begin{theorem}[Zariski decomposition]
Let $D$ be an effective $\Q$-divisor on a smooth projective surface $X$.
Then there exists the unique decomposition $D=P+N$ such that the following hold.

\smallskip

\noindent
$(\mathrm{i})$ $P$ is a nef $\Q$-divisor on $X$ $($called the positive part of $D$$)$.

\smallskip

\noindent
$(\mathrm{ii})$ $N=0$ or $N>0$ is a negative definite $\Q$-divisor on $X$ $($called the negative part of $D$$)$.

\smallskip

\noindent
$(\mathrm{iii})$ $PC=0$ for any curve $C\subset \Supp(N)$.
\end{theorem}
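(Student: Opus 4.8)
The plan is to obtain the decomposition by first producing the negative part $N$ --- as a nonnegative $\Q$-divisor supported on a maximal ``$D$-negative'' configuration of curves --- then checking that the residual part $P:=D-N$ is automatically nef, and finally deducing uniqueness from the Hodge index theorem. Two inputs are used throughout. First, if $C_1,\dots,C_r$ are distinct irreducible curves whose intersection matrix $A=(C_i\cdot C_j)$ is negative definite, then every entry of $A^{-1}$ is $\le 0$ (the off-diagonal entries of $A$ being $\ge 0$, this is the standard $M$-matrix property), so the unique solution of $A\vec{a}=\vec{b}$ is componentwise $\ge 0$ whenever $\vec{b}\le 0$. Second, by the Hodge index theorem the intersection form on $N^1(X)_{\R}$ has signature $(1,\rho(X)-1)$, so any set of curves with negative-definite intersection matrix has at most $\rho(X)-1$ members.

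For existence I would call a reduced divisor $T=C_1+\dots+C_r$ (the $C_i$ distinct irreducible) \emph{$D$-negative} if its intersection matrix is negative definite, the $\Q$-divisor $N_T=\sum_i a_iC_i$ determined by $(D-N_T)\cdot C_j=0$ for all $j$ is effective, and $D-N_T$ is pseudo-effective. The empty divisor is $D$-negative, and since the number of components is bounded by $\rho(X)-1$, a maximal $D$-negative divisor $T$ exists by Zorn's lemma; set $N:=N_T$ and $P:=D-N$. By construction $N$ is effective, $\Supp N$ is negative definite, and $P\cdot C_i=0$ for every component $C_i$ of $N$, so $(\mathrm{ii})$ and $(\mathrm{iii})$ hold; the task is to show $P$ is nef. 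Suppose it is not and pick an irreducible curve $C$ with $P\cdot C<0$; then $C$ is not among the $C_i$, and since $P=D-N$ is pseudo-effective one has $C^2<0$. Let $G:=C-\sum_i g_iC_i$ with $\vec{g}:=A^{-1}(C\cdot C_j)_j$, so that $G\cdot C_i=0$ for all $i$; by the first input the coefficients $-g_i$ are $\ge 0$, hence $G$ is effective, and one computes $G^2=G\cdot C$ and $D\cdot G=P\cdot C<0$. The crucial claim is that $G^2<0$, which should follow by playing $D\cdot G<0$ (with $D$ pseudo-effective and $G$ effective) against the Hodge index theorem; granting it, the Gram matrix of $(C_1,\dots,C_r,G)$ equals $\diag(A,G^2)$ and is negative definite, so $C_1+\dots+C_r+C$ has negative-definite intersection matrix, and the same linear system exhibits $N_{T+C}$ as effective with $D-N_{T+C}$ pseudo-effective --- contradicting the maximality of $T$. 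Hence $P$ is nef, proving $(\mathrm{i})$.

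The uniqueness is cleaner, and I would argue it directly. Suppose $D=P+N=P'+N'$ with both decompositions satisfying $(\mathrm{i})$--$(\mathrm{iii})$, and set $E:=N-N'=P'-P$. Write $E=E_+-E_-$ with $E_+,E_-$ effective $\Q$-divisors having no common component; then $\Supp E_+\subseteq\Supp N$ and $\Supp E_-\subseteq\Supp N'$, both negative definite, so $E_+^2\le 0$ and $E_-^2\le 0$ with equality only for the zero divisor, while $E_+\cdot E_-\ge 0$; hence $E^2=E_+^2-2E_+\cdot E_-+E_-^2\le 0$. On the other hand $(\mathrm{iii})$ gives $P\cdot E_+=0$ and $P'\cdot E_-=0$, so $E\cdot E_+=(P'-P)\cdot E_+=P'\cdot E_+\ge 0$ and $E\cdot E_-=(P'-P)\cdot E_-=-P\cdot E_-\le 0$, whence $E^2=E\cdot E_+-E\cdot E_-=P'\cdot E_++P\cdot E_-\ge 0$. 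Therefore $E^2=0$, which forces $E_+=E_-=0$, so $N=N'$ and $P=P'$.

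The step I expect to be the main obstacle is the nef-ness of $P$ --- concretely the inequality $G^2<0$, or equivalently the claim that a maximal $D$-negative configuration cannot be enlarged by a curve on which $P$ is negative. This is exactly where the positivity of $D$, the $M$-matrix property and the Hodge index theorem have to be combined carefully; by contrast the boundedness bookkeeping for the divisors $N_T$, the extraction of a maximal configuration, and the entire uniqueness argument are routine.
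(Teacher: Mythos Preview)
Your outline is essentially correct and close to Zariski's original argument (and to Bauer's simplification), but the paper takes a different route tailored to its later integral version. Rather than solving for the exact projection $N_T$, the paper works with the set $\Enu_{\R}(D)$ of effective negative definite $\R$-divisors $B>0$ such that $B-D$ is nef over $B$ (an \emph{inequality} condition), shows it has a maximal element $N$ via Zorn, and proves $P=D-N$ is nef by contradiction: if $P\cdot C<0$ then $N+\varepsilon C\in\Enu_{\R}(D)$ unless $N+\varepsilon C$ is not negative definite, in which case an effective nef $Z$ supported on $\Supp(N)\cup C$ yields $P\cdot Z\ge 0$ and forces $C\subset\Supp N$. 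Uniqueness in the paper comes from their comparison Lemma~\ref{easy}; your $E^2\le 0\le E^2$ computation is a valid alternative. Your approach is a bit more concrete (finite search bounded by $\rho(X)-1$, exact linear algebra); theirs transports verbatim to the $\Z$-coefficient setting they need next.

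The step you flag, $G^2<0$, can indeed be closed, and the idea is exactly the paper's nef-$Z$ trick in disguise. Suppose $G^2\ge 0$. Since $G$ is effective with $G\cdot C_i=0$ for all $i$ and $G\cdot C=G^2\ge 0$, the divisor $G$ meets each of its own components nonnegatively, hence $G$ is nef; but then $P\cdot G\ge 0$ because $P$ is pseudo-effective, contradicting $P\cdot G=P\cdot C<0$. No direct Hodge-index computation is needed. One further small gap: you assert but do not justify that $D-N_{T+C}$ is pseudo-effective. For effective $D$ this is automatic---in fact $N_{T'}\le D$ for every $D$-negative $T'$: writing $D=\sum m_iC_i+D'$ with no $C_i$ in $D'$, your $M$-matrix input applied to $A(\vec a-\vec m)=(D'\cdot C_j)_j\ge 0$ gives $\vec a\le \vec m$. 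So the third clause in your definition of ``$D$-negative'' is redundant here, and the iteration goes through.
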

For a more general setting including the pseudo-effective case \cite{Fuj} and the relative case, see Theorem~\ref{Zardecomp}.
This decomposition $D=P+N$ is called the {\em Zariski decomposition} of $D$.
It is a fundamental tool in algebraic geometry (for higher dimensional generalizations of the Zariski decomposition, see for example \cite{Nak}, \cite{Pro} and references therein).
Note that the positive part $P$ and the negative part $N$ of $D$ are not necessarily $\Z$-divisors even if $D$ is a $\Z$-divisor.
The main theorem in this paper is an integral analog of the Zariski decomposition on surfaces as follows (for a more general setting including relative cases, see Theorem~\ref{IntZardecomp}).
\begin{theorem}[Integral Zariski decomposition]
Let $D$ be a pseudo-effective divisor on a normal complete surface $X$.
Then there exists the unique decomposition $D=P_{\Z}+N_{\Z}$ such that the following hold.

\smallskip

\noindent
$(\mathrm{i})$ $P_{\Z}$ is a $\Z$-positive divisor on $X$.

\smallskip

\noindent
$(\mathrm{ii})$ $N_{\Z}=0$ or $N_{\Z}>0$ is a negative definite divisor on $X$.

\smallskip

\noindent
$(\mathrm{iii})$ $-P_{\Z}$ is nef over $N_{\Z}$.
\end{theorem}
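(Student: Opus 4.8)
The plan is to bootstrap from the classical ($\Q$-)Zariski decomposition of Theorem~\ref{Zardecomp} and then to ``integralize'' its negative part, all intersection numbers being understood in Mumford's sense since $X$ is only normal. Write $D = P + N$ for the classical Zariski decomposition, so that $P$ is nef, $N = \sum_{i=1}^{r} a_i C_i \ge 0$ has negative definite support, and $P\cdot C_i = 0$ for every $i$. I will produce $N_\Z$ supported on $\Supp(N)$, so that (ii) is automatic; writing $N_\Z = \sum_i n_i C_i$ with $n_i \in \Z_{\ge 0}$, condition (iii) becomes, using $P\cdot C_i = 0$, the finite linear system $(N - N_\Z)\cdot C_i \le 0$ over those $i$ with $n_i > 0$, while condition (i) becomes a complementary lower bound on the $n_i$, read off from the coefficients of the classical negative part of $P + (N - N_\Z)$ (no component of $\Supp(N)$ may acquire coefficient $\ge 1$ there). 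Thus the whole problem is transported to the negative definite lattice $L := \bigoplus_i \Z C_i$.

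For existence I would run an adjustment algorithm on $L$. One starts from $N_\Z^{(0)} := \lceil N\rceil$: here $D - N_\Z^{(0)} = P - (\lceil N\rceil - N)$ has classical negative part $\lceil N\rceil - N$ with all coefficients in $[0,1)$, hence is $\Z$-positive. Then one repeatedly lowers $n_i$ by $1$ at any $C_i\subseteq\Supp(N_\Z^{(k)})$ violating $(N - N_\Z^{(k)})\cdot C_i \le 0$, and raises $n_i$ by $1$ whenever the classical negative part of $P + (N - N_\Z^{(k)})$ acquires coefficient $\ge 1$ on some $C_i$. Using negative definiteness of $L$ I would exhibit a bounded potential that changes monotonically under every step (built from $(N_\Z^{(k)})^2$, or from pairing $N_\Z^{(k)}$ against a class positive on all the $C_i$), forcing termination; the output $N_\Z$ satisfies (i) and (iii) by construction, and (ii) as above. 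This is the integral counterpart of the finiteness already built into Zariski's construction.

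For uniqueness I would take a second decomposition $D = P_\Z' + N_\Z'$ obeying (i)--(iii) and compare: $N_\Z - N_\Z' = P_\Z' - P_\Z$ decomposes as $E_+ - E_-$ with $E_\pm \ge 0$ effective and of disjoint support, and $\Supp(E_+)\subseteq\Supp(N_\Z)$, $\Supp(E_-)\subseteq\Supp(N_\Z')$, each negative definite by (ii). Pairing with $E_+$ and $E_-$ and combining (iii) for $N_\Z$ and for $N_\Z'$ with the lower bound on intersections forced by $\Z$-positivity of $P_\Z$ and $P_\Z'$, I would get $E_+^2 \ge 0$ and $E_-^2 \ge 0$; negative definiteness then gives $E_+ = E_- = 0$, i.e. $N_\Z = N_\Z'$. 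This is the classical uniqueness argument with the hypothesis ``$P$ nef'' replaced by the integral (and possibly cohomological) lower bound packaged into $\Z$-positivity.

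The main obstacle is the termination of the existence algorithm. The trouble is that rounding is not componentwise: lowering $n_i$ to repair (iii) at $C_i$ can push a neighbour $C_j$ to coefficient $1$ in the classical negative part, breaking (i), or can turn $C_j$ into a fresh negative curve, and conversely --- so the coordinates of $N_\Z$ cannot be adjusted independently, and termination must be squeezed out of the global negative definite geometry of $L$ interlocked with the condition defining $\Z$-positivity. A secondary technical point, needed for the relative and non-projective refinements subsumed by Theorem~\ref{IntZardecomp}, is to carry out the same scheme with a morphism-relative notion of nefness, pseudo-effectivity, and Mumford intersection, which I would treat by working over the base.
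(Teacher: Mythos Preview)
Your uniqueness sketch is essentially right and close to the paper's: if $E_+>0$, then $E_+$ is negative definite (as a subdivisor of $N_\Z$), and for every component $C_i$ of $E_+$ one has $(E_+-P'_\Z)\cdot C_i = (-P_\Z+E_-)\cdot C_i \ge 0$ by (iii) for $N_\Z$ together with $E_-\cdot C_i\ge 0$ (disjoint supports). Thus $E_+\in\Enu_\Z(P'_\Z)$, contradicting $\Z$-positivity of $P'_\Z$ directly; you do not need to pass through $E_\pm^2\ge 0$.

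The existence argument, however, has a real gap, and you flag it yourself: termination of your two-sided adjustment is not proved, and there is no obvious monotone potential once you allow both raising and lowering. The paper avoids any algorithm. The key observation you are missing is that every $B\in\Enu_\Z(D)$ already satisfies $B\le N$: apply Lemma~\ref{easy} to the two decompositions $D=P+N=(D-B)+B$, with $F=P$ nef and $-A=B-D$ nef over $B$. Hence $\Enu_\Z(D)$ is a \emph{finite} set of integer divisors in the box $0<B\le N$. Moreover it is closed under componentwise maximum: if $B,B'\in\Enu_\Z(D)$ and $B''=\max(B,B')$, then for each component $C_i$ of $B''$ one has $(B''-D)\cdot C_i=(B-D)\cdot C_i+(B''-B)\cdot C_i\ge 0$ when $b_i\ge b'_i$ (and symmetrically otherwise). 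So $\Enu_\Z(D)$ has a unique maximum $N_\Z$; set $P_\Z:=D-N_\Z$. Condition (iii) is built in, (ii) follows from $N_\Z\le N$, and $\Z$-positivity of $P_\Z$ is a short argument: any $B\in\Enu_\Z(P_\Z)$ would satisfy $B\le N-N_\Z$ (Lemma~\ref{easy} again, applied to $P_\Z=P+(N-N_\Z)$), and then $B+N_\Z\in\Enu_\Z(D)$, contradicting maximality of $N_\Z$.

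In short, replace your iterative rounding by the one-line description ``$N_\Z$ is the maximum of the finite, max-closed set $\Enu_\Z(D)$''; the bound $B\le N$ from Lemma~\ref{easy} is the missing structural fact that makes existence immediate. (If you want an algorithm, the paper's Remark~\ref{zartointzar} gives a monotone one: start from $\lfloor N\rfloor$ and only \emph{decrease}, peeling off components $C$ with $D_k\cdot C>0$; this terminates trivially and lands on $N_\Z$ by Proposition~\ref{connchain}.)
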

Here a divisor $D$ on $X$ is called {\em $\Z$-positive} if $B-D$ is not nef over $B$ for any effective  negative definite divisor $B>0$ on $X$.
Typical examples of $\Z$-positive divisors are the round-up of nef $\R$-divisors and numerically connected (and not negative definite) divisors.
Integral Zariski decompositions have been naturally appeared informally as a phenomenon in the study of the cohomology of divisors on surfaces (e.g., \cite{Miy}, \cite{Sak2}, \cite{KaMa}, \cite{YZZ}).
As applications of the integral Zariski decomposition, we give generalizations of Kodaira-type vanishing theorems, Reider-type theorems and Serrano's extension theorems on surfaces. 

\subsection{Vanishing theorem}
In the usual Zariski decomposition of $D$ on a surface $X$, the positive part of $D$ measures the asymptotic behavior of the cohomology with respect to $mD$, $m\gg 0$.
For example, the section ring of a big divisor $D$ on $X$ is finitely generated if and only if the positive part $P$ of $D$ is semiample (cf.\ \cite{Laz} Corollary~2.3.23).
Moreover, the cohomology $H^{i}(X,\O_X(K_X+\ulcorner mP \urcorner))$ vanishes for any $i>0$ and $m>0$ by the Miyaoka vanishing theorem \cite{Miy}.
On the other hand, the $\Z$-positive part of $D$ in the integral Zariski decomposition measures the cohomology with respect to $D$ itself.
Indeed, the first cohomology $H^{1}(X,\O_X(K_X+D))$ can be computed by some cohomology on the $\Z$-negative part $N_{\Z}$ for a big divisor $D$ on $X$:

\begin{theorem}[Vanishing theorem]\label{Introvan}
Let $D$ be a big divisor on a normal complete surface $X$ over a field of characteristic $0$.
Let $D=P_{\Z}+N_{\Z}$ be the integral Zariski decomposition of $D$.
Then we have
$$
H^{1}(X,\O_{X}(K_{X}+D))\cong H^{1}(N_{\Z},\mathcal{L}_{D}),
$$
where $\mathcal{L}_{D}$ is the rank $1$ sheaf on $N_{\Z}$ defined by the cokernel of the homomorphism $\O_{X}(K_X+P_{\Z})\to \O_{X}(K_X+D)$ %between divisorial sheaves on $X$ 
induced by multiplying a defining section of $N_{\Z}$.
\end{theorem}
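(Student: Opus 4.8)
The plan is to obtain the isomorphism as a segment of the long exact sequence attached to a natural short exact sequence, thereby reducing the whole statement to a vanishing theorem for the $\Z$-positive part $P_\Z$. First I would note that a normal complete surface carrying a big divisor is Moishezon, hence projective, so I may assume $X$ is projective; since $X$ is normal it is Cohen--Macaulay with dualizing sheaf $\mathcal{O}_X(K_X)$, so Serre duality is available. Write $P_\Z=D-N_\Z$ and let $s$ be a section of $\mathcal{O}_X(N_\Z)$ cutting out $N_\Z\ge 0$. Multiplication by $s$ realizes the natural inclusion of divisorial sheaves $\mathcal{O}_X(K_X+P_\Z)=\mathcal{O}_X(K_X+D-N_\Z)\hookrightarrow\mathcal{O}_X(K_X+D)$; it is injective because divisorial sheaves on a normal variety are torsion free and $s\ne 0$, and its cokernel $\mathcal{L}_D$ is supported on $N_\Z$ and is generically of rank one on each component (away from the finitely many singular points of $X$ the two sheaves are line bundles and the cokernel is $\mathcal{O}_{N_\Z}$). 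This gives the short exact sequence
\[
0\longrightarrow\mathcal{O}_X(K_X+P_\Z)\longrightarrow\mathcal{O}_X(K_X+D)\longrightarrow\mathcal{L}_D\longrightarrow 0 .
\]

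Taking cohomology, the relevant stretch of the long exact sequence reads
\[
H^1(X,\mathcal{O}_X(K_X+P_\Z))\longrightarrow H^1(X,\mathcal{O}_X(K_X+D))\longrightarrow H^1(N_\Z,\mathcal{L}_D)\longrightarrow H^2(X,\mathcal{O}_X(K_X+P_\Z)),
\]
so the theorem follows once we establish the two vanishings $H^1(X,\mathcal{O}_X(K_X+P_\Z))=0$ and $H^2(X,\mathcal{O}_X(K_X+P_\Z))=0$.

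The second is the easy one, once one knows $P_\Z$ is big. If $N_\Z=0$ this is the hypothesis on $D$, so assume $N_\Z>0$; then $N_\Z^2<0$ by property (ii), while property (iii) gives $P_\Z\cdot N_\Z\le 0$, whence $P_\Z\cdot D=D^2-N_\Z\cdot D=D^2-(P_\Z\cdot N_\Z+N_\Z^2)\ge D^2-N_\Z^2>D^2>0$ and $P_\Z^2=P_\Z\cdot D-P_\Z\cdot N_\Z\ge P_\Z\cdot D>0$. Since $D$ is big it lies in the positive cone, and the Hodge index theorem then forces a class of positive self-intersection and positive intersection with $D$ into the positive cone as well; every such class is big, so $P_\Z$ is big. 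Consequently $-P_\Z$ is not effective, and Serre duality gives $H^2(X,\mathcal{O}_X(K_X+P_\Z))\cong H^0(X,\mathcal{O}_X(-P_\Z))^{\vee}=0$.

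The remaining vanishing $H^1(X,\mathcal{O}_X(K_X+P_\Z))=0$ is the heart of the matter and the step I expect to be the main obstacle; it is precisely the generalization of the Kawamata--Viehweg and Miyaoka vanishing theorems that the $\Z$-positive formalism is designed to provide, since by property (i) $P_\Z$ is $\Z$-positive, by the previous paragraph it is big, and the base field has characteristic $0$. I would prove it separately, and this is where resolution of singularities and the characteristic-zero hypothesis enter: one pulls back to a resolution $f\colon Y\to X$, applies the Kawamata--Viehweg/Miyaoka vanishing on $Y$ to a suitable rounding of $f^{*}P_\Z$, and uses the defining property of $\Z$-positivity --- that $B-P_\Z$ is not nef over $B$ for any nonzero effective negative definite divisor $B$ --- to guarantee that the exceptional $R^{1}f_{*}$-term and the fractional boundary term do not obstruct the vanishing upstairs. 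Granting this, the long exact sequence above yields $H^1(X,\mathcal{O}_X(K_X+D))\cong H^1(N_\Z,\mathcal{L}_D)$. Finally, note that the case $N_\Z=0$ of the theorem is exactly $H^1(X,\mathcal{O}_X(K_X+D))=0$ for a $\Z$-positive big divisor $D$, so this case has to be proved first and cannot be deduced from the general statement.
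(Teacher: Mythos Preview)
Your overall architecture is exactly the paper's: set up the short exact sequence
\[
0\to \mathcal{O}_X(K_X+P_{\Z})\to \mathcal{O}_X(K_X+D)\to \mathcal{L}_D\to 0,
\]
reduce to the two vanishings $H^i(X,\mathcal{O}_X(K_X+P_{\Z}))=0$ for $i=1,2$, and identify the $H^1$-vanishing for a big $\Z$-positive divisor as the substantive input (the paper proves it by the connecting-chain characterization of $\Z$-positivity together with Miyaoka/Kawamata--Viehweg vanishing on a resolution, then descends via a projection-formula lemma; your sketch of this step is compatible with that).

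There is, however, a concrete error in your argument that $P_{\Z}$ is big. You assert $D^2>0$ because $D$ is big, and your chain of inequalities for $P_{\Z}\cdot D$ and $P_{\Z}^2$ rests on this. But a big divisor on a surface need not have positive self-intersection: on the blow-up of $\mathbb{P}^2$ at a point with exceptional curve $E$ and line class $H$, the divisor $H+2E$ is big (its Zariski positive part is $H$) while $(H+2E)^2=-3$. So ``$D$ lies in the positive cone'' is false in general, and your Hodge-index step does not go through. The fix is simpler than your attempted computation: since $N_{\Z}\le N$ (any element of $\Enu_{\Z}(D)$ is $\le N$ by the maximality in the ordinary Zariski decomposition), one has $P_{\Z}=P+(N-N_{\Z})$, the sum of the nef and big divisor $P$ and an effective divisor, hence $P_{\Z}$ is big. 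Equivalently, $P_{\Z}$ has the same Zariski positive part $P$ as $D$, and bigness is read off from $P^2>0$. With this correction your $H^2$-vanishing via Serre duality is fine, and the rest of the argument stands.
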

In particular, if $D$ is big and $\Z$-positive, then all the higher cohomologies of $K_X+D$ vanish.
This is a generalization of the Kawamata-Viehweg vanishing \cite{Kaw} \cite{Vie}, Ramanujam's $1$-connected vanishing \cite{Ram} and Miyaoka's vanishing \cite{Miy} on surfaces.
The relative version for Theorem~\ref{Introvan} also holds (see Theorem~\ref{van}), even when positive characteristics,
which is a generalization of the local vanishing theorem due to Sakai~\cite{Sak} and Koll\'ar-Kov\'acs~\cite{KoKo}.

\subsection{Reider-type theorem}
By using Theorem~\ref{Introvan}, we will prove some Reider-type theorems.
For example, the following can be shown:

\begin{theorem}[Reider-type theorem I]\label{Introbpf}
Let $D$ be a big %$\Z$-positive
 divisor on a normal complete surface $X$ over a field of characteristic $0$.
Let $\zeta$ be a $0$-dimensional subscheme of $X$ along which $K_X+D$ is Cartier.
We further assume that $P^2>\delta_{\zeta}$ $($resp.\ $P_{\Z}^{2}>\delta_{\zeta}$$)$, where $\delta_{\zeta}$ is the invariant of the germ $(X,\zeta)$ $($for the details, see Section 5$)$ and $D=P+N$ $($resp.\ $D=P_{\Z}+N_{\Z}$$)$ the $($resp.\ integral$)$ Zariski decomposition.
Then the restriction map $H^{0}(X,\O_X(K_X+D))\to H^{0}(X,\O_X(K_X+D)|_{\zeta})$ is surjective, or there exists an effective divisor $B$ on $X$ intersecting $\zeta$ such that $(D-B)B\le \delta_{\zeta}/4$ $($resp.\ and $D+N_{\Z}-2B$ is big$)$.
\end{theorem}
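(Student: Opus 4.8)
The plan is to run a Reider-type argument: from a failure of surjectivity, manufacture a rank-$2$ sheaf by a Serre extension through $\zeta$, show it is Bogomolov-unstable using the numerical hypothesis on $P$ (resp.\ $P_{\Z}$), and read the divisor $B$ off a destabilizing sub-line-bundle. Suppose then that the restriction map is not surjective. First I would shrink $\zeta$ to a subscheme minimal with this property --- a Cayley--Bacharach argument makes this harmless, since $\delta_{\zeta'}\le\delta_{\zeta}$ for $\zeta'\subseteq\zeta$ and an effective divisor meeting $\zeta'$ a fortiori meets $\zeta$. Now $H^{1}\!\bigl(X,\O_{X}(K_{X}+D)\otimes\mathcal I_{\zeta}\bigr)\neq 0$, so by Serre duality on the normal surface $X$ (dualizing sheaf $\O_{X}(K_{X})$) there is a nonzero class in $\operatorname{Ext}^{1}\!\bigl(\mathcal I_{\zeta}\otimes\O_{X}(D),\O_{X}\bigr)$. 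Here the hypothesis that $K_{X}+D$ is Cartier along $\zeta$ lets me compute the relevant sheaf-$\operatorname{Ext}$ locally around $\zeta$ and, after the minimality reduction, conclude that the extension
\[
0\longrightarrow\O_{X}\longrightarrow\mathcal E\longrightarrow\mathcal I_{\zeta}\otimes\O_{X}(D)\longrightarrow 0
\]
given by this class has $\mathcal E$ of rank $2$ and locally free in a neighbourhood of $\zeta$. Thus $c_{1}(\mathcal E)=D$, and the essential bookkeeping is that $c_{2}(\mathcal E)$ is bounded above in terms of $\deg\zeta$ and the local invariants of the singular germ $(X,\zeta)$; this is exactly what $\delta_{\zeta}$ of Section~5 is designed to package, so that $4\,c_{2}(\mathcal E)\le\delta_{\zeta}$.

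Next I would apply a Bogomolov-type instability theorem to $\mathcal E$, in a form adapted to the Zariski decomposition, so that it detects $P^{2}$ (resp.\ $P_{\Z}^{2}$) rather than $D^{2}=c_{1}(\mathcal E)^{2}$: since $P^{2}>\delta_{\zeta}\ge 4\,c_{2}(\mathcal E)$ (resp.\ $P_{\Z}^{2}>\delta_{\zeta}\ge 4\,c_{2}(\mathcal E)$), the bundle $\mathcal E$ must be Bogomolov-unstable --- here, and only here, is characteristic zero used. Such a statement can be reached by pulling back to a resolution on which the positive part becomes nef and big and invoking ordinary Bogomolov instability \cite{Laz} together with Miyaoka vanishing \cite{Miy} for $K_{X}+\ulcorner P\urcorner$; in the integral case Theorem~\ref{Introvan} is what controls the possibly nonzero group $H^{1}(X,\O_{X}(K_{X}+D))$ and lets one work with $P_{\Z}$ in place of $P$. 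The output is a saturated sub-line-bundle $\O_{X}(A)\hookrightarrow\mathcal E$ with $2A-P$ big (resp.\ $2A-P_{\Z}$ big).

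From here I would extract $B$. The composite $\O_{X}(A)\hookrightarrow\mathcal E\to\mathcal I_{\zeta}\otimes\O_{X}(D)$ is nonzero, since a zero composite would factor $\O_{X}(A)$ through the subsheaf $\O_{X}$, forcing $-A$ to be effective, which is incompatible with $2A-P$ (resp.\ $2A-P_{\Z}$) being big because $-P$ (resp.\ $-P_{\Z}$) is not pseudo-effective. Hence this composite is a nonzero section of $\O_{X}(D-A)$ whose zero divisor $B$ is effective with $B\sim D-A$, and $B$ meets $\zeta$ because $\mathcal I_{\zeta}\otimes\O_{X}(D)$ is not locally free along $\zeta$ while $\O_{X}(A)$ is. Since $\O_{X}(A)$ is saturated in $\mathcal E$, one has $\mathcal E/\O_{X}(A)\cong\mathcal I_{Z}\otimes\O_{X}(D-A)$ for an effective $0$-cycle $Z$, so $c_{2}(\mathcal E)=A\cdot(D-A)+\operatorname{length}(Z)\ge A\cdot(D-A)=(D-B)B$; together with $4\,c_{2}(\mathcal E)\le\delta_{\zeta}$ this gives $(D-B)B\le\delta_{\zeta}/4$. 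Finally, in the integral case $2A-P_{\Z}=2(D-B)-(D-N_{\Z})=D+N_{\Z}-2B$, so the bigness produced by instability is exactly the assertion that $D+N_{\Z}-2B$ is big.

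The step I expect to be the main obstacle is establishing the correct Bogomolov-type instability statement \emph{on the possibly singular normal surface} $X$, with an explicit error term that can be identified with $\delta_{\zeta}$: this means carrying the Serre construction and the Chern-class computation through the singular points of $(X,\zeta)$ and matching the local corrections with the definition of $\delta_{\zeta}$, and replacing $D^{2}$ by $P^{2}$ (resp.\ $P_{\Z}^{2}$) in the instability bound --- which in the integral case is precisely where the integral Zariski decomposition and Theorem~\ref{Introvan}, rather than Kawamata--Viehweg or Miyaoka vanishing alone, are needed to absorb the failure of $H^{1}(X,\O_{X}(K_{X}+D))$ to vanish.
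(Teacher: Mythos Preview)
Your approach is the classical vector-bundle route (Serre construction plus Bogomolov instability), which is \emph{not} what the paper does. The paper's proof of Theorem~\ref{sep} avoids rank-$2$ sheaves entirely: from $H^{1}(X,\mathcal{I}_{\zeta}\O_{X}(K_{X}+D))\neq 0$ it passes to a resolution $\pi\colon X'\to X$ with a $\pi$-exceptional divisor $Z$ realizing $\delta_{\zeta}$, sets $D':=\pi^{*}D+\Delta-Z$, and observes via Leray that $H^{1}(X',\O_{X'}(K_{X'}+D'))\neq 0$. The hypothesis $P^{2}>\delta_{\zeta}$ forces $D'$ big, so by the vanishing theorem (Theorem~\ref{van}) $D'$ cannot be $\Z$-positive; its integral Zariski decomposition $D'=P'_{\Z}+N'_{\Z}$ then produces $B:=\pi_{*}N'_{\Z}$, and a direct intersection computation on $X'$ (completing the square with $\frac{1}{2}(\Delta-Z)$) gives $(D-B)B\le -\tfrac{1}{4}(\Delta-Z)^{2}=\delta_{\zeta}/4$. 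The bigness of $D+N_{\Z}-2B$ comes from Corollary~\ref{reflection}. The paper itself remarks after the proof that this is the ``Kodaira-type vanishing'' route, as opposed to the vector-bundle route you chose.

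Your outline can be made to work---Langer \cite{Lan} carries it out over $\C$---but two of the steps you flag as obstacles are genuinely nontrivial and are not just bookkeeping. First, the identity you assert, ``$4\,c_{2}(\mathcal{E})\le\delta_{\zeta}$'', is not how the paper's $\delta_{\zeta}$ is defined (it is $\min_{\pi,Z}\{-(\Delta-Z)^{2}\}$ over resolutions), and matching it to a Chern-class quantity on a singular $X$ requires real work; indeed the paper notes its $\delta_{\zeta}$ may differ from Langer's. Second, a Bogomolov instability statement that sees $P^{2}$ or $P_{\Z}^{2}$ rather than $c_{1}^{2}=D^{2}$ is not standard and would itself need the Zariski-decomposition machinery; the paper's route bypasses this by building the decomposition of $D'$ directly on the resolution. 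What the paper's method buys is uniformity (it also yields the relative statement, even in positive characteristic when $\dim Y\ge 1$) and elementarity (no reflexive Chern classes on singular surfaces); what yours buys is closer contact with Reider's original argument and, in Langer's hands, a sometimes sharper constant.
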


It is a generalization of Reider's theorem (\cite{Rei} Theorem~1).
More general statements including the relative case, see Theorem~\ref{sep}.
%As an advantage of Theorem~\ref{Introvan}, Reider-type theorems (e.g., Theorem~\ref{Introbpf}) can be proved easily. %than the proof by using the usual Kawamata-Viehweg vanishing theorem. 
The relative version of Theorem~\ref{Introbpf} works even for positive characteristics and generalizes the results of Shepherd-Barron \cite{She}, Laufer \cite{Lau} and Sakai \cite{Sak2} for base points of linear systems on resolution spaces of surface singularities.
For example, we obtain as a special case of Theorem~\ref{sep} the following famous results:

\begin{corollary}[Corollaries~\ref{bpfdim0}, \ref{bpfdim1} and \ref{bpfdim2}] \label{Introbpfcor}
Let $f\colon X\to Y$ be a morphism defined over an algebraically closed field satisfying one of the following $(\mathrm{i})$, $(\mathrm{ii})$ and $(\mathrm{iii})$.

\smallskip

\noindent
$(\mathrm{i})$ $X$ is a minimal smooth projective surface of general type with $K_X^2> 4$ and $Y$ is a point and assume the base field is of characteristic $0$.

\smallskip

\noindent
$(\mathrm{ii})$ $f$ is a relatively minimal fiber space from a smooth surface $X$ to a curve $Y$ whose general fiber has arithmetic genus greater than $1$.

\smallskip

\noindent
$(\mathrm{iii})$ $f$ is a minimal resolution of a normal surface singularity $(Y,y)$.

\smallskip

\noindent
Then the natural homomorphism $f^{*}f_{*}\O_X(mK_X)\to \O_X(mK_X)$ is surjective for $m\ge 2$.
\end{corollary}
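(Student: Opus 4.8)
The plan is to obtain all three cases from the Reider-type theorem for separating a reduced point --- in its absolute form (Theorem~\ref{Introbpf}, integral version) for (i) and in the relative form (Theorem~\ref{sep}) for (ii) and (iii) --- applied to the divisor $D=(m-1)K_X$, so that $K_X+D=mK_X$. In every case $X$ is smooth, so $mK_X$ is Cartier everywhere and, for a reduced point $\zeta=\{x\}$, the germ invariant is $\delta_{\zeta}=4$ (the classical Reider threshold, $\delta_{\zeta}/4=1$). By Nakayama's lemma, surjectivity of $f^{*}f_{*}\O_X(mK_X)\to\O_X(mK_X)$ is equivalent to the statement that for every $x\in X$ some section of $\O_X(mK_X)$ over $f^{-1}(U)$, $U\ni f(x)$ small, is nonzero at $x$; i.e.\ that $|mK_X|$ separates the reduced point $\zeta=\{x\}$ relative to $f$. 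So it suffices to run the theorem for each such $\zeta$ and rule out the alternative.

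I would first verify the positivity hypothesis on $D=(m-1)K_X$. In case~(i), $K_X$ is nef and big with $K_X^{2}\ge 5$, so $D$ is nef and big and $D^{2}=(m-1)^{2}K_X^{2}\ge K_X^{2}>4=\delta_{\zeta}$. In cases~(ii) and~(iii) the key point is that $K_X$ is $f$-nef: relative minimality forbids a $(-1)$-curve inside a fibre, and a minimal resolution has no exceptional $(-1)$-curve, so in both cases $K_X\cdot C=2p_{a}(C)-2-C^{2}\ge 0$ for every curve $C$ contracted by $f$; moreover $D$ is $f$-big, since $D\cdot F=(m-1)(2p_{a}(F)-2)>0$ on a general fibre $F$ in case~(ii), and $f$-bigness is automatic for a birational $f$ in case~(iii). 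Hence $D$ is (relatively) $\Z$-positive, so $P_{\Z}=D$, $N_{\Z}=0$, and the positivity hypothesis holds (this is $P_{\Z}^{2}>\delta_{\zeta}$ in case~(i), and its relative analogue in cases~(ii)--(iii)).

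It remains to exclude the alternative: an effective divisor $B$ with $x\in B$, $(D-B)\cdot B\le 1$, and $D+N_{\Z}-2B=(m-1)K_X-2B$ (relatively) big, where in cases~(ii)--(iii) one may also take $B$ contracted by $f$. If $B^{2}<0$ --- automatic in~(iii), and one subcase for the vertical $B$ in~(ii) --- then $K_X\cdot B\ge 0$ gives $(D-B)\cdot B=(m-1)K_X\cdot B-B^{2}\ge 1$, and equality would force $K_X\cdot B=0$, $B^{2}=-1$, impossible because $B^{2}+K_X\cdot B=2p_{a}(B)-2$ is even; so $(D-B)\cdot B\ge 2$, a contradiction. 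If $B^{2}=0$ in case~(ii), then by Zariski's lemma the vertical $B$ is a positive rational multiple of a single fibre $F_{0}$, say $B=jC$ with $C=F_{0}/n$ and $n$ the multiplicity; $C^{2}=0$ and $K_X\cdot C=(2p_{a}(F)-2)/n$ is a positive integer, so $n\mid(2p_{a}(F)-2)$, and $(D-B)\cdot B=j(m-1)K_X\cdot C\le 1$ with $j\ge1$, $m-1\ge1$, $K_X\cdot C\ge1$ forces $j=1$, $m=2$, $K_X\cdot C=1$, hence $n=2p_{a}(F)-2$ and $C^{2}+K_X\cdot C=1$ is odd, contradicting that it equals $2p_{a}(C)-2$. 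Finally, if $B^{2}\ge 0$ in case~(i): since $D^{2}>0$ the Hodge index inequality $B^{2}D^{2}\le(D\cdot B)^{2}$ holds, while $(m-1)K_X-2B$ is big, hence pseudo-effective, so $(D-2B)\cdot D\ge 0$ (as $D$ is nef), i.e.\ $D\cdot B\le D^{2}/2$, and $(D-B)\cdot B\le 1$ gives $B^{2}\ge D\cdot B-1$; a short estimate then forces $D\cdot B\le 1$ and $B^{2}\le 1/D^{2}<1$, so $B^{2}=0$, but then $D\cdot B=0$ forces $B\equiv 0$ (impossible, $x\in B$) while $D\cdot B=1$ forces $m=2$, $K_X\cdot B=1$, again contradicting parity. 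In all cases the alternative is impossible, so $mK_X$ is relatively generated and the map is surjective.

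The main obstacle is this last step. Case~(iii) falls out at once from negative-definiteness and adjunction, but (i) and (ii) genuinely require combining three ingredients: the numerical inequality from Reider's theorem, the \emph{additional} bigness of $(m-1)K_X-2B$ --- exactly the extra data supplied by the integral Zariski decomposition, and the reason the integral rather than the classical version is used here --- and the parity relation $B^{2}\equiv K_X\cdot B\pmod 2$ from adjunction; in case~(ii) the hypothesis $p_{a}(F)\ge 2$, i.e.\ $2p_{a}(F)-2\ge 2$, is used decisively to handle multiple fibres. The characteristic-zero assumption in~(i) is inherited from the vanishing theorem underpinning the absolute Reider-type theorem; in~(ii) and~(iii) the corresponding relative vanishing, hence the whole argument, is valid in any characteristic.
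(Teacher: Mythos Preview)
Your proof is correct and follows the same strategy as the paper: apply the Reider-type theorem (Theorem~\ref{sep}) with $D=(m-1)K_X$ and exclude the alternative curve $B$ using $f$-nefness of $K_X$ together with the adjunction parity $K_X\cdot B+B^2\equiv 0\pmod 2$. The paper packages the three cases as Corollaries~\ref{bpfdim0}, \ref{bpfdim1}, \ref{bpfdim2} and then specializes, while you argue directly; your case~(i) analysis via the Hodge index theorem is in fact more explicit than what appears in the paper's brief proof of Corollary~\ref{bpfdim0}. One small imprecision: in case~(ii) with $B^2=0$, Zariski's lemma only tells you that $B$ is, fibre by fibre, a rational multiple of each fibre it meets---it need not be supported on a single fibre $F_0$---but your parity argument survives unchanged, since all you actually use is that $K_X\cdot B$ is a positive integer.
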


Moreover, we can prove another variant of Reider-type theorems which says the existence of negative semi-definite curves satisfying the ``Reider-type'' condition:

\begin{theorem}[Reider-type theorem II] \label{Intronegsemidefnthm}
Let $D$ be a nef and big divisor on a normal complete surface $X$ over a field of characteristic $0$. 
Let $\zeta$ be a $0$-dimensional subscheme of $X$ along which $K_X+D$ is Cartier.
Assume that $D^2>\delta'_{\zeta}$ and $H^{0}(X,\O_{X}(K_X+D))\to H^{0}(X,\O_{X}(K_X+D)|_{\zeta})$ is not surjective, where $\delta'_{\zeta}$ is the invariant of the pair $(X,\zeta)$ greater than or equal to $\delta_{\zeta}$ $($for the details, see Section 5$)$.
Then there exists a negative semi-definite effective divisor $B$ on $X$ intersecting $\zeta$ such that 
$0<(D-B)B\le \delta_{\zeta}/4$ and $D-2B$ is big.
\end{theorem}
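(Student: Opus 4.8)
The plan is to run the Reider--Bogomolov argument underlying Theorem~\ref{sep} with the destabilizing sub-line bundle chosen \emph{maximally}, so that the resulting divisor comes out negative semi-definite. Observe first that, since $D$ is nef and big it is $\Z$-positive, so its Zariski decomposition and its integral Zariski decomposition are both trivial; hence Theorem~\ref{sep}, applied in the present situation, already furnishes an effective divisor $B$ meeting $\zeta$ with $(D-B)B\le\delta_\zeta/4$ and $D-2B$ big, and the only genuinely new points are that such a $B$ can be taken negative semi-definite and that then $(D-B)B>0$.

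\emph{Step 1: the extension sheaf.} Failure of surjectivity of $H^{0}(X,\O_X(K_X+D))\to H^{0}(X,\O_X(K_X+D)|_\zeta)$ forces $H^{1}\bigl(X,\mathcal{I}_\zeta\otimes\O_X(K_X+D)\bigr)\ne 0$. As a normal surface is Cohen--Macaulay, Serre duality turns this (with due care for reflexive sheaves, since $\O_X(D)$ need not be invertible when $X$ is singular, although $\O_X(K_X+D)$ is invertible near $\zeta$) into a nonzero class in $\operatorname{Ext}^{1}_{\O_X}\bigl(\mathcal{I}_\zeta\otimes\O_X(D),\O_X\bigr)$, hence a non-split extension
\[
0\to \O_X\to \mathcal{E}\to \mathcal{I}_\zeta\otimes\O_X(D)\to 0
\]
with $c_1(\mathcal{E})=D$. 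The invariant $\delta'_\zeta$, which is $\ge\delta_\zeta$, is designed so that the hypothesis $D^{2}>\delta'_\zeta$ both allows $\mathcal{E}$ to be taken reflexive with the correct local Chern data at the singularities of $X$ and forces $c_1(\mathcal{E})^{2}-4c_2(\mathcal{E})>0$; the vanishing Theorem~\ref{Introvan} (equivalently Theorem~\ref{van}) and the hypothesis of characteristic $0$ are used here, in checking the relevant Cayley--Bacharach condition and in the Euler-characteristic computation. Bogomolov's instability theorem then yields a saturated destabilizing sub-line bundle $\O_X(A)\hookrightarrow\mathcal{E}$ with $\xi:=2A-D$ in the interior of the positive cone: $\xi^{2}>0$ and $\xi\cdot H>0$ for every ample divisor $H$.

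\emph{Step 2: producing $B$.} The composite $\O_X(A)\hookrightarrow\mathcal{E}\to\mathcal{I}_\zeta\otimes\O_X(D)$ is nonzero --- otherwise $\O_X(A)\hookrightarrow\O_X$, so $A\le 0$ and $\xi\cdot H=2A\cdot H-D\cdot H<0$, contradicting positivity of $\xi$ --- hence injective, and it exhibits an effective divisor $B:=D-A$ with $\O_X(-B)\subseteq\mathcal{I}_\zeta$, so $B$ meets $\zeta$; moreover $B\ne 0$ since $D$ is big and $X$ is complete. Now $\xi=2A-D=D-2B$, so $D-2B$ is big; and saturating gives $0\to\O_X(A)\to\mathcal{E}\to\mathcal{I}_W\otimes\O_X(D-A)\to 0$ with $\length(W)\ge 0$, whence $(D-B)B=A(D-A)\le c_2(\mathcal{E})$, which is $\le\delta_\zeta/4$ by the definition of $\delta_\zeta$. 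Finally, once $B$ is known to be negative semi-definite the inequality $(D-B)B>0$ is automatic: $B$ is effective and numerically nonzero while $D$ is nef and big with $D^{2}>0$, so by the Hodge index theorem $D\cdot B=0$ would force $B^{2}<0$ and then $(D-B)B=-B^{2}>0$, whereas if $D\cdot B>0$ then $(D-B)B=D\cdot B-B^{2}\ge D\cdot B>0$ since $B^{2}\le 0$.

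\emph{Step 3: negative semi-definiteness, the main obstacle.} It remains to arrange that $B$ is negative semi-definite. For this I would take $\O_X(A)$ to be the \emph{maximal} destabilizing sub-line bundle of $\mathcal{E}$, so that $B=D-A$ is minimal among the effective divisors $B'$ with $\O_X(-B')\subseteq\mathcal{I}_\zeta$ and $\O_X(D-B')\hookrightarrow\mathcal{E}$ (note that any strictly larger sub-line bundle $\O_X(A')\hookrightarrow\mathcal{E}$, $A'>A$, automatically destabilizes, since $(A'-A)\cdot H>0$ for $H$ ample, contradicting maximality). If $B$ were not negative semi-definite, the Zariski-decomposition structure of $B$ should let one split off a nonzero sub-part that can be absorbed into $A$ --- i.e. realized as $A'=A+B'$ with $\O_X(A')\hookrightarrow\mathcal{E}$ --- while the part $B'$ removed can be taken to avoid $\zeta$, so that $\O_X(-(B-B'))\subseteq\mathcal{I}_\zeta$ is preserved; this contradicts minimality, and since passing from $B$ to $B-B'$ only makes $D-2B$ ``bigger'' the property $D-2B$ big is kept throughout. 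Pinning down this exchange --- in particular controlling which components of $B$ are absorbed, using that $D\cdot(D-2B)>0$ and the Hodge index theorem to show no component of $B$ through $\zeta$ lies in the part to be removed --- is where the real work lies, and is handled, as in the minimality arguments behind Theorem~\ref{sep}, by the Hodge-index and Zariski-decomposition techniques of the earlier sections together with the bookkeeping of local Chern classes and of the invariants $\delta_\zeta\le\delta'_\zeta$ along the singular locus of $X$; everything else is the standard Reider/Bogomolov machinery.
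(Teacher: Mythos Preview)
Your approach diverges from the paper in two essential ways, and Step~3 contains a genuine gap.

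First, the paper does not use the vector bundle/Bogomolov route at all; Theorem~\ref{sep} is proved via the vanishing Theorem~\ref{van} and the integral Zariski decomposition, and the negative semi-definiteness in Theorem~\ref{negsemidefnthm} is then obtained by a short, purely numerical argument using the Hodge index theorem. More importantly, you have misread what $\delta'_\zeta$ is for. It is \emph{not} designed to make $\mathcal{E}$ reflexive or to force $c_1^2-4c_2>0$; it is the explicit number $\delta'_\zeta=q_\zeta(\delta_\zeta/4q_\zeta+1)^2$, where $q_\zeta$ is the minimum of $\delta_\zeta/4$ and the self-intersections $E^2>0$ of effective divisors $E$ meeting $\zeta$. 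The role of $D^2>\delta'_\zeta$ is entirely in the Hodge-index step, not in constructing $\mathcal{E}$.

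The paper's argument for negative semi-definiteness is this: given the $B$ from Theorem~\ref{sep} with $0<(D-B)B\le\delta_\zeta/4$ and $D-2B$ big, suppose some subcurve $0<C\le B$ has $x:=C^2>0$ (including possibly $C=B$). Then $D-2C$ is big, so $0<D(D-2C)=(D-C)^2-C^2$, and $0<C(D-C)\le B(D-B)\le\delta_\zeta/4$; Hodge index gives $C^2(D-C)^2\le(C(D-C))^2\le\delta_\zeta^2/16$, hence $x<\delta_\zeta/4$ and $(D-C)^2\le\delta_\zeta^2/16x$, so
\[
D^2=(D-C)^2+2(D-C)C+C^2\le x\Bigl(\tfrac{\delta_\zeta}{4x}+1\Bigr)^2=:\phi(x).
\]
Since $\phi$ is decreasing on $(0,\delta_\zeta/4]$ and $q_\zeta\le x$, this gives $D^2\le\phi(q_\zeta)=\delta'_\zeta$, contradicting the hypothesis. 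That is the whole proof.

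Your Step~3, by contrast, is only a sketch: you assert that maximality of $\O_X(A)$ should force $B$ to be negative semi-definite via some ``exchange'' argument, but you do not carry it out, and the mechanism you propose (absorbing a sub-part of $B$ into $A$ while keeping $\O_X(-(B-B'))\subseteq\mathcal{I}_\zeta$) does not obviously use the hypothesis $D^2>\delta'_\zeta$ in the right way. Since $\delta'_\zeta$ encodes exactly the Hodge-index bound above, any correct argument must at some point reproduce that computation; your outline does not.
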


%Theorem~\ref{Intronegsemidefnthm} is new even for smooth surfaces.

\subsection{Extension theorem}
For a normal complete surface $X$,
we define the invariant $q_X$ as
$$
q_X:=\mathrm{min}\{E^2\ |\ \text{$E$ is an effective divisor on $X$ with $E^2>0$} \}.
$$
For example, $q_X\ge 1$ if $X$ is smooth and $q_X\ge 2$ if $X$ is smooth and $K_X$ is numerically even.
By using Theorem~\ref{Intronegsemidefnthm},
we can generalize Serrano's extension theorem \cite{Ser} as the following (this is a simplified version of Theorem~\ref{extnthm}):

\begin{theorem}[Extension theorem] \label{introextnthm}
Let $X$ be a normal complete surface over a field of characteristic $0$.
Let $D>0$ be an effective divisor on $X$ such that any prime component $D_i$ of $D$ has positive self-intersection number $D_i^2>0$.
Let $\varphi\colon D\to \mathbb{P}^1$ be a finite morphism of degree $d$
and put $q_{X,d}:=\mathrm{min}\{q_X,d\}$.
If $D^2>q_{X,d}(d/q_{X,d}+1)^2$, then there exists a morphism $\psi\colon X\to \mathbb{P}^1$ such that the restriction of $\psi$ to $D$ is equal to $\varphi$.
\end{theorem}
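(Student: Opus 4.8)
The plan is to produce $\psi$ as the morphism attached to a base-point-free pencil on $X$ that restricts to the pencil inducing $\varphi$ on $D$; this pencil will be obtained by combining an elementary Hodge-index estimate with the Reider-type Theorem~\ref{Intronegsemidefnthm}.

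\emph{Reduction.} Put $L_0:=\mathcal{O}_D(\varphi^{*}(0))\cong\mathcal{O}_D(\varphi^{*}(\infty))$ and let $s_0,s_\infty\in H^0(D,L_0)$ be the sections with zero schemes $\Gamma_0:=\varphi^{*}(0)$ and $\Gamma_\infty:=\varphi^{*}(\infty)$ --- two disjoint $0$-dimensional subschemes of $X$ supported on $D$, each of length $d$ --- so that $\langle s_0,s_\infty\rangle$ induces $\varphi$. A morphism $\psi\colon X\to\mathbb{P}^1$ with $\psi|_D=\varphi$ is exactly a base-point-free pencil $W\subseteq H^0(X,\mathcal{O}_X(M))$ with $M^2=0$, $M|_D\cong L_0$, $W|_D=\langle s_0,s_\infty\rangle$ (when such $W$ is given, $\varphi_W$ maps $X$ to a curve because $M^2=0$, and onto $\mathbb{P}^1$ because $\varphi$ is nonconstant). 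Note $D$ is big: the $D_i^2>0$ and distinct prime divisors on a surface meet nonnegatively, so $D^2>0$. Thus it is enough to build a line bundle $M$ on $X$ with $M|_D\cong L_0$ together with $\tilde s_0,\tilde s_\infty\in H^0(X,\mathcal{O}_X(M))$ restricting to $s_0,s_\infty$ on $D$.

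\emph{From the liftings to the pencil.} Suppose such $M,\tilde s_0,\tilde s_\infty$ are given, and let $W_0$ be the fixed part of the pencil $\langle\tilde s_0,\tilde s_\infty\rangle$. Since the zero divisors of $\tilde s_0$ and $\tilde s_\infty$ meet $D$ along $\Gamma_0$ and $\Gamma_\infty$, which are disjoint, $W_0$ is disjoint from $D$; dividing the two sections by a defining section of $W_0$ we get a fixed-part-free pencil in $H^0(X,\mathcal{O}_X(V))$, where $V:=M-W_0$ still satisfies $V|_D\cong L_0$ and hence $V\cdot D=d$. Two general members of this pencil share no component, so $V^2\ge 0$. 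On the other hand, $D^2>0$ and the Hodge index theorem applied to $D$ and $V$ give $V^2\le d^2/D^2<q_X$, the strict inequality because the hypothesis forces $D^2>q_{X,d}(d/q_{X,d}+1)^2\ge d^2/q_{X,d}\ge d^2/q_X$. As $V$ is effective, $V^2>0$ would force $V^2\ge q_X$, a contradiction; therefore $V^2=0$, the pencil is base-point-free, and --- its restriction to $D$ still inducing $\varphi$, because the defining section of $W_0$ is nowhere zero on $D$ --- it defines the desired $\psi$.

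\emph{Existence of the liftings.} This is the crux. For any line bundle $M$ with $M|_D\cong L_0$, the obstruction to lifting $s_0,s_\infty$ to $X$ lies in $H^1\big(X,\mathcal{O}_X(M)\otimes\mathcal{O}_X(-D)\big)$, which by Serre duality on the Cohen--Macaulay surface $X$ is controlled by $H^1(X,\mathcal{O}_X(K_X+D-M))$; when $D-M$ is big and $\Z$-positive this vanishes by Theorem~\ref{Introvan} and we are done by the previous paragraph. Otherwise, one applies Theorem~\ref{Intronegsemidefnthm} to a nef and big divisor closely related to $D$ (e.g.\ a positive part) and to a $0$-dimensional subscheme $\zeta$ built from $\Gamma_0,\Gamma_\infty$ and supported on $D$ (along which the relevant adjoint line bundle is Cartier), translating non-extendability of $\varphi$ into non-surjectivity of the associated restriction map. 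This yields a negative semi-definite effective divisor $B$ meeting $\zeta$, hence meeting $D$, with $0<(D-B)B\le\delta_{\zeta}/4$ and $D-2B$ big. Each component of $B$ has negative self-intersection, so no component of $B$ is a $D_i$; and since $\varphi$ is finite, every $D_i$ dominates $\mathbb{P}^1$, so the mobile curve $V$ above meets each $D_i$ while $V\cdot D=d$. Feeding the resulting bounds on $B\cdot D$, $B^2$ and the multiplicities of $B$ along $D$ into the Hodge index inequality on the span of $D$, $B$ and $V$, and optimizing the contribution of positive-self-intersection pieces (each of square $\ge q_X$) against the cap $d=V\cdot D$ --- which is precisely what makes the constant $q_{X,d}=\min\{q_X,d\}$ appear --- one obtains $D^2\le q_{X,d}(d/q_{X,d}+1)^2$, contradicting the hypothesis. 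Hence the liftings exist and $\varphi$ extends.

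\emph{Main obstacle.} The hard part is the step just described: arranging that Theorem~\ref{Intronegsemidefnthm} applies with a subscheme $\zeta$ along which $K_X+(\,\cdot\,)$ is Cartier (so that $\delta_{\zeta}$ is under control) and for which ``$\varphi$ does not extend'' really produces non-surjectivity; handling the existence of a line bundle $M$ on $X$ restricting to $L_0$, together with the non-Cartier bookkeeping on the normal surface $X$; and finally extracting the exact constant $q_{X,d}(d/q_{X,d}+1)^2$ from the numerical data of $B$, which involves ruling out that $B$ absorbs components of $D$ (this is where $D_i^2>0$ enters) and treating the regimes $q_X\le d$ and $q_X\ge d$ uniformly. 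By contrast, the passage from the liftings to the pencil uses only the much weaker inequality $q_X D^2>d^2$.
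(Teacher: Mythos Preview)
Your proposal has a genuine gap in the ``Existence of the liftings'' step, and the overall logical architecture is inverted relative to what actually works.

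First, the approach via lifting a line bundle $M$ from $D$ to $X$ is not viable as written. You never explain why any line bundle $M$ on $X$ with $M|_D\cong L_0$ should exist; the restriction map $\mathrm{Pic}(X)\to\mathrm{Pic}(D)$ is in general far from surjective, and nothing in the hypotheses controls its cokernel. You acknowledge this as a ``main obstacle'' but give no mechanism to overcome it. Without $M$ in hand, the cohomology group $H^1(X,\mathcal{O}_X(M-D))$ you want to kill is not even defined.

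Second, and more fundamentally, your ``otherwise'' branch is supposed to produce a contradiction with the bound on $D^2$, but this cannot work. The relevant non-surjectivity --- that $H^0(X,\mathcal{O}_X(K_X+D))\to H^0(\mathfrak{a}_\lambda,\mathcal{O}_X(K_X+D)|_{\mathfrak{a}_\lambda})$ fails to surject for a general fiber $\mathfrak{a}_\lambda$ of $\varphi$ --- holds \emph{unconditionally}, whether or not $\varphi$ extends (this is a short Riemann--Roch computation on $D$ using that $\mathfrak{a}_\lambda$ moves in a pencil). Hence Theorem~\ref{Intronegsemidefnthm} always produces a negative semi-definite curve $B_\lambda$ meeting $\mathfrak{a}_\lambda$, and no contradiction follows from its mere existence. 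Your sentence ``translating non-extendability of $\varphi$ into non-surjectivity'' is therefore pointing in the wrong direction.

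The paper's argument uses these curves $B_\lambda$ \emph{constructively} rather than for contradiction. One shows, by a careful computation on the blow-up along $\mathfrak{a}_\lambda$ involving the integral Zariski decomposition, that in fact $B_\lambda^2=0$ and $B_\lambda\cap D\subseteq\mathfrak{a}_\lambda$ scheme-theoretically (this step is not just ``$B_\lambda$ negative semi-definite'' --- getting $B_\lambda^2=0$ exactly requires work). Varying $\lambda$ over infinitely many rational points gives infinitely many pairwise disjoint integral curves in a fixed numerical class with square zero; these are then the fibers of a fibration $f\colon X\to Y$, and one checks that $\varphi$ factors through $f|_D$, yielding the extension $\psi$. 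Your ``From the liftings to the pencil'' paragraph is correct in spirit and its Hodge-index computation is close to what happens inside Theorem~\ref{Intronegsemidefnthm}, but the pencil on $X$ is obtained not by lifting $L_0$ directly but by first building the fibration $f$ from the Reider curves.
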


Historically, the extension theorem on surfaces as above was firstly proved by Reid in \cite{Reid} when $X$ is a $K3$ surface and $D$ is a smooth irreducible curve.
After that, Serrano \cite{Ser} proved this when $X$ is a smooth projective surface and $D$ is smooth and irreducible, and Paoletti \cite{Pao} improved Serrano's extension theorem to the case that $X$ is smooth and $D$ is a prime divisor.
For extension theorems on higher dimensional varieties, see \cite{Som}, \cite{BeIo} and references therein.

Theorem~\ref{introextnthm} has many applications to curves on normal surfaces.
For example, we can give the lower bound of the gonality of curves on a normal surface of Picard number $1$ (e.g., a weighted projective plane) which generalizes the classical result \cite{Nam} that a smooth complex plane curve of degree $m$ has gonality $m-1$.
For other applications, see Section 7.

\subsection{Structure of the paper}
The present paper is organized as follows.
In Section 2, we fix some notations and terminology used in this paper.
In Section 3, we first give another proof of Zariski decompositions in the usual sense, which is as simple as Bauer's one in \cite{Bau}.
Next, we give a proof of integral Zariski decompositions.
We study several properties of $\Z$-positive divisors in the rest of the section.
%The class of $\Z$-positive divisors contains the round-up of nef $\R$-divisors (Corollary~\ref{roundup}) and
The $\Z$-positivity is characterized by using the usual Zariski decomposition and connecting chains (Proposition~\ref{connchain}). 
One important property is that the $\Z$-positivity of divisors are preserved by the round-up of the Mumford pull-back by a proper birational morphism (Proposition~\ref{pullback}).
In Section 4, we study vanishing theorems for adjoint divisors $K_X+D$.
Following Sakai's argument in \cite{Sak}, we give a proof of the (semi-)local vanishing theorem (Theorem~\ref{van} (2)).
We prove the absolute version of the vanishing theorem (Theorem~\ref{van} (1)) by using the Kawamata-Viehweg vanishing theorem as a starting point.
In Section 5, we study higher order separations of adjoint (relative) linear systems $|K_X+D|$.
We first define the invariant $\delta_{\zeta}$ for a $0$-dimensional subscheme $\zeta$ and give a proof of the Reider-type theorem I (Theorem~\ref{sep}). 
Next, we prove the Reider-type theorem II (Theorem~\ref{negsemidefnthm}) by using Theorem~\ref{sep} and the Hodge index theorem.
Section 6 is devoted to the proof of the extension theorem (Theorem~\ref{extnthm}) as an application of Theorem~\ref{negsemidefnthm}.
In Section 7, we give some applications of the extension theorems.
In Appendix A, we collect some basic results for Mumford's intersection theory on a normal surface for the convenience of readers.
In Appendix B, we collect definitions and basic properties of some invariants of proper curves used in Section 6 and Section 7.

\begin{acknowledgement}
The author is grateful to Prof.\ Kazuhiro Konno for valuable comments on linear systems on curves.
He would like to thank Tatsuro Kawakami for nice discussions.
He also would like to thank Hiroto Akaike and Takumi Murayama for sending me some helpful papers.
He also appreciates Prof.\ Fei Ye discussing Remark~\ref{zartointzar}.
He was partially supported by JSPS Grant-in-Aid for Research Activity Start-up: 19K23407. 
\end{acknowledgement}

%%%%%%%%%%%%%%%%%%%%%%%%%%%%%%%%%%%%%%%%%%%%%%%%%
\section{Notations}
\label{sec:Notations}
%%%%%%%%%%%%%%%%%%%%%%%%%%%%%%%%%%%%%%%%%%%%%%%%%%%

\begin{itemize}

\item
In this paper, we mainly work in the category of algebraic schemes over a base field $k$.
All results in this paper hold true in the category of complex analytic spaces.

\item
A {\em variety} means an irreducible and reduced scheme which is separated and of finite type over $k$ (or an irreducible, reduced, paracompact and Hausdorff analytic space).

\item
A {\em surface} means a variety of dimension $2$.
But in this paper, a {\em curve} means a $1$-dimensional scheme which is separated and of finite type over $k$ (not necessarily assume irreducible or reduced).
Note that any normal surface $X$ and any curve $C$ on $X$ are both Cohen-Macaulay by the following exact sequence:

\begin{equation} \label{restex}
0\to \O_X(-C)\to \O_X\to \O_C\to 0.
\end{equation}
%A {\em curve} on a surface means a non-zero effective divisor on it (not necessarily irreducible and reduced).

\item
A {\em divisor} means a Weil divisor (not necessarily $\Q$-Cartier).
We sometimes call it a $\Z$-divisor.
A $\Q$-(resp.\  $\R$-)divisor is a $\Q$-(resp.\  $\R$-)linear combination of divisors.
For two $\R$-divisors $D$ and $E$, we write $D\ge E$ (resp.\ $D>E$) if $D-E$ is effective (resp.\ and non-zero).
%For an $\R$-divisor $D$, we denote by $D\ge 0$ (resp.\ $>0$) that $D$ is (resp.\ non-zero and) effective.
%$D\ge E$ means $D-E$ is effective for two $\R$-divisors $D$ and $E$.

\item
For an $\R$-divisor $D=\sum_{i}a_i C_i$ where the $C_i$ are distinct prime divisors,
we define the {\em round-down} $\llcorner D \lrcorner:=\sum_{i}\llcorner a_i \lrcorner C_i$, the {\em round-up} $\ulcorner D \urcorner:=\sum_{i}\ulcorner a_i \urcorner C_i$ and the {\em fractional part} $\{D\}:=D-\llcorner D \lrcorner$ of $D$,
where $\llcorner a \lrcorner$ and $\ulcorner a \urcorner$ are the greatest integer not exceeding $a$ and the least integer not less than $a$, respectively.

\item
For a curve $C$ on a normal surface $X$, we sometimes regard it as an effective divisor $C=\sum_{i}m_iC_i$ by attaching each prime component $C_i$ to the geometric multiplicity $m_i:=\mathrm{length}(\O_{C,\eta_i})$, where $\eta_i$ is the generic point of $C_i$.

\item
For a normal (resp.\ Cohen-Macaulay) variety $X$, let $K_X$ (resp.\ $\omega_X$) denote the canonical divisor (resp.\ dualizing sheaf) on $X$.
Note that for a curve $C$ on a normal surface $X$, we have $\omega_X=\O_X(K_X)$ and $\omega_C=\mathscr{E}xt^{1}(\O_C,\omega_X)$.%(by for example, applying Grothendieck duality).

\item
Let us denote $h^{i}(\mathcal{F}):=\mathrm{dim}_{k}H^{i}(X,\mathcal{F})$ as the dimension of $H^{i}(X,\mathcal{F})$ as $k$-vector spaces for a proper $k$-scheme $X$ and a coherent sheaf $\mathcal{F}$ on $X$,
and put $\chi(\mathcal{F}):=\sum_{i}(-1)^{i}h^{i}(\mathcal{F})$.

\item
For a Cohen-Macaulay curve $C$ over a field $k$, let us define $p_a(C):=h^{0}(\omega_C)$ and call it the {\em arithmetic genus} of $C$, which coincides with $h^{1}(\O_C)$ by Serre duality.

\item
For a morphism $f\colon X\to Y$ of schemes, the scheme-theoretic image of $f$ is denoted by $f(X)$.
Then $f$ is {\em scheme-theoretically surjective} if $f(X)=Y$ holds.
For a subscheme $Z$ of $Y$, we denote the fiber product $X\times_{Y}Z$ by $f^{-1}(Z)$.

\item
Throughout this paper, $X$ is a normal surface and $f\colon X\to Y$ is a proper surjective morphism to a variety $Y$ unless otherwise stated.
(In the analytic setting, we always assume that $Y$ is compact or $f$ can be extended to a proper surjective morphism $\overline{f}\colon \overline{X}\to \overline{Y}$ from a normal surface $\overline{X}$ to a variety $\overline{Y}$ such that $Y$ is a relatively compact open subset of $\overline{Y}$).

\item
Throughout this paper, we freely use Mumford's intersection theory and some numerical properties of Weil divisors on normal surfaces (see Appendix~A).

\item
An $\R$-divisor $D$ on $X$ is {\em $f$-exceptional} if each irreducible component of $\Supp(D)$ maps to a point by $f$ (note that this is not a standard definition).

\item
A non-zero effective $\R$-divisor $D=\sum_{i}a_iC_i$ on $X$ is {\em negative $($semi-$)$definite}
if the intersection matrix $(C_iC_j)_{ij}$ of prime components of $D$ is negative (semi-)definite.

\item
Let $D$ be an $\R$-divisor and $B$ an effective $f$-exceptional $\R$-divisor on $X$.
Then $D$ is {\em nef over $B$} if $DC\ge 0$ holds for each prime component $C$ of $B$.
\end{itemize}

%%%%%%%%%%%%%%%%%%%%%%%%%%%%%%%%%%%%%%%%%%%%%%%%%%%%%%%%%%%%%%%%%%%%%%%%%%%%
%%%%%%%%%%%%%%%%%%%%%%%%%%%%%%%%%%%%%%%%%%%%%%%%%%%%%%%%%%%%%%%%%%%%%%%%%%%%

\section{Zariski decomposition on a normal surface}
\label{sec:Zariski decomposition on a normal surface}

%%%%%%%%%%%%%%%%%%%%%%%%%%%%%%%%%%%%%%%%%%%%%%%%%%%%%%%%%%%%%%%%%%%%%%%%%%%%%
%%%%%%%%%%%%%%%%%%%%%%%%%%%%%%%%%%%%%%%%%%%%%%%%%%%%%%%%%%%%%%%%%%%%%%%%%%%%%

Let $f\colon X\to Y$ be a proper surjective morphism from a normal surface $X$ to a variety $Y$.
Recall the following famous result of Zariski \cite{Zar} (the pseudo-effective case is due to Fujita \cite{Fuj}).
\begin{theorem}[Zariski decomposition]\label{Zardecomp}
Let $D$ be an $f$-pseudo-effective $\R$-$($resp.\ $\Q$-$)$divisor on $X$.
Then there exists the unique decomposition $D=P+N$ such that the following hold.

\smallskip

\noindent
$(\mathrm{i})$ $P$ is an $f$-nef $\R$-$($resp.\ $\Q$-$)$divisor on $X$.

\smallskip

\noindent
$(\mathrm{ii})$ $N=0$ or $N>0$ is a negative definite $f$-exceptional $\R$-$($resp.\ $\Q$-$)$divisor on $X$

\smallskip

\noindent
$(\mathrm{iii})$ $PC=0$ holds for any curve $C\subset \Supp(N)$.
\end{theorem}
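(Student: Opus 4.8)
The plan is to reduce the statement to the absolute case over a point by a standard argument, then prove the absolute case directly. For the relative reduction, I would first observe that uniqueness is a formal consequence of the Hodge index theorem on $X$: if $D = P+N = P'+N'$ are two such decompositions, then $N-N'$ is supported on $f$-exceptional curves, $(P-P')$ is $f$-numerically trivial on the negative definite lattice spanned by $\Supp(N)\cup\Supp(N')$ by condition~(iii), and negative definiteness forces $N=N'$. So the real content is existence. For existence, I would work on the lattice $V$ spanned (over $\R$) by the finitely many $f$-exceptional prime divisors; this is where Mumford's intersection pairing is negative definite (Appendix~A).

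The core construction is the classical one, which I would phrase as an optimization/projection argument in the spirit of Bauer \cite{Bau}. Among all effective $f$-exceptional $\R$-divisors $N$ with $0 \le N$ and $D-N$ $f$-nef over every prime component of $N$, I want the ``largest'' such $N$; concretely, consider the set $\mathcal{S} = \{ N \ge 0 : N\ \text{$f$-exceptional},\ (D-N)C \le 0\ \text{for every prime component } C \text{ of } N\}$ — wait, more cleanly: I would take $N$ to be determined by the condition that $P := D - N$ is $f$-nef and $PC = 0$ for $C \subset \Supp(N)$. Existence of such an $N$ follows by a finite iterative procedure (Zariski's original algorithm): start with $N_0 = 0$; if $P_i := D - N_i$ is already $f$-nef we are done; otherwise there is an $f$-exceptional prime $C$ with $P_i C < 0$, and one enlarges $N_i$ by solving, on the negative-definite sublattice spanned by the current support together with $C$, the linear system that makes the intersection numbers with all those curves zero. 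The negative definiteness of the pairing guarantees that this linear system has a unique solution and — crucially — that the solution has \emph{nonnegative} coefficients and that the process terminates (the self-intersection $P_i^2$ strictly increases and is bounded above, or one argues via a finiteness of the poset of supports).

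The main obstacle — and the step I would spend the most care on — is verifying that the enlargement stays \emph{effective}: that when we pass from $N_i$ to $N_{i+1}$ by zeroing out intersection numbers against a larger support, the new divisor remains $\ge 0$, indeed $\ge N_i$. This is the point where negative definiteness is used in an essential, slightly subtle way: one needs the fact that if $M$ is a negative definite symmetric matrix with nonpositive off-diagonal entries then $-M^{-1}$ has nonnegative entries (an M-matrix / Perron–Frobenius type statement), together with the inductive hypothesis $P_i C < 0$. I would isolate this as a linear-algebra lemma. Once effectivity and termination are in hand, the output $N = N_\infty$ is $f$-exceptional, effective, negative definite (its support sits inside the negative definite lattice $V$ and its own intersection matrix is a principal submatrix, hence negative definite), $P = D-N$ is $f$-nef by construction, and $PC = 0$ for $C \subset \Supp(N)$ by construction; finally $D$ being $f$-pseudo-effective forces $P$ to be the $f$-pseudo-effective-and-nef part, so nothing went wrong and $P$ (being $f$-nef) is in particular well-defined. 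The $\Q$-coefficient case follows because the linear systems solved at each stage have rational coefficients when $D$ does. For the genuinely relative statement (not just over a point), one notes that all curves involved are $f$-exceptional, so the entire argument takes place fiberwise over the finitely many points of $Y$ below which $f$ has positive-dimensional fibers, and Mumford's intersection theory provides the required negative-definite pairing there; I would remark that this is exactly Theorem~\ref{Zardecomp} as stated, so I may in fact simply cite \cite{Zar}, \cite{Fuj} and the surface-singularities literature and give only the short uniqueness argument plus a sketch of existence, since this theorem is recalled, not claimed as new.
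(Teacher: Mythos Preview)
Your proposal is correct and follows the classical Zariski--Bauer iterative construction: build $N$ up from $0$ by successively adding curves $C$ with $PC<0$ and solving linear systems on the growing negative-definite support, with the M-matrix lemma guaranteeing effectivity. The paper takes a genuinely different route. It defines the set $\Enu_{\R}(D)$ of negative definite $f$-exceptional effective $\R$-divisors $B>0$ with $B-D$ nef over $B$, shows via Zorn's lemma that $\Enu_{\R}(D)$ has a maximal element $N_{\R}$ (Lemma~\ref{existmax}), and then proves directly that $P_{\R}:=D-N_{\R}$ is $f$-nef (Lemma~\ref{Pisnef}); uniqueness comes from an elementary comparison lemma (Lemma~\ref{easy}) that plays roughly the role of your M-matrix step. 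So rather than growing $N$ from below, the paper characterizes $N$ as the \emph{maximum} of a certain poset and takes it in one stroke.

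What each approach buys: yours is constructive and makes termination and rationality transparent. The paper's approach is chosen because it sets up a template that is reused verbatim for the integral Zariski decomposition (Theorem~\ref{IntZardecomp}): one simply replaces $\Enu_{\R}(D)$ by $\Enu_{\Z}(D)$ and reruns the same maximal-element argument. Your iterative algorithm does not adapt as cleanly to the $\Z$-case, since one cannot solve the linear system exactly over $\Z$, so the parallel between the two decompositions---which is the structural point of Section~3---would be lost.
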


%In this section, we give another simple proof of Theorem~\ref{Zardecomp} together with the $\Z$-version of Theorem~\ref{Zardecomp}.
First, we define the notion of the positivity of divisors on a surface $X$.

\begin{definition}
Let $R:=\Z$, $\Q$ or $\R$.
For an $\R$-divisor $D$ on $X$, let $\Enu_{R}(D)$ be the set of negative definite $f$-exceptional $R$-divisors $B>0$ on $X$ with $B-D$ nef over $B$.
Then $D$ is called {\em $f$-$R$-positive} if $\Enu_{R}(D)=\emptyset$.
\end{definition}

\begin{lemma}\label{nef=pos}
Let $D$ be an $f$-pseudo-effective $\R$-divisor on $X$.
Then the following are equivalent.

\smallskip

\noindent
$(1)$ $D$ is $f$-nef.
\smallskip

\noindent
$(2)$ $D$ is $f$-$\R$-positive.

\smallskip

\noindent
$(3)$ $D$ is $f$-$\Q$-positive.
\end{lemma}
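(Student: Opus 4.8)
The plan is to establish the cycle of implications $(1)\Rightarrow(2)\Rightarrow(3)\Rightarrow(1)$. The implication $(1)\Rightarrow(2)$ is essentially immediate from the definitions: if $D$ is $f$-nef, then for any negative definite $f$-exceptional $\R$-divisor $B>0$ and any prime component $C$ of $B$ we have $(B-D)C = BC - DC$; since $DC\ge 0$ while $BC\le 0$ does not immediately give the conclusion, I would instead argue that if $B\in\Enu_{\R}(D)$ then $B-D$ is nef over $B$, hence $(B-D)B\ge 0$ (summing over components with nonnegative coefficients), so $B^2\ge DB\ge 0$ because $D$ is $f$-nef and $B$ is effective $f$-exceptional; but $B^2<0$ by negative definiteness, a contradiction. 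Hence $\Enu_{\R}(D)=\emptyset$. The implication $(2)\Rightarrow(3)$ is trivial since $\Enu_{\Q}(D)\subseteq\Enu_{\R}(D)$, so $\Enu_{\R}(D)=\emptyset$ forces $\Enu_{\Q}(D)=\emptyset$.

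The substantive direction is $(3)\Rightarrow(1)$, which I would prove by contraposition: assuming $D$ is $f$-pseudo-effective but not $f$-nef, I want to produce an element of $\Enu_{\Q}(D)$. The natural candidate is the negative part $N$ of the Zariski decomposition $D=P+N$ from Theorem \ref{Zardecomp} (applied in the relative setting), possibly after passing to a suitable multiple or a rational perturbation. Since $D$ is not $f$-nef, there is a curve $\Gamma$ with $D\Gamma<0$; such a $\Gamma$ must lie in $\Supp(N)$, so $N>0$. By Theorem \ref{Zardecomp}, $N$ is negative definite and $f$-exceptional, and $PC=0$ for every $C\subseteq\Supp(N)$; therefore for each such $C$ we have $(N-D)C = (N-D)C = -PC = 0\ge 0$, wait — more precisely $N-D = -P$, so $(N-D)C=-PC=0$ for every prime component $C$ of $N$, which shows $N-D$ is nef over $N$. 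Thus $N\in\Enu_{\R}(D)$. To land in $\Enu_{\Q}(D)$ when $D$ is merely an $\R$-divisor, I note that $N$ itself is then an $\R$-divisor; but if we only need $\Enu_{\Q}(D)=\emptyset\Rightarrow D$ nef and $D$ is an arbitrary $\R$-divisor, the cleaner route is: the conditions defining membership in $\Enu$ are that $B-D$ is nef over $B$, i.e.\ $(B-D)C\ge 0$ for finitely many curves $C$; since $N-D=-P$ and $P$ is $f$-nef, in fact $(B-D)C\ge 0$ holds for the $\R$-divisor $B=N$, and one can replace $N$ by a nearby effective $\Q$-divisor $B$ supported on $\Supp(N)$ with $B-D$ still nef over $B$ and $B$ still negative definite, using that negative definiteness and the (finitely many) strict/non-strict inequalities $(B-D)C\ge 0$ are preserved under small perturbations once we check the relevant inequalities are not all forced to be equalities in a degenerate way. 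I expect the bookkeeping here — ensuring the perturbed $\Q$-divisor stays effective, stays negative definite, and keeps $B-D$ nef over $B$ — to be the only delicate point; it should follow from an open-ness argument, or can be sidestepped entirely if the lemma is only invoked for $\Q$-divisors or $\Z$-divisors $D$, in which case $N$ is already a $\Q$-divisor and $N\in\Enu_{\Q}(D)$ directly, contradicting $(3)$.

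The main obstacle, then, is purely the rational-approximation step in $(3)\Rightarrow(1)$ for genuine $\R$-divisors; everything else is a direct unwinding of the Zariski decomposition. An alternative that avoids approximation altogether is to prove $(2)\Rightarrow(1)$ directly by the same negative-part argument (producing $N\in\Enu_{\R}(D)$), and separately prove $(3)\Rightarrow(2)$; but $(3)\Rightarrow(2)$ again needs the same approximation, so there is no free lunch. I would therefore present the cycle $(1)\Rightarrow(2)\Rightarrow(3)\Rightarrow(1)$ as above, with the perturbation lemma isolated as the single technical point.
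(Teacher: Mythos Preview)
Your arguments for $(1)\Rightarrow(2)$ and $(2)\Rightarrow(3)$ are correct and essentially match the paper's. The issue is with $(3)\Rightarrow(1)$.

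First, there is a circularity problem: you invoke Theorem~\ref{Zardecomp}, but in this paper the Zariski decomposition is \emph{proved} in Section~3.1, and that proof (specifically Lemma~\ref{Pisnef}) uses Lemma~\ref{nef=pos}. So within the paper's logical flow your argument is circular. One could of course appeal to Zariski--Fujita externally, but that defeats the paper's purpose of giving a self-contained treatment.

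Second, even granting the decomposition, the rational-approximation step you flag is genuinely unresolved in your sketch. You have $N\in\Enu_{\R}(D)$ with $(N-D)C=0$ for every component $C$, so all the inequalities are equalities; an arbitrary small perturbation of the coefficients can push them the wrong way. It \emph{can} be fixed (choose rational targets $e_j>DC_j$ close to $DC_j$ and solve $B$ from $BC_j=e_j$ using invertibility of the intersection matrix over $\Q$), but you have not done this, and it is not a triviality.

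The paper's route for $(3)\Rightarrow(1)$ avoids both problems and is much simpler: rather than building a complicated $B$, test $\Q$-positivity against the simplest possible divisors $B=\varepsilon C$ for a \emph{single} irreducible $f$-exceptional curve $C$ with $C^2<0$ and rational $\varepsilon>0$. Since $\varepsilon C\notin\Enu_{\Q}(D)$, one gets $(\varepsilon C - D)C<0$, i.e.\ $DC>\varepsilon C^2$, and letting $\varepsilon\to 0^{+}$ yields $DC\ge 0$. (For curves with $C^2\ge 0$, nefness of $C$ and $f$-pseudo-effectivity of $D$ give $DC\ge 0$ directly.) This is a two-line argument with no approximation and no dependence on the Zariski decomposition.
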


\begin{proof}
If $\Enu_{\R}(D)$ has an element $B$, then $DB=-(B-D)B+B^2<0$ holds since $B-D$ is nef over $B$ and $B>0$ is negative definite.
Hence (1) implies (2).
It is clear that $f$-$\R$-positive implies $f$-$\Q$-positive.
Suppose that $D$ is $f$-$\Q$-positive.
Let $C$ be an arbitrary $f$-exceptional irreducible curve on $X$.
If $C^{2}\ge 0$, then $DC\ge 0$ since $C$ is nef and $D$ is $f$-pseudo-effective.
Then we may assume that $C^2<0$, that is, $C$ is negative definite.
Let $\varepsilon>0$ be a positive rational number and put $B:=\varepsilon C$.
Since $D$ is $f$-$\Q$-positive, we have $(B-D)C<0$.
Thus $DC>\varepsilon C^2$ holds.
Since $\varepsilon >0$ is arbitrary, we have $DC\ge 0$.
Hence $D$ is $f$-nef.
\end{proof}

\begin{remark}
For not necessarily $f$-pseudo-effective $\R$-divisors on $X$, we have 
$$
\text{$f$-nef}\  \Longrightarrow \  \text{$f$-$\R$-positive} \  \Longrightarrow \  \text{$f$-$\Q$-positive}  \ \Longrightarrow  \ \text{$f$-$\Z$-positive}.
$$
\end{remark}

The main theorem in this section is the following, which is a $\Z$-version of Theorem~\ref{Zardecomp}.
\begin{theorem}[Integral Zariski decomposition]\label{IntZardecomp}
Let $D$ be an $f$-pseudo-effective $\R$-divisor on $X$.
Then there exists the unique decomposition $D=P_{\Z}+N_{\Z}$ such that the following hold.

\smallskip

\noindent
$(\mathrm{i})$ $P_{\Z}$ is an $f$-$\Z$-positive $\R$-divisor on $X$.

\smallskip

\noindent
$(\mathrm{ii})$ $N_{\Z}=0$ or $N_{\Z}>0$ is a negative definite $f$-exceptional $\Z$-divisor on $X$

\smallskip

\noindent
$(\mathrm{iii})$ $-P_{\Z}$ is nef over $N_{\Z}$.
\end{theorem}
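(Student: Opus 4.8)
\emph{Proof proposal.} The plan is to take $N_\Z$ to be the largest element of $\Enu_\Z(D)$ (and $N_\Z=0$ when $\Enu_\Z(D)=\emptyset$), and to deduce everything from one comparison device applied repeatedly. The device, which I would prove first, is: \emph{if $A\ge 0$ is an $f$-exceptional $R$-divisor ($R=\Z$ or $\R$), negative definite when $A>0$, with $A-D$ nef over $A$, and if $D-A$ is $f$-$R$-positive, then $B\le A$ for every $B\in\Enu_R(D)$.} This is a one-line computation: put $\Theta:=\max(A,B)$ and $\Delta:=\Theta-A$; if $\Delta>0$ then $\Supp\Delta\subseteq\Supp B$ is negative definite, and for $C\subseteq\Supp\Delta$ one has $(\Theta-D)\cdot C=(\Theta-B)\cdot C+(B-D)\cdot C\ge 0$ (the first term because $C\not\subseteq\Supp(\Theta-B)$, the second because $B\in\Enu_R(D)$), so $\Delta\in\Enu_R(D-A)=\emptyset$, a contradiction.

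\emph{Existence, properties (ii) and (iii), and uniqueness.} Let $D=P+N$ be the relative Zariski decomposition (Theorem~\ref{Zardecomp}). Then $N\in\Enu_\R(D)\cup\{0\}$ by (ii) and (iii) there, while $D-N=P$ is $f$-nef, hence $f$-$\R$-positive by the remark following Lemma~\ref{nef=pos}; so the device gives $B\le N$, and thus $B\le\llcorner N\lrcorner$, for all $B\in\Enu_\Z(D)$. Hence $\Enu_\Z(D)$ is a finite set of $\Z$-divisors supported on the negative definite configuration $\Supp N$, and it is closed under componentwise maximum: for $B_1,B_2\in\Enu_\Z(D)$ and $B:=\max(B_1,B_2)$, each $C\subseteq\Supp B$ has $\mult_CB=\mult_CB_i$ for some $i$, so $(B-D)\cdot C=(B-B_i)\cdot C+(B_i-D)\cdot C\ge 0$, and $\Supp B\subseteq\Supp N$ forces $B$ negative definite. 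So $\Enu_\Z(D)\cup\{0\}$ has a largest element $N_\Z$; set $P_\Z:=D-N_\Z$. Properties (ii) and (iii) hold because $N_\Z\in\Enu_\Z(D)\cup\{0\}$ (if $N_\Z=0$, (iii) is vacuous). For uniqueness, a second decomposition $D=P_\Z'+N_\Z'$ has $N_\Z'\in\Enu_\Z(D)\cup\{0\}$ by its (ii) and (iii), hence $N_\Z'\le N_\Z$ by maximality, while the device (with $A=N_\Z'$, using that $P_\Z'$ is $f$-$\Z$-positive) gives $N_\Z\le N_\Z'$.

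\emph{The $f$-$\Z$-positivity of $P_\Z$ (property (i)) is the crux.} I would argue by contradiction: if $B>0$ lay in $\Enu_\Z(D-N_\Z)=\Enu_\Z(P_\Z)$ (note $\Enu_\Z(P_\Z)=\emptyset$ by construction when $N_\Z=0$), then a short check — splitting $\Supp(B+N_\Z)=\Supp B\cup\Supp N_\Z$ and using $B\in\Enu_\Z(D-N_\Z)$ on the first set and $N_\Z\in\Enu_\Z(D)$ on the rest — shows $B+N_\Z-D$ is nef over $B+N_\Z$; hence if $B+N_\Z$ were negative definite it would lie in $\Enu_\Z(D)$ and strictly exceed $N_\Z$, contradicting maximality. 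Everything therefore reduces to the following \textbf{Key Claim}: \emph{if $A\ge 0$ is an $f$-exceptional $\Z$-divisor, negative definite when $A>0$, with $A-D$ nef over $A$, and $B>0$ is a negative definite $f$-exceptional $\Z$-divisor with $A+B-D$ nef over $B$, then $A+B$ is negative definite.} I expect this to be the main obstacle.

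\emph{Proving the Key Claim.} Here the $f$-pseudo-effectivity of $D$ is essential. Assume $A>0$ (otherwise $\Supp(A+B)=\Supp B$ is negative definite and there is nothing to prove) and, for contradiction, that $A+B$ is not negative definite. Since $\Supp(A+B)$ is $f$-exceptional its intersection form is negative semi-definite but not definite, so (by the structure of $f$-exceptional configurations, Appendix~A) there is a non-zero effective $f$-exceptional $\R$-divisor $Z$ with $\Supp Z\subseteq\Supp(A+B)$ connected and lying in a single fibre, orthogonal to every component of $\Supp(A+B)$; then $Z$ is nef and $Z^2=0$. Since $D$ is $f$-pseudo-effective and $Z$ is nef, $D\cdot Z\ge 0$; but $(A+B-D)\cdot C\ge 0$ on $\Supp(A+B)$ gives $D\cdot Z\le(A+B)\cdot Z=0$, so $D\cdot Z=0$ and $(A+B-D)\cdot C=0$ for every $C\subseteq\Supp Z$. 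Now $\Supp Z$ lies in neither $\Supp A$ nor $\Supp B$ (a negative definite configuration carries no non-zero effective divisor of self-intersection $0$), so a path in the connected configuration $\Supp Z$ from a component of $\Supp A\setminus\Supp B$ to one of $\Supp B\setminus\Supp A$ contains adjacent primes $v\subseteq\Supp A\setminus\Supp B$ and $v'\subseteq\Supp B$. At $v$: $0=(A+B-D)\cdot v=(A-D)\cdot v+B\cdot v$ with both summands $\ge 0$ (the first because $A-D$ is nef over $A$, the second because $v\not\subseteq\Supp B$), so $B\cdot v=0$; but $B\cdot v\ge(v'\cdot v)\,\mult_{v'}(B)>0$, a contradiction. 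In short, the obstacle is precisely ruling out that the union of the supports of two ``negative-part-like'' divisors degenerates to a fibre-type (merely negative semi-definite) configuration, and the mechanism is that such a degeneration would make $D$ negative on a nef class, contradicting $f$-pseudo-effectivity.
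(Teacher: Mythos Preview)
Your overall architecture matches the paper's: take $N_\Z$ to be the maximum of $\Enu_\Z(D)$ (bounding it by $\llcorner N\lrcorner$ via the usual Zariski decomposition), verify (ii) and (iii), and prove (i) by showing that any $B\in\Enu_\Z(P_\Z)$ would yield $B+N_\Z\in\Enu_\Z(D)$, contradicting maximality. Your ``device'' is essentially the paper's Lemma~\ref{easy}, and your treatment of existence, (ii), (iii) and uniqueness is fine.

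The gap is in your proof of the Key Claim. You assert that ``since $\Supp(A+B)$ is $f$-exceptional its intersection form is negative semi-definite''. This is true when $\dim Y\ge 1$ (Zariski's lemma for fibrations, Mumford's theorem for contractions), and your argument from there is correct in those cases. But when $\dim Y=0$ every divisor is $f$-exceptional in the paper's convention, and there is no reason for the union of two negative definite supports to be even negative semi-definite; your appeal to Appendix~A does not supply this. So the absolute case --- precisely the one that underlies Theorem~\ref{Introvan} and the applications --- is not covered.

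The fix is already in your hands: apply your own device (with $R=\R$) to $P_\Z$ rather than to $D$. The Zariski decomposition of $P_\Z$ is $P_\Z=P+(N-N_\Z)$; here $A:=N-N_\Z$ is effective (since $N_\Z\le N$), $A-P_\Z=-P$ is nef over $A$ (indeed $P\cdot C=0$ for $C\subset\Supp N$), and $P_\Z-A=P$ is $f$-nef, hence $f$-$\R$-positive. Your device then gives $B\le N-N_\Z$ for every $B\in\Enu_\R(P_\Z)\supseteq\Enu_\Z(P_\Z)$, so $B+N_\Z\le N$ is automatically negative definite. This is exactly how the paper argues (Lemma~\ref{Pispos} via Lemma~\ref{easy}), and it bypasses the Key Claim entirely.
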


\subsection{Proof of Zariski decompositions}

We start with an easy lemma.
\begin{lemma} \label{easy}
Let $D$ be an $\R$-divisor on $X$ and $D=F+E=A+B$ two decompositions of $D$ as $\R$-divisors which satisfy the following conditions.

\smallskip

\noindent
$(\mathrm{i})$ $E$ is effective.

\smallskip

\noindent
$(\mathrm{ii})$ $B$ is  non-zero effective, negative definite and $f$-exceptional.

\smallskip

\noindent
$(\mathrm{iii})$ $F$ and $-A$ are nef over $B$. 

\smallskip

\noindent
Then $B\le E$ holds.
\end{lemma}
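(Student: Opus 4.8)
The plan is to prove the statement by contradiction, mimicking the classical ``negativity lemma'' that underlies proofs of Zariski decomposition. Write $E=\sum_i e_iC_i$ and $B=\sum_i b_iC_i$ over the prime divisors $C_i$ of $X$ (allowing zero coefficients), and suppose that $B\le E$ fails, so that
$$
B_0:=\sum_{i\,:\,b_i>e_i}(b_i-e_i)C_i
$$
is a non-zero effective divisor. The first step is to record two elementary consequences of the hypotheses. Since $E$ is effective by $(\mathrm{i})$, each coefficient of $B_0$ is at most the corresponding coefficient of $B$, hence $0<B_0\le B$; in particular $\Supp(B_0)\subseteq \Supp(B)$. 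Therefore the intersection matrix of $B_0$ is a principal submatrix of that of $B$, so $B_0$ is again negative definite by $(\mathrm{ii})$, and moreover $F$ and $-A$ are nef over $B_0$ by $(\mathrm{iii})$, i.e.\ $FB_0\ge 0$ and $AB_0\le 0$.

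The second step is to use the single rearranged identity $E-B=A-F$ (obtained from $F+E=A+B$) in two different ways. By construction $B_0$ is exactly the negative part of the $\R$-divisor $E-B$: writing $E-B=G-B_0$ with $G\ge 0$ sharing no prime component with $B_0$, and using that the Mumford intersection number of two effective divisors with no common component on a normal surface is non-negative (Appendix~A), we obtain
$$
B_0(E-B)=B_0G-B_0^2\ge -B_0^2>0,
$$
the last inequality because $B_0$ is non-zero and negative definite. On the other hand, substituting $E-B=A-F$ and invoking the nefness facts from the first step,
$$
B_0(E-B)=B_0A-B_0F\le 0.
$$
These two inequalities are incompatible, so $B_0=0$, which means $b_i\le e_i$ for all $i$, i.e.\ $B\le E$.

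There is no serious obstacle here; the only point that requires care is the bookkeeping that simultaneously views $B_0$ as a sub-divisor of $B$ (so that it inherits negative definiteness and the ``nef over $B$'' conditions) and as the negative part of $E-B=A-F$ (so that the intersection estimate applies), together with the elementary positivity of Mumford intersection numbers of effective divisors without common component on a normal surface, collected in Appendix~A.
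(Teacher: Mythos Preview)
Your proof is correct and is essentially the same argument as the paper's. Your divisor $B_0=\sum_{b_i>e_i}(b_i-e_i)C_i$ coincides with the paper's $B-G$ where $G:=\sum_i\min\{b_i,e_i\}C_i$, and your $G$ (the positive part of $E-B$) is the paper's $E-G$; the contradictory inequalities you derive are a rearrangement of the paper's computation $F(B-G)=A(B-G)+(B-G)^2-(E-G)(B-G)<0$.
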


\begin{proof}
Let $B=\sum_{i}b_{i}C_{i}$ and $E=\sum_{i}e_{i}C_{i}$ be prime decompositions and put $G:=\sum_{i}\mathrm{min}\{b_{i},e_{i}\}C_{i}$.
Clearly, we have $G\le B$ and $G\le E$.
In order to prove $B\le E$, it suffices to show that $G=B$.
Suppose that $G<B$.
Then $(B-G)^{2}<0$ holds since $B-G$ is non-zero and negative definite.
Moreover, we have $(E-G)(B-G)\ge 0$ and $A(B-G)\le 0$ since $E-G$ and $B-G$ has no common components and $-A$ is nef over $B$.
Thus we have
$$
F(B-G)=(A+B-E)(B-G)=A(B-G)+(B-G)^{2}-(E-G)(B-G)<0,
$$
which contradicts the condition that $F$ is nef over $B$.
Hence we obtain $B\le E$.
\end{proof}

\begin{lemma} \label{existmax}
Let $D$ be an $\R$-divisor on $X$.
If $\Enu_{\R}(D)\neq \emptyset$, then it has a maximal element.
\end{lemma}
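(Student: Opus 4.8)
\emph{Proof proposal.}
The plan is to apply Zorn's lemma to the poset $(\Enu_{\R}(D),\le)$, where $\le$ is the usual order on effective $\R$-divisors. Since $\Enu_{\R}(D)\neq\emptyset$ by hypothesis, it is enough to show that every chain $\{B_{\alpha}\}_{\alpha\in I}$ in $\Enu_{\R}(D)$ has an upper bound lying in $\Enu_{\R}(D)$, and the natural candidate is the coefficientwise supremum $B_{\infty}$ of the $B_{\alpha}$; essentially all the work is to check that $B_{\infty}$ is a well-defined $\R$-divisor and that it still belongs to $\Enu_{\R}(D)$.

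The first point is to control the support. Since $B_{\alpha}\le B_{\beta}$ forces $\Supp B_{\alpha}\subseteq\Supp B_{\beta}$, the supports form a chain under inclusion, so each finite subset of $Z:=\bigcup_{\alpha}\Supp B_{\alpha}$ lies in some $\Supp B_{\alpha}$ and hence is the prime support of a negative definite divisor; in particular its members are linearly independent modulo numerical equivalence, for a nontrivial relation would give a nonzero, numerically trivial divisor class of negative self-intersection. As numerical equivalence classes of divisors on $X$ span a finite-dimensional space (N\'eron--Severi finiteness), $Z$ must be finite, and being a union of a chain all of whose finite subsets are negative definite, $Z$ is itself a negative definite $f$-exceptional configuration. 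Write $M:=(C\cdot C')_{C,C'\in Z}$ for its intersection matrix, $\vec d:=(D\cdot C)_{C\in Z}$, and identify each $B_{\alpha}$ with its coefficient vector $\vec b_{\alpha}\in\R^{Z}_{\ge 0}$.

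Next I would bound the coefficients uniformly. For each $\alpha$ and each component $C$ of $B_{\alpha}$ we have $B_{\alpha}\cdot C\ge D\cdot C$, while on coordinates with $b_{\alpha,C}=0$ the corresponding term drops out; pairing with $\vec b_{\alpha}$ gives
$$
\langle\vec b_{\alpha},-M\vec b_{\alpha}\rangle=\sum_{C\in Z}b_{\alpha,C}\,(-\,B_{\alpha}\!\cdot C)\le\sum_{C\in Z}b_{\alpha,C}\,(-\,D\!\cdot C)=\langle\vec b_{\alpha},-\vec d\,\rangle\le\|\vec b_{\alpha}\|\,\|\vec d\,\|.
$$
Since $-M$ is positive definite, $\langle\vec b_{\alpha},-M\vec b_{\alpha}\rangle\ge\lambda\|\vec b_{\alpha}\|^{2}$ with $\lambda>0$ its least eigenvalue, so $\|\vec b_{\alpha}\|\le\|\vec d\,\|/\lambda$ for every $\alpha$. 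Therefore $\vec b_{\infty}:=\sup_{\alpha}\vec b_{\alpha}$ (coordinatewise) is finite, and $B_{\infty}:=\sum_{C\in Z}b_{\infty,C}\,C$ is a well-defined effective $f$-exceptional $\R$-divisor; it is negative definite since $\Supp B_{\infty}\subseteq Z$, and $B_{\infty}\ge B_{\alpha_{0}}>0$ for any fixed $\alpha_{0}$, so $B_{\infty}$ is a genuine upper bound of the chain.

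Finally I would check $B_{\infty}-D$ is nef over $B_{\infty}$, which places $B_{\infty}$ in $\Enu_{\R}(D)$ and finishes the argument. Fix a component $C$ of $B_{\infty}$. Because $Z$ is finite and $I$ is totally ordered, one can choose an increasing sequence $\gamma_{1}\le\gamma_{2}\le\cdots$ in $I$ with $\vec b_{\gamma_{n}}\to\vec b_{\infty}$; for $n\gg 0$ one has $b_{\gamma_{n},C}>0$, hence $(B_{\gamma_{n}}-D)\cdot C\ge0$, and since $B_{\gamma_{n}}\cdot C$ depends linearly — hence continuously — on the bounded vectors $\vec b_{\gamma_{n}}$, letting $n\to\infty$ yields $(B_{\infty}-D)\cdot C\ge0$. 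The delicate step is the support analysis of the second paragraph: one must know that a negative definite configuration is numerically independent and that only finitely many prime divisors can arise in this way, since otherwise the supremum along a chain might have infinite support or unbounded coefficients and Zorn's lemma would not apply. (If $D$ happens to be $f$-pseudo-effective one can shortcut the boundedness: applying Lemma~\ref{easy} to $D=P+N=(D-B)+B$ for any $B\in\Enu_{\R}(D)$ shows $B\le N$, the Zariski negative part of $D$, which bounds $\Enu_{\R}(D)$ uniformly; the argument above is what keeps $D$ arbitrary.)
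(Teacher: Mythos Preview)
Your argument is correct and follows the same strategy as the paper's proof: apply Zorn's lemma with the coefficientwise supremum as the upper bound of a chain, first bounding the support via linear independence of negative definite configurations in the finite-dimensional space $N^{1}(X/Y)$, then bounding the coefficients, then checking that the limit is still in $\Enu_{\R}(D)$. The one substantive variation is in the boundedness step: the paper argues by contradiction using compactness of $\{B : B^{2}=-1\}$ in the span of $Z$, whereas your eigenvalue inequality $\lambda\|\vec b_{\alpha}\|^{2}\le -B_{\alpha}^{2}\le \|\vec b_{\alpha}\|\,\|\vec d\,\|$ gives a direct uniform bound, which is a bit cleaner.

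One caution: your closing parenthetical invoking the Zariski decomposition $D=P+N$ and Lemma~\ref{easy} in the $f$-pseudo-effective case is circular, since in this paper Lemma~\ref{existmax} is used to \emph{construct} the Zariski decomposition (via Lemma~\ref{Pisnef}); you should drop that remark or rephrase it as a forward reference.
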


\begin{proof}
By Zorn's lemma, it is enough to show that $\Enu_{\R}(D)$ is inductive, that is, any non-empty totally ordered subset $\mathcal{T}\subset \Enu_{\R}(D)$ has an upper bound, 
where we consider the inclusion order on $\Enu_{\R}(D)$.
For such a subset $\mathcal{T}\subset \Enu_{\R}(D)$, we put $B_{\mathcal{T}}:=\sum_{i}b_{i}C_{i}$, where the sum is taken over all $f$-exceptional integral curves $C_i$ and $b_{i}:=\mathrm{sup}\{\mult_{C_{i}}(B)\ |\ B\in \mathcal{T}\}$.
We will show that this is well-defined and gives an upper bound of $\mathcal{T}$.
We first assume that there is an infinite sequence $\{C_{i_{n}}\}_{n}$ with $b_{i_{n}}>0$ for each $n$.
Then for each $n$, there exists an element $B_n$ of $\mathcal{T}$ with $\mult_{C_{i_{n}}}(B_n)>0$.
Since $\mathcal{T}$ is totally ordered, we have $B_1\le B_2$ or $B_2\le B_1$.
Thus we may assume that $\mult_{C_{i_1}}(B_2)>0$.
Similarly, we can take an element $B_n$ of $\mathcal{T}$ for each $n$ such that the number of irreducible components of $B_n$ is not less than $n$.
Taking $n>\dim \N^{1}(X/Y)$, it is a contradiction because $B_n$ is negative definite and then all irreducible components of $B_n$ are linearly independent in $\N^{1}(X/Y)$.
Hence we have $b_i=0$ except for a finite number of $C_i$'s.
Next, we assume that $b_i=\infty$ holds for some $i$.
Then we can take an infinite sequence $\{B_n\}_{n}$ in $\mathcal{T}$ such that $\lim_{n\to \infty}\mult_{C_i}(B_n)=\infty$.
In particular, we have $\lim_{n\to \infty}B_{n}^{2}=-\infty$ since $B_n$ is negative definite and 
any irreducible component of $B_n$ is contained in $\Supp(B_{\mathcal{T}})$.
On the other hand, it follows that $(B_n-D)B_n\ge 0$ since $B_n-D$ is nef over $B_n$.
Thus we have $DB_n\le B_{n}^{2}$, or equivalently, $D(B_n/\sqrt{-B_{n}^{2}})\le -\sqrt{-B_{n}^{2}}$.
Since the multiplication map $D- \colon \N_{1}(X/Y)\to \R$ has the minimum on the compact subset $K:=\{B\in \bigoplus_{C_i\subset \Supp(B_{\mathcal{T}})}\R C_i\ |\ B^{2}=-1\}$, %$L:=\bigoplus_{C_i\subset \Supp(B_{\mathcal{T}})}\R C_i$,
we have a contradiction by taking $n\to \infty$.
Hence we have $b_i<\infty$ for each $i$ and then $B_{\mathcal{T}}$ is well-defined as an $\R$-divisor.
Moreover, $B_{\mathcal{T}}$ is negative definite since we can take an element $B\in \mathcal{T}$ with $\Supp(B)=\Supp(B_{\mathcal{T}})$.
For any prime component $C_i$ of $B_{\mathcal{T}}$ and $\varepsilon>0$, we can take $B_{\varepsilon}\in \mathcal{T}$ satisfying $\mult_{C_i}(B_{\varepsilon})>0$ and $0\le b_i-\mult_{C_i}(B_{\varepsilon})<\varepsilon$.
Since $B_{\varepsilon}-D$ is nef over $B_{\varepsilon}$, we have 
$$
(B_{\mathcal{T}}-D)C_i=(B_{\varepsilon}-D)C_i+(B_{\mathcal{T}}-B_{\varepsilon})C_i\ge (b_i-\mult_{C_i}(B_{\varepsilon}))C_{i}^{2}> \varepsilon C_{i}^{2}.
$$
Since $\varepsilon>0$ is arbitrary, we have $(B_{\mathcal{T}}-D)C_i\ge 0$, whence $B_{\mathcal{T}}-D$ is nef over $B_{\mathcal{T}}$.
Thus $B_{\mathcal{T}}$ is an element of $\Enu_{\R}(D)$, which is an upper bound of $\mathcal{T}$ by definition.
\end{proof}

\begin{lemma} \label{D-Bpeff}
Let $D$ be an $f$-pseudo-effective $\R$-divisor on $X$.
Then $D-B$ is $f$-pseudo-effective for any $B\in \Enu_{\R}(D)$.
\end{lemma}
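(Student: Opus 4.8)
The plan is to reduce the statement to the Zariski decomposition (Theorem~\ref{Zardecomp}) by means of Lemma~\ref{easy}. Since $D$ is $f$-pseudo-effective, let $D = P + N$ be its Zariski decomposition: $P$ is $f$-nef and $N \ge 0$ is negative definite and $f$-exceptional (the third property, $PC = 0$ for $C\subset\Supp(N)$, will not be needed). Now fix $B \in \Enu_{\R}(D)$ and consider the two decompositions $D = P + N = (D-B) + B$ of $D$ as $\R$-divisors.

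First I would apply Lemma~\ref{easy} with $F = P$, $E = N$ and $A = D - B$ (so that $A + B = D$): condition (i) holds because $N$ is effective; condition (ii) holds because $B\in\Enu_{\R}(D)$ is by definition a non-zero effective negative definite $f$-exceptional divisor; and condition (iii) holds because $F = P$ is $f$-nef, hence nef over $B$, while $-A = B - D$ is nef over $B$, this last being exactly the membership $B\in\Enu_{\R}(D)$. Lemma~\ref{easy} then yields $B \le N$.

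Granting this, $D - B = P + (N - B)$ exhibits $D - B$ as the sum of the $f$-nef $\R$-divisor $P$ and the effective $\R$-divisor $N - B$. Since an $f$-nef $\R$-divisor is $f$-pseudo-effective (cf.\ Appendix~A) and the $f$-pseudo-effective cone is stable under adding effective divisors, $D - B$ is $f$-pseudo-effective, which finishes the proof.

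I do not foresee a real obstacle along this route: the only thing to be careful about is matching the hypotheses of Lemma~\ref{easy}, the key observation being that ``$-A$ is nef over $B$'' is literally the defining property of $\Enu_{\R}(D)$. If instead one wants a proof that does not quote Theorem~\ref{Zardecomp} — say, because this lemma is intended to feed into an independent route to it — the natural substitute is to run the same argument with an approximate Zariski decomposition of $D + tA$ for an $f$-ample $\R$-divisor $A$ and small $t > 0$ (using that $D + tA$ is $f$-big to produce an effective representative, then a minimization of the type carried out in the proof of Lemma~\ref{easy}), and to let $t \to 0$ at the end; there the genuine difficulty is that the perturbation $tA$ destroys the ``nef over $B$'' inequalities, so one must carry error terms of order $t$ through the argument before passing to the limit (using that the $f$-pseudo-effective cone is closed).
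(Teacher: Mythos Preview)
Your argument is logically valid but circular in the paper's architecture. Lemma~\ref{D-Bpeff} is one of the ingredients the paper uses to \emph{prove} Theorem~\ref{Zardecomp}: it is invoked in the proof of Lemma~\ref{Pisnef} to show that the candidate positive part $P_{\R}=D-N_{\R}$ is $f$-pseudo-effective, and Lemma~\ref{Pisnef} is then what establishes condition~(i) of the Zariski decomposition. So quoting Theorem~\ref{Zardecomp} here assumes what remains to be shown. You do anticipate this objection at the end, but the perturbative alternative you sketch (add $tA$ with $A$ $f$-ample and carry $O(t)$ error terms) is left too vague to stand as a proof, and in any case is not the route the paper takes.

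The paper's proof is a direct limiting argument that never mentions the decomposition. One approximates $D$ by effective $\R$-divisors $D_n\to D$ in $\N^1(X/Y)$ (this is the definition of $f$-pseudo-effective), writes $D_n-B=G_n^+-G_n^-$ with $G_n^{\pm}\ge 0$ having no common components, and notes that $G_n^-\le B$, hence $G_n^-$ is negative definite and $f$-exceptional. The key inequality is
\[
(B-D_n)G_n^- \;=\; (G_n^- - G_n^+)G_n^- \;\le\; (G_n^-)^2 \;\le\; 0,
\]
and since the linear forms $(B-D_n)\,\cdot$ converge uniformly on the compact set $\{E:0\le E\le B\}$ to the nonnegative form $(B-D)\,\cdot$ (this is exactly the hypothesis $B\in\Enu_{\R}(D)$), one gets $(B-D_n)G_n^-\to 0$, hence $(G_n^-)^2\to 0$, hence $G_n^-\to 0$ by negative definiteness. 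Thus $D-B$ lies in the closure of the effective cone. No Zariski decomposition is needed, and no ampleness perturbation either.
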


\begin{proof}
We take effective $\R$-divisors $D_n$ on $X$ such that $D_n\to D$ in $\N^{1}(X/Y)$ $(n\to \infty)$.
Let us write $D_n-B=G_{n}^{+}-G_{n}^{-}$, where $G_{n}^{+}$, $G_{n}^{-}$ are effective $\R$-divisors having no common components.
In order to prove $D-B$ is $f$-pseudo-effective, it is enough to show that $G_{n}^{-}\to 0$ in $\N^{1}(X/Y)$ $(n\to \infty)$.
Note that $G_{n}^{-}\le B$ and then $G_{n}^{-}$ is negative definite and $f$-exceptional.
Thus we have
$$
(B-D_n)G_{n}^{-}=(G_{n}^{-}-G_{n}^{+})G_{n}^{-}\le (G_{n}^{-})^{2}\le 0.
$$
Since the sequence of multiplication maps $\{(B-D_{n})-\colon \N_{1}(X/Y)\to \R\}_{n}$ converges uniformly to the non-negative function $(B-D)-$ on the compact subset $K:=\{E\ |\ 0\le E\le B\}$ of $\N_{1}(X/Y)$, we have $(B-D_n)G_{n}^{-}\to 0$ $(n\to \infty)$, whence $(G_{n}^{-})^{2}\to 0$ $(n\to \infty)$.
Since $G_{n}^{-}$ is negative definite, we have $G_{n}^{-}\to 0$ $(n\to \infty)$.
\end{proof}

For an $f$-pseudo-effective $\R$-divisor $D$ on $X$, we take a maximal element $N_{\R}$ in $\Enu_{\R}(D)$.
If $\Enu_{\R}(D)= \emptyset$, we define $N_{\R}:=0$.
Put $P_{\R}:=D-N_{\R}$.
\begin{lemma} \label{Pisnef}
$P_{\R}$ is $f$-nef.
\end{lemma}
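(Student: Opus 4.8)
The plan is to argue by contradiction. Suppose $P_{\R}=D-N_{\R}$ is not $f$-nef; I will produce a divisor in $\Enu_{\R}(D)$ strictly larger than $N_{\R}$, contradicting the maximality of $N_{\R}$ (when $N_{\R}=0$, i.e.\ when $\Enu_{\R}(D)=\emptyset$, producing \emph{any} element of $\Enu_{\R}(D)$ is already the contradiction). First observe that $P_{\R}$ is $f$-pseudo-effective, by Lemma~\ref{D-Bpeff} applied with $B=N_{\R}$ (and trivially when $N_{\R}=0$). Since $P_{\R}$ is not $f$-nef, pick an $f$-exceptional irreducible curve $C$ with $P_{\R}C<0$. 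An $f$-exceptional irreducible curve with non-negative self-intersection is nef, and an $f$-pseudo-effective divisor meets every $f$-nef class non-negatively, so $P_{\R}C<0$ forces $C^{2}<0$. The divisor I will test against the definition of $\Enu_{\R}(D)$ is $N_{\R}+tC$ for a sufficiently small $t>0$.

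For every prime component $C'$ of $N_{\R}$ with $C'\neq C$ we have $(N_{\R}+tC-D)C'=(N_{\R}-D)C'+t\,CC'\ge 0$ for all $t\ge 0$, since $(N_{\R}-D)C'\ge 0$ (as $N_{\R}\in\Enu_{\R}(D)$) and $CC'\ge 0$ (distinct prime divisors); moreover $(N_{\R}+tC-D)C=(N_{\R}-D)C+tC^{2}=-P_{\R}C+tC^{2}>0$ once $0<t<-P_{\R}C/(-C^{2})$, because $-P_{\R}C>0$ and $C^{2}<0$. Hence, \emph{provided} the intersection matrix of $\Supp(N_{\R})\cup\{C\}$ is negative definite, $N_{\R}+tC$ is a non-zero effective negative definite $f$-exceptional $\R$-divisor with $(N_{\R}+tC)-D$ nef over it, so $N_{\R}+tC\in\Enu_{\R}(D)$; since it strictly dominates $N_{\R}$, this is the desired contradiction. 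Note that this part of the argument is identical whether or not $C$ lies in $\Supp(N_{\R})$.

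The remaining point --- and the main obstacle --- is the negative definiteness of the intersection matrix of $\Supp(N_{\R})\cup\{C\}$. This is automatic when $C\subseteq\Supp(N_{\R})$, so suppose $C\not\subseteq\Supp(N_{\R})$ and that negative definiteness fails. Write $\Supp(N_{\R})=\{C_{1},\dots,C_{r}\}$, $M:=(C_{i}C_{j})_{i,j}$ (negative definite by hypothesis on $N_{\R}$), $w:=(CC_{i})_{i}\ge 0$ (distinct prime divisors), and let $a=(a_{i})$ be the solution of $Ma=-w$. Put $Z:=C+\sum_{i}a_{i}C_{i}$; then $ZC_{i}=0$ for all $i$, and a routine computation shows $Z^{2}=ZC$ is exactly the Schur complement of the block $M$ in the intersection matrix of $\Supp(N_{\R})\cup\{C\}$, so the failure of negative definiteness is precisely the condition $Z^{2}\ge 0$. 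Now $-M$ is symmetric positive definite with non-positive off-diagonal entries, hence a Stieltjes matrix, so $(-M)^{-1}$ has non-negative entries and therefore $a\ge 0$. Thus $Z$ is an effective $f$-exceptional $\R$-divisor, and being effective with $ZC_{i}=0$ for all $i$ and $ZC=Z^{2}\ge 0$ it is $f$-nef. Consequently $P_{\R}Z\ge 0$, since $P_{\R}$ is $f$-pseudo-effective. But $P_{\R}Z=P_{\R}C+\sum_{i}a_{i}\,P_{\R}C_{i}$ with $P_{\R}C<0$ and $P_{\R}C_{i}=-(N_{\R}-D)C_{i}\le 0$ (again because $N_{\R}\in\Enu_{\R}(D)$), so $P_{\R}Z<0$ --- a contradiction. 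The facts I invoke here --- negative semi-definiteness of intersection forms of $f$-exceptional curves, nefness of an $f$-exceptional irreducible curve of non-negative self-intersection, and entrywise non-negativity of the inverse of a negative definite intersection matrix --- are all among the Mumford-intersection-theory statements collected in Appendix~A; everything else is the elementary perturbation bookkeeping above.
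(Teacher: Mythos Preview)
Your proof is correct and follows essentially the same route as the paper's: both argue by contradiction, perturb $N_{\R}$ by a small multiple of a curve $C$ with $P_{\R}C<0$, and handle the only nontrivial point---negative definiteness of $\Supp(N_{\R})\cup\{C\}$---by producing an effective nef $f$-exceptional divisor $Z$ on that support and deriving $P_{\R}Z\ge 0$ versus $P_{\R}Z<0$. The paper obtains $Z$ by quoting Lemma~\ref{linalg}, whereas you construct it explicitly via the Schur complement and the Stieltjes property of $-M$; these are the same idea. Two minor remarks: the duality you invoke is $\PE^{1}(X/Y)=\Nef_{1}(X/Y)^{*}$, so what you need (and what you in fact verify) is that $Z$ is \emph{nef}, not merely $f$-nef; and the ``negative semi-definiteness of intersection forms of $f$-exceptional curves'' you list at the end is neither used in your argument nor true when $\dim Y=0$.
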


\begin{proof}
If $P_{\R}=D$, the claim holds from Lemma~\ref{nef=pos}.
Thus we may assume that $N_{\R}>0$.
By Lemma~\ref{D-Bpeff}, $P_{\R}$ is $f$-pseudo-effective.
Thus it suffices to show that $P_{\R}C\ge 0$ for any $f$-exceptional irreducible curve $C$ on $X$ with $C^{2}<0$.
Suppose that there exists such a curve $C$ on $X$ such that $P_{\R}C<0$.
Take a small number $\varepsilon>0$ such that $(P_{\R}-\varepsilon C)C<0$.
Since $-P_{\R}=N_{\R}-D$ is nef over $N_{\R}$, it follows that $\varepsilon C+N_{\R}-D$ is nef over $\varepsilon C+N_{\R}$.
Hence, it suffices to show that $\varepsilon C+N_{\R}$ is negative definite since it contradicts the maximality of $N_{\R}$.
If $\varepsilon C+N_{\R}$ is not negative definite, then there exists an effective $f$-exceptional nef divisor $Z$ on $X$ with $\Supp(Z)=\Supp(\varepsilon C+N_{\R})$ by Lemma~\ref{linalg}.
Since $P_{\R}$ is $f$-pseudo-effective, it follows that $(D-N_{\R})Z\ge 0$.
On the other hand, since $N_{\R}-D$ is nef over $N_{\R}$ and $(N_{\R}-D)C>0$, we have $(N_{\R}-D)Z\le 0$.
Hence we have $(N_{\R}-D)Z=0$ and $\Supp(Z)\subset \Supp(N_{\R})$, which is a contradiction.
\end{proof}

\begin{corollary}
If $\Enu_{\R}(D)\neq \emptyset$, then $N_{\R}$ is the maximum element of $\Enu_{\R}(D)$.
\end{corollary}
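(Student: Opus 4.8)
The plan is to deduce the corollary directly from Lemma~\ref{easy} together with Lemma~\ref{Pisnef}, with essentially no extra work. Since $\Enu_{\R}(D)\neq\emptyset$, the chosen maximal element $N_{\R}$ is itself an element of $\Enu_{\R}(D)$; hence $N_{\R}>0$ is a non-zero effective, negative definite, $f$-exceptional $\R$-divisor, $N_{\R}-D$ is nef over $N_{\R}$, and $P_{\R}=D-N_{\R}$ is $f$-nef by Lemma~\ref{Pisnef}. To prove $N_{\R}$ is the maximum element of $\Enu_{\R}(D)$, it suffices to show $B\le N_{\R}$ for every $B\in\Enu_{\R}(D)$.

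Fix such a $B$. I would apply Lemma~\ref{easy} to the two decompositions $D=P_{\R}+N_{\R}=(D-B)+B$, taking $F:=P_{\R}$, $E:=N_{\R}$, $A:=D-B$. Condition (i) of that lemma holds because $N_{\R}$ is effective; condition (ii) holds because $B\in\Enu_{\R}(D)$ is by definition a non-zero effective, negative definite, $f$-exceptional divisor; and condition (iii) holds since $F=P_{\R}$ is $f$-nef, hence nef over $B$, while $-A=B-D$ is nef over $B$ by the defining property of $\Enu_{\R}(D)$. Lemma~\ref{easy} then gives $B\le E=N_{\R}$. As $B$ was arbitrary, $N_{\R}$ dominates every element of $\Enu_{\R}(D)$, so it is the (unique) maximum element.

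I expect there to be no real obstacle: the only points to check are the bookkeeping hypotheses of Lemma~\ref{easy} — in particular that $N_{\R}$ is genuinely effective (immediate from $\Enu_{\R}(D)\neq\emptyset$ and the definition of its elements) and that $f$-nefness of $P_{\R}$ entails nefness over the particular $f$-exceptional divisor $B$. It is worth recording that this corollary also removes the apparent ambiguity in Lemma~\ref{existmax}: the maximal element produced by Zorn's lemma is in fact independent of any choice, which is what makes $N_{\R}$, and therefore $P_{\R}$, well-defined for the statement of Theorem~\ref{Zardecomp}.
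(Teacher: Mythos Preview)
Your proof is correct and uses the same two ingredients as the paper (Lemma~\ref{easy} and Lemma~\ref{Pisnef}). The only difference is cosmetic: the paper compares two \emph{maximal} elements $N_{\R}$ and $N'_{\R}$, deduces $N'_{\R}\le N_{\R}$ via Lemma~\ref{easy}, and then invokes maximality to get equality (implicitly using the inductive property from Lemma~\ref{existmax} to pass from ``unique maximal'' to ``maximum''); you instead apply Lemma~\ref{easy} directly to an arbitrary $B\in\Enu_{\R}(D)$, which is marginally more direct.
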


\begin{proof}
We take two maximal elements $N_{\R}$ and $\N'_{\R}$ of $\Enu_{\R}(D)$ and write 
$$
D=P_{\R}+N_{\R}=P'_{\R}+N'_{\R}.
$$
Since $P_{\R}$ and $-P'_{\R}$ are nef over $N'_{\R}$, we have $N'_{\R}\le N_{\R}$ by Lemma~\ref{easy}.
By maximality, we have $N_{\R}=N'_{\R}$.
\end{proof}

\begin{remark}
We can take a maximal element of $\Enu_{\R}(D)$ from Lemma~\ref{existmax} even if $D$ is not $f$-pseudo-effective.
But it is not necessarily unique in general.
For example, let $X\to \Sigma_e$ be a blow-up of the $e$-th Hirzebruch surface $\Sigma_e$ ($e\ge 3$) at a point.
Then we can see that $\Enu_{\R}(K_X)$ consists of two elements (they are prime components of the unique reducible fiber of the rulings on $X$).
\end{remark}

\begin{proof}[Proof of Theorem~\ref{Zardecomp}]
The decomposition $D=P_{\R}+N_{\R}$ satisfies the condition (i), (ii) and (iii) in Theorem~\ref{Zardecomp} from Lemma~\ref{Pisnef} and the definition of $P_{\R}$.
The uniqueness of the decomposition satisfying (i), (ii) and (iii) follows from Lemma~\ref{easy}.
If $D$ is a $\Q$-divisor, then the negative part $N_{\R}=\sum_{i}a_{i}C_{i}$ is also a $\Q$-divisor since $DC_{j}=N_{\R}C_{j}=\sum_{i}a_{i}(C_{i}C_{j})$ is rational for each $j$ and the matrix $(C_{i}C_{j})_{ij}$ is negative definite over $\Q$.
\end{proof}

Let $D$ be an $f$-pseudo-effective $\R$-divisor on $X$ and $D=P_{\R}+N_{\R}$ the Zariski decomposition in Theorem~\ref{Zardecomp}.

\begin{lemma}
If $\Enu_{\Z}(D)\neq \emptyset$, it has the maximum element.
\end{lemma}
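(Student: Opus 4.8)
The plan is to show that $\Enu_{\Z}(D)$ is closed under componentwise maxima and is bounded above by $N_{\R}$, so that a largest element can be extracted directly. First I would observe that every $B\in\Enu_{\Z}(D)$ satisfies $B\le N_{\R}$: since $\Enu_{\Z}(D)\subseteq\Enu_{\R}(D)$ we have $\Enu_{\R}(D)\ne\emptyset$, hence $N_{\R}>0$, and applying Lemma~\ref{easy} to the decompositions $D=P_{\R}+N_{\R}$ and $D=(D-B)+B$ --- with $N_{\R}$ effective, $B>0$ negative definite $f$-exceptional, $P_{\R}$ $f$-nef by Lemma~\ref{Pisnef}, and $B-D$ nef over $B$ --- yields $B\le N_{\R}$. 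In particular $\Supp(B)\subseteq\Supp(N_{\R})$, so $B$ is negative definite (its intersection matrix is a principal submatrix of the negative definite matrix of the prime components of $N_{\R}$), and the coefficients of $B$ are bounded above by those of $\llcorner N_{\R}\lrcorner$.

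Next I would verify that $\Enu_{\Z}(D)$ is stable under componentwise maximum. Given $B_{1},B_{2}\in\Enu_{\Z}(D)$, set $B:=\max\{B_{1},B_{2}\}$; this is a non-zero effective $f$-exceptional $\Z$-divisor with $\Supp(B)=\Supp(B_{1})\cup\Supp(B_{2})\subseteq\Supp(N_{\R})$, hence negative definite by the previous step. If $C$ is a prime component of $B$ with, say, $\mult_{C}(B)=\mult_{C}(B_{1})\ge\mult_{C}(B_{2})$, then $C\subseteq\Supp(B_{1})$ whereas the effective divisor $B-B_{1}$ does not have $C$ as a component, so
$$
(B-D)C=(B_{1}-D)C+(B-B_{1})C\ge 0,
$$
since $B_{1}-D$ is nef over $B_{1}$ and distinct prime divisors on a normal surface meet non-negatively (see Appendix~A). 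Hence $B-D$ is nef over $B$ and $B\in\Enu_{\Z}(D)$.

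Finally I would set, for each prime divisor $C_{i}$ on $X$, the non-negative integer $b_{i}:=\max\{\mult_{C_{i}}(B)\ |\ B\in\Enu_{\Z}(D)\}$; this maximum exists because the relevant values are integers bounded above by $\mult_{C_{i}}(\llcorner N_{\R}\lrcorner)$, and $b_{i}=0$ whenever $C_{i}\not\subseteq\Supp(N_{\R})$, hence $b_{i}=0$ for all but finitely many $i$. Picking, for each of the finitely many indices $i$ with $b_{i}>0$, an element $B_{i}\in\Enu_{\Z}(D)$ attaining $b_{i}$, and iterating stability under pairwise maximum, the divisor $N^{\ast}:=\max_{i}B_{i}$ lies in $\Enu_{\Z}(D)$ and has $\mult_{C_{i}}(N^{\ast})=b_{i}$ for every $i$. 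Therefore $B\le N^{\ast}$ for all $B\in\Enu_{\Z}(D)$, i.e.\ $N^{\ast}$ is the maximum element.

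The only genuinely delicate point will be the negative definiteness of $\max\{B_{1},B_{2}\}$, since a componentwise maximum of negative definite divisors need not be negative definite in general; this is exactly why one first establishes the bound $B\le N_{\R}$ via Lemma~\ref{easy} (equivalently, uses that $N_{\R}$ is the maximum of $\Enu_{\R}(D)$). Everything else is routine bookkeeping.
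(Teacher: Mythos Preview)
Your proof is correct and follows essentially the same approach as the paper: bound every $B\in\Enu_{\Z}(D)$ by $N_{\R}$ via Lemma~\ref{easy}, then show $\Enu_{\Z}(D)$ is closed under componentwise maximum. The paper simply notes that the bound makes $\Enu_{\Z}(D)$ finite and thus a maximum exists, whereas you construct it explicitly, but the argument is the same.
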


\begin{proof}
Note that each element $B$ of $\Enu_{\Z}(D)$ is a subdivisor of $N_{\R}$ since $P_{\R}$ and $B-D$ are nef over $B$ and Lemma~\ref{easy}.
In particular, $\Enu_{\Z}(D)$ is a finite set.
Let $B=\sum_{i}b_{i}C_{i}$ and $B'=\sum_{i}b'_{i}C_{i}$ be two divisors in $\Enu_{\Z}(D)$
and put $B'':=\sum_{i}\max\{b_{i},b'_{i}\}C_{i}$.
For each prime component $C_{i}\le B''$, we have
$$
(B''-D)C_{i}=(B-D)C_{i}+(B''-B)C_{i}\ge 0
$$
when $b'_{i}\le b_{i}$.
Similarly, we also have $(B''-D)C_{i}\ge 0$ in the case of $b_{i}\le b'_{i}$.
Thus $B''$ belongs to $\Enu_{\Z}(D)$, whence the claim holds.
\end{proof}

Let $N_{\Z}$ be the maximum element of $\Enu_{\Z}(D)$ if $\Enu_{\Z}(D)\neq \emptyset$,  or $N_{\Z}:=0$ if $\Enu_{\Z}(D)=\emptyset$.
Put $P_{\Z}:=D-N_{\Z}$.
Note that the decomposition $P_{\Z}=P_{\R}+(N_{\R}-N_{\Z})$ is the Zariski decomposition of $P_{\Z}$ since $N_{\Z}\le N_{\R}$.

\begin{lemma}\label{Pispos}
$P_{\Z}$ is $f$-$\Z$-positive.
\end{lemma}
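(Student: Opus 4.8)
The plan is to argue by contradiction. Suppose $\Enu_{\Z}(P_{\Z})\neq\emptyset$ and pick $B\in\Enu_{\Z}(P_{\Z})$; I will show that then $N_{\Z}+B\in\Enu_{\Z}(D)$. Since $B>0$, this divisor strictly dominates $N_{\Z}$, contradicting the maximality of $N_{\Z}$ in $\Enu_{\Z}(D)$ (or contradicting $\Enu_{\Z}(D)=\emptyset$ in the case $N_{\Z}=0$). Hence $\Enu_{\Z}(P_{\Z})=\emptyset$, that is, $P_{\Z}$ is $f$-$\Z$-positive.

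The first step is to locate $B$ inside $N_{\R}$. Recall that $N_{\Z}\le N_{\R}$, so $P_{\Z}=P_{\R}+(N_{\R}-N_{\Z})=(P_{\Z}-B)+B$ are two decompositions of $P_{\Z}$ to which Lemma~\ref{easy} applies: $N_{\R}-N_{\Z}$ is effective, $B$ is nonzero effective negative definite and $f$-exceptional, $P_{\R}$ is $f$-nef (Lemma~\ref{Pisnef}) hence nef over $B$, and $-(P_{\Z}-B)=B-P_{\Z}$ is nef over $B$ by the definition of $B\in\Enu_{\Z}(P_{\Z})$. Thus $B\le N_{\R}-N_{\Z}$, so $N_{\Z}+B\le N_{\R}$. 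In particular $N_{\Z}+B$ is a nonzero effective $f$-exceptional $\Z$-divisor, and it is negative definite because its prime components are among those of $N_{\R}$.

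The remaining point is that $(N_{\Z}+B)-D$ is nef over $N_{\Z}+B$. Writing $D=P_{\R}+N_{\R}$ gives $(N_{\Z}+B)-D=-(N_{\R}-N_{\Z}-B)-P_{\R}$, and since every prime component $C$ of $N_{\Z}+B$ lies in $\Supp(N_{\R})$, property~(iii) of Theorem~\ref{Zardecomp} yields $P_{\R}C=0$; so it suffices to check $(N_{\R}-N_{\Z}-B)C\le 0$ for each such $C$. If $C$ is a component of $B$, I rewrite $B-P_{\Z}=-(N_{\R}-N_{\Z}-B)-P_{\R}$ and use $(B-P_{\Z})C\ge 0$ (definition of $\Enu_{\Z}(P_{\Z})$) together with $P_{\R}C=0$. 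If $C$ is a component of $N_{\Z}$ but not of $B$, then $BC\ge 0$, while from $(N_{\Z}-D)C\ge 0$ (definition of $\Enu_{\Z}(D)$, valid since $N_{\Z}\in\Enu_{\Z}(D)$ when nonzero) together with $P_{\R}C=0$ one gets $(N_{\R}-N_{\Z})C\le 0$; adding these, $(N_{\R}-N_{\Z}-B)C\le 0$. These two cases exhaust all prime components of $N_{\Z}+B$, so $N_{\Z}+B\in\Enu_{\Z}(D)$, which is the desired contradiction.

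I do not expect a genuine obstacle. The facts I would simply cite are that an effective subdivisor of a negative definite divisor is negative definite (the intersection form depends only on the set of prime components) and that an effective divisor meets any prime curve outside its support non-negatively (Mumford intersection theory, Appendix~A). The only delicate point is the bookkeeping of which prime components can be shared among $B$, $N_{\Z}$ and $N_{\R}$ in the last step, and making sure the two cases cover all components of $N_{\Z}+B$ — which they do, since any component of $N_{\Z}+B$ that is not a component of $B$ must be a component of $N_{\Z}$.
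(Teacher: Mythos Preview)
Your argument is correct and follows essentially the same strategy as the paper: assume $B\in\Enu_{\Z}(P_{\Z})$, apply Lemma~\ref{easy} to get $B\le N_{\R}-N_{\Z}$ (hence $N_{\Z}+B$ is negative definite), then verify $(N_{\Z}+B)-D$ is nef over $N_{\Z}+B$ by splitting into the two types of prime components, contradicting the maximality of $N_{\Z}$. The only difference is cosmetic: in the nefness check the paper uses the identity $B+N_{\Z}-D=B-P_{\Z}$ directly, whereas you route through the $\R$-decomposition via $P_{\R}C=0$; both arrive at the same inequalities.
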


\begin{proof}
We may assume that $N_{\Z}>0$.
Suppose that there exists an element $B$ of $\Enu_{\Z}(P_{\Z})$.
Then we obtain two decompositions $P_{\R}+(N_{\R}-N_{\Z})=(P_{\Z}-B)+B$ of $P_{\Z}$.
Since $P_{\R}$ and $B-P_{\Z}$ are nef over $B$, we have $B\le N_{\R}-N_{\Z}$ by Lemma~\ref{easy}.
In particular, $B+N_{\Z}$ is negative definite.
Thus it suffices to show that $B+N_{\Z}-D$ is nef over $B+N_{\Z}$ since it contradicts the maximality of $N_{\Z}$.
For any subcurve $C\le B$, we have $(B+N_{\Z}-D)C\ge 0$ since $B-P_{\Z}=B+N_{\Z}-D$ is nef over $B$.
For any irreducible curve $C$ in $N_{\Z}$ not contained in $B$, we have
$$
(B+N_{\Z}-D)C=BC+(N_{\Z}-D)C\ge 0,
$$
since $N_{\Z}-D$ is nef over $N_{\Z}$.
Hence the claim holds.
\end{proof}

\begin{proof}[Proof of Theorem~\ref{IntZardecomp}]
The decomposition $D=P_{\Z}+N_{\Z}$ satisfies the condition (i), (ii) and (iii) in Theorem~\ref{IntZardecomp} by Lemma~\ref{Pispos}.
Let $D=P'_{\Z}+N'_{\Z}$ be another decomposition satisfying (i), (ii) and (iii) in Theorem~\ref{IntZardecomp}.
If $N'_{\Z}=0$, then $D=P'_{\Z}$ is $f$-$\Z$-positive and so $N_{\Z}=0$.
If $N'_{\Z}>0$, then $N'_{\Z}$ belongs to $\Enu_{\Z}(D)$ by the condition (iii).
Thus $N'_{\Z}\le N_{\Z}$ holds.
Suppose $N'_{\Z}<N_{\Z}$.
Then $N_{\Z}-N'_{\Z}$ is an element of $\Enu_{\Z}(P'_{\Z})$ since it is negative definite and $(N_{\Z}-N'_{\Z})-P'_{\Z}=-P_{\Z}$ is nef over $N_{\Z}-N'_{\Z}$, which contradicts the condition (i) on $P'_{\Z}$.
Hence we have $P_{\Z}=P'_{\Z}$ and $N_{\Z}=N'_{\Z}$.
\end{proof}

\begin{remark}
(1) Taking (integral) Zariski decompositions defines self-maps $P_{R}$, $N_{R}$ on $N^{1}(X/Y)$ with $P_{R}^{2}=P_{R}$, $N_{R}^{2}=N_{R}$ and $P_{R}+N_{R}=\mathrm{id}_{N^{1}(X/Y)}$, where $R=\R$ or $\Z$.
The maps $P_{\R}$ and $N_{\R}$ are continuous, but $P_{\Z}$ and $N_{\Z}$ are not continuous in general.

\smallskip

\noindent
(2) Given a property $\mathcal{P}$ of negative definite $f$-exceptional irreducible curves on $X$, we can consider the (resp.\ integral) Zariski decomposition $D=P+N$ (resp.\ $D=P_{\Z}+N_{\Z}$) with the additional condition that each irreducible component of $N$ (resp.\ $N_{\Z}$) has the property $\mathcal{P}$.
Indeed, all the arguments in this subsection work by replacing $\Enu_{R}(D)$ ($R=\Z$ or $\R$) by the subset consisting of $B\in \Enu_{R}(D)$ any component of which has the property $\mathcal{P}$.
\end{remark}

\subsection{$\Z$-positive divisors}

\begin{definition}
Let $A$ and $B$ be $\R$-divisors on $X$ such that $A-B$ is an effective $f$-exceptional $\Z$-divisor.
Then the sequence of divisors $B=D_0<D_1<\cdots<D_m=A$ is called a {\em connecting chain from $B$ to $A$} if $C_i:=D_i-D_{i-1}$ is an integral curve and $D_{i-1}C_i>0$ for any $i=1,\ldots,m$.
Note that $B=A$ is regarded as a connecting chain from $B$ to $A$ ($m=0$ case).
\end{definition}

The following is a characterization of $f$-$\Z$-positive divisors.

\begin{proposition} \label{connchain}
Let $D$ be an $f$-pseudo-effective $\R$-divisor on $X$ with the Zariski decomposition $D=P+N$.
Then the following are equivalent.

\smallskip

\noindent
$(1)$ $D$ is $f$-$\Z$-positive.

\smallskip

\noindent
$(2)$ For any $D-\llcorner N \lrcorner \le D_0\le D$ with $D-D_0$ integral, there exists a connecting chain from $D_0$ to $D$.

\smallskip

\noindent
$(3)$ There exists a connecting chain from $D-\llcorner N \lrcorner$ to $D$.
\end{proposition}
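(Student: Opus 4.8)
The plan is to prove the cyclic implications $(1)\Rightarrow(2)\Rightarrow(3)\Rightarrow(1)$; the middle one is immediate by taking $D_0=D-\llcorner N\lrcorner$, so all the content lies in the other two.

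For $(1)\Rightarrow(2)$ I would build the chain greedily. Given $D_0$ with $D-\llcorner N\lrcorner\le D_0\le D$ and $D-D_0$ integral, if $D_0=D$ we are done with $m=0$; otherwise set $B:=D-D_0>0$. Since $\Supp(B)\subseteq\Supp(N)$ and $N$ is a negative definite $f$-exceptional divisor by Theorem~\ref{Zardecomp}, the same holds for the sub-$\Z$-divisor $B$, so $B$ fulfils every requirement for lying in $\Enu_{\Z}(D)$ except possibly that $B-D$ be nef over $B$. As $D$ is $f$-$\Z$-positive, $\Enu_{\Z}(D)=\emptyset$, so that last requirement must fail: there is a prime component $C_1\le B$ with $(-D_0)C_1<0$, i.e.\ $D_0C_1>0$. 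Then $D_1:=D_0+C_1$ satisfies $D_0<D_1\le D$ with $D-D_1$ integral of strictly smaller coefficient sum, and I iterate. After finitely many steps I reach $D_m=D$, and $D_0<D_1<\cdots<D_m=D$ is the desired connecting chain.

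For $(3)\Rightarrow(1)$ I would argue by contradiction. If $D$ is not $f$-$\Z$-positive, Theorem~\ref{IntZardecomp} provides its integral negative part $N_{\Z}>0$, which satisfies $N_{\Z}\le N$ --- hence $N_{\Z}\le\llcorner N\lrcorner$, as $N_{\Z}$ is a $\Z$-divisor --- and has $N_{\Z}-D$ nef over $N_{\Z}$. Suppose there is a connecting chain $D-\llcorner N\lrcorner=D_0<\cdots<D_m=D$ with $C_i:=D_i-D_{i-1}$. Because $D_0\le D-N_{\Z}$ while $D_m=D\not\le D-N_{\Z}$, I take the least index $i$ with $D_i\not\le D-N_{\Z}$; then $D_{i-1}\le D-N_{\Z}$, and since adjoining the single prime $C_i$ breaks this inequality, $\mult_{C_i}(D_{i-1})=\mult_{C_i}(D-N_{\Z})$. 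Together with $D_i\le D$ this forces $\mult_{C_i}(N_{\Z})\ge1$, so $C_i$ is a component of $N_{\Z}$ and $(D-N_{\Z})C_i=-(N_{\Z}-D)C_i\le0$. The multiplicity equality also shows that $C_i$ is not a component of $G:=(D-N_{\Z})-D_{i-1}\ge0$, so $GC_i\ge0$, whence $D_{i-1}C_i=(D-N_{\Z})C_i-GC_i\le0$, contradicting $D_{i-1}C_i>0$.

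The step I expect to be the main obstacle is $(3)\Rightarrow(1)$: one is tempted to deduce $D_{i-1}C_i\le(D-N_{\Z})C_i$ directly from $D_{i-1}\le D-N_{\Z}$, but this monotonicity fails as soon as $C_i$ occurs in the difference, since $C_i^2<0$. The crux is to choose $i$ as the index at which $D_{i-1}$ first ``catches up'' with $D-N_{\Z}$ along the curve $C_i$, which is exactly what guarantees that $C_i$ avoids $\Supp(G)$ and restores the inequality. The remaining ingredients --- that a subdivisor of the negative part is again negative definite, the termination of the greedy construction, and the trivial $(2)\Rightarrow(3)$ --- are routine.
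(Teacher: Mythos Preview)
Your proof is correct. The implication $(1)\Rightarrow(2)$ is essentially identical to the paper's greedy argument, and $(2)\Rightarrow(3)$ is trivial in both.

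For $(3)\Rightarrow(1)$ you take a genuinely different route. The paper argues by induction on the length $m$ of the connecting chain: the base case $m=0$ (i.e.\ $\llcorner N\lrcorner=0$) follows from Lemma~\ref{easy}, and in the inductive step one assumes $D_{m-1}$ is $f$-$\Z$-positive, takes a hypothetical $B\in\Enu_{\Z}(D)$, observes $B\le N$, shows $C_m$ is not a component of $B$, and deduces that $B-D_{m-1}$ is nef over $B$, contradicting the inductive hypothesis. Your argument instead picks any $B\in\Enu_{\Z}(D)$ (you call it $N_{\Z}$, but you never use maximality, only that $B\le \llcorner N\lrcorner$ and $B-D$ is nef over $B$), and then locates the first index $i$ at which the chain crosses the threshold $D-B$; the multiplicity bookkeeping forces $C_i$ to lie in $B$ but not in $G=(D-B)-D_{i-1}$, and the contradiction drops out immediately. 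This ``first crossing'' argument is arguably cleaner than the induction and makes the role of the connecting chain more transparent. Note that you do not actually need to invoke Theorem~\ref{IntZardecomp}: the definition of $f$-$\Z$-positivity gives you some $B\in\Enu_{\Z}(D)$, and $B\le N$ follows directly from Lemma~\ref{easy} (as the paper records just before Lemma~\ref{Pispos}), so your argument is in fact logically independent of the integral Zariski decomposition.
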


\begin{proof}
Assume that $D$ is $f$-$\Z$-positive.
Let $D_0$ be an $\R$-divisor on $X$ as in (2) and we may assume that $D-D_0>0$.
In order to prove (2), it suffices to show by induction on the number of prime components of $D-D_0$ (counting multiplicity) that there exists an irreducible subcurve $C\le D-D_0$ such that $D_0C>0$ holds.
Since $-D_0=(D-D_0)-D$ is not nef over $D-D_0$ by the $f$-$\Z$-positivity of $D$, the claim follows.
The condition (2) trivially implies (3).
We assume that $D$ satisfies (3) and fix a connecting chain $D-\llcorner N \lrcorner=D_0<D_1<\cdots<D_m=D$.
We will show that $D$ is $f$-$\Z$-positive by induction on $m$.
We first assume $m=0$, that is, $\llcorner N \lrcorner=0$.
Suppose that there is a non-zero effective $f$-exceptional $\Z$-divisor $B$ on $X$ such that $B-D$ is nef over $B$.
From Lemma~\ref{easy}, we have $B\le N$, which contradicts $\llcorner N \lrcorner=0$.
Hence $D$ is $f$-$\Z$-positive.
Assume that $m>0$ and the claim holds when the length of the connecting chain is less than $m$.
In particular, $D_{m-1}$ is $f$-$\Z$-positive.
Assume that there exists a non-zero effective divisor $B$ on $X$ such that $B-D$ is nef over $B$.
Then $B$ is contained in $N$ by Lemma~\ref{easy}.
It is easy to see that $C_m:=D-D_{m-1}$ is not contained in $B$ since $D_{m-1}$ is $f$-$\Z$-positive and $D_{m-1}C_m>0$.
Thus we have $C_mB\ge 0$.
Then $B-D_{m-1}=B-D+C_m$ is nef over $B$, which contradicts that $D_{m-1}$ is $f$-$\Z$-positive.
Hence $D$ is $f$-$\Z$-positive, whence (1) holds.
\end{proof}

\begin{remark}\label{zartointzar}
Proposition~\ref{connchain} is useful for taking the integral Zariski decomposition of a given $f$-pseudo-effective divisor $D$ from the usual Zariski decomposition $D=P+N$.
Indeed, we put $D_0:=D-\llcorner N \lrcorner$  and if $-D_0$ is nef over $N_0:=\llcorner N \lrcorner$, then the decomposition $D=D_0+N_0$ is nothing but the integral Zariski decomposition.
Otherwise, there exists a prime divisor $C_1\le N_0$ such that $D_0C_1>0$.
Then we put $D_1:=D_0+C_1$.
If $-D_1$ is nef over $N_1:=\llcorner N \lrcorner-C$, then the decomposition $D=D_1+N_1$ becomes the integral Zariski decomposition by Proposition~\ref{connchain}.
Repeating this process, we finally obtain the connecting chain $D_0<D_1<\cdots<D_m$ and the integral Zariski decomposition $D=D_m+N_m$ from Proposition~\ref{connchain}.
Such a construction of divisors can be found in many places of the theory of surfaces (e.g., \cite{Miy}, \cite{Sak2}, \cite{KaMa}, \cite{Kon}, \cite{YZZ}) and $2$-dimensional singularities (e.g., \cite{Lau}, Section 7 in \cite{Ish}).
\end{remark}

\begin{corollary}\label{roundup}
Let $D=M+Z$ be an $\R$-divisor on $X$ such that $M$ is an $f$-nef $\R$-divisor and $Z$ is an $f$-exceptional $\R$-divisor with $\llcorner Z \lrcorner= 0$.
Then $D$  is $f$-$\Z$-positive.
In particular, $D=\ulcorner M \urcorner $ is $f$-$\Z$-positive for any $f$-nef $\R$-divisor $M$ on $X$.
\end{corollary}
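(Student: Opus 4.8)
The plan is to show directly that $\Enu_{\Z}(D)=\emptyset$ by a single application of Lemma~\ref{easy}. Suppose, for contradiction, that there exists $B\in\Enu_{\Z}(D)$; by definition $B>0$ is an effective, negative definite, $f$-exceptional $\Z$-divisor with $B-D$ nef over $B$. The first observation is that the hypothesis $\llcorner Z\lrcorner=0$ forces every coefficient of $Z$ to lie in $[0,1)$; in particular $Z$ is effective, and $\mult_{C}(Z)<1$ for every prime divisor $C$.

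Now consider the two decompositions $D=M+Z=(D-B)+B$ of $D$ as $\R$-divisors, and check the three hypotheses of Lemma~\ref{easy} with $F:=M$, $E:=Z$, $A:=D-B$: condition (i) holds because $E=Z$ is effective; condition (ii) holds because $B$ is non-zero, effective, negative definite and $f$-exceptional by assumption; condition (iii) holds because $F=M$ is $f$-nef, hence nef over the $f$-exceptional divisor $B$, while $-A=B-D$ is nef over $B$ since $B\in\Enu_{\Z}(D)$. Lemma~\ref{easy} then yields $B\le Z$. But $B$ is a non-zero effective $\Z$-divisor, so $\mult_{C}(B)\ge 1$ for some prime component $C$ of $B$, whereas $\mult_{C}(Z)<1$; this contradicts $B\le Z$. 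Hence $\Enu_{\Z}(D)=\emptyset$, i.e.\ $D$ is $f$-$\Z$-positive.

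For the ``in particular'' part, I would run the same argument with $M$ in place of itself and $Z:=\ulcorner M\urcorner-M$. Every coefficient of $\ulcorner M\urcorner-M$ lies in $[0,1)$, so this $Z$ is effective with $\llcorner Z\lrcorner=0$, and the argument above applies verbatim to $\ulcorner M\urcorner=M+Z=(\ulcorner M\urcorner-B)+B$. (Note that Lemma~\ref{easy} requires only the complementary piece $E$ to be effective, not $f$-exceptional, so no $f$-exceptionality hypothesis on this $Z$ is needed.)

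As for difficulty, there is essentially no obstacle: the corollary is a formal consequence of Lemma~\ref{easy}. The only point requiring a moment's attention is translating $\llcorner Z\lrcorner=0$ into the strict coefficient bound $\mult_{C}(Z)<1$ and contrasting it with the integrality of any $B\in\Enu_{\Z}(D)$, which is precisely what produces the contradiction.
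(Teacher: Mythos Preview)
Your proof is correct and in fact more direct than the paper's. The paper first establishes that $D$ is $f$-pseudo-effective, takes the Zariski decomposition $D=P+N$, applies Lemma~\ref{easy} to the pair of decompositions $D=M+Z=P+N$ to get $N\le Z$ (hence $\llcorner N\lrcorner=0$), and then invokes Proposition~\ref{connchain} (the trivial $m=0$ case) to conclude $f$-$\Z$-positivity. You bypass both the Zariski decomposition and Proposition~\ref{connchain} by applying Lemma~\ref{easy} directly to a hypothetical $B\in\Enu_{\Z}(D)$, obtaining $B\le Z$ and an immediate contradiction with integrality. Your observation that Lemma~\ref{easy} never requires $E$ to be $f$-exceptional is also what makes the ``in particular'' clause go through cleanly; the same is implicitly true in the paper's argument, but you make it explicit.
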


\begin{proof}
Note that $D$ is $f$-pseudo-effective by Lemma~\ref{inclusion}~(2).
Let $D=P+N$ be the Zariski decomposition of $D$ in Theorem~\ref{Zardecomp}.
Since $M$ and $-P$ is nef over $N$, we have $N\le Z$ by Lemma~\ref{easy}.
In particular, we have $\llcorner N \lrcorner=0$.
Hence $D$ is $f$-$\Z$-positive by Proposition~\ref{connchain}.
\end{proof}

The following property is important.

\begin{proposition} \label{pullback}
Let $\pi\colon X'\to X$ be a proper birational morphism between normal surfaces.
Let $D$ be an $f$-$\Z$-positive $\R$-divisor on $X$ and $Z$ a $\pi$-exceptional $\R$-divisor  on $X$ with $\llcorner Z \lrcorner=0$.
Then $\pi^{*}D+Z$ is $(f\circ\pi)$-$\Z$-positive.
In particular, $\ulcorner \pi^{*}D \urcorner$ is $(f\circ \pi)$-$\Z$-positive for any $f$-$\Z$-positive $\Z$-divisor $D$ on $X$.
\end{proposition}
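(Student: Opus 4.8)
The plan is: (i) identify the usual Zariski decomposition of $\pi^{*}D+Z$; (ii) reduce $\Z$-positivity to the existence of a connecting chain via Proposition~\ref{connchain}; (iii) build that chain by induction on the length of a connecting chain for $D$ downstairs. We may assume $D$ is $f$-pseudo-effective; write $D=P+N$ for its Zariski decomposition, and (replacing $X$ by $X'$ if needed) allow $Z=0$. \emph{Step 1.} I claim that $\pi^{*}D+Z=\pi^{*}P+(\pi^{*}N+Z)$ is the Zariski decomposition of $\pi^{*}D+Z$ for $f\circ\pi$. Indeed $\pi^{*}P$ is $(f\circ\pi)$-nef (the Mumford pull-back preserves nefness, Appendix~A); $\pi^{*}P\cdot C=0$ for every component $C$ of $\Supp(\pi^{*}N+Z)$, since $\pi^{*}P$ is numerically trivial on $\pi$-exceptional curves while $\pi^{*}P\cdot\widetilde{C_{0}}=P\cdot C_{0}=0$ for the strict transform of a component $C_{0}\subset\Supp N$; and $\pi^{*}N+Z$, if nonzero, is negative definite, for otherwise its support $\Sigma=\pi^{-1}(\Supp N)\cup\Supp Z$ would carry an effective $W>0$ that is nef over $\Sigma$, and then either $W$ is $\pi$-exceptional, contradicting $W^{2}<0$, or $\pi_{*}W>0$ is supported on $\Supp N$ with $\pi_{*}W\cdot C_{0}=W\cdot\pi^{*}C_{0}\ge 0$ for all components $C_{0}$ of $N$ (as $\pi^{*}C_{0}\ge 0$ is supported on $\pi^{-1}(C_{0})\subseteq\Sigma$), contradicting negative definiteness of $N$. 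By the uniqueness in Theorem~\ref{Zardecomp}, $\pi^{*}N+Z$ is the $\R$-negative part of $\pi^{*}D+Z$.

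\emph{Step 2.} By Proposition~\ref{connchain} it suffices to produce a connecting chain from $\pi^{*}D+Z-\llcorner\pi^{*}N+Z\lrcorner$ to $\pi^{*}D+Z$. Since $D$ is $f$-$\Z$-positive, Proposition~\ref{connchain} gives one downstairs, $D-\llcorner N\lrcorner=D_{0}<\cdots<D_{m}=D$ with $C_{i}:=D_{i}-D_{i-1}$ integral curves in $\Supp N$ and $D_{i-1}C_{i}>0$; here each $D_{i}=P+(N-\sum_{j>i}C_{j})$ is again $f$-$\Z$-positive, with a connecting chain of length $i$. I argue by induction on $m$. If $m=0$ then $\llcorner N\lrcorner=0$, so the non-$\pi$-exceptional part of $\pi^{*}N+Z$ has all coefficients $<1$; hence any $B\in\Enu_{\Z}(\pi^{*}D+Z)$ — which satisfies $B\le\pi^{*}N+Z$ by Lemma~\ref{easy} and Step 1 and is integral — must be $\pi$-exceptional, and then $(B-Z)\cdot C=(B-\pi^{*}D-Z)\cdot C\ge 0$ on each component $C$ of $B$ because $\pi^{*}D\cdot C=0$, so $B\in\Enu_{\Z}(Z)$; but $Z=0+Z$ is $(f\circ\pi)$-$\Z$-positive by Corollary~\ref{roundup}, a contradiction, so $\pi^{*}D+Z$ is $(f\circ\pi)$-$\Z$-positive. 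If $m>0$, write $D=D_{m-1}+C_{m}$; by induction $\pi^{*}D_{m-1}+Z'$ is $(f\circ\pi)$-$\Z$-positive for every admissible $Z'$, and one assembles the required chain in three stretches: first a connecting chain (from Proposition~\ref{connchain}(2) applied to a suitable $\pi^{*}D_{m-1}+Z'$) up to a divisor $G\ge\pi^{*}D_{m-1}$; then the single curve $\widetilde{C_{m}}$, which is legitimate since $G\cdot\widetilde{C_{m}}\ge\pi^{*}D_{m-1}\cdot\widetilde{C_{m}}=D_{m-1}C_{m}>0$; then the components of $\Delta_{m}:=\pi^{*}C_{m}-\widetilde{C_{m}}\ge 0$ one at a time, which is possible because $\pi^{*}C_{m}$ is numerically trivial on $\pi$-exceptional curves, so $\widetilde{C_{m}}\cdot\Delta_{m}=-\Delta_{m}^{2}>0$ and at every stage some remaining component of $\Delta_{m}$ still meets the current divisor positively.

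\emph{Main obstacle.} The hard part is the step $m>0$: because pull-back and round-down do not commute — the correction $\Delta_{m}=\pi^{*}C_{m}-\widetilde{C_{m}}$ may carry integral coefficients and share components both with $Z$ and with pull-backs of the lower negative parts — reconciling the prescribed starting divisor $\pi^{*}D+Z-\llcorner\pi^{*}N+Z\lrcorner$ with the chain supplied by induction takes care. Equivalently, in the direct route — take a maximal $B\in\Enu_{\Z}(\pi^{*}D+Z)$, so $B\le\pi^{*}N+Z$, and put $B_{0}:=\pi_{*}B$, which is $\le\llcorner N\lrcorner$, effective and negative definite, and aim for $B_{0}\in\Enu_{\Z}(D)$ (a contradiction with $f$-$\Z$-positivity of $D$; the case $B_{0}=0$ is the $m=0$ argument verbatim) — the only intersection number whose sign is not immediate is $B\cdot\Delta_{0}$ with $\Delta_{0}=\pi^{*}C_{0}-\widetilde{C_{0}}$, and it is controlled using maximality of $B$ (so $B$ already contains as much of $\pi^{*}B_{0}$ as the numerical conditions permit) together with the identity $\widetilde{B_{0}}\cdot(\pi^{*}B_{0}-\widetilde{B_{0}})=-(\pi^{*}B_{0}-\widetilde{B_{0}})^{2}\ge 0$, itself a consequence of $\pi^{*}B_{0}$ being numerically trivial on $\pi$-exceptional curves. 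This exceptional bookkeeping is where the real work lies; everything else is the formal scaffolding above.

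Finally, for the ``in particular'' statement take $Z:=\ulcorner\pi^{*}D\urcorner-\pi^{*}D$: it is effective, $\pi$-exceptional (its support lies in the exceptional locus, since the strict transform of the $\Z$-divisor $D$ is again a $\Z$-divisor) and $\llcorner Z\lrcorner=0$, so $\ulcorner\pi^{*}D\urcorner=\pi^{*}D+Z$ is $(f\circ\pi)$-$\Z$-positive by the main assertion.
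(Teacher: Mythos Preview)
Your Step~1 is fine and matches the paper exactly. The gap is in Step~2. Your inductive chain construction is not carried out: you never specify $Z'$ or $G$, you do not verify that the required starting divisor $\pi^{*}D+Z-\llcorner\pi^{*}N+Z\lrcorner$ lines up with the chain supplied by induction, and the ``three stretches'' argument (especially adding the $\Delta_{m}$ components one by one) is only asserted, not checked. You yourself flag this as ``where the real work lies.'' The alternative direct route you sketch is closer to a proof, but you do not actually show $(B_{0}-D)C_{0}\ge 0$; appealing to ``maximality of $B$'' together with the identity $\widetilde{B_{0}}\cdot(\pi^{*}B_{0}-\widetilde{B_{0}})=-(\pi^{*}B_{0}-\widetilde{B_{0}})^{2}$ is a gesture, not an argument, and in fact maximality is unnecessary.

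The paper's proof \emph{is} your direct route, completed by one clean trick you are missing. Take any $B'\in\Enu_{\Z}(\pi^{*}D+Z)$, write $B'=\pi^{*}B+B_{\pi}$ with $B=\pi_{*}B'$ and $B_{\pi}$ $\pi$-exceptional, and decompose $B_{\pi}-Z=G^{+}-G^{-}$ into effective $\pi$-exceptional parts with no common component. The point is that $G^{+}=0$: if not, by negative definiteness some $\pi$-exceptional component $C$ of $G^{+}$ has $G^{+}C<0$, and then (since $\pi^{*}(B-D)\cdot C=0$ and $G^{-}C\ge 0$) one gets $(B'-D')C=(G^{+}-G^{-})C<0$, contradicting $B'\in\Enu_{\Z}$. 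Once $B_{\pi}\le Z$, two things follow at once: first $B>0$, since otherwise $B'=B_{\pi}$ would be an integral divisor with $0<B'\le Z$ and $\llcorner Z\lrcorner=0$; second, for the strict transform $\widehat{C}$ of any component $C$ of $B$,
\[
(B'-D')\widehat{C}=(B-D)C-G^{-}\widehat{C}\le(B-D)C,
\]
so $\Z$-positivity of $D$ (which gives some $C$ with $(B-D)C<0$) forces $(B'-D')\widehat{C}<0$, a contradiction. No induction, no connecting chains, no maximality. Your base case $m=0$ is essentially this argument restricted to the case $B=0$, so you had the right instinct; what you were missing is that the same mechanism, via the $G^{+}/G^{-}$ split, handles the general case directly.
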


\begin{proof}
We put $D':=\pi^{*}D+Z$.
Suppose that there exists a negative definite $(f\circ \pi)$-exceptional divisor $B'>0$ on $X'$ such that $B'-D'$ is nef over $B'$.
Let us denote $B'=\pi^{*}B+B_{\pi}$ for some $\pi$-exceptional $\Q$-divisor $B_{\pi}$ and $B:=\pi_{*}B'\ge 0$.
Let $D=P+N$ be the Zariski decomposition. Then $D'=\pi^{*}P+(\pi^{*}N+Z)$ gives the Zariski decomposition of $D'$ since $Z$ is effective and $\pi$-exceptional.
Since $\pi^{*}P$ and $B'-D'$ is nef over $B'$, we have $B'\le \pi^{*}N+Z$ by Lemma~\ref{easy}.
Taking $\pi_{*}$, we obtain $B\le N$.
In particular, $B=0$ or $B>0$ is negative definite and $f$-exceptional.
We write $B_{\pi}-Z=G^{+}-G^{-}$, where $G^{+}$ and $G^{-}$ are effective $\pi$-exceptional $\R$-divisors having no common components.
Clearly we have $\Supp(G^{+})\subset \Supp(B')$.
If $G^{+}>0$, then we can take an integral curve $C$ in the support of $G^{+}$ such that $G^{+}C<0$ since $G^{+}$ is negative definite.
Hence we have $C\le B'$ and 
$$
(B'-D')C=(\pi^{*}(B-D)+B_{\pi}-Z)C=(G^{+}-G^{-})C<0,
$$
which contradicts the nefness of $B'-D'$ over $B'$.
Thus $G^{+}=0$ holds.
Then we have $B>0$ by the assumption of $\llcorner Z \lrcorner=0$.
Since $D$ is $f$-$\Z$-positive, there exists an irreducible curve $C$ in $B$ such that $(B-D)C<0$.
Let $\widehat{C}$ be the proper transform of $C$ on $X'$.
Then $\widehat{C}$ is contained in $B'$ and 
$$
(B'-D')\widehat{C}=(B-D)C-G^{-}\widehat{C}<0,
$$
which contradicts the nefness of $B'-D'$ over $B'$.
Hence $D'$ is $(f\circ \pi)$-$\Z$-positive.
\end{proof}

\begin{definition}[cf.\ \cite{Ram}, \cite{Kon}, \cite{Miy}]
Let $D$ be an effective divisor on a normal complete surface $X$.
Then $D$ is called {\em numerically connected}
(resp.\ {\em chain-connected} or series-connected)
if $D_1D_2>0$ (resp.\ $-D_1$ is not nef over $D_2$) holds for any decomposition $D=D_1+D_2$ with $D_1, D_2>0$.
Clearly, numerically connected implies chain-connected.
\end{definition}

\begin{proposition}
Let $D$ be a chain-connented divisor on a normal complete surface $X$, which is not negative definite.
Then $D$ is $\Z$-positive.
\end{proposition}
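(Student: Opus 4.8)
The plan is to argue by contradiction: I will assume that $D$ is chain-connected and not negative definite but fails to be $\Z$-positive, and exhibit a decomposition of $D$ that violates chain-connectedness. Throughout I regard $X\to\mathrm{pt}$ as the structure morphism, so that every divisor is $f$-exceptional and ``nef over $B$'' just means non-negative intersection with each prime component of $B$. If $D$ is not $\Z$-positive there is an effective negative definite divisor $B>0$ on $X$ with $B-D$ nef over $B$, i.e. $(B-D)C\ge 0$ for every prime component $C$ of $B$; fix such a $B$.

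The key step is the inequality $B\le D$, which I would deduce from Lemma~\ref{easy} applied to the two decompositions $D = 0 + D = (D-B)+B$. With $F:=0$, $E:=D$ and $A:=D-B$ the hypotheses are satisfied: $E=D$ is effective; $B>0$ is effective, negative definite and $f$-exceptional; and both $F=0$ and $-A=B-D$ are nef over $B$, the latter being precisely the defining property of $B$. Since Lemma~\ref{easy} imposes no condition on $F$ beyond nefness over $B$, the choice $F=0$ is admissible, and the lemma yields $B\le E=D$.

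Once $B\le D$ is known, $D-B$ is effective. If $D-B=0$ then $D=B$ is negative definite, contrary to hypothesis; hence $D=(D-B)+B$ is a decomposition of $D$ into two non-zero effective divisors. By chain-connectedness, $-(D-B)$ is not nef over $B$, so there is a prime component $C$ of $B$ with $(D-B)C>0$, equivalently $(B-D)C<0$ --- contradicting the choice of $B$. This contradiction shows that $D$ is $\Z$-positive.

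I expect $B\le D$ to be the only step requiring care; everything else is purely formal. Should one prefer not to insert $F=0$ into Lemma~\ref{easy}, an alternative is to take the Zariski decomposition $D=P+N$, apply Lemma~\ref{easy} to $D=P+N=(D-B)+B$ to get $B\le N$, and then verify $N\le D$ for effective $D$: the positive part $P$ must itself be effective, since a prime component of $P$ with negative coefficient would lie in $\Supp N$ and hence have zero intersection with $P$, making the ``negative part'' of $P$ a non-zero effective divisor supported on $\Supp N$ and of non-negative self-intersection --- impossible, as the intersection form on $\Supp N$ is negative definite. Either way $B\le D$, and the dichotomy above concludes.
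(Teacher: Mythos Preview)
Your argument is correct and takes a genuinely different route from the paper. The paper invokes Proposition~\ref{connchain}: it quotes the characterization (from \cite{Kon}) that $D$ is chain-connected if and only if there is a connecting chain from any positive subdivisor to $D$, observes that $D-\llcorner N\lrcorner>0$ because $D$ is not negative definite (so $P>0$), and then reads off $\Z$-positivity from condition~(3) of Proposition~\ref{connchain}. Your proof is more self-contained: you work directly from the definitions, using Lemma~\ref{easy} with $F=0$ to force $B\le D$ and then feeding the decomposition $D=(D-B)+B$ into the \emph{definition} of chain-connectedness to contradict the nefness of $B-D$ over $B$. This avoids both the external characterization of chain-connectedness and the machinery of connecting chains, at the cost of not illuminating why Proposition~\ref{connchain} is the ``right'' bridge between the two notions. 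Your alternative route via $B\le N\le D$ also works; the one step you left terse --- that $P_-^2\ge 0$ --- follows from $P\cdot P_-=0$ (since $P_-$ is supported on $\Supp N$) and $P_-^2=P_+\cdot P_- - P\cdot P_- = P_+\cdot P_-\ge 0$.
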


\begin{proof}
Following the proof of Proposition~1.2 in \cite{Kon}, $D$ is chain-connected if and only if for any subdivisor $0<D_0\le D$, there exists a connecting chain from $D_0$ to $D$.
Let $D=P+N$ be the Zariski decomposition.
Then we have $P> 0$ since $D$ is not negative definite.
Thus $D-\llcorner N \lrcorner> 0$ and so the claim follows from Proposition~\ref{connchain}.
\end{proof}

\begin{remark}
(1) Ramanujam's connectedness lemma implies that any nef and big effective divisor is numerically connected (cf.\ Lemma~2 in \cite{Ram}, p.\ 242 in \cite{KaMa}).
In summary, the following implication holds for effective and not negative definite divisors:
$$
\text{nef and big}\  \Longrightarrow \  \text{numerically connected} \  \Longrightarrow \  \text{chain-connected}  \ \Longrightarrow  \ \text{$\Z$-positive}.
$$

\smallskip

\noindent
(2) Any chain-connected divisor $D$ is connected since $H^{0}(D, \O_{D})$ is a field (cf.\ \cite{Miy} Corollary~3.6).
On the other hand, $\Z$-positive effective divisors are not necessarily connected.
For example, a finite sum of fibers of a fibration over a curve is $\Z$-positive.
However, if further assume that $D$ is big, it is connected since $H^{1}(X,\O_X(-D))=0$ holds by Theorem~\ref{van} proved in the next section.
\end{remark}

%%%%%%%%%%%%%%%%%%%%%%%%%%%%%%%%%%%%%%%%%%%%%%%%%%%%%%%%%%%
%%%%%%%%%%%%%%%%%%%%%%%%%%%%%%%%%%%%%%%%%%%%%%%%%%%%%%%%%%%

\section{Vanishing theorem on a normal surface}
\label{sec:Vanishing theorem on a normal surface}

Let $f\colon X\to Y$ be a proper surjective morphism from a normal surface $X$ to a variety $Y$.
In this section, we prove the following vanishing theorem.
\begin{theorem}[Vanishing theorem] \label{van}
Let $D$ be an $f$-big divisor on $X$ and $D=P_{\Z}+N_{\Z}$ the integral Zariski decomposition in Theorem~\ref{IntZardecomp}.
Let $\mathcal{L}_{D}$ and $\mathcal{L}'_{D}$ respectively be the rank $1$ sheaves on $N_{\Z}$ defined by the cokernel of the homomorphisms $\O_{X}(K_X+P_{\Z})\to \O_{X}(K_X+D)$ and $\O_{X}(-D)\to \O_{X}(-P_{\Z})$ %between divisorial sheaves on $X$ 
induced by multiplying a defining section of $N_{\Z}$.

\smallskip

\noindent
$(1)$ Assume that $\dim(Y)=0$ and the base field $k$ is of characteristic $0$ if we consider the algebraic setting.
Then we have
$$
H^{1}(X,\O_{X}(K_{X}+D))\cong H^{1}(N_{\Z},\mathcal{L}_{D}),
$$
and
$$
H^{1}(X,\O_{X}(-D))\cong H^{0}(N_{\Z},\mathcal{L}'_{D}).
$$

\smallskip

\noindent
$(2)$ Assume that $\dim(Y)\ge 1$. 
Then we have
$$
R^{1}f_{*}\O_{X}(K_X+D)\cong R^{1}f_{*}\mathcal{L}_{D}.
$$
In particular, we have
$$
\mathrm{length}R^{1}f_{*}\O_{X}(K_X+D)=\dim H^{1}(N_{\Z},\mathcal{L}_{D}).
$$
\end{theorem}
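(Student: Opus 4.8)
The plan is to reduce the whole statement to a single vanishing and then establish it by descent from a resolution. Write $D=P+N$ for the ordinary Zariski decomposition (Theorem~\ref{Zardecomp}); every element of $\Enu_{\Z}(D)$ lies below $N$, so $N_{\Z}\le N$ and $P_{\Z}=D-N_{\Z}\ge P$, whence $P_{\Z}$ is $f$-big (it dominates the $f$-big divisor $P$) and $f$-$\Z$-positive (Lemma~\ref{Pispos}). The definitions of $\mathcal{L}_{D}$ and $\mathcal{L}'_{D}$ give the short exact sequences
\[
0\to\O_{X}(K_{X}+P_{\Z})\to\O_{X}(K_{X}+D)\to\mathcal{L}_{D}\to 0,\qquad 0\to\O_{X}(-D)\to\O_{X}(-P_{\Z})\to\mathcal{L}'_{D}\to 0,
\]
the left maps being the inclusions of divisorial sheaves from $K_{X}+P_{\Z}\le K_{X}+D$ and $-D\le-P_{\Z}$, with cokernels supported on the $f$-exceptional curve $N_{\Z}$. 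Everything then follows from the vanishing
\[
(\star)\qquad R^{1}f_{*}\O_{X}(K_{X}+P_{\Z})=0\ \ (\dim Y\ge 1),\qquad H^{1}(X,\O_{X}(K_{X}+P_{\Z}))=0\ \ (\dim Y=0).
\]
Indeed, for $\dim Y\ge 1$ one has $R^{2}f_{*}=0$ on fibres of dimension $\le 1$, so the first sequence gives $R^{1}f_{*}\O_{X}(K_{X}+D)\cong R^{1}f_{*}\mathcal{L}_{D}$; since $\mathcal{L}_{D}$ is an $\O_{N_{\Z}}$-module and $N_{\Z}$ is contracted to finitely many points, $R^{1}f_{*}\mathcal{L}_{D}$ is a skyscraper of length $\dim H^{1}(N_{\Z},\mathcal{L}_{D})$. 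For $\dim Y=0$ one uses moreover that $H^{0}(X,\O_{X}(-P_{\Z}))=0$ (as $P_{\Z}$ is big and pseudo-effective, $-P_{\Z}$ is not linearly equivalent to an effective divisor), and that $H^{2}(X,\O_{X}(K_{X}+P_{\Z}))=0$ and $H^{1}(X,\O_{X}(-P_{\Z}))=0$, which are Serre-dual to $H^{0}(X,\O_{X}(-P_{\Z}))$ and to $(\star)$ -- here one invokes that a divisorial sheaf on the normal, hence Cohen--Macaulay, surface $X$ is maximal Cohen--Macaulay, so its local $\mathscr{E}xt$-sheaves into $\omega_{X}$ vanish and Serre duality takes the stated form. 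The two long exact sequences then yield both isomorphisms.

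First I would prove $(\star)$ in the relative case $\dim Y\ge 1$, in arbitrary characteristic, following Sakai's argument in \cite{Sak}: after a base change one may assume $Y$ is a point or a smooth curve and work over a complete local base, where $R^{1}f_{*}$ of a coherent sheaf is computed by cohomology on the curves contracted by $f$; one then induces over these fibral curves using the connecting-chain description of $f$-$\Z$-positivity (Proposition~\ref{connchain}), each step killing an $H^{1}$ on an integral curve by a Riemann--Roch/Serre-duality estimate that exploits the positivity of the relevant intersection number.

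Next I would treat the absolute case $\dim Y=0$, in characteristic $0$. Take a resolution $\pi\colon X'\to X$. By Proposition~\ref{pullback} the $\Z$-divisor $D':=\ulcorner\pi^{*}P_{\Z}\urcorner$ (the round-up of the Mumford pull-back) is $\Z$-positive on $X'$, and it is big since it dominates the nef and big divisor $\pi^{*}P$. On the smooth surface $X'$ write the ordinary Zariski decomposition $D'=P'+N'$ and fix a connecting chain $D'-\llcorner N'\lrcorner=D_{0}<D_{1}<\dots<D_{m}=D'$ (Proposition~\ref{connchain}). The base case $H^{1}(X',\O_{X'}(K_{X'}+D_{0}))=0$ is Kawamata--Viehweg vanishing, because $D_{0}$ is integral with $D_{0}=P'+\{N'\}$, $P'$ a nef and big $\Q$-divisor and $\{N'\}$ a fractional boundary. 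In each step $D_{i}=D_{i-1}+C_{i}$ with $C_{i}$ an integral curve and $D_{i-1}C_{i}>0$; since intersection numbers on the smooth surface $X'$ are integers we get $D_{i-1}C_{i}\ge 1$ -- this is precisely the point of resolving -- so $\O_{X'}(K_{X'}+D_{i})|_{C_{i}}$ is a line bundle of degree $(K_{X'}+C_{i})C_{i}+D_{i-1}C_{i}=2p_{a}(C_{i})-2+D_{i-1}C_{i}\ge 2p_{a}(C_{i})-1$ on the integral Gorenstein curve $C_{i}$, its $H^{1}$ vanishes (Serre duality on $C_{i}$), and the restriction sequence propagates $H^{1}=0$ from $D_{i-1}$ to $D_{i}$; hence $H^{1}(X',\O_{X'}(K_{X'}+D'))=0$. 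To descend, apply the relative case just proved to $\pi$ and $D'$ (which is $\pi$-big, being big, and $\pi$-$\Z$-positive): since $D'$ is $\pi$-$\Z$-positive its integral Zariski negative part over $\pi$ vanishes, so $R^{1}\pi_{*}\O_{X'}(K_{X'}+D')=0$, and the Leray spectral sequence gives $H^{1}(X,\pi_{*}\O_{X'}(K_{X'}+D'))=0$. Finally $\pi_{*}\O_{X'}(K_{X'}+D')$ is a subsheaf of $\O_{X}(K_{X}+P_{\Z})$ ($\pi_{*}$ preserves effectivity of divisors, and the two sheaves agree on the smooth locus of $X$), with cokernel supported at the finitely many singularities of $X$, hence a skyscraper with no $H^{1}$; so $H^{1}(X,\O_{X}(K_{X}+P_{\Z}))$ is a quotient of $0$, which is $(\star)$.

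The main obstacle is $(\star)$. In the relative case it is Sakai's semi-local vanishing, which needs a careful induction tailored to the fibre configuration. In the absolute case the delicate point is the descent from the resolution: because $\O_{X}(K_{X}+P_{\Z})$ is not locally free at the singular points, one cannot naively pull back and push forward, and one must (i) take $\ulcorner\pi^{*}P_{\Z}\urcorner$ rather than $\pi^{*}P_{\Z}$ so that Proposition~\ref{pullback} preserves $\Z$-positivity and the relative vanishing applies on $X'$, and (ii) verify, using the Mumford intersection theory of Appendix~A, that the resulting direct image recovers $\O_{X}(K_{X}+P_{\Z})$ up to a zero-dimensional sheaf. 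A minor point is to start the connecting-chain induction on $X'$ at a divisor ($D_{0}=D'-\llcorner N'\lrcorner$) whose Zariski decomposition displays it in exactly the form needed for Kawamata--Viehweg.
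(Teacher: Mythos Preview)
Your proposal is correct and follows essentially the same route as the paper: reduce to the vanishing $(\star)$ for the big $\Z$-positive part $P_{\Z}$, establish the relative case via Sakai's formal-function/induction argument on fibral curves (Lemma~\ref{locvanreg} and Proposition~\ref{locvan}), and in the absolute case run the connecting-chain induction on a resolution starting from Kawamata--Viehweg (Corollary~\ref{Miyvan}, Proposition~\ref{bigZposvan}) before descending. Your descent via the inclusion $\pi_{*}\O_{X'}(K_{X'}+\ulcorner\pi^{*}P_{\Z}\urcorner)\hookrightarrow\O_{X}(K_{X}+P_{\Z})$ with zero-dimensional cokernel is equivalent to the paper's version, which instead enlarges by a sufficiently effective $\pi$-exceptional $Z$ and invokes the projection formula (Lemma~\ref{proj}) to get an actual isomorphism $\pi_{*}\O_{X'}(K_{X'}+\ulcorner\pi^{*}P_{\Z}\urcorner+Z)\cong\O_{X}(K_{X}+P_{\Z})$; either formulation works.
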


\begin{remark}
In the situation of Theorem~\ref{van}, $R^{i}f_{*}\O_X(K_X+D)=0$ automatically holds for any $i\ge 2$ since $D$ is $f$-big.
\end{remark}

\begin{remark}
Theorem~\ref{van}~(1) is a generalization of the Kawamata-Viehweg vanishing \cite{Kaw} \cite{Vie}, Ramanujam's $1$-connected vanishing \cite{Ram}, Miyaoka's vanishing \cite{Miy} on surfaces.
Langer's vanishing (\cite{Lan} Theorem~3.2 and its remarks) is essentially the same as Theorem~\ref{van} (1) in the case that $X$ is projective over $\C$ and $D$ is $\Z$-positive, which was proved by using a log version of Reider's method \cite{Rei}.
Our proof is more elementary.
Theorem~\ref{van}~(2) is a generalization of the local vanishing theorem due to Ramanujam (Lemma~4 in \cite{Ram}), Sakai (Theorem~(2.2) in \cite{Sak}) and Koll\'ar-Kov\'acs (2.2.5~Corollary in \cite{KoKo}).
\end{remark}

\subsection{Proof of Theorem~\ref{van} (2)}

First, we assume that $\dim(Y)\ge 1$.
We follow Sakai's argument in \cite{Sak} in this subsection.

\begin{lemma}[(Semi-)local vanishing on a regular surface. cf.\ \cite{Sak}, \cite{KoKo}]\label{locvanreg}
Assume that $X$ is regular.
Let $D$ be an $f$-big $f$-$\Z$-positive divisor on $X$.
Then $R^{1}f_{*}\O_{X}(K_X+D)=0$ holds.
\end{lemma}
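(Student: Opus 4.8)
The plan is to use the connecting chain of Proposition~\ref{connchain} to strip $D$ down to the positive part of its usual Zariski decomposition, at which point the statement becomes a relative Kawamata--Viehweg type vanishing. First I would take the Zariski decomposition $D=P+N$ of Theorem~\ref{Zardecomp}; since $PN=0$ and $N$ is negative definite, $P^2=D^2-N^2\ge D^2$, and $P$ has the same degree as $D$ on a general fibre when $\dim Y=1$, so $P$ is again $f$-nef and $f$-big. Because $D$ is $f$-$\Z$-positive, Proposition~\ref{connchain} provides a connecting chain
\[
D_0:=D-\llcorner N\lrcorner=P+\{N\}<D_1<\cdots<D_m=D,\qquad C_i:=D_i-D_{i-1},
\]
with each $C_i$ an integral curve satisfying $D_{i-1}C_i>0$. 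Since $X$ is regular, every $C_i$ and every $D_{i-1}$ is Cartier, so twisting the structure sequence of $C_i$ by $\O_X(K_X+D_i)$ and applying adjunction $\O_X(K_X+C_i)|_{C_i}=\omega_{C_i}$ produces short exact sequences
\[
0\to\O_X(K_X+D_{i-1})\to\O_X(K_X+D_i)\to\omega_{C_i}\otimes\O_{C_i}(D_{i-1})\to0,
\]
where the line bundle $\O_{C_i}(D_{i-1})$ has degree $D_{i-1}C_i>0$ on $C_i$.

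The next step is to show $R^1f_*(\omega_{C_i}\otimes\O_{C_i}(D_{i-1}))=0$ for every $i$. If $f$ does not contract $C_i$, then $f|_{C_i}$ is finite, so $R^1f_*$ of any sheaf supported on $C_i$ vanishes. If $f$ contracts $C_i$ to a point $y$, then this sheaf is the skyscraper $H^1(C_i,\omega_{C_i}\otimes\O_{C_i}(D_{i-1}))$ at $y$; as $C_i$ is a Cartier divisor on the regular surface $X$ it is Gorenstein, so Serre duality on $C_i$ identifies this with the dual of $H^0(C_i,\O_{C_i}(-D_{i-1}))$, which vanishes because $C_i$ is integral and $\deg\O_{C_i}(-D_{i-1})=-D_{i-1}C_i<0$. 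Feeding these into the long exact cohomology sequences above, an ascending induction on $i$ reduces the lemma to the base case $R^1f_*\O_X(K_X+D_0)=0$, i.e.\ the case in which $D$ is replaced by $D_0=P+\{N\}$ (so $\llcorner N\lrcorner=0$).

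It remains to settle this base case, where I expect the real difficulty to lie. Here $K_X+D_0=K_X+P+\{N\}$ with $P$ an $f$-nef and $f$-big $\R$-divisor and $\{N\}$ effective, $f$-exceptional, with $\llcorner\{N\}\lrcorner=0$, so $R^{>0}f_*\O_X(K_X+D_0)=0$ is precisely the relative Kawamata--Viehweg (Grauert--Riemenschneider) vanishing for the regular surface $X$; for surface contractions this holds in arbitrary characteristic, which is the content of the theorems of Sakai \cite{Sak} and Koll\'ar--Kov\'acs \cite{KoKo}. A self-contained argument would pass to a log resolution $\pi\colon X'\to X$ of $(X,\Supp N)$, apply the vanishing on $X'$ to the $(f\circ\pi)$-$\Z$-positive divisor $\ulcorner\pi^*D_0\urcorner$ (using Proposition~\ref{pullback}), and descend along $\pi$ via $R^1\pi_*=0$; the subtle points there --- making the Kawamata--Viehweg argument work in positive characteristic and handling a possibly non-simple-normal-crossing exceptional boundary --- are the technical heart, whereas the connecting-chain reduction above is essentially formal once the curve-term vanishing is in hand.
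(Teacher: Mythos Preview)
Your reduction via the connecting chain is sound and mirrors exactly what the paper does in the \emph{absolute} case (Proposition~\ref{bigZposvan}, which assumes characteristic~$0$). The paper, however, proves the relative Lemma~\ref{locvanreg} by a different and more elementary route that needs no base-case vanishing theorem at all: by the formal function theorem it suffices to show $H^{1}(B,\O_B(K_X+D))=0$ for every $f$-exceptional effective divisor $B$, equivalently (Serre duality on the Gorenstein curve $B$) $H^{0}(B,\O_B(B-D))=0$. One then inducts on the number of prime components of $B$, using $f$-$\Z$-positivity (together with $f$-bigness when $\dim Y=1$, via Corollary~\ref{fib}) to find a prime $C\le B$ with $(B-D)C<0$, and peels $C$ off via
\[
0\to\O_{B-C}(B-C-D)\to\O_B(B-D)\to\O_C(B-D)\to0.
\]

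The practical difference is that the paper's argument is self-contained and characteristic-free, whereas your base case $D_0=P+\{N\}$ is itself a relative Kawamata--Viehweg statement whose positive-characteristic validity you do not establish; the ``self-contained'' attack you sketch via a log resolution and $\ulcorner\pi^{*}D_0\urcorner$ is circular, since it applies the very lemma being proved on $X'$. In characteristic~$0$ your strategy works and is indeed how the paper later handles the absolute case; for the relative lemma the formal-function-theorem argument both shortens the proof and removes the characteristic hypothesis.
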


\begin{proof}
%The proof is essentially identical to the proof of Theorem~2.2 in \cite{Sak}.
%For the sake of readers, we sketch the proof.
In order to prove the claim, it suffices to show the completion $R^{1}f_{*}\O_{X}(K_X+D)^{\widehat{}}_{y}$ is $0$ for any closed point $y\in Y$.
By the formal function theorem, it suffices to show that for any $f$-exceptional effective divisor $B$ on $X$, the vanishing $H^{1}(B,\O_{B}(K_X+D))=0$ holds, which is equivalent to $H^{0}(B,\O_{B}(B-D))=0$ by the Serre duality.
We will show this by the induction on the number of prime components (counting multiplicity) of $B$.
Note that $B$ is negative definite when $\dim(Y)=2$, or negative semi-definite when $\dim(Y)=1$ by Zariski's lemma.
If $B$ is a prime divisor, then it follows that $(B-D)B<0$ since $D$ is $f$-big and $f$-$\Z$-positive (note that when $\dim(Y)=1$, the $f$-bigness of $D$ implies that $B-D$ is not nef over $B$ if $\Supp(B)$ contains the support of a fiber $F$ of $f$ since $DF>0$, see Corollary~\ref{fib}).
Hence the claim follows.
We assume that $B$ is not prime.
Then there is a prime component $C$ of $B$ such that $(B-D)C<0$ since $D$ is $f$-big and $f$-$\Z$-positive.
Taking $H^{0}$ of the exact sequence
$$
0\to \O_{B-C}(B-C-D)\to \O_{B}(B-D) \to \O_{C}(B-D)\to 0,
$$
we have $H^{0}(B,\O_{B}(B-D))=0$ by the inductive assumption.
Hence the claim holds.
\end{proof}

\begin{lemma}[Projection formula. cf.\ \cite{Sak}]\label{proj}
Let $\pi \colon X'\to X$ be a proper birational morphism between normal surfaces.
Let $D$ be an $\R$-divisor on $X$ and $Z$ a $\pi$-exceptional effective $\R$-divisor on $X'$.
Then $\pi_{*}\O_{X'}(\llcorner \pi^{*}D+Z \lrcorner)\cong \O_{X}(\llcorner D \lrcorner)$ holds.
\end{lemma}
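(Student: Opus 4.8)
The plan is to prove the stronger statement that, regarding both sheaves as subsheaves of the constant sheaf $\mathcal{K}_{X}$ of rational functions on $X$ (which coincides with $\mathcal{K}_{X'}$ since $\pi$ is birational), one has the \emph{equality} $\pi_{*}\O_{X'}(\llcorner \pi^{*}D+Z \lrcorner)=\O_{X}(\llcorner D \lrcorner)$ inside $\mathcal{K}_{X}$. Since $\pi_{*}\mathcal{F}(U)=\mathcal{F}(\pi^{-1}(U))$ and the right-hand side is also computed openwise, it suffices to fix an open $U\subseteq X$ and show that a rational function $g$ lies in $\O_{X'}(\llcorner \pi^{*}D+Z \lrcorner)(\pi^{-1}(U))$ if and only if it lies in $\O_{X}(\llcorner D \lrcorner)(U)$. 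As $\mathrm{div}_{X}(g)$ and $\mathrm{div}_{X'}(g)$ are $\Z$-divisors, the round-downs may be absorbed: for a $\Z$-divisor $E$ and an $\R$-divisor $\Delta$ one has $E+\llcorner\Delta\lrcorner\ge 0$ iff $E+\Delta\ge 0$ (compare coefficients along a prime and use that an integer is $\ge\lceil a\rceil$ iff it is $\ge a$). So everything reduces to the equivalence
$$
\mathrm{div}_{X'}(g)+\pi^{*}D+Z\ge 0\ \text{on}\ \pi^{-1}(U)\qquad\Longleftrightarrow\qquad\mathrm{div}_{X}(g)+D\ge 0\ \text{on}\ U,
$$
where $\pi^{*}D$ is Mumford's pull-back.

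For the implication $\Leftarrow$, set $D':=\mathrm{div}_{X}(g)+D$, which by hypothesis is effective over $U$. Since $\mathrm{div}_{X}(g)$ is Cartier, its Mumford pull-back is the ordinary one, namely $\mathrm{div}_{X'}(g)$, so $\R$-linearity of the pull-back gives $\mathrm{div}_{X'}(g)+\pi^{*}D=\pi^{*}D'$. After replacing $X,X'$ by $U,\pi^{-1}(U)$ (harmless, as Mumford's pull-back is compatible with restriction to opens) we may assume $D'$ is effective, and then $\pi^{*}D'\ge 0$ by the basic fact (Appendix~A) that the Mumford pull-back of an effective divisor is effective: writing $\pi^{*}D'=\widetilde{D'}+\sum_{i}a_{i}E_{i}$ with $\widetilde{D'}\ge 0$ the strict transform and the $E_{i}$ the $\pi$-exceptional primes, the defining relations $\sum_{i}a_{i}(E_{i}E_{j})=-\widetilde{D'}E_{j}\le 0$ together with negative-definiteness of $(E_{i}E_{j})_{ij}$ force all $a_{i}\ge 0$. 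Adding the effective divisor $Z$ then yields $\mathrm{div}_{X'}(g)+\pi^{*}D+Z\ge 0$ over $\pi^{-1}(U)$.

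For $\Rightarrow$, let $C$ be any prime divisor of $X$ meeting $U$ and let $\widetilde{C}\subseteq X'$ be its strict transform; then $\widetilde{C}$ meets $\pi^{-1}(U)$ and defines the same valuation on $\mathcal{K}_{X}$ as $C$. Evaluating the hypothesis along $\widetilde{C}$ and using $\mathrm{ord}_{\widetilde{C}}(\mathrm{div}_{X'}(g))=\mathrm{ord}_{C}(\mathrm{div}_{X}(g))$, $\mathrm{ord}_{\widetilde{C}}(\pi^{*}D)=\mathrm{ord}_{C}(D)$ (the non-exceptional part of the Mumford pull-back reproduces $D$), and $\mathrm{ord}_{\widetilde{C}}(Z)=0$ (as $Z$ is $\pi$-exceptional, so $\widetilde{C}\not\subseteq\Supp(Z)$), we obtain $\mathrm{ord}_{C}(\mathrm{div}_{X}(g)+D)\ge 0$. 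Since $C$ was arbitrary, $\mathrm{div}_{X}(g)+D\ge 0$ over $U$, completing the equivalence and hence the proof.

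The only non-formal ingredient is the effectivity of the Mumford pull-back of an effective divisor, which is itself just the negative-definiteness of the exceptional intersection matrix (together with the elementary fact that a symmetric negative-definite $M$ with $Mx\le 0$ has $x\ge 0$); everything else is bookkeeping with the identification $\mathcal{K}_{X}=\mathcal{K}_{X'}$ and with the horizontal/$\pi$-exceptional splitting of divisors. I therefore do not expect a genuine obstacle, only the need to keep that splitting straight.
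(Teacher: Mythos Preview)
Your proof is correct and takes essentially the same approach as the paper's: both identify sections with rational functions and use that Mumford pull-back preserves effectivity (via negative-definiteness of the exceptional intersection matrix) for the inclusion $\O_X(\llcorner D\lrcorner)\subseteq \pi_*\O_{X'}(\llcorner \pi^*D+Z\lrcorner)$. For the reverse inclusion the paper restricts to $X'\setminus E\cong X\setminus\{x\}$ and then invokes normality of $X$ to extend over the codimension-$2$ center, whereas you equivalently read off the coefficient along each strict transform directly; this is a cosmetic difference only.
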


\begin{proof}
We prove this in the algebraic setting. 
In the analytic case, the proof is similar and proved in \cite{Sak}.
%First, we show the claim when $X'$ is regular.
Since the claim is local on $X$, we may assume that $X$ is affine and contains only one point $x$ in the center of $\pi$.
Let $E:=\pi^{-1}(x)$ be the $\pi$-exceptional set on $X'$.
Then we have the following exact sequence
$$
0\to H^{0}(X', \O_{X'}(\llcorner \pi^{*}D+Z\lrcorner))\to H^{0}(X'\setminus E, \O_{X'}(\llcorner \pi^{*}D+Z\lrcorner))\to H_{E}^{1}(\O_{X'}(\llcorner \pi^{*}D+Z\lrcorner)).
$$

Because $X$ is normal, we have natural isomorphisms
$$
H^{0}(X'\setminus E, \O_{X'}(\llcorner \pi^{*}D+Z\lrcorner))\cong  H^{0}(X\setminus \{x\}, \O_{X}(\llcorner D\lrcorner))\cong  H^{0}(X, \O_{X}(\llcorner D\lrcorner))
$$
and then obtain the natural injection $H^{0}(X', \O_{X'}(\llcorner \pi^{*}D+Z\lrcorner))\to H^{0}(X, \O_{X}(\llcorner D\lrcorner))$.
One can check easily that this is an isomorphism.
Indeed, since $\pi^{*}\mathrm{div}(\varphi)=\mathrm{div}(\pi^{*}\varphi)$ for any rational function $\varphi$ on $X$ and the Mumford pull-back $\pi^{*}$ preserves the effectivity of divisors, then $\mathrm{div}(\varphi)+D\ge 0$ implies $\mathrm{div}(\pi^{*}\varphi)+\llcorner \pi^{*}D+Z\lrcorner \ge 0$ for any rational function $\varphi$ on $X$.
\end{proof}

\begin{proposition}[(Semi-)local vanishing on a normal surface. cf.\ \cite{Sak}]\label{locvan}
Let $D$ be an $f$-big $f$-$\Z$-positive divisor on the normal surface $X$.
Then $R^{1}f_{*}\O_{X}(K_X+D)=0$ holds.
\end{proposition}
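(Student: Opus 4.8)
The plan is to bootstrap from the already-established regular case, Lemma~\ref{locvanreg}, by passing to a resolution and then descending. Fix a resolution of singularities $\pi\colon X'\to X$ (these exist for surfaces over any field) and put $g:=f\circ\pi\colon X'\to Y$, which is again proper and surjective. On $X'$ I would work not with a pull-back of $K_X+D$ itself but with the $\Z$-divisor $D':=\ulcorner\pi^{*}D\urcorner$, where $\pi^{*}$ denotes the Mumford pull-back. By Proposition~\ref{pullback}, $D'$ is $g$-$\Z$-positive. It is also $g$-big: since $\dim Y\ge1$, a general fibre of $g$ lies over a point of $Y$ outside the finite set $f(\mathrm{Sing}(X))$, hence avoids the $\pi$-exceptional locus, and there $D'$ agrees with $\pi^{*}D$, so that $D'\cdot(\text{general fibre of }g)=D\cdot(\text{general fibre of }f)>0$. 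Therefore Lemma~\ref{locvanreg}, applied to $g$ and $D'$, gives $R^{1}g_{*}\O_{X'}(K_{X'}+D')=0$.

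Now I would descend this vanishing to $X$. Set $\mathcal{G}:=\pi_{*}\O_{X'}(K_{X'}+D')$. Over the smooth locus $X\setminus\mathrm{Sing}(X)$ the morphism $\pi$ is an isomorphism taking $K_{X'}+D'$ to $K_X+D$, so, viewing both as subsheaves of the constant sheaf of rational functions on $X$, there is a natural inclusion $\mathcal{G}\subseteq\O_X(K_X+D)$ which is an equality away from the finitely many points of $\mathrm{Sing}(X)$; hence the cokernel $Q$ is a torsion sheaf supported on $\mathrm{Sing}(X)$. The five-term exact sequence of the Leray spectral sequence for $g=f\circ\pi$ yields an injection
$$
R^{1}f_{*}\mathcal{G}=R^{1}f_{*}\bigl(\pi_{*}\O_{X'}(K_{X'}+D')\bigr)\hookrightarrow R^{1}g_{*}\O_{X'}(K_{X'}+D')=0,
$$
so $R^{1}f_{*}\mathcal{G}=0$. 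Applying $R^{\bullet}f_{*}$ to $0\to\mathcal{G}\to\O_X(K_X+D)\to Q\to0$ and using $R^{1}f_{*}Q=0$ (since $Q$ is supported on a finite set of points) gives a surjection $R^{1}f_{*}\mathcal{G}\twoheadrightarrow R^{1}f_{*}\O_X(K_X+D)$, and hence $R^{1}f_{*}\O_X(K_X+D)=0$, as desired.

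I expect the real subtlety to lie in the choice of $D'$ rather than in any single computation. The tempting choice $\ulcorner\pi^{*}(K_X+D)\urcorner$ would, by the projection formula (Lemma~\ref{proj}), push forward exactly to $\O_X(K_X+D)$; but the associated divisor $\ulcorner\pi^{*}(K_X+D)\urcorner-K_{X'}=\ulcorner\pi^{*}D-(K_{X'}-\pi^{*}K_X)\urcorner$ need not be $g$-$\Z$-positive (nor even $g$-big), because the discrepancy divisor $K_{X'}-\pi^{*}K_X$ can have coefficients $<-1$ when $X$ has sufficiently bad singularities, so Lemma~\ref{locvanreg} does not apply to it. Using $D'=\ulcorner\pi^{*}D\urcorner$ instead restores $g$-$\Z$-positivity (Proposition~\ref{pullback}) at the cost that $\pi_{*}\O_{X'}(K_{X'}+D')$ becomes only a finite-colength subsheaf of $\O_X(K_X+D)$ — which is harmless for the first direct image, since the two sheaves differ only over the finitely many singular points, over which higher direct images vanish. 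Thus the only genuine verification beyond assembling Lemma~\ref{locvanreg}, Proposition~\ref{pullback} and the Leray sequence is the identification of $\mathcal{G}$ with a subsheaf of $\O_X(K_X+D)$ having torsion cokernel, which is the short local computation over $X\setminus\mathrm{Sing}(X)$ indicated above.
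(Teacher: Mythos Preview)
Your proof is correct and follows essentially the same route as the paper's: pass to a resolution, set $D'=\ulcorner\pi^{*}D\urcorner$, invoke Proposition~\ref{pullback} and Lemma~\ref{locvanreg} to obtain $R^{1}g_{*}\O_{X'}(K_{X'}+D')=0$, then descend via the Leray spectral sequence. The only difference is in the final comparison: the paper enlarges $K_{X'}+D'$ by a sufficiently effective $\pi$-exceptional $Z$ so that, by the projection formula (Lemma~\ref{proj}), $\pi_{*}\O_{X'}(K_{X'}+\ulcorner\pi^{*}D\urcorner+Z)\cong\O_X(K_X+D)$ on the nose, whereas you work directly with the inclusion $\mathcal{G}\subseteq\O_X(K_X+D)$ and its torsion cokernel---a slightly more direct variant of the same idea that avoids invoking Lemma~\ref{proj}.
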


\begin{proof}
Let $\pi\colon X'\to X$ be a resolution.
By using the Leray spectral sequence 
$$
E_2^{p,q}=R^{p}f_{*}R^{q}\pi_{*}\O_{X'}(K_{X'}+\ulcorner \pi^{*}D\urcorner)\Rightarrow E^{m}=R^{m}(f\circ \pi)_{*}\O_{X'}(K_{X'}+\ulcorner \pi^{*}D\urcorner)
$$
and $E^1=0$ by Proposition~\ref{pullback} and Lemma~\ref{locvanreg}, we have $R^{1}f_{*}\pi_{*}\O_{X'}(K_{X'}+\ulcorner \pi^{*}D\urcorner)=0$.
Let $Z$ be an effective $\pi$-exceptional divisor on $X'$.
There is an exact sequence
$$
0\to \pi_{*}\O_{X'}(K_{X'}+\ulcorner \pi^{*}D\urcorner)\to \pi_{*}\O_{X'}(K_{X'}+\ulcorner \pi^{*}D\urcorner+Z)\to \mathcal{T}\to 0,
$$
where $\mathcal{T}$ is a torsion sheaf on $X$ whose support is of dimension $0$.
Taking $R^{1}f_{*}$, we have $R^{1}f_{*}\pi_{*}\O_{X'}(K_{X'}+\ulcorner \pi^{*}D\urcorner+Z)=0$.
Let us write $K_{X'}=\pi^{*}K_X-\Delta$, where $\Delta$ is a $\pi$-exceptional $\Q$-divisor on $X'$.
By taking $Z$ sufficiently effective, we have
$$
\pi_{*}\O_{X'}(K_{X'}+\ulcorner \pi^{*}D\urcorner+Z)=\pi_{*}\O_{X'}(\llcorner \pi^{*}(K_X+D)+Z'\lrcorner)\cong \O_{X}(K_X+D)
$$
by Lemma~\ref{proj}, where $Z':=Z-\Delta+\ulcorner \pi^{*}D\urcorner-\pi^{*}D+\{\pi^{*}(K_X+D)\}$ is an effective $\pi$-exceptional divisor.
Hence $R^{1}f_{*}\O_{X}(K_X+D)=0$ follows.
\end{proof}

\begin{proof}[Proof of Theorem~\ref{van} (2)]
Let $D=P_{\Z}+N_{\Z}$ be the integral Zariski decomposition as in Theorem~\ref{IntZardecomp}.
Then we have an exact sequence
$$
0\to \O_{X}(K_X+P_{\Z})\to \O_{X}(K_X+D)\to \mathcal{L}_D\to 0.
$$
Since $R^{i}f_{*}\O_{X}(K_X+P_{\Z})=0$ for $i\ge 1$ by Proposition~\ref{locvan}, we get
$R^{1}f_{*}\O_{X}(K_X+D)\cong R^{1}f_{*}\mathcal{L}_D$.
\end{proof}

\subsection{Proof of Theorem~\ref{van} (1)}

Next, we assume that $\dim(Y)=0$ and $X$ is a normal proper surface over a field of characteristic $0$.
The following lemma can be proved similarly to Proposition~\ref{locvan} by using Lemma~\ref{proj}.

\begin{lemma}\label{reduction}
Let $\pi\colon X'\to X$ be a proper birational morphism between normal complete surfaces.
Let $M$ be an $\R$-divisor on $X$.
If $H^{1}(X',\O_{X'}(K_{X'}+\ulcorner \pi^{*}M \urcorner))=0$, then $H^{1}(X,\O_{X}(K_X+\ulcorner M\urcorner))=0$ holds.
\end{lemma}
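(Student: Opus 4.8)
The plan is to mirror the proof of Proposition~\ref{locvan}, with the Leray spectral sequence for $\pi$ itself playing the role that the spectral sequence for the composite $f\circ\pi$ played there; this is legitimate because now $\dim(Y)=0$, so the argument takes place entirely with absolute cohomology. Concretely, set $\mathcal{F}:=\O_{X'}(K_{X'}+\ulcorner\pi^{*}M\urcorner)$. The five-term exact sequence attached to $E_{2}^{p,q}=H^{p}(X,R^{q}\pi_{*}\mathcal{F})\Rightarrow H^{p+q}(X',\mathcal{F})$ begins with an injection $H^{1}(X,\pi_{*}\mathcal{F})\hookrightarrow H^{1}(X',\mathcal{F})$, so the hypothesis $H^{1}(X',\mathcal{F})=0$ forces $H^{1}(X,\pi_{*}\mathcal{F})=0$.

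Next I would enlarge the divisor by a $\pi$-exceptional effective one. For any effective $\pi$-exceptional $\Z$-divisor $Z$ on $X'$ there is an exact sequence
\[
0\to\pi_{*}\mathcal{F}\to\pi_{*}\O_{X'}(K_{X'}+\ulcorner\pi^{*}M\urcorner+Z)\to\mathcal{T}\to 0,
\]
where $\mathcal{T}$ is a torsion sheaf on $X$ supported on the finite set $\pi(Z)$, so $H^{1}(X,\mathcal{T})=0$; together with the vanishing already obtained this gives $H^{1}(X,\pi_{*}\O_{X'}(K_{X'}+\ulcorner\pi^{*}M\urcorner+Z))=0$. Finally, as in the proof of Proposition~\ref{locvan}, writing $K_{X'}=\pi^{*}K_{X}-\Delta$ with $\Delta$ a $\pi$-exceptional $\Q$-divisor and choosing $Z$ sufficiently effective, Lemma~\ref{proj} identifies
\[
\pi_{*}\O_{X'}(K_{X'}+\ulcorner\pi^{*}M\urcorner+Z)=\pi_{*}\O_{X'}(\llcorner\pi^{*}(K_{X}+\ulcorner M\urcorner)+Z'\lrcorner)\cong\O_{X}(K_{X}+\ulcorner M\urcorner),
\]
with $Z':=Z-\Delta+(\ulcorner\pi^{*}M\urcorner-\pi^{*}\ulcorner M\urcorner)+\{\pi^{*}(K_{X}+\ulcorner M\urcorner)\}$, which is effective and $\pi$-exceptional once $Z\gg 0$. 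Combining the two displays yields $H^{1}(X,\O_{X}(K_{X}+\ulcorner M\urcorner))=0$.

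The one place where care is needed, and the only genuine difference from Proposition~\ref{locvan}, is that $M$ is now an arbitrary $\R$-divisor rather than a $\Z$-divisor, so $\ulcorner\pi^{*}M\urcorner-\pi^{*}M$ is no longer $\pi$-exceptional. This is why the correction term in $Z'$ must be written with $\pi^{*}\ulcorner M\urcorner$ instead of $\pi^{*}M$: for a prime $C_{i}$ with $\mult_{C_{i}}M=a_{i}$, both $\ulcorner\pi^{*}M\urcorner$ and $\pi^{*}\ulcorner M\urcorner$ carry coefficient $\ulcorner a_{i}\urcorner$ along the strict transform of $C_{i}$, so $\ulcorner\pi^{*}M\urcorner-\pi^{*}\ulcorner M\urcorner$ is $\pi$-exceptional, and $\{\pi^{*}(K_{X}+\ulcorner M\urcorner)\}$ is $\pi$-exceptional because $K_{X}+\ulcorner M\urcorner$ is an integral divisor. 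The remaining verifications (that the displayed round-down really equals $K_{X'}+\ulcorner\pi^{*}M\urcorner+Z$, and applicability of Lemma~\ref{proj}) are a routine repetition of the computation in Proposition~\ref{locvan}, so I do not expect any real obstacle.
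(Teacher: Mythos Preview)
Your proposal is correct and follows precisely the approach the paper indicates: the paper only states that the lemma ``can be proved similarly to Proposition~\ref{locvan} by using Lemma~\ref{proj}'' without giving details, and you have faithfully carried this out. Your observation that for an $\R$-divisor $M$ the correction term must be written with $\pi^{*}\ulcorner M\urcorner$ rather than $\pi^{*}M$ (so that $\ulcorner\pi^{*}M\urcorner-\pi^{*}\ulcorner M\urcorner$ is $\pi$-exceptional) is exactly the adjustment needed to make the argument of Proposition~\ref{locvan} go through, and the rest is a direct transcription.
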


Sakai proved in \cite{Sak} the following vanishing theorem by using the usual Kawamata-Viehweg vanishing theorem and Lemma~\ref{reduction}.

\begin{proposition}[\cite{Sak} Theorem~5.1]\label{KVvan}
Let $M$ be a nef and big $\R$-divisor on $X$.
Then we have $H^{1}(X,\O_{X}(K_X+\ulcorner M \urcorner))=0$.
\end{proposition}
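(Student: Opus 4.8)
The plan is to follow Sakai's two-step reduction: first pass to a smooth model, then apply the Kawamata--Viehweg vanishing theorem. Since $X$ is defined over a field of characteristic $0$, we may choose a resolution of singularities $\pi\colon X'\to X$, so that $X'$ is a smooth projective surface. The Mumford pull-back $\pi^{*}M$ is again nef: for an integral curve $C'$ on $X'$ one has $\pi^{*}M\cdot C'=M\cdot \pi_{*}C'$, which is $\ge 0$ when $\pi_{*}C'\neq 0$ because $M$ is nef, and which vanishes when $C'$ is $\pi$-exceptional by the definition of the Mumford pull-back (see Appendix~A). Moreover $(\pi^{*}M)^{2}=M^{2}>0$, so $\pi^{*}M$ is a nef and big $\R$-divisor on $X'$. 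By Lemma~\ref{reduction} applied to $M$, it therefore suffices to prove $H^{1}(X',\O_{X'}(K_{X'}+\ulcorner \pi^{*}M\urcorner))=0$; that is, we may assume from the start that $X$ itself is a smooth projective surface over a field of characteristic $0$.

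In that case the statement is an instance of the Kawamata--Viehweg vanishing theorem \cite{Kaw}, \cite{Vie}: if $M$ is a nef and big $\R$-divisor on a smooth projective variety in characteristic $0$, then $H^{i}(X,\O_{X}(K_{X}+\ulcorner M\urcorner))=0$ for every $i>0$, so in particular for $i=1$. The $\R$-divisor form used here follows from the classical $\Q$-divisor statement in the usual way, by taking a log resolution of the fractional part $\ulcorner M\urcorner-M$ together with the density of $\Q$ (equivalently, by perturbing $M$ slightly inside the nef and big cone to a $\Q$-divisor with the same round-up); this is standard and I would not reproduce it. This finishes the proof.

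The essential content is the reduction in the first paragraph, and there the only two things to verify are (a) that the Mumford pull-back of a nef and big $\R$-divisor is again nef and big, and (b) that Lemma~\ref{reduction} genuinely allows one to descend the $H^{1}$-vanishing from $X'$ to $X$; both are immediate from the elementary properties of Mumford's intersection product recorded in Appendix~A, so I do not anticipate a real obstacle. The only point that requires a moment's care is the passage from the $\Q$-divisor to the $\R$-divisor version of Kawamata--Viehweg, and even that is a routine and well-known perturbation argument.
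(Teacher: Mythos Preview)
Your proof is correct and follows essentially the same route as the paper's: reduce to the smooth case via Lemma~\ref{reduction} and a resolution, then invoke Kawamata--Viehweg. The paper makes explicit a few of the standard steps you allude to---perturbing $M$ to an ample $\Q$-divisor with the same round-up, passing to the algebraic closure, and applying Lemma~\ref{reduction} a second time after a log resolution of $(X,\{M\})$---but the overall argument is the same.
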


\begin{proof}
Taking a resolution and using Lemma~\ref{reduction}, we may assume that $X$ is smooth.
Shifting the coefficients of prime divisors in $M$, we may assume that $M$ is ample.
Moreover, taking the base change to the algebraic closure of the base field, we may assume that the base field is algebraically closed 
(note that taking the fractional part of $M$ commutes with the base change because all varieties are necessarily geometrically reduced in characteristic $0$).
By taking a log resolution $\pi\colon X'\to X$ of $(X,\{M\})$ and using Lemma~\ref{reduction} again, the assertion follows from the Kawamata-Viehweg vanishing theorem.
\end{proof}

The following corollary in the smooth surface case is a special version of the Miyaoka vanishing theorem in \cite{Miy}.

\begin{corollary}[Miyaoka vanishing theorem on a normal surface, cf.\ \cite{Miy}]\label{Miyvan}
Let $D$ be a big divisor on $X$ with the Zariski decomposition $D=P+N$.
If $\llcorner N \lrcorner=0$, then $H^{1}(X,\O_{X}(K_X+D))=0$.
\end{corollary}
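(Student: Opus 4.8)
The plan is to reduce the statement to Proposition~\ref{KVvan} by replacing the big divisor $D$ with the round-up of a suitable nef and big $\R$-divisor that is numerically equivalent to it modulo a fractional $f$-exceptional correction. First I would write the Zariski decomposition $D=P+N$ guaranteed by Theorem~\ref{Zardecomp}; since $D$ is big, $P$ is nef and big. The hypothesis $\llcorner N\lrcorner=0$ means $N=\{N\}$, so $D=P+N$ expresses $D$ as a nef and big $\R$-divisor $P$ plus the fractional $\R$-divisor $N$ supported on the negative definite exceptional curves, and $\llcorner N\lrcorner=0$ gives $\ulcorner P\urcorner=\ulcorner P+N\lrcorner$... more precisely, since $N\geq 0$ and $D-N=P$, and all coefficients of $D$ are integers while $N=\{N\}$ has coefficients in $[0,1)$, we get $D=\ulcorner P\urcorner$. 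Indeed for each prime component $C_i$, writing $D=\sum a_iC_i$ with $a_i\in\Z$ and $P=D-N$, the coefficient of $P$ at $C_i$ is $a_i-n_i$ with $n_i\in[0,1)$, hence $\ulcorner a_i-n_i\urcorner=a_i$ whenever $n_i>0$, and $=a_i$ trivially when $n_i=0$; so $\ulcorner P\urcorner=D$.

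Then I would simply invoke Proposition~\ref{KVvan} with $M:=P$: since $P$ is a nef and big $\R$-divisor on $X$, we get $H^1(X,\O_X(K_X+\ulcorner P\urcorner))=0$, and by the identity $\ulcorner P\urcorner=D$ just established, this is exactly $H^1(X,\O_X(K_X+D))=0$.

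The only place requiring care — and the one I would present as the single nontrivial step — is the verification that $\llcorner N\lrcorner=0$ together with $D\in\WDiv(X)$ forces $\ulcorner P\urcorner=D$. This is a one-line coefficient-wise check as above, using that $N$ is effective with $\llcorner N\lrcorner=0$ (so $0\le n_i<1$ for every $i$) and $D$ has integer coefficients; no geometry is involved. Everything else is a direct citation: Theorem~\ref{Zardecomp} for the existence of $D=P+N$ with $P$ nef (and $P$ big because $P\sim_{\mathrm{num}} D-N$ with $D$ big and $N$ small, or simply because $P^2=PD>0$ by property (iii) and bigness of $D$), and Proposition~\ref{KVvan} for the vanishing. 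So the proof is essentially a reformulation of Sakai's vanishing, and I expect no real obstacle.
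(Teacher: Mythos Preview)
Your proposal is correct and follows exactly the same approach as the paper: observe that $\ulcorner P\urcorner=D-\llcorner N\lrcorner=D$ and apply Proposition~\ref{KVvan} to the nef and big $\Q$-divisor $P$. The paper's proof is the one-line version of what you wrote; your coefficient-wise verification of $\ulcorner P\urcorner=D$ is fine, though the identity $\ulcorner D-N\urcorner=D-\llcorner N\lrcorner$ for $D$ integral is all that is needed.
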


\begin{proof}
Since $P=D-N$ is a nef and big $\Q$-divisor on $X$ and $\ulcorner P\urcorner=D-\llcorner N\lrcorner=D$, the claim holds from Proposition~\ref{KVvan}.
\end{proof}

By using Corollary~\ref{Miyvan}, we will prove the big $\Z$-positive vanishing, the smooth case of which is essentially the same as Theorem~2.7 in \cite{Miy}.
\begin{proposition}\label{bigZposvan}
Let $D$ be a big $\Z$-positive divisor on $X$.
Then we have $H^{1}(X,\O_X(K_X+D))=0$.
\end{proposition}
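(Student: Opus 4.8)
The plan is to climb a connecting chain, starting from the Miyaoka-type vanishing of Corollary~\ref{Miyvan}. Let $D=P+N$ be the Zariski decomposition and set $D_0:=D-\llcorner N\lrcorner$. Since $P$ (which is also the positive part of $D_0$, because $PC=0$ for every prime component $C$ of $\{N\}\subset\Supp(N)$) is nef and big, the divisor $D_0=P+\{N\}\ge P$ is big, and its negative part $\{N\}$ satisfies $\llcorner\{N\}\lrcorner=0$; hence $H^{1}(X,\O_X(K_X+D_0))=0$ by Corollary~\ref{Miyvan}. Because $D$ is $\Z$-positive, Proposition~\ref{connchain} furnishes a connecting chain $D_0<D_1<\cdots<D_m=D$ in which $C_i:=D_i-D_{i-1}$ is an integral curve and $D_{i-1}C_i>0$; each $D_i$ is a $\Z$-divisor. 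It therefore suffices to show, by induction on $i$, that $H^{1}(X,\O_X(K_X+D_i))=0$.

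For the inductive step, since $C_i$ is effective there is a natural inclusion of reflexive sheaves $\O_X(K_X+D_{i-1})\hookrightarrow\O_X(K_X+D_i)$ whose cokernel $\mathcal{G}_i$ is a rank-$1$ sheaf supported on the integral curve $C_i$. From the inductive hypothesis $H^{1}(X,\O_X(K_X+D_{i-1}))=0$ and the associated long exact sequence we obtain an injection $H^{1}(X,\O_X(K_X+D_i))\hookrightarrow H^{1}(C_i,\mathcal{G}_i)$, so it is enough to prove $H^{1}(C_i,\mathcal{G}_i)=0$. First, $\mathcal{G}_i$ is torsion-free on $C_i$: if a local section represented by a rational function $\varphi\in\O_X(K_X+D_i)$ were torsion, then $g\varphi\in\O_X(K_X+D_{i-1})$ for some $g$ not vanishing along $C_i$, and comparing valuations along every prime divisor $E$ (using $\operatorname{ord}_E(K_X+D_i)=\operatorname{ord}_E(K_X+D_{i-1})$ for $E\ne C_i$ together with $\varphi\in\O_X(K_X+D_i)$) forces $\varphi\in\O_X(K_X+D_{i-1})$ by reflexivity. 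Second, writing $\O_X(K_X+D_{i-1})=\O_X(K_X+D_i-C_i)$, the sheaf $\mathcal{G}_i$ is the restriction of $\O_X(K_X+D_i)$ to $C_i$ in Mumford's intersection theory, so $\deg\mathcal{G}_i=(K_X+D_i)C_i$ (Appendix~A).

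Now Serre duality on the Cohen--Macaulay curve $C_i$ yields $H^{1}(C_i,\mathcal{G}_i)^{\vee}\cong\Hom_{\O_{C_i}}(\mathcal{G}_i,\omega_{C_i})$. A nonzero homomorphism $\mathcal{G}_i\to\omega_{C_i}$ would be injective, since both sheaves are torsion-free of rank $1$ on the integral curve $C_i$, hence would realize $\mathcal{G}_i$ as a subsheaf of $\omega_{C_i}$ and give
$$
(K_X+D_i)C_i=\deg\mathcal{G}_i\le\deg\omega_{C_i}=(K_X+C_i)C_i,
$$
i.e.\ $D_{i-1}C_i\le 0$, contradicting the connecting-chain condition; here we used the adjunction formula $\deg\omega_{C_i}=(K_X+C_i)C_i$. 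Therefore $H^{1}(C_i,\mathcal{G}_i)=0$, which completes the induction, and $i=m$ gives $H^{1}(X,\O_X(K_X+D))=0$.

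I expect the main obstacle to lie in the local analysis of $\mathcal{G}_i$ on the possibly singular curve $C_i$ with only Weil divisors at hand---verifying torsion-freeness and, above all, identifying $\deg\mathcal{G}_i$ with the Mumford intersection number $(K_X+D_i)C_i$---since this is exactly what turns the combinatorial inequality $D_{i-1}C_i>0$ of the connecting chain into the strict degree inequality needed for the vanishing. The remaining ingredients, namely the base case via Corollary~\ref{Miyvan} and the cohomology long exact sequence, are routine.
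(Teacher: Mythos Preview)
Your approach matches the paper's exactly when $X$ is smooth: start from Corollary~\ref{Miyvan} at $D_0 = D - \llcorner N\lrcorner$ and climb the connecting chain using the exact sequences $0\to\O_X(K_X+D_{i-1})\to\O_X(K_X+D_i)\to\O_{C_i}(K_X+D_i)\to 0$; the paper dispatches the $H^1$ of the quotient in one line via Serre duality, $H^1(C_i,\O_{C_i}(K_X+D_i))\cong H^0(C_i,\O_{C_i}(-D_{i-1}))^\ast=0$, since $D_{i-1}C_i>0$.

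The obstacle you flag is real, and the paper does not try to overcome it directly. When $X$ is only normal and $C_i$ passes through a singular point of $X$, the cokernel $\mathcal{G}_i$ need not be a line bundle, and neither the identity $\deg\mathcal{G}_i=(K_X+D_i)C_i$ nor the adjunction $\deg\omega_{C_i}=(K_X+C_i)C_i$ holds on the nose in Mumford's intersection theory: Riemann--Roch for Weil divisors on a normal surface carries local correction terms at the singular points, and there is no reason for these to cancel in the inequality you need (Appendix~A defines the Mumford pairing via pullback to a resolution and says nothing about degrees of such cokernels). Your degree comparison therefore does not go through as stated. The paper circumvents this by \emph{first} reducing to the smooth case: take a resolution $\pi\colon X'\to X$, observe that $\ulcorner\pi^\ast D\urcorner$ is again big and $\Z$-positive by Proposition~\ref{pullback}, run the connecting-chain argument on $X'$ where every divisor is Cartier, and then descend the vanishing to $X$ via Lemma~\ref{reduction}. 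This two-step structure is what you are missing; once you insert it, the local analysis of $\mathcal{G}_i$ becomes unnecessary.
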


\begin{proof}
First, we assume that $X$ is smooth.
Let $D=P+N$ be the Zariski decomposition.
Since $D$ is $\Z$-positive, there exists a connecting chain $D-\ulcorner N\urcorner=:D_0<D_1<\cdots<D_m:=D$ by Proposition~\ref{connchain}.
Putting $C_i:=D_i-D_{i-1}$, we have $D_{i-1}C_i>0$.
From Corollary~\ref{Miyvan}, we have $H^{1}(X,\O_{X}(K_X+D_0))=0$.
By taking the cohomologies of the exact sequences
$$
0\to \O_{X}(K_X+D_{i-1})\to \O_{X}(K_X+D_i)\to \O_{C_i}(K_X+D_i)\to 0
$$
and using the vanishing $H^{1}(C_i,\O_{C_i}(K_X+D_i))\cong H^{0}(C_i,\O_{C_i}(-D_{i-1}))^{*}=0$,
the claim holds by the induction on $m$.

Next, we will prove the claim in general.
Take a resolution $\pi\colon X'\to X$.
Then $\ulcorner \pi^{*}D\urcorner$ is big and $\Z$-positive by proposition~\ref{pullback}.
Thus we have $H^{1}(X',\O_{X'}(K_{X'}+\ulcorner \pi^{*}D\urcorner))=0$.
By using Lemma~\ref{reduction}, we get $H^{1}(X,\O_X(K_X+D))=0$.
\end{proof}

\begin{proof}[Proof of Theorem~\ref{van} (1)]
Let $D=P_{\Z}+N_{\Z}$ be the integral Zariski decomposition.
Then $H^{1}(X,\O_{X}(K_X+D))\cong H^{1}(N_{\Z},\mathcal{L}_D)$ holds by using the exact sequence
$$
0\to \O_{X}(K_X+P_{\Z})\to \O_X(K_X+D)\to \mathcal{L}_D\to 0
$$
and $H^{1}(X,\O_{X}(K_X+P_{\Z}))=0$ by Proposition~\ref{bigZposvan}.
The dual case $H^{1}(X,\O_X(-D))\cong H^{0}(N_{\Z},\mathcal{L}'_{D})$ can be proved similarly.
\end{proof}

%%%%%%%%%%%%%%%%%%%%%%%%%%%%%%%%%%%%%%%%%%%%%%%%%%%%%%%%%%%
%%%%%%%%%%%%%%%%%%%%%%%%%%%%%%%%%%%%%%%%%%%%%%%%%%%%%%%%%%%

\section{Reider-type theorems}
\label{sec:Reider-type theorems}

In this section, we apply Theorem~\ref{van} to the criterion of higher order separations of adjoint (relative) linear systems.
Let $f\colon X\to Y$ be a proper surjective morphism from a normal surface $X$ to a variety $Y$.
If $\dim(Y)=0$, we assume that the base field $k$ is of characteristic $0$ if we consider the algebraic setting.

\subsection{The main theorem}

%Let $f\colon X\to Y$ be a proper surjective morphism from a normal surface $X$ to a variety $Y$.

Let $\zeta\subset X$ be a {\em cluster}, that is, a subscheme (or an analytic subset) of dimension $0$.
First, we define invariants $\delta_{\zeta}$, $q_{\zeta}$ and $\delta'_{\zeta}$ of the pair $(X,\zeta)$.

\begin{definition}[Invariants of a cluster] \label{cluster}
We consider resolutions $\pi\colon X'\to X$ of the singularities contained in $\zeta$ and $\pi$-exceptional divisors $Z>0$ satisfying $\pi_{*}\O_{X'}(-Z)\subset \mathcal{I_{\zeta}}$, where $\mathcal{I_{\zeta}}$ is the sheaf of ideals corresponding to $\zeta$.
For such $\pi$ and $Z$, we define $\delta_{\zeta}(\pi,Z)$ as the number $-(\Delta-Z)^2$ if $\Delta-Z$ is not effective, or $0$ otherwise, where $\Delta:=\pi^{*}K_X-K_{X'}$ is the anti-canonical cycle of $\pi$.
The number $\delta_{\zeta}$ is defined by the minimum $\delta_{\zeta}(\pi,Z)$ among all such $\pi$ and $Z$.
Note that the minimum always exists since $Z$ is integral and there is a sufficiently divisible integer $s$ such that $s\Delta$ is integral for any resolution $\pi$.
%The invariant $\delta_{\zeta}$ of the germ $(X,\zeta)$ is defined as 
%$$
%\delta_{\zeta}:=\mathrm{inf}\{\delta_{\zeta}(\pi,Z)\ |\ \text{$\pi$ and $Z$ are as above}\}.
%$$
%Note that there exist $\pi$ and $Z$ such that $\delta_{\zeta}=\delta_{\zeta}(\pi,Z)$ since $Z$ is integral and there is a sufficiently divisible integer $s$ such that $s\Delta$ is integral for any resolution $\pi$.
By definition, we can write $\delta_{\zeta}=\sum_{x}\delta_{\zeta_x}$, where $\zeta_x$ is the subcluster of $\zeta$ supported at the point $x$.

Let us define $q_{\zeta}$ as 
$$
q_{\zeta}:=\mathrm{min}\{E^2, \delta_{\zeta}/4 \ |\ \text{$E$ is an effective divisor on $X$ with $E\cap \zeta\neq \emptyset$ and $E^2>0$}\}
$$
and put $\delta'_{\zeta}:=q_{\zeta}(\delta_{\zeta}/4q_{\zeta}+1)^2$.
Clearly we have $\delta'_{\zeta}\ge \delta_{\zeta}$ with the equality holding if and only if $q_{\zeta}=\delta_{\zeta}/4$.
\end{definition}

The main theorem in this section is the following.

\begin{theorem}[Reider-type theorem I]\label{sep}
Let $D$ be an $f$-big divisor on $X$ and $D=P+N$ $($resp.\ $D=P_{\Z}+N_{\Z}$$)$ the Zariski decomposition $($resp.\ the integral Zariski decomposition$)$ of $D$.
Let $\zeta$ be a cluster along which $K_X+D$ is Cartier.
When $\dim(Y)=0$, we further assume that $P^2>\delta_{\zeta}$ $($resp.\ $P_{\Z}^{2}>\delta_{\zeta}$$)$.
Then the natural map $f_{*}\O_{X}(K_X+D)\to f_{*}(\O_{X}(K_X+D)|_{\zeta})$ is surjective, or
 there exists an $f$-exceptional divisor $B>0$ on $X$ intersecting $\zeta$ such that $(D-B)B\le \delta_{\zeta}/4$ $($resp.\ and $D+N_{\Z}-2B$ is big$)$.
\end{theorem}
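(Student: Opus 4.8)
The plan is to reduce the statement, via the vanishing theorem (Theorem~\ref{van}), to the case where $D$ is itself $f$-$\Z$-positive (resp.\ $f$-nef) and big, and then to run Reider's instability argument on a resolution of the singular points of $\zeta$, pushing the divisor it produces down to $X$; the hypothesis $P^{2}>\delta_{\zeta}$ (resp.\ $P_{\Z}^{2}>\delta_{\zeta}$) and, when $\dim(Y)=0$, the characteristic-zero assumption enter only at that last step. First I would reformulate: if $f_{*}\O_{X}(K_X+D)\to f_{*}(\O_{X}(K_X+D)|_{\zeta})$ fails to be surjective, then applying $f_{*}$ to
$$
0\to\mathcal{I}_{\zeta}\otimes\O_{X}(K_X+D)\to\O_{X}(K_X+D)\to\O_{X}(K_X+D)|_{\zeta}\to 0
$$
gives $R^{1}f_{*}\bigl(\mathcal{I}_{\zeta}\otimes\O_{X}(K_X+D)\bigr)\neq 0$, and $H^{1}\bigl(X,\mathcal{I}_{\zeta}\otimes\O_{X}(K_X+D)\bigr)\neq 0$ when $\dim(Y)=0$; so it suffices to manufacture $B$ out of this nonvanishing.

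Next I would reduce to the $\Z$-positive (resp.\ nef) case using the vanishing theorem. If $\Supp(N_{\Z})$ meets $\zeta$, then $B:=N_{\Z}$ already gives the conclusion: it is $f$-exceptional and meets $\zeta$, $(D-B)B=P_{\Z}N_{\Z}\le 0\le\delta_{\zeta}/4$ because $-P_{\Z}$ is nef over $N_{\Z}$, and $D+N_{\Z}-2B=P_{\Z}$ is $f$-big. If $\Supp(N_{\Z})\cap\zeta=\emptyset$, consider the exact sequence $0\to\mathcal{I}_{\zeta}\otimes\O_{X}(K_X+P_{\Z})\to\mathcal{I}_{\zeta}\otimes\O_{X}(K_X+D)\to\mathcal{L}_{D}\to 0$ (its quotient is supported on $N_{\Z}$, away from $\zeta$): the non-surjectivity obstruction is supported near $\zeta$, hence lifts to $H^{1}$ of the first term, and together with $H^{1}(X,\O_{X}(K_X+P_{\Z}))=0$ (Theorem~\ref{van}, since $P_{\Z}$ is big and $\Z$-positive) this shows that $f_{*}\O_{X}(K_X+P_{\Z})\to f_{*}(\O_{X}(K_X+P_{\Z})|_{\zeta})$ is not surjective. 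So we may replace $D$ by $P_{\Z}$ (for the usual-decomposition version, by the nef part $P$, working with $\ulcorner\pi^{*}P\urcorner$ on the resolution below), and henceforth $D$ is $f$-$\Z$-positive and $f$-big with $\Supp(N_{\Z})\cap\zeta=\emptyset$, and (when $\dim(Y)=0$) $D^{2}>\delta_{\zeta}$.

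Now the instability step, in the case $\dim(Y)=0$. Choose a resolution $\pi\colon X'\to X$ of the singularities in $\Supp(\zeta)$ and an effective $\pi$-exceptional $Z>0$ with $\pi_{*}\O_{X'}(-Z)\subseteq\mathcal{I}_{\zeta}$ realizing $\delta_{\zeta}=\delta_{\zeta}(\pi,Z)=-(\Delta-Z)^{2}$, where $\Delta:=\pi^{*}K_X-K_{X'}$. By the projection formula (Lemma~\ref{proj}), for a suitable such pair the sheaf $\mathcal{I}_{\zeta}\otimes\O_{X}(K_X+D)$ is the pushforward of $\O_{X'}(K_{X'}+M')$ for an integral divisor $M'$ numerically equal to $\pi^{*}D+\Delta-Z$ up to an effective exceptional fractional correction, so by the Leray spectral sequence the nonvanishing lifts to $H^{1}\bigl(X',\O_{X'}(K_{X'}+M')\bigr)\neq 0$, which by Serre duality is a nonzero class in $\mathrm{Ext}^{1}$ defining a non-split rank-$2$ extension
$$
0\to\O_{X'}\to\mathcal{E}\to\mathcal{I}_{W}\otimes\O_{X'}(M')\to 0
$$
with $W$ zero-dimensional and $c_{1}(\mathcal{E})^{2}-4c_{2}(\mathcal{E})=D^{2}-\delta_{\zeta}>0$ (using $\pi^{*}D\cdot(\Delta-Z)=0$ and $(\Delta-Z)^{2}=-\delta_{\zeta}$). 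Since the base field has characteristic zero, $\mathcal{E}$ is Bogomolov-unstable, so it has a saturated sub-line-bundle $\O_{X'}(A')$ with $2A'-M'$ big and $A'(M'-A')\le c_{2}(\mathcal{E})$, one of $A'$ and $M'-A'$ meeting the exceptional locus over $\zeta$. As $\pi_{*}$ preserves effectivity and bigness, pushing the appropriate divisor down and comparing intersection numbers on $X'$ and on $X$ (which differ by a discrepancy term controlled by $\delta_{\zeta}$) yields an effective $B$ on $X$ meeting $\zeta$ with $(D-B)B\le\delta_{\zeta}/4$; since $B$ is disjoint from $\Supp(N_{\Z})$, adding $N_{\Z}$ back gives the bigness of $D+N_{\Z}-2B$.

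When $\dim(Y)\ge 1$ the same construction is done in a neighbourhood of a fibre over a point $y$; the divisors supported on $f^{-1}(y)$ form a negative (semi-)definite lattice by Zariski's lemma, which together with $f$-bigness of $D$ forces the numerical inequality used above for free --- this is why no extra hypothesis is imposed there --- and allows one to replace the Bogomolov step by a direct computation in that lattice, valid in any characteristic. I expect the main obstacle to be the bookkeeping in the instability step: handling the rounding corrections to $M'$ and the non-integrality of $\Delta$ on a non-Gorenstein $X$, choosing an admissible $(\pi,Z)$ realizing $\delta_{\zeta}$ for which $\O_{X'}(K_{X'}+M')$ has the required pushforward and the extension class falls in the Bogomolov-unstable range precisely when $D^{2}>\delta_{\zeta}$, and verifying that the destabilizing sub-line-bundle descends to a genuine effective divisor on $X$ meeting $\zeta$ with the asserted intersection and bigness properties.
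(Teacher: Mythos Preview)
Your reduction to the $\Z$-positive case is essentially the paper's, but your core ``instability step'' takes a genuinely different route. You invoke Serre's construction and Bogomolov instability on $X'$; the paper instead applies its own vanishing theorem (Theorem~\ref{van}) \emph{a second time}: since $D'=\pi^{*}D+\Delta-Z$ is $f'$-big but $R^{1}f'_{*}\O_{X'}(K_{X'}+D')\neq 0$, Theorem~\ref{van} forces $D'$ to be non-$f'$-$\Z$-positive, and the divisor $B'$ is simply the $\Z$-negative part of the integral Zariski decomposition of $D'$. Writing $B'=\pi^{*}B+B_{\pi}$ and using $(B'-D')B'\ge 0$, completing the square gives
$(D-B)B\le\bigl(B_{\pi}-\tfrac{1}{2}(\Delta-Z)\bigr)^{2}-\tfrac{1}{4}(\Delta-Z)^{2}\le\delta_{\zeta}/4$
directly; $B\cap\zeta\neq\emptyset$ is forced by the $\Z$-positivity of $D$, and bigness of $D-2B$ follows from Corollary~\ref{reflection}. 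This avoids vector bundles entirely, works uniformly for all $\dim Y$ (and in any characteristic when $\dim Y\ge 1$), and is exactly where the integral Zariski decomposition earns its keep. Your route is the Reider/Langer one and is valid for $\dim Y=0$ in characteristic~$0$, but your relative-case substitute (``a direct computation in that lattice'') is not a genuine replacement for Bogomolov and is left unsubstantiated; the paper's argument handles this case for free.

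There is also a small gap in your reduction. After replacing $D$ by $P_{\Z}$ and finding $B$ with $(P_{\Z}-B)B\le\delta_{\zeta}/4$, you assert that $B$ is disjoint from $\Supp(N_{\Z})$, but nothing guarantees this. The paper splits into cases: if $N_{\Z}B\le 0$ then $(D-B)B\le(P_{\Z}-B)B\le\delta_{\zeta}/4$; if $N_{\Z}B>0$, replace $B$ by $\overline{B}:=B+N_{\Z}$ and check $(D-\overline{B})\overline{B}=(P_{\Z}-B)B+P_{\Z}N_{\Z}-BN_{\Z}<\delta_{\zeta}/4$ and $D+N_{\Z}-2\overline{B}=P_{\Z}-2B$ is big.
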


\begin{proof}
%Note that the relative linear system $f_{*}\O_{X}(K_X+D)$ is free at $x$ if and only if $f^{*}f_{*}\O_X(K_X+D)\to \O_X(K_X+D)$ is surjective at $x$, which is equivalent to the surjectivity of $f_{*}\O_{X}(K_X+D)\to f_{*}\left(\O_{X}(K_X+D)|_{x}\right)$ at $x$ since $K_X+D$ is Cartier at $x$.

Assume that $f_{*}\O_{X}(K_X+D)\to f_{*}(\O_{X}(K_X+D)|_{\zeta})$ is not surjective.
Then we have $R^{1}f_{*}\mathcal{I}_{\zeta}\O_{X}(K_X+D)\neq 0$.

First, we consider the case that $D$ is $f$-$\Z$-positive.
%Assume that $f_{*}\O_{X}(K_X+D)\to f_{*}(\O_{X}(K_X+D)|_{\zeta})$ is not surjective,
% which is equivalent to $R^{1}f_{*}\mathcal{I}_{\zeta}\O_{X}(K_X+D)\neq 0$.
We take a resolution $\pi\colon X'\to X$ of the singularities contained in $\zeta$
and a $\pi$-exceptional divisor $Z>0$ on $X'$ such that $\pi_{*}\O_{X'}(-Z)\subset \mathcal{I_{\zeta}}$ and $\delta_{\zeta}=\delta_{\zeta}(\pi,Z)$.
Then $\pi_{*}\O_{X'}(\pi^{*}(K_X+D)-Z)$ is a subsheaf of $\mathcal{I}_{\zeta}\O_X(K_X+D)$ whose cokernel is supported on $\zeta$. 
Hence we have $R^{1}f_{*}\pi_{*}\O_{X'}(\pi^{*}(K_X+D)-Z)\neq 0$.
By using the Leray spectral sequence 
$$
E_2^{p,q}=R^{p}f_{*}R^{q}\pi_{*}\O_{X'}(\pi^{*}(K_X+D)-Z) \Rightarrow E^m=R^{m}f'_{*}\O_{X'}(\pi^{*}(K_X+D)-Z),
$$
we have $R^{1}f'_{*}\O_{X'}(K_{X'}+D')\neq 0$, where we put 
$D':=\pi^{*}D+\Delta-Z$ and $f':=f\circ \pi$.
Note that $\Delta-Z$ is not effective in this case.
Indeed, if $Z\le \Delta$, then we may assume that $D'=\ulcorner \pi^{*}D\urcorner$ by replacing $Z$ with $\llcorner \Delta \lrcorner$, which is $f'$-big and $f'$-$\Z$-positive by Proposition~\ref{roundup}.
By Theorem~\ref{van}, it is a contradiction to the non-vanishing of $R^{1}f'_{*}\O_{X'}(K_{X'}+D')$.
When $\dim(Y)=0$, $D'$ is $f'$-big since $\pi^{*}P+\Delta-Z$ is $f'$-big from the assumption $P^2>\delta_{\zeta}$ and Lemma~\ref{positive}.
When $\dim(Y)\ge 1$, $D'$ is always $f'$-big from the $f$-bigness of $D$.
Hence the non-vanishing of $R^{1}f'_{*}\O_{X'}(K_{X'}+D')$ implies that $D'$ is not $f'$-$\Z$-positive by Theorem~\ref{van}.
Let $B'>0$ be the $\Z$-negative part of $D'$ as in Theorem~\ref{IntZardecomp} and put $B:=\pi_{*}B'$.
Then $B$ is non-zero.
Indeed, if $B'$ is $\pi$-exceptional, then we have $R^{1}f'_{*}\O_{X'}(K_{X'}+D'-B')\neq 0$
by the same argument as above after replacing $Z$ with $Z+B'$, which is a contradiction to Theorem~\ref{van}.
%since $R^{1}f'_{*}\O_{X'}(K_{X'}+D'-B')=0$ by Theorem~\ref{van} and the fact that $f_{*}\O_{X}(K_X+D)\to f_{*}(\O_{X}(K_X+D)|_{\zeta})$ is not surjective.
Let us write $B'=\pi^{*}B+B_{\pi}$ for some $\pi$-exceptional $\Q$-divisor $B_{\pi}$ on $X'$.
Then we have $0\le (B'-D')B'=(B-D)B+(B_{\pi}-\Delta+Z)B_{\pi}$.
Hence
\begin{align*}
(D-B)B&\le (B_{\pi}-\Delta+Z)B_{\pi}\\
&=\left(B_{\pi}-\frac{1}{2}(\Delta-Z)\right)^2-\frac{1}{4}(\Delta-Z)^2\\
%&\le -\frac{1}{4}(\Delta-Z)^2 \\
&\le \frac{1}{4}\delta_{\zeta}.
\end{align*}
We show that $B$ intersects $\zeta$.
Suppose that $B\cap \zeta =\emptyset$.
Then for any subcurve $C\le B$, the proper transform $\widehat{C}\le B'$ of $C$ equals the Mumford pull-back $\pi^{*}C$.
Thus $B$ is negative definite and we have
$$
0\le (B'-D')\widehat{C}=(\pi^{*}(B-D)+B_{\pi}-\Delta+Z)\pi^{*}C=(B-D)C.
$$
Hence $B-D$ is nef over $B$, which contradicts the $f$-$\Z$-positivity of $D$.
When $\dim(Y)=0$ and $D^2>\delta_{\zeta}$, we will show that $D-2B$ is $f$-big.
For this, it is enough to show that $D'-2B'$ is big since $D-2B=\pi_{*}(D'-2B')$ and Lemma~\ref{pushpull} (1).
This follows from Corollary~\ref{reflection}.

For any $f$-big divisor $D$ which is not $f$-$\Z$-positive, we consider the integral Zariski decomposition $D=P_{\Z}+N_{\Z}$ as in Theorem~\ref{IntZardecomp}.
If $N_{\Z}$ intersects $\zeta$, then $B:=N_{\Z}$ satisfies the properties in Theorem~\ref{sep}.
Then we may assume that $N_{\Z}$ and $\zeta$ are disjoint.
Hence $f_{*}\O_{X}(K_X+P_{\Z})\to f_{*}(\O_{X}(K_X+P_{\Z})|_{\zeta})$ is also not surjective.
As shown in the first half of the proof, we can take an $f$-exceptional curve $B$ on $X$ intersecting $\zeta$ such that 
$(P_{\Z}-B)B\le \delta_{\zeta}/4$.
If $N_{\Z}B\le 0$, then we have $(D-B)B\le (P_{\Z}-B)B\le \delta_{\zeta}/4$.
Assume that $N_{\Z}B>0$.
Putting $\overline{B}:=B+N_{\Z}$, we have
$$
(D-\overline{B})\overline{B}=(P_{\Z}-B)(B+N_{\Z})=(P_{\Z}-B)B+P_{\Z}N_{\Z}-BN_{\Z}<(P_{\Z}-B)B\le \delta_{\zeta}/4.
$$
Moreover, $D+N_{\Z}-2\overline{B}$ is big by the bigness of $P_{\Z}-2B$.
Replacing $\overline{B}$ to $B$, we complete the proof.
\end{proof}

\begin{remark}
Reider's original proof \cite{Rei} uses the vector bundle technique, especially, Serre's construction and  the Bogomolov inequality.
Many authors generalize Reider's theorem to singular surfaces (e.g., \cite{Sak2}, \cite{EiLa}, \cite{KaMa}, \cite{Lan}, \cite{YZZ}).
There are mainly two ways of the proof.
The first one is to use the vector bundle technique along the original one and the second one is to use the Kodaira-type vanishing theorem.
Our proof of Theorem~\ref{sep} belongs to the latter one.
%As an advantage of Theorem~\ref{van}, Reider-type theorem (Theorem~\ref{sep}) can be proved  easily. %once we use Theorem~\ref{van}.
%Theorem~\ref{bpf} is a generalization of almost all previous results of Reider-type theorems for base points, for example, 
Moreover, Theorem~\ref{sep} contains a log version ($D=\ulcorner M \urcorner$, a nef and big $\R$-divisor $M$) and a relative version of Reider-type theorems. % proved by using the usual Kawamata-Viehweg vanishing theorem.
For the case of normal projective surfaces over $\C$, Theorem~\ref{sep} is slightly weaker than Theorem~3.2 in \cite{Lan} because the invariant $\delta_{\zeta}$ in this paper may be greater than the $\delta_{\zeta}$ defined in \cite{Lan}.
%On the other hand, Theorem~\ref{sep} contains the relative cases, especially, a generalization of the results of Shepherd-Barron \cite{She}, Laufer \cite{Lau} and Sakai \cite{Sak2} about base points of linear systems on a resolution space of a normal surface singularity.
\end{remark}

Next, we prove a variant of Reider-type theorems for adjoint linear systems on normal proper surfaces which will be used in Section~6.

\begin{theorem}[Reider-type theorem II] \label{negsemidefnthm}
Let $D$ be a nef and big divisor on a normal complete surface $X$ over a field $k$ of characteristic $0$. %or analytic.
Let $\zeta$ be a cluster on $X$ along which $K_X+D$ is Cartier.
Assume that $D^2>\delta_{\zeta}$ and $H^{0}(X,\O_{X}(K_X+D))\to H^{0}(X,\O_{X}(K_X+D)|_{\zeta})$ is not surjective.
Then there exists an effective divisor $B>0$ on $X$ intersecting $\zeta$ such that 
$0<(D-B)B\le \delta_{\zeta}/4$ and $D-2B$ is big.
Moreover, if further assume that $D^2>\delta'_{\zeta}$ $($resp.\ $D^2=\delta'_{\zeta}$$)$,
then the curve $B$ becomes negative semi-definite $($resp.\ or satisfies $B^2=q_{\zeta}$ and $D\equiv (\delta_{\zeta}/4q_{\zeta}+1)B$$)$.
\end{theorem}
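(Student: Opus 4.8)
The plan is to derive the theorem entirely from Theorem~\ref{sep} together with Mumford intersection theory on $X$ (chiefly the Hodge index theorem and the numerical lemmas of Appendix~A), so that no fresh vanishing input is needed. \emph{Reduction to Theorem~\ref{sep}.} Since $D$ is nef and big, its Zariski decomposition and integral Zariski decomposition are both trivial ($N=N_{\Z}=0$, $P=P_{\Z}=D$) and $D$ is $\Z$-positive (Corollary~\ref{roundup}). Thus the hypothesis $D^{2}>\delta_{\zeta}$ is exactly the hypothesis $P^{2}>\delta_{\zeta}$ (equivalently $P_{\Z}^{2}>\delta_{\zeta}$) of Theorem~\ref{sep}, and non-surjectivity of the restriction map is assumed; applying Theorem~\ref{sep} with $Y$ a point produces an effective divisor $B>0$ meeting $\zeta$ with $(D-B)B\le\delta_{\zeta}/4$ and, by its ``resp.'' conclusion (available because $D^{2}>\delta_{\zeta}$), with $D+N_{\Z}-2B=D-2B$ big.

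\emph{Strict positivity $0<(D-B)B$.} Because $D-2B$ is big it is pseudo-effective; since $D$ is moreover nef and big, combining the Zariski decomposition of $D-2B$ with the Hodge index theorem gives $D(D-2B)>0$, hence $DB<\tfrac{1}{2}D^{2}$. If $(D-B)B\le 0$, then $B^{2}\ge DB\ge 0$ and the Hodge index theorem ($D^{2}>0$) gives $(DB)^{2}\ge D^{2}B^{2}\ge D^{2}\cdot DB$, so $DB=0$ or $DB\ge D^{2}$; the second contradicts $DB<\tfrac{1}{2}D^{2}$, and the first forces $B^{2}=0$ and (equality in Hodge index) $B\equiv\lambda D$, hence $B\equiv 0$, which is absurd. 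So $0<(D-B)B$.

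\emph{The refined ``moreover'' statement.} Assume $B$ is not negative semi-definite and, after replacing $B$, take it minimal (say, with fewest prime components) among effective divisors meeting $\zeta$ with $(D-B)B\le\delta_{\zeta}/4$ and $D-2B$ big; by the previous step these all satisfy $(D-B)B>0$. The key move is peeling: if $C\le B$ is a prime component with $C^{2}\ge 0$, then $C$ is nef, so bigness of $D-2B$ gives $(D-2B)C\ge 0$, i.e.\ $DC\ge 2BC\ge 2C^{2}$; hence $(D-(B-C))(B-C)=(D-B)B-DC+2BC-C^{2}\le (D-B)B-C^{2}\le\delta_{\zeta}/4$, while $D-2(B-C)=(D-2B)+2C$ stays big, contradicting minimality unless deleting $C$ destroys the intersection with $\zeta$ or $B=C$. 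Analysing these exceptions (using the previous step to recover positivity, and that a single prime curve of self-intersection $0$ is negative semi-definite) should reduce the problem to the case $B^{2}>0$. In that case $q_{\zeta}\le B^{2}$ by the definition of $q_{\zeta}$, and, writing $\alpha:=(D-B)B\in(0,\delta_{\zeta}/4]$, the Hodge index inequality $(DB)^{2}\ge D^{2}B^{2}$ rearranges to $D^{2}\le B^{2}+2\alpha+\alpha^{2}/B^{2}$. Feeding in $B^{2}\ge q_{\zeta}$, the upper bound on $B^{2}$ coming from $DB<\tfrac{1}{2}D^{2}$ and the numerical lemmas of Appendix~A, and the elementary estimate for $t\mapsto t+2\alpha+\alpha^{2}/t$, one obtains $D^{2}\le q_{\zeta}(\delta_{\zeta}/4q_{\zeta}+1)^{2}=\delta'_{\zeta}$; and the equality case $D^{2}=\delta'_{\zeta}$ forces $\alpha=\delta_{\zeta}/4$, $B^{2}=q_{\zeta}$ and equality in Hodge index, i.e.\ $D\equiv(\delta_{\zeta}/4q_{\zeta}+1)B$. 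Hence for $D^{2}>\delta'_{\zeta}$ the first alternative must hold, and for $D^{2}=\delta'_{\zeta}$ either the first or the second.

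I expect the main obstacle to be exactly this last reduction. A divisor that fails to be negative semi-definite need be neither irreducible nor of positive self-intersection — a reducible indefinite configuration can have non-positive self-intersection — so a bare count-of-components minimality is probably not sufficient; one likely has to exploit the provenance of $B$ as the push-forward of the negative-definite $\Z$-negative part occurring in the proof of Theorem~\ref{sep}, and, in the range where $B^{2}$ is large, a numerical estimate sharper than the naive Hodge bound above. By contrast, the reduction to Theorem~\ref{sep}, the strict-positivity step, and the final optimization once $B^{2}>0$ is in hand are routine.
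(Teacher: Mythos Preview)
Your reduction to Theorem~\ref{sep} and your argument for the strict inequality $(D-B)B>0$ are fine (the paper simply cites Ramanujam's connectedness lemma for the latter, but your direct Hodge argument is correct). The genuine gap is precisely where you flag it: the passage from ``$B$ is not negative semi-definite'' to ``$B^{2}>0$''. Your peeling/minimality scheme is not needed and would be hard to make work; the paper's route is much simpler.

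First, in the case $B^{2}>0$, your optimisation works once you extract the bound $B^{2}<\alpha$. This follows either from your own inequalities --- combining $(DB)^{2}\ge D^{2}B^{2}$ with $DB<\tfrac12 D^{2}$ gives $DB<(DB)^{2}/(2B^{2})$, hence $2B^{2}<DB=B^{2}+\alpha$ --- or, as the paper does, by applying Hodge to the pair $(D-B,B)$: from $D(D-2B)=(D-B)^{2}-B^{2}>0$ one gets $(D-B)^{2}>B^{2}>0$, and then $B^{2}(D-B)^{2}\le \alpha^{2}$ forces $B^{2}<\alpha\le \delta_{\zeta}/4$. With $q_{\zeta}\le B^{2}<\alpha$ the function $t\mapsto t+2\alpha+\alpha^{2}/t$ is decreasing, and one reads off $D^{2}\le \phi(q_{\zeta})=\delta'_{\zeta}$, with equality analysis as you describe.

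For the remaining case $B^{2}\le 0$ but $B$ not negative semi-definite, the paper does \emph{not} reduce to the previous case by modifying $B$. Instead, pick any subcurve $0<C\le B$ with $C^{2}>0$ and run the same Hodge estimate with $C$ in place of $B$. Two inputs make this go through. (i) One has $0<(D-C)C<DC\le DB\le (D-B)B\le \delta_{\zeta}/4$, the last inequality because $B^{2}\le 0$; so the same $\delta_{\zeta}/4$ bound holds for $C$, now with strict inequality, and $D-2C$ is big since $D-2B$ is. (ii) Crucially, $C\cap\zeta\neq\emptyset$: recall from the proof of Theorem~\ref{sep} that $B=\pi_{*}B'$ for a negative definite $B'$ on a resolution $\pi\colon X'\to X$ whose centre lies in $\zeta$. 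If $C$ were disjoint from $\zeta$, each of its components would pull back to its strict transform with the same intersection numbers, so the intersection matrix of $C$ would be a principal submatrix of that of $B'$, hence negative definite --- contradicting $C^{2}>0$. Thus $q_{\zeta}\le C^{2}$, and the optimisation yields $D^{2}<\phi(q_{\zeta})$, a contradiction; the equality case is handled the same way. No minimality argument is required.
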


\begin{proof}
The first half of the claim for $D^2>\delta_{\zeta}$ is a special case of Theorem~\ref{sep}.
In particular, $H^{0}(X,\O_{X}(K_X+D))\to H^{0}(X,\O_{X}(K_X+D)|_{\zeta})$ is always surjective when $\delta_{\zeta}=0$.
Thus, in order to prove the second half of the claim, we may assume that $D^2>\delta_{\zeta}>0$ and $H^{0}(X,\O_{X}(K_X+D))\to H^{0}(X,\O_{X}(K_X+D)|_{\zeta})$ is not surjective.
Then there exists a curve $B$ on $X$ with $B\cap \zeta\neq \emptyset$ such that $0<(D-B)B\le \delta_{\zeta}/4$ and $D-2B$ is big, 
where $(D-B)B>0$ follows from Ramanujam's connectedness lemma (cf.\ \cite{Ram}, \cite{KaMa}).
Note that the function $\phi(x):=x(\delta_{\zeta}/4x+1)^2$ defined on $\R_{>0}$ decreases monotonically in the range $0<x\le \delta_{\zeta}/4$ and takes the minimum value $\phi(\delta_{\zeta}/4)=\delta_{\zeta}$.

First we consider the case that $D^2> \phi(q_{\zeta})=\delta'_{\zeta}$.
We will show that $B$ is negative semi-definite.
First, we assume that $B^2>0$ and put $x:=B^2$.
Since $D$ is nef and $D-2B$ is big, we have $0< D(D-2B)=(D-B)^2-B^2$ and then $0<x< (D-B)^2$.
Thus we have $B^2(D-B)^2\le( B(D-B))^2\le \delta_{\zeta}^2/16$ by the Hodge index theorem.
It follows that $x< \sqrt{B^2(D-B)^2}\le \delta_{\zeta}/4$ and $(D-B)^2\le \delta_{\zeta}^2/16x$.
Hence we have
$$
D^2=(D-B)^2+2(D-B)B+B^2\le \frac{\delta_{\zeta}^2}{16x}+\frac{\delta_{\zeta}}{2}+x=\phi(x).
$$
By the definition of $q_{\zeta}$, we have $q_{\zeta}\le x< \delta_{\zeta}/4$.
Then we have $D^2\le \phi(x)\le \phi(q_{\zeta})$, which contradicts the assumption $D^2>\phi(q_{\zeta})$.
Hence we conclude $B^2\le 0$.
Next, we assume that there exists a subcurve $0<C\le B$ such that $x:=C^2>0$.
Note that $C\cap \zeta \neq \emptyset$ since $B$ is obtained by the push-forward of a negative definite divisor by a resolution $\pi\colon X'\to X$ of singularities in $\zeta$. 
Since $D-2B$ is big, $D-2C$ is also big and thus we have $0<x< (D-C)^2$ similarly as above.
On the other hand, we have
$$
0<C(D-C)<CD\le BD\le B(D-B)\le \frac{\delta_{\zeta}}{4}.
$$
Thus by the Hodge index theorem, we have $C^2(D-C)^2\le( C(D-C))^2< \delta_{\zeta}^2/16$.
It follows that $x< \sqrt{C^2(D-C)^2}< \delta_{\zeta}/4$ and $(D-C)^2< \delta_{\zeta}^2/16x$.
Hence we have $D^2=(D-C)^2+2(D-C)C+C^2< \phi(x)\le \phi(q_{\zeta})$, which is also a contradiction.
Hence $B$ is negative semi-definite.

Next we consider the case that $D^2=\phi(q_{\zeta})$. 
In this case, we have $q_{\zeta}< \delta_{\zeta}/4$ by the assumption $D^2>\delta_{\zeta}=\phi(\delta_{\zeta}/4)$.
If $B^2\le 0$, then $B$ is also negative semi-definite by the same proof as above.
Thus we may assume that $B^2>0$ and put $x:=B^2$.
Then we have $D^2\le \phi(x)\le \phi(q_{\zeta})$ and $q_{\zeta}\le x<\delta_{\zeta}/4$ by the same argument as the case of $D^2>\phi(q_{\zeta})$. 
Thus $D^2=\phi(q_{\zeta})$ and $x=q_{\zeta}$ hold.
In particular, the curve $B$ computes the invariant $q_{\zeta}$.
Moreover, we have $B^2(D-B)^2=(B(D-B))^2=\delta_{\zeta}^2/16$ and then
$D\equiv (\delta_{\zeta}/4q_{\zeta}+1)B$ holds by the Hodge index theorem.
\end{proof}

\begin{remark}
In Theorem~\ref{negsemidefnthm}, we may replace $\delta_{\zeta}$ by $\delta_{\zeta}(\pi,Z)$ for any pair $(\pi,Z)$ as in Definition~\ref{cluster} (see the proof of Theorem~\ref{sep}).
\end{remark}

\begin{remark} \label{subcurve}
When $D^2=\phi(q_{\zeta})>\delta_{\zeta}$, $B^2=q_{\zeta}$ and $D\equiv (\delta_{\zeta}/4q_{\zeta}+1)B$ hold, we can show that every proper subcurve $0<C<B$ which intersects $\zeta$ satisfies $C^2\le 0$ by the definition of $q_{\zeta}$ since $B^2-C^2=B(B-C)+C(B-C)>0$ holds by the nef and bigness of $B$.
\end{remark}

\subsection{Upper bound of $\delta_{\zeta}$} 
Before stating corollaries of Theorem~\ref{sep}, we collect upper bounds of the invariants $\delta_{\zeta}$ when $\zeta$ has small length for the readers' convenience (more details, see \cite{KaMa}, \cite{LanII}).

First, we consider the case that $\zeta=x$ is a closed point of a normal surface $X$.
Then the length of $\zeta$ is the degree of the field extension $[k(x):k]$, where $k(x)$ is the residue field of $\O_{X,x}$.
The following is a non-standard definition.

\begin{definition}
The germ $(X,x)$ is called {\em log terminal} if there exists a resolution $\pi\colon X'\to X$ of $(X,x)$ such that $\llcorner \Delta \lrcorner\le 0$, where $\Delta=\pi^{*}K_X-K_{X'}$ is the anti-canonical cycle of $\pi$.
\end{definition}

The following is well-known to experts.
For the convenience of readers, we will prove this.

\begin{lemma}[\cite{KoKo} Corollary~2.2.11]\label{ltisrat}
Any log terminal germ $(X,x)$ is rational, that is, $R^{1}\pi_{*}\O_{X'}=0$ and $R^{1}\pi_{*}\O_{X'}(K_{X'})=0$ hold for any resolution $\pi\colon X'\to X$ of $(X,x)$.
\end{lemma}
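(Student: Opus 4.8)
The plan is to obtain both vanishings from the relative vanishing theorem on a regular surface, Lemma~\ref{locvanreg}, applied to the anticanonical divisor of the \emph{minimal} resolution, after reducing to that resolution. Let $\pi\colon X'\to X$ be an arbitrary resolution; it factors as $X'\xrightarrow{\ \mu\ }X_{0}\xrightarrow{\ \pi_{0}\ }X$ through the minimal resolution $\pi_{0}$. Since $\mu$ is a composition of point blow-ups, $\mu_{*}\O_{X'}=\O_{X_{0}}$ and $R^{q}\mu_{*}\O_{X'}=0$ for $q>0$, so the Leray spectral sequence gives $R^{1}\pi_{*}\O_{X'}\cong R^{1}\pi_{0*}\O_{X_{0}}$. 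Thus it suffices to prove $R^{1}\pi_{0*}\O_{X_{0}}=0$ (and similarly it then suffices to treat the $\omega$-part on $X_{0}$, or to deduce it by duality).

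On $X_{0}$ write $\Delta_{0}:=\pi_{0}^{*}K_{X}-K_{X_{0}}$. Minimality of $X_{0}$ means no $\pi_{0}$-exceptional $(-1)$-curve occurs, so by adjunction $K_{X_{0}}E_{i}\ge 0$ for every $\pi_{0}$-exceptional prime $E_{i}$; hence $\Delta_{0}E_{i}=-K_{X_{0}}E_{i}\le 0$ for all $i$, and, the $\pi_{0}$-exceptional curves having negative definite intersection form, the negativity lemma (cf.\ the argument in Lemma~\ref{easy}) forces $\Delta_{0}\ge 0$. On the other hand log terminality of $(X,x)$ means every discrepancy over $X$ is $>-1$, so every coefficient of $\Delta_{0}$ lies in $[0,1)$, i.e.\ $\llcorner \Delta_{0}\lrcorner=0$. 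Now $-K_{X_{0}}=(-\pi_{0}^{*}K_{X})+\Delta_{0}$ with $-\pi_{0}^{*}K_{X}$ being $\pi_{0}$-nef (indeed $\pi_{0}$-numerically trivial) and $\Delta_{0}$ being $\pi_{0}$-exceptional with $\llcorner \Delta_{0}\lrcorner=0$, so Corollary~\ref{roundup} shows $-K_{X_{0}}$ is $\pi_{0}$-$\Z$-positive; it is also $\pi_{0}$-big since $\pi_{0}$ is birational. Applying Lemma~\ref{locvanreg} with $f=\pi_{0}$ and $D=-K_{X_{0}}$ yields $R^{1}\pi_{0*}\O_{X_{0}}=R^{1}\pi_{0*}\O_{X_{0}}(K_{X_{0}}-K_{X_{0}})=0$, and hence $R^{1}\pi_{*}\O_{X'}=0$ for every resolution $\pi$. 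Finally, knowing $R\pi_{*}\O_{X'}=\O_{X}$ (the $R^{0}$ is $\O_{X}$ by normality, the $R^{1}$ vanishes by what we just proved, and $R^{\ge 2}=0$ for dimension reasons), Grothendieck duality for the proper morphism $\pi$ gives $R\pi_{*}\omega_{X'}\cong \omega_{X}$, so $R^{1}\pi_{*}\O_{X'}(K_{X'})=R^{1}\pi_{*}\omega_{X'}=0$ as well (alternatively, this last vanishing is the Grauert--Riemenschneider vanishing theorem).

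The step I expect to be the real obstacle is the passage to the minimal resolution together with the input $\Delta_{0}\ge 0$ there: on an arbitrary log terminal resolution $-K_{X'}$ is generally \emph{not} $\Z$-positive (this already fails on a non-minimal resolution of an $A_{1}$-singularity), so one genuinely needs the resolution on which $-K$ splits as a $\pi$-nef divisor plus a boundary with zero round-down — exactly the shape required by Corollary~\ref{roundup}, and hence by Lemma~\ref{locvanreg}. Everything else (the Leray reduction, the negativity input from minimality, and the duality argument for the $\omega$-part) is routine.
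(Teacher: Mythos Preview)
Your proof is correct and follows essentially the same route as the paper's: reduce to the minimal resolution, combine minimality ($\Delta_0\ge 0$) with log terminality ($\llcorner\Delta_0\lrcorner\le 0$) to get $\llcorner\Delta_0\lrcorner=0$, conclude that $-K_{X_0}\equiv_{\pi_0}\Delta_0$ is $\pi_0$-$\Z$-positive via Corollary~\ref{roundup} (the paper cites the equivalent Proposition~\ref{connchain}), and apply the local vanishing on a regular surface. The only minor deviation is for $R^{1}\pi_{*}\omega_{X'}=0$: the paper obtains it directly from Theorem~\ref{van} with $D=0$ (which is trivially $\pi$-$\Z$-positive and $\pi$-big since $\dim X=\dim Y$), whereas you deduce it afterward from $R\pi_{*}\O_{X'}=\O_{X}$ via Grothendieck duality.
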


\begin{proof}
$R^{1}\pi_{*}\O_{X'}(K_{X'})=0$ follows from Theorem~\ref{van}.
If $(X,x)$ is regular and $\pi\colon X'\to X$ is a blow-up at $x$, then $\Delta$ is $\pi$-nef.
If $(X,x)$ is a log terminal singularity and $\pi\colon X'\to X$ is a minimal resolution of $(X,x)$,
 then $\Delta=\{\Delta\}$ is $\pi$-$\Z$-positive by Proposition~\ref{connchain}.
Then we have 
$$
R^{1}\pi_{*}\O_{X'}=R^{1}\pi_{*}\O_{X'}(K_{X'}-\pi^{*}K_X+\Delta)=0
$$
for such $(X,x)$ and $\pi$ by Theorem~\ref{van}.
For any resolution $\pi\colon X'\to X$, we also obtain 
$R^{1}\pi_{*}\O_{X'}=0$ by using the Leray spectral sequence $R^{p}\pi''_{*}R^{q}\pi'_{*}\O_{X'}\Rightarrow R^{m}\pi_{*}\O_{X'}$ for a composition $\pi=\pi'\circ \pi''$ inductively.
\end{proof}

\begin{lemma}[cf.\ \cite{KaMa} Theorem~2] \label{deltax}
We have $\delta_x\le 4[k(x):k]$ if $(X,x)$ is regular, $\delta_x\le 2[k(x):k]$ if $(X,x)$ is a log terminal singularity and $\delta_x=0$ otherwise.
\end{lemma}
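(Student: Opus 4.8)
The plan is, in each of the three cases, to exhibit a single pair $(\pi,Z)$ as in Definition~\ref{cluster} for which $\delta_x(\pi,Z)$ is bounded by the asserted quantity; since $\delta_x$ is by definition the minimum of all $\delta_x(\pi,Z)$, this suffices. Write $n:=[k(x):k]$, the length of $\zeta=x$, and $\Delta:=\pi^{*}K_X-K_{X'}$. A preliminary simplification that applies throughout: because $\zeta=x$ is a single reduced point, \emph{any} non-zero effective $\pi$-exceptional divisor $Z$ satisfies $\pi_{*}\O_{X'}(-Z)\subset\mathcal{I}_{\zeta}$, since a function on $X$ whose pull-back vanishes along some $\pi$-exceptional curve lying over $x$ vanishes at $x$. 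Hence the only conditions one needs to monitor are that $\Delta-Z$ be non-effective (if it is effective then $\delta_x(\pi,Z)=0$ and there is nothing to prove) and that $-(\Delta-Z)^{2}$ be small.

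Two of the cases are then immediate. If $(X,x)$ is \emph{regular}, take $\pi\colon X'\to X$ the blow-up of $x$ and $Z=E$ the exceptional curve; then $E\cong\mathbb{P}^{1}_{k(x)}$ has conormal bundle $\O_{\mathbb{P}^{1}_{k(x)}}(1)$, so $E^{2}=-n$, while $K_{X'}=\pi^{*}K_X+E$ gives $\Delta=-E$. Thus $\Delta-Z=-2E$ is not effective and $\delta_x(\pi,E)=-(-2E)^{2}=-4E^{2}=4n$, so $\delta_x\le 4n$. If $(X,x)$ is singular and \emph{not} log terminal, take $\pi\colon X'\to X$ the minimal resolution; then $K_{X'}$ is $\pi$-nef, so $\Delta\cdot E_i=-K_{X'}\cdot E_i\le 0$ for every exceptional curve $E_i$ and hence $\Delta\ge 0$ by the negativity lemma, while $\llcorner\Delta\lrcorner\le 0$ fails by hypothesis. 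Therefore $Z:=\llcorner\Delta\lrcorner>0$ is a legitimate choice, and since $\Delta-Z=\{\Delta\}\ge 0$ is effective we get $\delta_x(\pi,Z)=0$, i.e.\ $\delta_x=0$.

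The main case is $(X,x)$ a \emph{log terminal singularity}, which I would handle with the minimal resolution $\pi\colon X'\to X$ and $Z=Z_0$ the fundamental cycle. As above $\Delta\ge 0$, and log terminality forces every coefficient of $\Delta$ to be $<1$ (the inequality $\llcorner\Delta\lrcorner\le 0$ for one resolution propagates to the minimal one, which is dominated by it); thus $\Delta=\{\Delta\}$, so $\Delta<Z_0$ coefficientwise and $\Delta-Z_0$ is not effective. Now $(X,x)$ is rational by Lemma~\ref{ltisrat}, so $h^{1}(\O_{Z_0})=0$ and $h^{0}(\O_{Z_0})=n$ (for a rational singularity $\pi_{*}\O_{X'}(-Z_0)=\mathcal{I}_{x}$, so $\pi_{*}\O_{Z_0}=\O_{X}/\mathcal{I}_{x}$), whence $\chi(\O_{Z_0})=n$. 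Combining this with Riemann--Roch for the curve $Z_0$ on $X'$, namely $\chi(\O_{Z_0})=-\tfrac12(Z_0^{2}+K_{X'}\cdot Z_0)$, together with $K_{X'}\cdot Z_0=(\pi^{*}K_X-\Delta)\cdot Z_0=-\Delta\cdot Z_0$, yields $(Z_0-\Delta)\cdot Z_0=-2n$. Consequently
\[
\delta_x(\pi,Z_0)=-(Z_0-\Delta)^{2}=-(Z_0-\Delta)\cdot Z_0+(Z_0-\Delta)\cdot\Delta=2n+(Z_0-\Delta)\cdot\Delta,
\]
and the proof is finished by $(Z_0-\Delta)\cdot\Delta\le 0$: the divisor $Z_0-\Delta$ is effective and $\pi$-exceptional and $-\Delta$ is $\pi$-nef, so $(-\Delta)\cdot(Z_0-\Delta)\ge 0$. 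Hence $\delta_x\le 2n$.

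I expect the log terminal case to be where the content lies, and the crux there is the choice of $Z$: taking $Z_0$ to be the fundamental cycle is precisely what lets rationality of the singularity pin $\chi(\O_{Z_0})$ down to $n$ and force $(Z_0-\Delta)\cdot Z_0=-2n$, while the residual term $(Z_0-\Delta)\cdot\Delta$ is harmless only because $-\Delta$ is $\pi$-nef on the \emph{minimal} resolution (this is why one obtains $2n$ here but merely $4n$ for a regular point, where the first blow-up is not the minimal resolution). The remaining points needing care are routine: checking that $\Delta-Z$ is genuinely non-effective in each case, and verifying that the paper's one-resolution definition of log terminal indeed propagates to the minimal resolution.
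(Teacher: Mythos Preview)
Your proof is correct and follows essentially the same approach as the paper's: the same choices of $(\pi,Z)$ in each case (blow-up with $Z=E$; minimal resolution with the fundamental cycle; minimal resolution with $Z=\llcorner\Delta\lrcorner$), and in the log terminal case the same expansion of $-(\Delta-Z)^2$ into $2[k(x):k]$ plus a non-positive term controlled by the $\pi$-nefness of $K_{X'}$ (your $(Z_0-\Delta)\cdot\Delta$ is exactly the paper's $-K_{X'}(Z-\Delta)$, since $\Delta$ and $-K_{X'}$ agree on exceptional curves). You are somewhat more explicit than the paper in the non--log-terminal case and in checking that any $Z>0$ satisfies $\pi_*\O_{X'}(-Z)\subset\mathcal I_x$, but the overall argument is the same.
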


\begin{proof}
Let $\pi\colon X'\to X$ be the blow-up at $x$ if $(X,x)$ is regular or the minimal resolution of $(X,x)$ otherwise, and $Z$ the fundamental cycle of $\pi$.
When $(X,x)$ is regular, then $\Delta-Z=-2E$, $E$ is the exceptional curve on $X'$ and so $\delta_x\le -4E^2=4[k(x):k]$ holds.
When $(X,x)$ is a log terminal singularity, then we have
\begin{align*}
\delta_x&\le -(\Delta-Z)^2 \\
&=K_{X'}(\Delta-Z)-Z(X_{X'}+Z)\\
&=-K_{X'}(Z-\Delta)-(2p_a(Z)-2[k(x):k])\\
&=-K_{X'}(Z-\Delta)+2[k(x):k]\\
&\le 2[k(x):k],
\end{align*}
where $p_a(Z)=0$ since $Z$ is the fundamental cycle on a resolution of a rational singularity and the last inequality follows from the fact that $K_{X'}$ is $\pi$-nef and $Z-\Delta$ is effective.
The assertion for the not log terminal case is clear.
\end{proof}
%厳密に。。。。。

\begin{remark}
For simplicity, we consider a smooth projective surface $X$ over an algebraically closed field and a cluster $\zeta$ on $X$ of length $1$ or $2$.
Then we can compute $\delta_{\zeta}=4$ (resp.\ $\delta_{\zeta}=8$) if $\mathrm{length}(\zeta)=1$ (resp.\ $\mathrm{length}(\zeta)=2$) 
(the upper bound of $\delta_{\zeta}$ is obtained by Lemma~\ref{deltax} and the proof of Lemma~\ref{deltabound} in the next section. 
One can see easily that this is actually equal. More precisely, see Section 4 in \cite{Lan}).
On the other hand, we have $q_{\zeta}=1$ (resp.\ $q_{\zeta}=1$ or $2$) if $\mathrm{length}(\zeta)=1$ (resp.\ $\mathrm{length}(\zeta)=2$).
Thus the bound of $D^2$ in Theorem~\ref{negsemidefnthm} is $q_{\zeta}(\delta_{\zeta}/4q_{\zeta}+1)^2=4$ (resp.\ $8$ or $9$) if $\mathrm{length}(\zeta)=1$ (resp.\ $\mathrm{length}(\zeta)=2$).
Theorem~\ref{negsemidefnthm} in this case was mentioned in Theorem~1 and Remark~1.2 in \cite{Rei}.
\end{remark}

\subsection{Corollaries of Theorem~\ref{sep}}

In this subsection, we give some corollaries of Theorem~\ref{sep} for the freeness and the very ampleness of adjoint linear systems.

\begin{corollary}\label{bpfdim0}
Let $X$ be a normal complete surface over a field of characteristic $0$.
Let $D$ be a nef divisor on $X$ and $x\in X$ a closed point at which $K_X+D$ is Cartier.
Assume that 

\smallskip

\noindent
$(\mathrm{i})$ $D^2>\delta_x$, and

\smallskip

\noindent
$(\mathrm{ii})$ $DB\ge \frac{1}{2}\delta_x$ for any curve $B$ on $X$ passing through $x$.

\smallskip

\noindent
Then $x$ is not a base point of $|K_X+D|$.
\end{corollary}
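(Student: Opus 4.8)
The plan is to apply Theorem~\ref{sep} to the cluster $\zeta = x$ and derive a contradiction from the two hypotheses. First I would observe that $x$ being a base point of $|K_X+D|$ means precisely that the natural restriction map $H^0(X,\O_X(K_X+D)) \to H^0(X,\O_X(K_X+D)|_x)$ is not surjective (here $\dim Y = 0$, so $f_*$ is just global sections, and $\O_X(K_X+D)|_\zeta$ has length $[k(x):k] \ge 1$ at $x$). Since $D$ is nef and $D^2 > \delta_x \ge 0$, $D$ is big, so the hypotheses of Theorem~\ref{sep} are in force provided we check the auxiliary positivity condition $P^2 > \delta_\zeta$; but $D$ nef means its Zariski decomposition is $D = D + 0$, so $P = D$ and the condition $P^2 > \delta_x$ is exactly hypothesis~(i).

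Next, Theorem~\ref{sep} then forces the existence of an $f$-exceptional (here: arbitrary, since $Y$ is a point, so any) effective divisor $B > 0$ intersecting $x$ with $(D-B)B \le \delta_x/4$. The key step is to play this against hypothesis~(ii): expand $(D-B)B = DB - B^2$, so $DB - B^2 \le \delta_x/4$, i.e.\ $DB \le B^2 + \delta_x/4$. I would then need a lower bound on $DB$ and an upper bound on $B^2$ that together contradict this. Hypothesis~(ii) gives $DB \ge \tfrac12 \delta_x$, so $\tfrac12\delta_x \le B^2 + \tfrac14\delta_x$, whence $B^2 \ge \tfrac14\delta_x > 0$.

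The main obstacle — and the crux of the argument — is to rule out $B^2 > 0$. Since $D$ is nef and big, I expect to invoke the Hodge index theorem on the span of $D$ and $B$: from $B^2 > 0$ and $D^2 > 0$ one gets $(DB)^2 \ge D^2 B^2$, so $(DB)^2 \ge D^2 B^2 > \delta_x B^2$ (using hypothesis~(i)). Combined with $DB \le B^2 + \delta_x/4$ this should give $(B^2 + \delta_x/4)^2 > \delta_x B^2$, which rearranges to $(B^2 - \delta_x/4)^2 > 0$ — not yet a contradiction, so I will need to be more careful and instead combine $DB \ge \tfrac12\delta_x$ with the Hodge inequality: $D^2 B^2 \le (DB)^2$ and $DB - B^2 \le \delta_x/4$. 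Writing $t := B^2 > 0$, nefness of $D$ and bigness of $D - 2B$ (which, as in the proof of Theorem~\ref{negsemidefnthm}, follows here too) give $0 < D(D-2B) = (D-B)^2 - B^2$, hence $t < (D-B)^2$; then $t(D-B)^2 \le ((D-B)B)^2 \le \delta_x^2/16$ forces $t < \delta_x/4$, contradicting $B^2 \ge \delta_x/4$ obtained above. This closes the case $B^2 > 0$.

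Finally, with $B^2 \le 0$, I would derive the contradiction directly: $(D-B)B \le \delta_x/4$ becomes $DB \le B^2 + \delta_x/4 \le \delta_x/4 < \tfrac12\delta_x$ (using $\delta_x > 0$, which holds because $\delta_x \ge D^2 \cdot \text{(something positive)}$... more precisely $D^2 > \delta_x$ with $D^2 > 0$ forces nothing about $\delta_x$ being positive, but if $\delta_x = 0$ then hypothesis~(ii) reads $DB \ge 0$ which is automatic and Theorem~\ref{sep} already gives surjectivity, so $x$ is not a base point). Assuming $\delta_x > 0$, the inequality $DB < \tfrac12\delta_x$ contradicts hypothesis~(ii), since $B$ passes through $x$. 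Therefore no such $B$ exists, the restriction map is surjective, and $x$ is not a base point of $|K_X+D|$. The one point requiring care is the passage through a resolution implicit in Theorem~\ref{sep} (the divisor $B$ is a push-forward $\pi_* B'$), but since we only use the numerical inequality $(D-B)B \le \delta_x/4$ and the fact that $B$ meets $x$, both of which are part of the statement of Theorem~\ref{sep}, this causes no difficulty.
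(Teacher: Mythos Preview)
Your argument is correct in spirit and in fact more careful than the paper's. The paper's three-line proof runs: Theorem~\ref{sep} produces $B$ through $x$ with $D-2B$ big; nefness of $D$ gives $(D-2B)D\ge 0$, i.e.\ $DB\le D^2/2$; then the paper writes ``$DB\le D^2/2<\delta_x/2$'' and declares a contradiction with~(ii). But hypothesis~(i) says $D^2>\delta_x$, so $D^2/2>\delta_x/2$, and the printed inequality goes the wrong way --- the paper's proof as written does not close. Your Hodge-index route, borrowed from the proof of Theorem~\ref{negsemidefnthm}, is the honest way to finish the argument.

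One small point to tighten: the step $((D-B)B)^2\le\delta_x^2/16$ uses not just the upper bound $(D-B)B\le\delta_x/4$ from Theorem~\ref{sep} but also a lower bound $(D-B)B\ge 0$, which you have not justified. This positivity is exactly what Ramanujam's connectedness lemma supplies in the proof of Theorem~\ref{negsemidefnthm} (a proof you already cite for the bigness of $D-2B$); alternatively one can check it directly: if $(D-B)B<0$ then $B^2>DB>0$, and Hodge on $D,B$ gives $(DB)^2\ge D^2B^2>D^2\cdot DB$, so $DB>D^2$, contradicting $DB<D^2/2$ from $D(D-2B)>0$. Once this is made explicit your chain $t^2<t(D-B)^2\le((D-B)B)^2\le\delta_x^2/16$ is valid and yields $t<\delta_x/4$, contradicting $t\ge\delta_x/4$. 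The cases $B^2\le 0$ and $\delta_x=0$ are handled correctly.
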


\begin{proof}
If $x$ is a base point of $|K_X+D|$, then there exists a curve $B$ passing through $x$ on $X$ such that $D-2B$ is big from Theorem~\ref{sep}.
Since $D$ is nef, $(D-2B)D\ge 0$ holds.
Hence $DB\le D^{2}/2< \delta_x/2$, which contradicts the assumption.
\end{proof}

In the rest of the section, we assume that the base field is algebraically closed.

\begin{corollary}\label{bpfdim1}
Let $f\colon X\to Y$ be a fiber space from a smooth surface $X$ to a curve $Y$.
Let $D$ be an $f$-nef divisor on $X$ with $DF>0$ for a fiber $F$ of $f$.
Then for any base point $x$ of $f_{*}\O_X(K_X+D)$, there exists an $f$-exceptional curve $B$ on $X$ passing through $x$ such that one of the following holds.

\smallskip

\noindent
$(\mathrm{i})$ $DB=0$ and $B^2=-1$.

\smallskip

\noindent
$(\mathrm{ii})$ $DB=1$ and $B^2=0$.

\smallskip

\noindent
In particular, if one of the following conditions $(\mathrm{a})$ and $(\mathrm{b})$ holds, then
%$D$ is $f$-ample and $f$ has no irreducible multiple fibers, or $f$ is a relatively minimal fibration of genus greater than $1$ and $D-K_X$ is $f$-nef, then 
the natural map $f^{*}f_{*}\O_X(K_X+D)\to \O_X(K_X+D)$ is surjective.

\smallskip

\noindent
$(\mathrm{a})$ $D-mH$ is $f$-nef, where $H$ is $f$-ample and $m\ge 2$.

\smallskip

\noindent
$(\mathrm{b})$ $f$ is a relatively minimal fibration of genus greater than $1$ and $D-K_X$ is $f$-nef.
\end{corollary}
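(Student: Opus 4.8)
The plan is to derive the first assertion directly from the relative Reider-type theorem (Theorem~\ref{sep}) applied with $\zeta=x$, and then to rule out the two configurations (i) and (ii) under hypotheses (a) and (b) by elementary intersection-number bookkeeping together with Zariski's lemma.

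First I would note that $K_X+D$ is Cartier at $x$ (as $X$ is smooth), that $D$ is $f$-big (since $D$ is $f$-nef with $DF>0$), and that $\delta_x\le 4$ by Lemma~\ref{deltax} because the base field is algebraically closed. If $x$ is a base point of $f_{*}\O_X(K_X+D)$, then the restriction map $f_{*}\O_X(K_X+D)\to f_{*}(\O_X(K_X+D)|_{x})$ is not surjective near $f(x)$, since otherwise a local section of $f_{*}\O_X(K_X+D)$ not vanishing at $x$ would pull back through $f^{*}f_{*}\O_X(K_X+D)\to\O_X(K_X+D)$ and generate the stalk there. Hence Theorem~\ref{sep} in the case $\dim Y=1$ (where the extra positivity hypothesis is not needed) yields an effective $f$-exceptional divisor $B>0$ meeting $x$ with $(D-B)B\le\delta_x/4\le 1$. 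Since $D$ is $f$-nef, $DB\ge 0$; since $B$ is $f$-exceptional, Zariski's lemma gives $B^2\le 0$; and $(D-B)B=DB-B^2$ is a nonnegative integer $\le 1$ because $D,B$ are $\Z$-divisors on a smooth surface. Thus $(DB,B^2)\in\{(0,0),(0,-1),(1,0)\}$. The case $(0,0)$ is impossible: $B^2=0$ forces, by Zariski's lemma, that $B=\sum_y\lambda_yF_y$ is a positive rational combination of whole scheme-theoretic fibres on its support, so $DB=(DF)\sum_y\lambda_y>0$. This leaves exactly (i) and (ii).

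For the ``in particular'' statement under (a), I would write $D=(D-mH)+mH$ with $D-mH$ $f$-nef; then, for the divisor $B$ above, $DB\ge mHB\ge m\ge 2$, because $H$ is $f$-ample and $B>0$ is $f$-exceptional so $HB$ is a positive integer. This contradicts $DB\le 1$ in both (i) and (ii), so $x$ cannot be a base point and $f^{*}f_{*}\O_X(K_X+D)\to\O_X(K_X+D)$ is surjective. Under (b), relative minimality and genus $>1$ make $K_X$ $f$-nef, so $D-K_X$ being $f$-nef gives $DB\ge K_XB\ge 0$; moreover Riemann--Roch on $X$ gives that $B^2+K_XB=-2\chi(\O_B)$ is even. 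In case (i), $B^2=-1$ forces $K_XB$ odd, hence $K_XB\ge 1$ and $0=DB\ge 1$, a contradiction. In case (ii), $B^2=0$ forces $K_XB$ even, so $K_XB=0$ or $K_XB\ge 2$; the latter gives $DB\ge 2>1$, while $K_XB=0$ is impossible since, as in the $(0,0)$ case, $B=\sum_y\lambda_yF_y$ and $K_XF=2p_a(F)-2=2g-2>0$. Hence again no base point.

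The step I expect to be the main obstacle is controlling the degenerate shapes of $B$ --- for instance $B$ supported on a component of a multiple fibre --- which requires Zariski's lemma in the sharp form that $B^2=0$ identifies $B$ with a positive rational multiple of whole scheme-theoretic fibres, combined with the parity constraint $B^2+K_XB\equiv 0\pmod 2$ from Riemann--Roch. The reduction of ``base point'' to non-surjectivity of the relative restriction map is routine but deserves the sentence above. Everything else is bookkeeping with $\delta_x\le 4$, the $f$-nefness of $D$ and of $D-K_X$, and $DF>0$.
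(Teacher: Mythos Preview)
Your proposal is correct and follows essentially the same route as the paper: apply Theorem~\ref{sep} with $\zeta=x$ (noting $\delta_x\le 4$), use Zariski's lemma and $f$-nefness of $D$ to reduce to $(DB,B^2)\in\{(0,0),(0,-1),(1,0)\}$, eliminate $(0,0)$ via $B\equiv aF$, and then dispose of (a) by $DB\ge mHB\ge 2$ and of (b) by the parity constraint $K_XB+B^2\equiv 0\pmod 2$ together with $f$-nefness of $K_X$. The paper's proof is terser (it leaves the (a)/(b) verification to the reader with only the parity hint), but your expanded case analysis is exactly what one would write out.
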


\begin{proof}
Let $x$ be a base point of $f_{*}\O_X(K_X+D)$.
Then Theorem~\ref{sep} implies that there exists an $f$-exceptional curve $B$ passing through $x$ on $X$ such that $(D-B)B\le 1$.
Since $B^2\le 0$ by Zariski's lemma, we have $DB\le 1+B^2\le 1$.
Hence $B^2=0$ or $-1$.
When $B^2=0$, then we can write $B\equiv aF$ for some $a\in \Q_{>0}$.
Then $DB>0$ by assumption.
Hence the first half of the claim follows.
The rest of the claim follows easily by using the fact that $K_XB+B^2$ is even.
\end{proof}

%In particular, Corollary~\ref{Introbpfcor} (ii) follows.
Note that the last assertion of Corollary~\ref{bpfdim1} also follows from Lemmas~1.3.2 and 4.2.1 in \cite{Kon123}.

Similarly to Corollary~\ref{bpfdim1}, the following can be proved.

\begin{corollary}[cf.\ \cite{Sak2} Theorem~7, \cite{Lau} Theorem~3.1]\label{bpfdim2}
Let $f\colon X\to Y$ be a resolution of a normal surface singularity $(Y,y)$.
Let $D$ be an $f$-nef divisor on $X$.
Then for any base point $x$ of $f_{*}\O_X(K_X+D)$, there exists an $f$-exceptional curve $B$ on $X$ passing through $x$ such that $DB=0$ and $B^2=-1$ hold.
In particular, if $D$ is $f$-ample, or $f$ is a minimal resolution and $D-K_X$ is $f$-nef, then the natural map $f^{*}f_{*}\O_X(K_X+D)\to \O_X(K_X+D)$ is surjective.
\end{corollary}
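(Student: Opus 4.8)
The plan is to imitate the proof of Corollary~\ref{bpfdim1}, applying the relative Reider-type theorem (Theorem~\ref{sep}) to the reduced cluster $\zeta=\{x\}$, but now in the case $\dim(Y)=2$. First I would collect the facts special to this situation: since $f$ is a resolution, $\dim(Y)=2$, so every divisor on $X$ is automatically $f$-big and every $f$-nef divisor is $f$-$\Z$-positive (by the chain of implications recorded after Lemma~\ref{nef=pos}); hence the integral Zariski decomposition of $D$ is trivial, $N_{\Z}=0$. Moreover $X$ is smooth, so $K_X+D$ is Cartier everywhere and, by Lemma~\ref{deltax} and the hypothesis that the base field is algebraically closed, $\delta_x\le 4[k(x):k]=4$ (since $[k(x):k]=1$). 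Finally, a point of $X$ not lying over $y$ is never a base point of $f_{*}\O_X(K_X+D)$ (there $f$ is an isomorphism), so we may assume $x\in f^{-1}(y)$, and ``$x$ is a base point'' is precisely the non-surjectivity of $f_{*}\O_X(K_X+D)\to f_{*}(\O_X(K_X+D)|_{\{x\}})$.

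With this in place, apply Theorem~\ref{sep} with $\zeta=\{x\}$: since $\dim(Y)\ge 1$, no auxiliary positivity hypothesis is needed, and non-surjectivity produces an $f$-exceptional divisor $B>0$ passing through $x$ with $(D-B)B\le \delta_x/4\le 1$. Because $f$ is birational, every nonzero effective $f$-exceptional divisor on $X$ is negative definite, so $B^2<0$; because $D$ is $f$-nef and $B$ is effective and $f$-exceptional, $DB\ge 0$. From $DB=(D-B)B+B^2\le 1+B^2$ together with $DB\ge 0$ we get $B^2\ge -1$, and as $B^2$ is a negative integer this forces $B^2=-1$, whence $DB\le 0$ and so $DB=0$. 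This proves the first assertion (here ``curve'' is understood, as throughout the paper, to allow $B$ reducible or non-reduced).

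For the ``in particular'' statements: if $D$ is $f$-ample, then $DB>0$ for every effective $f$-exceptional $B>0$, contradicting $DB=0$, so there is no base point and $f^{*}f_{*}\O_X(K_X+D)\to \O_X(K_X+D)$ is surjective. If instead $f$ is the minimal resolution and $D-K_X$ is $f$-nef, then $K_X$ is $f$-nef (this property characterizes the minimal resolution), so the identity $0=DB=K_XB+(D-K_X)B$ with both summands $\ge 0$ forces $K_XB=0$; but then $K_XB+B^2=-1$ is odd, contradicting the adjunction formula $K_XB+B^2=2p_a(B)-2$. So again there is no base point, which gives the surjectivity. I do not expect a genuine obstacle here: all the content sits in Theorem~\ref{sep}, and what remains is the numerical bookkeeping above; the one point requiring care is that $f$-nefness of $D$ is used twice — once to make $N_{\Z}=0$ (so that Theorem~\ref{sep} takes its simplest form) and once to bound $DB$ from below.
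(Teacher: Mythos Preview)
Your proof is correct and follows exactly the approach the paper indicates: the paper simply says this is proved ``similarly to Corollary~\ref{bpfdim1}'', and your argument is precisely the $\dim(Y)=2$ analogue of that proof, applying Theorem~\ref{sep} with $\zeta=\{x\}$, using negative definiteness of $f$-exceptional divisors in place of Zariski's lemma to get $B^2<0$, and concluding with the parity of $K_XB+B^2$.
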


We can show the very ample cases similarly.
Note that for a proper morphism $f\colon X\to Y$ between varieties over an algebraically closed field, a Cartier divisor $L$ on $X$ is $f$-very ample if and only if the natural map $f_{*}\O_{X}(L)\to f_{*}(\O_{X}(L)|_{\zeta})$ is surjective for any cluster $\zeta$ of length $2$ contained in a fiber of $f$.

\begin{corollary}\label{vadim0}
Let $X$ be a normal complete surface over an algebraically closed field of characteristic $0$.
Let $D$ be a nef divisor on $X$ and $\zeta$ a cluster along which $K_X+D$ is Cartier.
Assume that 

\smallskip

\noindent
$(\mathrm{i})$ $D^2>\delta_{\zeta}$, and

\smallskip

\noindent
$(\mathrm{ii})$ $DB\ge \frac{1}{2}\delta_{\zeta}$ for any curve $B$ on $X$ intersecting $\zeta$.

\smallskip

\noindent
Then $|K_X+D|$ separates $\zeta$.
In particular, if $X$ is smooth, $D^2>8$ and $DB\ge 4$ for any curve $B$ on $X$, then $K_X+D$ is very ample.
\end{corollary}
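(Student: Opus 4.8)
The plan is to deduce both statements from Theorem~\ref{sep} applied to the constant morphism $f$ from $X$ to a point (so $\dim(Y)=0$, and the characteristic-zero hypothesis of that theorem is in force). Since $D$ is nef with $D^2>\delta_\zeta\ge 0$, it is big; and being nef it is $\Z$-positive by the implications recorded after Lemma~\ref{nef=pos}, so both its Zariski decomposition and its integral Zariski decomposition are the trivial one $D=D+0$. In particular $N_\Z=0$, and the hypothesis "$P^2>\delta_\zeta$" (equivalently "$P_\Z^2>\delta_\zeta$") of Theorem~\ref{sep} is exactly~(i). Hence Theorem~\ref{sep} yields the dichotomy: either the natural map $H^0(X,\O_X(K_X+D))\to H^0(X,\O_X(K_X+D)|_\zeta)$ is surjective, which is precisely what "$|K_X+D|$ separates $\zeta$" means, or there is an effective divisor $B>0$ meeting $\zeta$ with $(D-B)\cdot B\le \tfrac14\delta_\zeta$ and, using $N_\Z=0$, with $D-2B$ big. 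It remains to rule out the second alternative by means of~(ii).

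Suppose the second alternative holds, and set $t:=D\cdot B$, $a:=D^2>0$, $d:=\delta_\zeta$. Since $D-2B$ is big, hence pseudo-effective, and $D$ is nef, we have $D\cdot(D-2B)\ge 0$, i.e.\ $t\le a/2$. From $(D-B)\cdot B\le \tfrac14 d$ we get $B^2\ge t-\tfrac14 d$, and the Hodge index theorem (applicable since $a>0$) gives $t^2\ge aB^2\ge a(t-\tfrac14 d)$, that is, $t^2-at+\tfrac14 ad\ge 0$. The quadratic $u\mapsto u^2-au+\tfrac14 ad$ has discriminant $a(a-d)>0$ by~(i) and roots $\tfrac12\bigl(a\pm\sqrt{a(a-d)}\bigr)$, and since $t\le a/2$ lies strictly below the larger root we must have $t\le \tfrac12\bigl(a-\sqrt{a(a-d)}\bigr)$; a short computation using $0<d<a$ shows this is $<\tfrac12 d$. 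Thus $D\cdot B<\tfrac12\delta_\zeta$, contradicting~(ii). (If $\delta_\zeta=0$ the same inequalities degenerate to $D\cdot B=0$ and $B^2=0$, forcing $B\equiv 0$ by the Hodge index theorem and contradicting $B>0$; alternatively this case follows at once from Theorem~\ref{negsemidefnthm}.) This proves the first assertion.

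For the final assertion, $X$ smooth makes $K_X+D$ Cartier, and by Lemma~\ref{deltax}, the additivity $\delta_\zeta=\sum_x\delta_{\zeta_x}$ from Definition~\ref{cluster}, and the bound for length-$2$ clusters on a smooth surface recorded in the remark after Lemma~\ref{deltax}, one has $\delta_\zeta\le 8$ for every cluster $\zeta$ of length $2$. Then $D^2>8\ge\delta_\zeta$ and $D\cdot B\ge 4\ge\tfrac12\delta_\zeta$ for every such $\zeta$ and every curve $B$ meeting it, so by the first part $|K_X+D|$ separates every cluster of length $2$; by the very-ampleness criterion recalled just before the corollary this means $K_X+D$ is very ample. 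The main obstacle is the numerical estimate of the second paragraph: one must combine $t\le a/2$, $B^2\ge t-\tfrac14 d$ and $t^2\ge aB^2$ to force $t<\tfrac12 d$, and the crux is that the relevant root of the quadratic lies below $d/2$ exactly because $a>d$, so hypothesis~(i) enters in an essential way; one also has to dispose of the boundary value $\delta_\zeta=0$, the case $B^2\le 0$ being trivially subsumed in the general argument.
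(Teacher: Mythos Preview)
Your proof is correct and follows the same overall strategy the paper intends: apply Theorem~\ref{sep} to the nef (hence $\Z$-positive) and big divisor $D$, and rule out the second alternative by showing $DB<\tfrac12\delta_\zeta$. The paper does not give a separate proof of this corollary, saying only that it is analogous to Corollary~\ref{bpfdim0}; but note that the displayed argument for Corollary~\ref{bpfdim0} concludes ``$DB\le D^{2}/2<\delta_x/2$'', which is the wrong direction since the hypothesis is $D^2>\delta_x$. Your use of the Hodge index inequality $t^2\ge aB^2$ together with $B^2\ge t-\tfrac14 d$ (from $(D-B)B\le\tfrac14\delta_\zeta$) and $t\le a/2$ (from $D-2B$ big and $D$ nef) is precisely the step needed to force $t\le\tfrac12\bigl(a-\sqrt{a(a-d)}\bigr)<\tfrac12 d$, so your argument in fact supplies the missing piece. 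The treatment of the boundary case $\delta_\zeta=0$ and the ``in particular'' clause via the bound $\delta_\zeta\le 8$ for length-$2$ clusters on a smooth surface are also correct.
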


\begin{corollary}\label{vadim1}
Let $f\colon X\to Y$ be a fiber space from a smooth surface $X$ to a curve $Y$.
Let $D$ be an $f$-nef divisor on $X$ with $DF>0$ for a fiber $F$ of $f$.
If $f_{*}\O_X(K_X+D)$ does not separate a cluster $\zeta$ of length $2$, then there exists an $f$-exceptional curve $B$ on $X$ intersecting $\zeta$ such that one of the following holds.

\smallskip

\noindent
$(\mathrm{i})$ $DB=0$ and $B^2=-2$ or $-1$.

\smallskip

\noindent
$(\mathrm{ii})$ $DB=1$ and $B^2=-1$ or $0$.

\smallskip

\noindent
$(\mathrm{iii})$ $DB=2$ and $B^2=0$.

\smallskip

\noindent
In particular, if one of the following conditions $(\mathrm{a})$ and $(\mathrm{b})$ holds, then
$K_X+D$ is $f$-very ample.

\smallskip

\noindent
$(\mathrm{a})$ $D-3H$ is $f$-nef for some $f$-ample divisor $H$ on $X$.

\smallskip

\noindent
$(\mathrm{b})$ $f$ is a relatively minimal fibration of genus greater than $1$, $D-K_X$ is $f$-nef and there are no curves $B$ with $p_a(B)=i$ and $B^{2}=i-2$ $(i=0,1,2)$ contained in fibers.
\end{corollary}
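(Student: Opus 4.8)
The plan is to run the argument of Corollary~\ref{bpfdim1} with a cluster $\zeta$ of length $2$ in place of a point. Recall the criterion stated just before the corollary: $K_X+D$ is $f$-very ample precisely when $f_*\O_X(K_X+D)\to f_*(\O_X(K_X+D)|_\zeta)$ is surjective for every cluster $\zeta$ of length $2$ contained in a fibre of $f$. Since $X$ is smooth, $K_X+D$ is Cartier everywhere, so for any such $\zeta$ Theorem~\ref{sep} applies, and because $\dim Y=1$ no positivity hypothesis on a Zariski-positive part is required: if the restriction map fails to be surjective we obtain an $f$-exceptional effective divisor $B>0$ meeting $\zeta$ with $(D-B)B\le\delta_\zeta/4$. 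As every point of $\zeta$ is a regular point of $X$ and the base field is algebraically closed, Lemma~\ref{deltax} (together with the estimate for a length-$2$ cluster supported at a single point) gives $\delta_\zeta\le 8$, hence $(D-B)B\le 2$.

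From here I would read off the list. By Zariski's lemma $B^2\le 0$, and $DB\ge 0$ since $D$ is $f$-nef, so $0\le DB=(D-B)B+B^2\le 2$ and $DB-2\le B^2\le 0$. If $B^2=0$, then $B\equiv aF$ for some $a\in\Q_{>0}$ (all fibres of $f$ are numerically equivalent), whence $DB=aDF>0$; in particular $DB\neq 0$ in this case. Matching $DB\in\{0,1,2\}$ against these constraints together with the last observation yields exactly the three alternatives (i), (ii), (iii).

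For the last assertion I would rule out all of (i)--(iii) under each hypothesis. Under (a): since $H$ is $f$-ample, $HB\ge 1$, so $DB=(D-3H)B+3HB\ge 3$, contradicting $DB\le 2$. Under (b): relative minimality with genus $g\ge 2$ makes $K_X$ $f$-nef, so $0\le K_XB$, and $f$-nefness of $D-K_X$ gives $K_XB\le DB$; also $K_XB+B^2=2p_a(B)-2$ is even. In case (i), $DB=0$ forces $K_XB=0$; then $B^2=-1$ contradicts parity, and $B^2=-2$ gives $p_a(B)=0$, an excluded curve with $(p_a(B),B^2)=(0,-2)$. In case (ii), $DB=1$: if $B^2=-1$, parity and $0\le K_XB\le 1$ force $K_XB=1$, $p_a(B)=1$, an excluded curve with $(1,-1)$; if $B^2=0$, then $B\equiv aF$ and $K_XB=a(2g-2)$ is positive and even, contradicting $K_XB\le 1$. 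In case (iii), $DB=2$ and $B^2=0$: $K_XB=a(2g-2)$ is positive and even with $K_XB\le 2$, so $K_XB=2$, $p_a(B)=2$, an excluded curve with $(2,0)$. Thus no such $B$ exists and $K_X+D$ is $f$-very ample.

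I expect the only delicate points to be, first, checking that (i)--(iii) exhaust the possibilities---handled by the observation $B^2=0\Rightarrow B\equiv aF$---and, second, the parity--adjunction bookkeeping in (b) that identifies each surviving case with one of the prohibited curves $p_a(B)=i$, $B^2=i-2$; both are elementary, since all the substance is supplied by Theorem~\ref{sep}.
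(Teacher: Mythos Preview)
Your proof is correct and follows exactly the approach the paper indicates (``We can show the very ample cases similarly''), namely running the argument of Corollary~\ref{bpfdim1} with $\delta_\zeta\le 8$ in place of $\delta_x\le 4$. One small point of phrasing: in case (ii) with $B^2=0$ you write ``$K_XB=a(2g-2)$ is positive and even'', but the evenness does not follow from this expression (since $a$ need not be an integer); it follows from adjunction $K_XB+B^2=2p_a(B)-2$, which you invoked just before---so the argument is fine, only the wording could be tightened.
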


\begin{corollary}\label{vadim2}
Let $f\colon X\to Y$ be a resolution of a normal surface singularity $(Y,y)$.
Let $D$ be an $f$-nef divisor on $X$.
If $f_{*}\O_X(K_X+D)$ does not separate a cluster $\zeta$ of length $2$, then there exists an $f$-exceptional curve $B$ on $X$ intersecting $\zeta$ such that one of the following hold.

\smallskip

\noindent
$(\mathrm{i})$ $DB=0$ and $B^2=-2$ or $-1$.

\smallskip

\noindent
$(\mathrm{ii})$ $DB=1$ and $B^2=-1$.

\smallskip

\noindent
In particular, if one of the following conditions $(\mathrm{a})$ and $(\mathrm{b})$ holds, then
$K_X+D$ is $f$-very ample.

\smallskip

\noindent
$(\mathrm{a})$ $D-2H$ is $f$-nef for some $f$-ample divisor $H$ on $X$.

\smallskip

\noindent
$(\mathrm{b})$ $f$ is a minimal resolution, $D-K_X$ is $f$-nef and there are no $f$-exceptional $(-2)$-curves and $f$-exceptional curves $E$ with $p_a(E)=1$ and $E^{2}=-1$.
\end{corollary}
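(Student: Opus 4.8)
The plan is to run, in the birational setting $\dim Y=2$, the same argument used for Corollaries~\ref{bpfdim1} and \ref{bpfdim2}. Since $f$ is a resolution, $X$ is smooth (so $K_X+D$ is automatically Cartier along any cluster and all intersection numbers are integers), $f$ is an isomorphism over $Y\setminus\{y\}$, and every effective $f$-exceptional divisor is supported on $f^{-1}(y)$ and is negative definite. Because $f$ is birational, every divisor on $X$ is $f$-big, hence $f$-pseudo-effective, and $D$, being $f$-nef, is $f$-$\Z$-positive (Lemma~\ref{nef=pos}), so its integral Zariski decomposition is trivial. By the criterion recalled before Corollary~\ref{vadim0}, $K_X+D$ is $f$-very ample if and only if $f_*\O_X(K_X+D)\to f_*(\O_X(K_X+D)|_\zeta)$ is surjective for every length-$2$ cluster $\zeta$ in a fibre of $f$, and such a $\zeta$ is automatically supported on $f^{-1}(y)$. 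Applying Theorem~\ref{sep} to a cluster $\zeta$ that $f_*\O_X(K_X+D)$ does not separate produces an $f$-exceptional divisor $B>0$ meeting $\zeta$ with $(D-B)B\le\delta_\zeta/4$.

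Next I would insert the numerics. A length-$2$ cluster on a smooth surface satisfies $\delta_\zeta\le 8$ (Lemma~\ref{deltax}; see also the blow-up computation in the proof of Lemma~\ref{deltabound}), so $(D-B)B\le 2$. From $DB-B^2=(D-B)B\le 2$, together with $B^2<0$ (negative definiteness) and $DB\ge 0$ ($D$ is $f$-nef), one gets $-2\le B^2\le -1$, hence $B^2\in\{-1,-2\}$ and $DB\le 2+B^2\le 1$. This leaves precisely $(DB,B^2)\in\{(0,-2),(0,-1),(1,-1)\}$, which are cases (i) and (ii). Replacing $B$ by the connected component of $B$ meeting $\zeta$ does not increase $(D-B)B$ (because $(D-B_1)B_1=DB_1-B_1^2\ge 0$ for the complementary part $B_1$), so for the consequences below we may and do assume $B$ connected.

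For the ``in particular'' statements I argue by contradiction: assume $f_*\O_X(K_X+D)$ fails to separate some length-$2$ cluster $\zeta$ in a fibre, and take the resulting connected curve $B$. Under (a), writing $DB=(D-2H)B+2HB$ with $(D-2H)B\ge 0$ and $HB$ a positive integer (since $H$ is $f$-ample and $B>0$ is $f$-exceptional) gives $DB\ge 2$, contradicting $DB\le 1$. Under (b), $K_X$ is $f$-nef because $f$ is a minimal resolution, so $0\le K_XB\le DB\le 1$; adjunction for the connected curve $B$ reads $B^2+K_XB=2p_a(B)-2$. If $K_XB=0$, then every prime component $C_i$ of $B$ has $K_XC_i=0$ and $C_i^2<0$, which by adjunction forces $C_i$ to be a $(-2)$-curve, contradicting the hypothesis. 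If $K_XB=1$ (possible only when $DB=1$, i.e.\ case (ii)), then every component of $B$ but one must again be a $(-2)$-curve, so $B=C$ is a single prime curve with $K_XC=1$ and $C^2=-1$, whence $p_a(C)=1$, again contradicting the hypothesis. Hence no such $\zeta$ exists and $K_X+D$ is $f$-very ample.

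The intersection-number bookkeeping is routine; the main obstacle is twofold: establishing $\delta_\zeta\le 8$ for all length-$2$ clusters, including the curvilinear ones supported at a single point (which require an explicit two-fold blow-up computation), and correctly propagating the numerical conclusions through an a priori reducible and possibly non-reduced divisor $B$ in part (b) — which is exactly why the reduction to $B$ connected is carried out before invoking adjunction.
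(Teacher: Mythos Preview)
Your proposal is correct and follows essentially the same approach the paper intends: apply Theorem~\ref{sep} with $\delta_\zeta\le 8$, combine negative definiteness of $f$-exceptional divisors with $f$-nefness of $D$ to pin down $(DB,B^2)$, and then exclude these possibilities under (a) or (b) by elementary adjunction. The paper gives no separate proof for this corollary, only the blanket remark ``we can show the very ample cases similarly,'' and your write-up is precisely that.
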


%%%%%%%%%%%%%%%%%%%%%%%%%%%%%%%%%%%%%%%%%%%%%%%%%%%%%%%%%%%%%%%%%%%%%%%%%%%%

\section{Extension theorems}
\label{sec:Extension theorems}

%%%%%%%%%%%%%%%%%%%%%%%%%%%%%%%%%%%%%%%%%%%%%%%%%%%%%%%%%%%%%%%%%%%%%%%%%%%%%

\subsection{Main theorem}
Let $X$ be a normal complete surface over a field $k$.
Let us define the invariant $q_X$ as
$$
q_X:=\mathrm{min}\{E^2\ |\ \text{$E$ is an effective divisor on $X$ with $E^2>0$} \}.
$$
For example, $q_X\ge 1$ if $X$ is smooth and $q_X\ge 2$ if $X$ is smooth and $K_X$ is numerically even.
The main theorem is a generalization of Serrano--Paoletti's extension theorem (\cite{Ser}, \cite{Pao}) as follows (for the definitions of the maximal degree and the pseudo-gonality, see Appendix~B).

\begin{theorem}[Extension theorem] \label{extnthm}
Let $X$ be a normal proper surface over a field $k$ of characteristic $0$.
Let $D>0$ be an effective divisor on $X$ all of whose prime components have positive self-intersection numbers.
Let $\varphi\colon D\to C$ be a finite morphism to a proper scheme $C$ over $k$ with scheme-theoretic image $\varphi(D)$.
We put $d:=\maxdeg(\varphi)\mathrm{pgon}_{\infty}(\varphi(D))$ and $q_{X,d}:=\mathrm{min}\{q_X,d\}$.
If $D^2>q_{X,d}(d/q_{X,d}+1)^2$, then there exists a morphism $\psi\colon X\to C$ such that the restriction of $\psi$ to $D$ is equal to $\varphi$.
\end{theorem}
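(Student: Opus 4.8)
The plan is to reduce the extension problem to finding a suitable morphism to $\mathbb{P}^1$ and then to apply the Reider-type theorem II (Theorem~\ref{negsemidefnthm}) to the adjoint linear system $|K_X+L|$ for an appropriate divisor $L$ built from $D$ and the fibers of $\varphi$. First I would set up the combinatorial framework from Appendix~B: the finite morphism $\varphi\colon D\to C$ together with the pseudo-gonality $\mathrm{pgon}_{\infty}(\varphi(D))$ produces pencils on $\varphi(D)$, which pull back to a family of clusters (divisors of degree $d$) on $D$ moving in a one-dimensional linear system. To extend $\varphi$ to $\psi\colon X\to C$, it suffices to show this family of clusters on $D$ is ``cut out'' by a pencil on $X$; concretely one wants, for each member $\zeta$ of the family (a cluster of length $\le d$ along which $K_X+D$ will be Cartier after a standard adjustment), that the restriction $H^0(X,\mathcal{O}_X(K_X+D))\to H^0(X,\mathcal{O}_X(K_X+D)|_{\zeta})$ — or rather the analogous restriction for a twist of $D$ — is surjective, so that the cluster is separated and the sections glue into a morphism.

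Next I would run the following dichotomy. Apply Theorem~\ref{negsemidefnthm} with the nef and big divisor taken to be $D$ itself (here the hypothesis that every prime component $D_i$ satisfies $D_i^2>0$ is used to guarantee $D$ is big and, via the Hodge index theorem, to control the intersection behavior; nefness of $D$ on the relevant curves should follow since any curve not a component of $D$ meets $D$ non-negatively and components have positive self-intersection). If the restriction map is surjective for every cluster $\zeta$ in the family, then the pencil on $D$ lifts to $X$ and we obtain $\psi$ directly. If not, Theorem~\ref{negsemidefnthm} produces a negative semi-definite effective divisor $B>0$ meeting $\zeta$ with $0<(D-B)B\le \delta_{\zeta}/4$ and $D-2B$ big. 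The point is then to bound $\delta_{\zeta}$: since $\zeta$ is a cluster of length $\le d$ supported on the smooth (or at worst mild) locus cut by moving members of the pencil, Lemma~\ref{deltax} and the additivity $\delta_{\zeta}=\sum_x\delta_{\zeta_x}$ give $\delta_{\zeta}\le 4d$ (and the refinement $q_{\zeta}\ge q_{X,d}$, since any effective $E$ with $E^2>0$ meeting $\zeta$ has $E^2\ge q_X$, and $\delta_\zeta/4\le d$). Plugging into $\delta'_{\zeta}=q_{\zeta}(\delta_{\zeta}/4q_{\zeta}+1)^2$ and using monotonicity of $\phi(x)=x(\delta_\zeta/4x+1)^2$, the numerical hypothesis $D^2>q_{X,d}(d/q_{X,d}+1)^2$ forces $D^2>\delta'_{\zeta}$, hence $B$ is genuinely negative semi-definite, while simultaneously $D-2B$ is big — and $D$ being nef gives $(D-2B)D\ge 0$, i.e. $DB\le D^2/2$. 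Combining $0<(D-B)B\le \delta_\zeta/4$, $B^2\le 0$, bigness of $D-2B$, and the self-intersection hypothesis on components of $D$ should yield a contradiction (a negative semi-definite $B$ meeting $D$ cannot have $D-2B$ big once $D^2$ is large enough relative to $d$), ruling out the second alternative and completing the extension.

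I expect the main obstacle to be the passage from ``$|K_X+(\text{twist of }D)|$ separates every cluster in the family'' to the actual construction of the morphism $\psi\colon X\to C$ extending $\varphi$ — i.e. checking that the separated clusters assemble into a genuine base-point-free pencil (or the right linear system mapping to $C$) whose restriction to $D$ is exactly $\varphi$, not merely numerically equivalent to it. This requires care with: the choice of the twisting divisor so that $K_X+L$ restricts correctly to $D$ (using the conductor/adjunction exact sequence and the fact that $D$ and $X$ are Cohen-Macaulay, cf.\ \eqref{restex}); the Cartier hypothesis on $K_X+D$ along each $\zeta$; and the behavior over the possibly non-closed points of $C$ when $k$ is not algebraically closed. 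A secondary technical point is verifying that $D$ is nef as an effective divisor on $X$ — this is where one uses that each prime component has positive self-intersection, so that $D_iD_j\ge 0$ for all $i\ne j$ and $D_i^2>0$, hence $D_iC\ge 0$ for every prime $C$; this also underlies bigness of $D$ and the applicability of the Hodge index theorem inside the proof of Theorem~\ref{negsemidefnthm}. The genus/gonality bookkeeping packaged into $d=\maxdeg(\varphi)\,\mathrm{pgon}_{\infty}(\varphi(D))$ should be exactly what is needed to bound $\mathrm{length}(\zeta)$ and hence $\delta_\zeta$, so that step should be routine given Appendix~B.
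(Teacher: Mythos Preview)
Your dichotomy is inverted, and this is a genuine gap rather than a technicality. You treat surjectivity of $H^0(X,\mathcal{O}_X(K_X+D))\to H^0(\mathcal{O}_X(K_X+D)|_{\zeta})$ as the desired outcome and the appearance of the Reider curve $B$ as an obstruction to be eliminated by contradiction. In the paper the logic runs the other way. Lemma~\ref{notsep} shows, by a direct Riemann--Roch count on $D$ (using $h^0(\mathcal{O}_D)=1$ and that $\mathfrak{a}_\lambda$ moves in a pencil), that the restriction map is \emph{never} surjective for $\lambda\in\Lambda$. So the ``bad'' branch of your dichotomy is the only branch, and the curve $B_\lambda$ produced by Theorem~\ref{negsemidefnthm} is not an obstruction at all: it is the object one is trying to construct. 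The numerical hypothesis $D^2>q_{X,d}(d/q_{X,d}+1)^2$ is used precisely to force $B_\lambda$ to be negative semi-definite; a further computation (Lemma~\ref{fiberexist}) then pins down $B_\lambda^2=0$ and $D\cap B_\lambda\subset\mathfrak{a}_\lambda$. Your proposed contradiction ``a negative semi-definite $B$ meeting $D$ cannot have $D-2B$ big once $D^2$ is large'' is simply false here---such $B_\lambda$ exist for every rational $\lambda\in\Lambda$, and that is the whole point.

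What you are missing is the second half of the argument: turning the infinite family $\{B_\lambda\}$ into a morphism on $X$. The paper does this by observing that the $B_\lambda$ fall into finitely many numerical classes, picking an infinite subfamily with $B^2=0$ in a fixed class, and invoking a Hilbert-scheme argument (Proposition~\ref{fiblem}) to produce a fibration $f\colon X\to Y$ whose fibers include the $B_\lambda$. One then checks (Lemma~\ref{isom}) that the induced map $D\to Y\times\mathbb{P}^1$ has image isomorphic to $Y$, giving the factorization $\varphi=\gamma\circ f|_D$ and hence $\psi=\gamma\circ f$. The reduction from general $C$ to $C=\mathbb{P}^1$ (Lemma~\ref{wmapencil}) is also nontrivial: it uses the infinite family built into $\mathrm{pgon}_\infty(\varphi(D))$, applies the $\mathbb{P}^1$ case to each $\alpha_i\circ\varphi$, and then runs an fppf descent to recover $\gamma\colon Y\to C$. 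None of this is captured by ``if the restriction is surjective the pencil lifts''; surjectivity of a restriction-to-cluster map does not by itself produce a pencil on $X$, let alone one restricting to $\varphi$.
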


\begin{remark}\label{lowerbound}
Let $\phi(x,d):=q_{x,d}(d/q_{x,d}+1)^2$ for $x,d\in \R_{>0}$, where $q_{x,d}:=\min\{x,d\}$.
Then $\phi(x,d)$ is non-increasing with respect to $x$ and monotonically increasing with respect to $d$.
The lower bound of $D^2$ in Theorem~\ref{extnthm} is nothing but $\phi(q_X,d)$.
Thus in order to check the condition $D^2>\phi(q_X,d)$, it suffices to verify $D^2>\phi(x,d')$ for some $x\le q_X$ and $d'\ge d$.
For example, we can take $x=1$ (resp.\ $x=2$) when $X$ is smooth (resp.\ and $K_X$ is numerically even).
In this case, $\phi(1,d)=(d+1)^2$ and $\phi(2,d)=(d+2)^2/2$ coincide with the lower bounds of $D^2$ in Theorem~3.1 in \cite{Ser}.
Moreover, we may replace $d$ by $d':=\maxdeg(\varphi)\mathrm{ps}_2(\varphi(D))$ (resp.\ $d':=\maxdeg(\varphi)(\mathrm{ps}_2(\varphi(D))-1)$ when $\varphi(D)$ has an irreducible component of geometric multiplicity $1$ with infinitely many rational points) by Proposition~\ref{invariants} (6).
\end{remark}

\begin{lemma} \label{wmapencil}
In order to prove Theorem~\ref{extnthm}, we may assume that $C=\mathbb{P}^1$.
\end{lemma}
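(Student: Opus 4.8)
The idea is to replace the general target $C$ by the image curve $\varphi(D)$, realise that curve as a closed subscheme of a product of copies of $\mathbb P^1$ by means of finitely many \emph{low-degree} pencils — this is precisely what the invariant $\mathrm{pgon}_{\infty}$ from Appendix~B is designed to control — apply the assumed case $C=\mathbb P^1$ to each of the resulting maps $D\to\mathbb P^1$, and then reassemble the extensions into a single morphism $X\to(\mathbb P^1)^{N}$ which one shows factors back through the image curve. The reassembly is the point that needs care; the rest is bookkeeping.

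First I would set $C_0:=\varphi(D)$, the scheme-theoretic image, and factor $\varphi$ as $D\twoheadrightarrow C_0\hookrightarrow C$. Since $\varphi$ is finite, $C_0$ is a (possibly reducible, possibly non-reduced) proper curve, hence projective; and neither $\maxdeg(\varphi)$ nor $C_0$ — hence neither $d$ nor $q_{X,d}$ — changes if we work with $D\twoheadrightarrow C_0$. By the construction underlying $\mathrm{pgon}_{\infty}$ there exist finite morphisms $h_1,\dots,h_N\colon C_0\to\mathbb P^1$ with $\maxdeg(h_j)\le\mathrm{pgon}_{\infty}(C_0)$ whose product $G:=(h_1,\dots,h_N)\colon C_0\to(\mathbb P^1)^{N}$ is a closed immersion (equivalently, the $h_j$ jointly separate points and tangent directions of $C_0$). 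Put $\varphi_j:=h_j\circ\varphi\colon D\to\mathbb P^1$. On each prime component $D_l$ of $D$ one has
$$
\deg(\varphi_j|_{D_l})=\deg(\varphi|_{D_l})\cdot\deg(h_j|_{\varphi(D_l)})\le\maxdeg(\varphi)\,\mathrm{pgon}_{\infty}(C_0)=d,
$$
so $\maxdeg(\varphi_j)\le d$. As $\mathrm{pgon}_{\infty}(\mathbb P^1)=1$ and, by Remark~\ref{lowerbound}, the bound in Theorem~\ref{extnthm} is monotonically increasing in $d$, the hypothesis $D^2>\phi(q_X,d)$ gives $D^2>\phi\bigl(q_X,\maxdeg(\varphi_j)\bigr)$, which is exactly the hypothesis of the theorem for $\varphi_j$ with target $\mathbb P^1$. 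Invoking that case produces $\psi_j\colon X\to\mathbb P^1$ with $\psi_j|_D=\varphi_j$.

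Finally I would form $\Psi:=(\psi_1,\dots,\psi_N)\colon X\to(\mathbb P^1)^{N}$, so that $\Psi|_D=(\varphi_1,\dots,\varphi_N)=G\circ\varphi$ has scheme-theoretic image $G(C_0)\cong C_0$; once one knows that $\Psi$ itself factors through $G(C_0)$, composing with $G(C_0)\xrightarrow{\sim}C_0\hookrightarrow C$ yields the desired $\psi$. This factorization is where I expect the main difficulty: the scheme-theoretic image $\Psi(X)$ is a reduced irreducible closed subvariety of $(\mathbb P^1)^{N}$ containing $G(C_0)$, so it equals $G(C_0)$ when $C_0$ is irreducible and is otherwise a surface, and one has to exclude the surface case. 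For this I would use that $D$ is a big divisor on $X$ — indeed $D^2>0$, since every prime component has positive self-intersection and, by the Hodge index theorem, any two distinct such components meet, so $D$ is connected and big — together with the fact that $\Psi$ restricts on $D$ to a finite morphism onto the curve $G(C_0)$: each multihomogeneous equation of $G(C_0)$ pulls back under $\Psi$ to a section of a nef line bundle vanishing along $D$ whose degree on each component of $D$ is controlled by $d$, and comparing this with the hypothesis $D^2>\phi(q_X,d)$ forces the section to vanish, so $\Psi$ satisfies all the equations of $G(C_0)$. The remaining ingredients — the degree estimates above, the monotonicity from Remark~\ref{lowerbound}, and the passage from a merely proper $C$ (automatic, since proper curves over $k$ are projective) — are routine.
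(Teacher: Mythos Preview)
Your approach has two real gaps. First, you misread $\mathrm{pgon}_\infty$: by Definition~\ref{gonmd} it furnishes an \emph{infinite} family $\{\alpha_i\colon C_0\to\mathbb P^1\}_{i\in I}$ satisfying condition~(A), and (A) only says that every infinite subsequence admits a finite initial segment whose product is \emph{birational onto its image}---not a closed immersion. For a singular or non-reduced $C_0$ no finite tuple of low-degree pencils need produce a closed immersion, and without one you lose the isomorphism $G(C_0)\cong C_0$ you invoke at the end. Second, and decisively, the factorization of $\Psi$ through $G(C_0)$ does not follow from your degree argument: a defining equation of $G(C_0)\subset(\mathbb P^1)^N$ has some multidegree $(a_1,\dots,a_N)$ over which you have no control, and its pullback by $\Psi$ has degree $\sum_j a_j\deg(\varphi_j)$ on $D$---bounded by $d\sum_j a_j$, not by anything depending on $d$ alone. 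The hypothesis $D^2>\phi(q_X,d)$ gives no leverage on such sections. Concretely, the $\psi_j$ will in general have pairwise distinct Stein factorizations, and then $\Psi(X)$ is genuinely a surface.

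The paper exploits the infinitude of the family in a different way. Apply the $\mathbb P^1$-case to \emph{all} $\alpha_i\circ\varphi$, obtain extensions $\psi_i\colon X\to\mathbb P^1$, and take Stein factorizations $X\xrightarrow{f_i}Y_i\to\mathbb P^1$. The fiber classes satisfy $0<DF_i\le d$, hence lie in finitely many numerical classes, so an infinite subset $J\subset I$ shares a single fibration $f\colon X\to Y$. Now $f|_D\colon D\to Y$ is finite flat (an fppf cover), and for every $i\in J$ one has $\alpha_i\circ\varphi\circ\pr_1=\alpha_i\circ\varphi\circ\pr_2$ on $D\times_Y D$; condition~(A), via Lemma~\ref{mono}, then forces $\varphi\circ\pr_1=\varphi\circ\pr_2$, so $\varphi$ descends to a morphism $\gamma\colon Y\to C$ and $\psi:=\gamma\circ f$ is the required extension. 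The infinite family is used not to embed $C_0$ anywhere but to separate the two projections $D\times_Y D\rightrightarrows D$ after composition with $\varphi$.
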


\begin{proof}
Assume that Theorem~\ref{extnthm} for $C=\mathbb{P}^1$, that is, Theorem~\ref{introextnthm} is true.
Let $X$, $D$, $\varphi\colon D\to C$ and $d$ be as in Theorem~\ref{extnthm} and suppose that $D^2>q_{X,d}(d/q_{X,d}+1)^2$.
We may assume that $\varphi(D)=C$.
By the definition of $\mathrm{pgon}_{\infty}(C)$, we can take an infinite family $\{\alpha_i\colon C\to \mathbb{P}^1\}_{i\in I}$ of finite morphisms of degree $\le \mathrm{pgon}_{\infty}(C)$ with condition $(A)$ in Definition~\ref{gonmd}.
Let us consider the composition $\alpha_i\circ\varphi\colon D\to \mathbb{P}^1$ of $\varphi$ and $\alpha_i$ for $i\in I$.
Then the degree of $\alpha_i\circ \varphi$ is less than or equal to $d$ by Lemma~\ref{findeg} (2).
Then by assumption, there exist morphisms $\psi_i\colon X\to \mathbb{P}^1$ such that $\psi_i|_{D}=\alpha_i\circ \varphi$ for any $i\in I$.
We take the Stein factorizations of $\psi_i$ as $\psi_i=\beta_i\circ f_i\colon X\to Y_i\to \mathbb{P}^1$, where $f_i\colon X\to Y_i$ is a fibration over a smooth curve $Y_i$ with connected fibers and $\beta_i\colon Y_i\to \mathbb{P}^1$ is finite.
Let $F_i$ denote the numerical equivalence class of a fiber of $f_i$.
Since $0<DF_i\le d$ holds for any $i\in I$,
the set $\{F_i\}_{i\in I}$ belongs to finitely many numerical equivalence classes in $X$.
Indeed, the multiplication of $D$ defines a positive function on the polyhedral cone $\Eff(X)/\{C\ge 0\ |\ DC=0\}$.
Hence there is an infinite subset $J\subset I$ such that $f_i$ is the same fibration for any $i\in J$, which is denoted by $f\colon X\to Y$.
Note that $f|_D\colon D\to Y$ is finite and flat because any irreducible component of $D$ has positive self-intersection number, $D$ is Cohen-Macaulay and $Y$ is smooth.
Then $f|_D$ and the projections $\pr_i\colon D\times_{Y}D\to D$, $i=1,2$ are fppf coverings.
In order to prove Theorem~\ref{extnthm},
it suffices to show the existence of a morphism $\gamma\colon Y\to C$ such that $\gamma\circ f|_{D}=\varphi$.
Indeed, $\psi:=\gamma\circ f$ is the desired one.
For any $i\in J$, we have
$$
\alpha_i\circ \varphi\circ \pr_1=\beta_i\circ f|_D\circ \pr_1=\beta_i\circ f|_D\circ \pr_2=\alpha_i\circ \varphi\circ \pr_2.
$$
Then we obtain $\varphi\circ \pr_1=\varphi\circ \pr_2$ by Lemma~\ref{mono}.
Since $f|_D$ is an fppf covering, the morphism $\varphi\colon D\to C$ descends to a morphism $\gamma\colon Y\to C$ with $\varphi=\gamma\circ f|_D$ (cf.\ VIII, 5.2 in \cite{SGA}), which completes the proof.
\end{proof}

\subsection{Proof of Theorem~\ref{introextnthm}}
Let $X$, $D$, $\varphi\colon D\to \mathbb{P}^1$ and $d$ be as in Theorem~\ref{introextnthm}.
In order to prove Theorem~\ref{introextnthm}, we may assume that $X$ is geometrically connected by replacing $k$ and $\varphi\colon D\to \mathbb{P}^1_{k}$ to $k':=H^{0}(X,\O_X)$ and $\varphi_{k'}\colon D\to \mathbb{P}^{1}_{k}\times_{k}k'\cong \mathbb{P}^1_{k'}$.
For a closed point $\lambda\in \mathbb{P}^1$, let us denote $\mathfrak{a}_{\lambda}:=\varphi^{-1}(\lambda)$ as the fiber of $\varphi$ at $\lambda$.
Let $\Lambda$ be the dense subset of $\mathbb{P}^1$ consisting of closed points $\lambda$ such that $\mathfrak{a}_{\lambda}$ is reduced and contained in the smooth loci of $X$ and $D_{\mathrm{red}}$.

\begin{lemma} \label{notsep}
For each closed point $\lambda\in \Lambda$, the restriction $H^{0}(X,\O_{X}(K_X+D))\to H^{0}(\mathfrak{a}_{\lambda}, \O_{X}(K_X+D)|_{\mathfrak{a}_{\lambda}})$ is not surjective.
\end{lemma}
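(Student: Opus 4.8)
The idea is to pass from $X$ to the curve $D$ and then exploit that $\varphi$ exhibits $D$ as a finite flat cover of $\mathbb{P}^1$. First I would reduce the statement. Since $\lambda\in\Lambda$, the cluster $\mathfrak{a}_\lambda$ lies in the smooth locus of $X$ and inside $D$, so there $D$ is an effective Cartier divisor and ordinary adjunction gives $\O_X(K_X+D)|_{\mathfrak{a}_\lambda}\cong\omega_D|_{\mathfrak{a}_\lambda}$; moreover the natural morphism $\O_X(K_X+D)|_D\to\omega_D$ lets us factor the restriction map in question as
$$
H^0(X,\O_X(K_X+D))\longrightarrow H^0(D,\omega_D)\longrightarrow H^0(\mathfrak{a}_\lambda,\omega_D|_{\mathfrak{a}_\lambda}).
$$
Hence it suffices to prove that the second arrow, the restriction $H^0(D,\omega_D)\to H^0(\mathfrak{a}_\lambda,\omega_D|_{\mathfrak{a}_\lambda})$, is not surjective.

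Next I would study $\varphi_*\omega_D$. Since $D$ is Cohen--Macaulay and $\varphi$ is finite, $\varphi$ is flat, so $\varphi_*\omega_D$ is a vector bundle of rank $d=\deg\varphi$ on $\mathbb{P}^1$; by Grothendieck's splitting theorem write $\varphi_*\omega_D\cong\bigoplus_i\O_{\mathbb{P}^1}(e_i)$. By Serre duality on the proper Cohen--Macaulay curve $D$, $H^1(\mathbb{P}^1,\varphi_*\omega_D)=H^1(D,\omega_D)\cong H^0(D,\O_D)^\vee\neq0$, so some $e_{i_0}\le-2$; let $p\colon\varphi_*\omega_D\twoheadrightarrow\O_{\mathbb{P}^1}(e_{i_0})=:\mathcal{N}$ be the corresponding direct-summand projection, and note $H^0(\mathbb{P}^1,\mathcal{N})=0$. (One can pin this down: applying $\mathcal{H}om_{\O_{\mathbb{P}^1}}(-,\omega_{\mathbb{P}^1})$ to $0\to\O_{\mathbb{P}^1}\to\varphi_*\O_D\to Q\to0$, with $Q$ locally free because $\O_{\mathbb{P}^1}\to\varphi_*\O_D$ is fiberwise injective, and using relative duality $\varphi_*\omega_D\cong\mathcal{H}om_{\O_{\mathbb{P}^1}}(\varphi_*\O_D,\omega_{\mathbb{P}^1})$, shows that $\O_{\mathbb{P}^1}(-2)$ is even a quotient of $\varphi_*\omega_D$.)

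Finally I would descend to $\mathbb{P}^1$ and conclude. As $\varphi$ is affine and $\mathfrak{a}_\lambda=\varphi^{-1}(\lambda)$, the projection formula identifies $H^0(D,\omega_D)=H^0(\mathbb{P}^1,\varphi_*\omega_D)$ and $H^0(\mathfrak{a}_\lambda,\omega_D|_{\mathfrak{a}_\lambda})=\varphi_*\omega_D\otimes k(\lambda)$, the restriction map becoming the evaluation $\mathrm{ev}_\lambda$. Tensoring $p$ with $k(\lambda)$ gives a surjection onto $\mathcal{N}\otimes k(\lambda)\neq0$, and by functoriality the composite $(p\otimes k(\lambda))\circ\mathrm{ev}_\lambda$ factors through $H^0(\mathbb{P}^1,\mathcal{N})=0$. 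Hence $\mathrm{Image}(\mathrm{ev}_\lambda)$ lies in the proper subspace $\ker\bigl(\varphi_*\omega_D\otimes k(\lambda)\to\mathcal{N}\otimes k(\lambda)\bigr)$, so $\mathrm{ev}_\lambda$ — and therefore the restriction map of the lemma — is not surjective.

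The one non-formal point is that $\varphi_*\omega_D$ necessarily has a summand of negative degree, which is exactly what produces the non-separation; this is immediate from Serre duality on $D$ (or from relative duality for $\varphi$). Note that only $\mathfrak{a}_\lambda\subset X_{\mathrm{sm}}\cap D$ is used — the reducedness condition in the definition of $\Lambda$ is irrelevant here — and everything else (the factorization through $D$, the projection formula, the splitting on $\mathbb{P}^1$) is standard.
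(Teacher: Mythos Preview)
Your proof is correct, and after the common first step (factoring the restriction through $H^0(D,\omega_D)$ via adjunction) it takes a genuinely different route from the paper.

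The paper argues by contradiction on the curve $D$ via Riemann--Roch and Serre duality: assuming the restriction $H^0(D,\omega_D)\to H^0(\mathfrak{a}_\lambda,\omega_D|_{\mathfrak{a}_\lambda})$ is surjective, it computes $h^0(\O_D(\mathfrak{a}_\lambda))=1$, contradicting the fact that $\mathfrak{a}_\lambda$ moves in the pencil $\varphi^*|\O_{\mathbb{P}^1}(1)|$. The key input here is $h^0(\O_D)=1$, which the paper obtains from $D$ being nef and big together with the geometric connectedness of $X$ (ultimately via the vanishing theorem proved earlier in the paper). By contrast, you push $\omega_D$ forward along the finite flat map $\varphi$, split $\varphi_*\omega_D$ on $\mathbb{P}^1$, and observe that $H^1(D,\omega_D)\cong H^0(D,\O_D)^\vee\neq 0$ forces a summand of degree $\le -2$; evaluation at any closed point $\lambda$ then lands in a proper subspace. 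Your argument only needs $H^0(D,\O_D)\neq 0$, so it is independent of the paper's vanishing theorem, of the reduction to $X$ geometrically connected, and of the nef-and-big hypothesis on $D$; as you note, it also does not use that $\mathfrak{a}_\lambda$ is reduced or that it avoids the singularities of $D_{\mathrm{red}}$. What the paper's argument buys is that it stays entirely on $D$ with classical Riemann--Roch and avoids Grothendieck splitting and relative duality; what yours buys is a cleaner, more self-contained statement that works for every closed point $\lambda\in\mathbb{P}^1$ (once the factorization through $\omega_D$ is available).
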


\begin{proof}
Assume contrary that this is surjective.
By the exact sequence
$$
0\to \O_X(K_X)\to \O_X(K_X+D)\to \omega_{D}\to 0
$$
obtained by applying $\mathscr{E}xt(-,\omega_X)$ to \eqref{restex},
it follows that $H^{0}(D,\omega_D)\to H^{0}(\mathfrak{a}_{\lambda},\omega_D|_{\mathfrak{a}_{\lambda}})$ is surjective.
Then we have $h^{0}(\omega_{D}(-\mathfrak{a}_{\lambda}))=p_a(D)-h^{0}(\omega_D|_{\mathfrak{a}_{\lambda}})$.
On the other hand, by the exact sequence
$$
0\to \O_D\to \O_D(\mathfrak{a}_{\lambda})\to \O_D(\mathfrak{a}_{\lambda})|_{\mathfrak{a}_{\lambda}}\to 0,
$$
we have $\chi(\O_D(\mathfrak{a}_{\lambda}))=\chi(\O_D)+h^{0}(\O_D(\mathfrak{a}_{\lambda})|_{\mathfrak{a}_{\lambda}})$.
Since $D$ is nef and big and $X$ is geometrically connected, we have $h^{0}(\O_D)=h^{0}(\O_X)=1$.
By these equalities and the Serre duality, we have $h^{0}(\O_D(\mathfrak{a}_{\lambda}))=1$, which contradicts the fact that $\mathfrak{a}_{\lambda}$ is a member of a linear pencil on $D$.
\end{proof}

We fix a closed point $\lambda\in \Lambda$ arbitrarily.
Let $D=\sum_{i}m_iD_i$ be the prime decomposition of $D$, where $m_i$ is the geometric multiplicity of $D_i$ in $D$.
Let us write $\mathfrak{a}_{\lambda}|_{D_i}=p_{i,1}+\cdots+p_{i,d_i}$, where the $p_{i,j}$ are closed points on $D_i$.
Note that $[k(\lambda):k]d=\sum_{i,j}m_{i}[k(p_{i,j}):k]$ holds by Lemma~\ref{lbdeg}.

\begin{lemma}\label{deltabound}
For any closed point $\lambda\in \Lambda$, we have 
$\delta_{\mathfrak{a}_{\lambda}}\le 4d[k(\lambda):k]$.
\end{lemma}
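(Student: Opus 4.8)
The plan is to bound $\delta_{\mathfrak{a}_{\lambda}}$ from above by exhibiting one explicit pair $(\pi,Z)$ as in Definition~\ref{cluster} and computing $\delta_{\mathfrak{a}_{\lambda}}(\pi,Z)$ directly. The point is that for $\lambda\in\Lambda$ the cluster $\mathfrak{a}_{\lambda}$ is reduced and contained in the regular locus of $X$ and in the smooth locus of $D_{\mathrm{red}}$. Hence the closed points $p_{i,j}$ are pairwise distinct regular points of $X$: two of them can coincide neither for $i=i'$ (reducedness of $\mathfrak{a}_{\lambda}$) nor for $i\neq i'$ (the point would then lie on $D_i\cap D_{i'}$, a singular point of $D_{\mathrm{red}}$), and moreover $\mathcal{I}_{\mathfrak{a}_{\lambda}}=\bigcap_{i,j}\mathfrak{m}_{p_{i,j}}$.

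First I would take $\pi\colon X'\to X$ to be the blow-up of $X$ at all the points $p_{i,j}$ simultaneously, with (pairwise disjoint) exceptional curves $E_{i,j}$, and set $Z:=\sum_{i,j}E_{i,j}$. Then $\pi$ resolves the (empty) set of singularities of $X$ contained in $\mathfrak{a}_{\lambda}$, the divisor $Z>0$ is $\pi$-exceptional, and $\pi_{*}\O_{X'}(-Z)=\bigcap_{i,j}\mathfrak{m}_{p_{i,j}}=\mathcal{I}_{\mathfrak{a}_{\lambda}}$, so $(\pi,Z)$ is a legitimate choice in Definition~\ref{cluster}. Since each $p_{i,j}$ is a regular point we have $K_{X'}=\pi^{*}K_X+Z$, hence the anti-canonical cycle is $\Delta=\pi^{*}K_X-K_{X'}=-Z$ and $\Delta-Z=-2Z$ is not effective. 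Using $E_{i,j}E_{i',j'}=0$ for $(i,j)\neq(i',j')$ and $E_{i,j}^{2}=-[k(p_{i,j}):k]$, this gives $\delta_{\mathfrak{a}_{\lambda}}\le \delta_{\mathfrak{a}_{\lambda}}(\pi,Z)=-(\Delta-Z)^{2}=-4Z^{2}=4\sum_{i,j}[k(p_{i,j}):k]$.

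To conclude I would invoke the identity $[k(\lambda):k]\,d=\sum_{i,j}m_{i}[k(p_{i,j}):k]$ recorded just above the statement (via Lemma~\ref{lbdeg}), together with $m_{i}\ge 1$, to get $\sum_{i,j}[k(p_{i,j}):k]\le \sum_{i,j}m_{i}[k(p_{i,j}):k]=d[k(\lambda):k]$, whence $\delta_{\mathfrak{a}_{\lambda}}\le 4d[k(\lambda):k]$. Equivalently, one can bypass the explicit blow-up and argue via the additivity $\delta_{\mathfrak{a}_{\lambda}}=\sum_{x}\delta_{(\mathfrak{a}_{\lambda})_{x}}$ from Definition~\ref{cluster} and the regular-point estimate $\delta_x\le 4[k(x):k]$ of Lemma~\ref{deltax}, summed over $x=p_{i,j}$.

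There is no real obstacle here; the only things needing a little care are (i) verifying that membership of $\lambda$ in $\Lambda$ forces the $p_{i,j}$ to be distinct regular points, which is exactly what makes the chosen $(\pi,Z)$ admissible and kills the cross terms $E_{i,j}E_{i',j'}$, and (ii) keeping the bookkeeping straight between $\delta_{\mathfrak{a}_{\lambda}}$, a sum over the points $p_{i,j}$ with no multiplicities, and the degree identity, which carries the component multiplicities $m_i$. Both are routine.
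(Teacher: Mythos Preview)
Your argument has a genuine gap when $D$ is not reduced, i.e.\ when some multiplicity $m_i>1$. The paper's phrase ``$\mathfrak{a}_{\lambda}$ is reduced'' in the definition of $\Lambda$ is slightly loose: what is actually used (and what the paper's own proof makes explicit) is that the restriction $\mathfrak{a}_{\lambda}|_{D_i}$ to each reduced component is \'etale over $\lambda$, so that $\mathfrak{a}_{\lambda}|_{D_i}=p_{i,1}+\cdots+p_{i,d_i}$ with the $p_{i,j}$ distinct regular points. As a closed subscheme of $X$, however, $\mathfrak{a}_{\lambda}=\varphi^{-1}(\lambda)\subset D\subset X$ carries the thickening coming from $D$: in suitable local coordinates $z,w$ at $p_{i,j}$ with $D_i=\{z=0\}$ and $D=\{z^{m_i}=0\}$, one has $\mathcal{I}_{\mathfrak{a}_{\lambda}/X}=(z^{m_i},w)$, not the maximal ideal $(z,w)$. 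Consequently your identity $\mathcal{I}_{\mathfrak{a}_{\lambda}}=\bigcap_{i,j}\mathfrak{m}_{p_{i,j}}$ fails for $m_i\ge 2$, and the pair $(\pi,Z)$ you propose is \emph{not} admissible in Definition~\ref{cluster}: with a single blow-up one gets $\pi_{*}\O_{X'}(-E_{i,j})=\mathfrak{m}_{p_{i,j}}=(z,w)\not\subset (z^{m_i},w)$. The alternative via Lemma~\ref{deltax} has the same defect, since $(\mathfrak{a}_{\lambda})_{p_{i,j}}$ is a length-$m_i$ fat point, not a reduced point, so the bound $\delta_x\le 4[k(x):k]$ does not apply.

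The paper repairs this by performing, at each $p_{i,j}$, a chain of $m_i$ point blow-ups $X_{m_i}\to\cdots\to X_1\to X$ whose centers are chosen (off the proper transform of $D_i$) so that $\pi_{*}\O_{X'}\bigl(-\sum_{k=1}^{m_i}E_{i,j}^{k}\bigr)=(z^{m_i},w)$ locally. With $Z=\sum_{i,j,k}E_{i,j}^{k}$ one then gets $\Delta=-Z$ and $-(\Delta-Z)^2=4\sum_{i,j}m_i[k(p_{i,j}):k]=4d[k(\lambda):k]$ on the nose, with no slack from $m_i\ge 1$ needed. Your argument is correct precisely in the reduced case $m_i\equiv 1$; a naive fix such as taking $Z=\sum_{i,j}m_iE_{i,j}$ on the single blow-up does give $\pi_{*}\O_{X'}(-Z)\subset\mathcal{I}_{\mathfrak{a}_{\lambda}}$ but yields $-(\Delta-Z)^2=\sum_{i,j}(m_i+1)^2[k(p_{i,j}):k]$, which exceeds $4d[k(\lambda):k]$ as soon as some $m_i\ge 2$. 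So the iterated blow-up is essential, and moreover the specific $(\pi,Z)$ built this way is reused in the proof of Lemma~\ref{fiberexist}.
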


\begin{proof}
Since $\lambda$ is contained in $\Lambda$, we can take a system of parameters $z$, $w$ of $\O_{X,p_{i,j}}$ such that
the ideal sheaves $\mathcal{I}_{\mathfrak{a}_{\lambda}/X}$, $\mathcal{I}_{D/X}$, $\mathcal{I}_{D_{i}/X}$ and $\mathcal{I}_{\mathfrak{a}_{\lambda}/D}$ are respectively generated at $p_{i,j}$ by $\{z^{m_i}, w\}$, $z^{m_i}$, $z$ and $w$.
Then for any $p_{i,j}$, we can take a sequence of blow-ups at single points
$$
X_{m_i}\xrightarrow{\pi_{m_i}} X_{m_i-1}\xrightarrow{\pi_{m_i-1}} \cdots \xrightarrow{\pi_2} X_1\xrightarrow{\pi_1} X_0=X
$$
such that $\pi_1$ is the blow-up at $p^{1}_{i,j}=p_{i,j}$, the center $p^{k}_{i,j}$ of $\pi_k$ is infinitely near to $p^{k-1}_{i,j}$ and not contained in the proper transform of $D_i$ for $k\ge 2$ and the push-forward of the sheaf $\O_{X_{m_i}}(-\sum_{k}E^{k}_{i,j})$ to $X$ is isomorphic to $\mathcal{I}_{\mathfrak{a}_{\lambda}/X}$ at $p_{i,j}$, where $E^{k}_{i,j}$ is the total transform of the exceptional divisor of $\pi_{k}$.
By taking these blow-ups for all $p_{i,j}$, we obtain a proper birational morphism $\pi\colon X'\to X$ the center of which is supported at the points $p_{i,j}$ such that $\pi_{*}\O_{X'}(-\sum_{i,j,k}E^{k}_{i,j})=\mathcal{I}_{\mathfrak{a}_{\lambda}/X}$.
Then we can take $Z=\sum_{i,j,k}E^{k}_{i,j}$ as in Definition~\ref{cluster}.
Since the anti-canonical cycle of $\pi$ is $\Delta=-\sum_{i,j,k}E^{k}_{i,j}$, we obtain
$$
\delta_{\mathfrak{a}_{\lambda}}\le -(\Delta-Z)^2=-4\sum_{i,j,k}(E^{k}_{i,j})^2=4\sum_{i,j}m_{i}[k(p_{i,j}):k]=4d[k(\lambda):k].
$$
\end{proof}

From now on, we consider the case that $\lambda\in \Lambda$ is a rational point.
Note that $D$ and $\zeta:=\mathfrak{a}_{\lambda}$ satisfy the assumption of Theorem~\ref{negsemidefnthm} by the condition of $D^2$ in Theorem~\ref{introextnthm}, Lemma~\ref{notsep} and Lemma~\ref{deltabound}.

\begin{lemma} \label{fiberexist}
For any rational point $\lambda\in \Lambda$, there exists a negative semi-definite curve $B_{\lambda}$ on $X$ such that $B_{\lambda}^2=0$ and $D\cap B_{\lambda}$ is a subscheme of $\mathfrak{a}_{\lambda}$.
\end{lemma}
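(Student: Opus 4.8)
The plan is to apply Theorem~\ref{negsemidefnthm} to the pair $(X,\mathfrak{a}_{\lambda})$ and then refine the curve it produces. First I would record the standing facts. Since every prime component $D_i$ of $D$ has $D_i^2>0$, we get $D\cdot D_i\ge m_iD_i^2>0$ for each $i$, hence $D$ is nef and $D^2=\sum_i m_i(D\cdot D_i)>0$, so $D$ is nef and big; $K_X+D$ is Cartier along $\zeta:=\mathfrak{a}_{\lambda}$ because $\lambda\in\Lambda$ forces $\mathfrak{a}_{\lambda}$ into the smooth locus of $X$; and by Lemma~\ref{notsep} the restriction $H^{0}(X,\O_X(K_X+D))\to H^{0}(\mathfrak{a}_{\lambda},\O_X(K_X+D)|_{\mathfrak{a}_{\lambda}})$ is not surjective. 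The only nontrivial hypothesis of Theorem~\ref{negsemidefnthm} left to check is $D^2>\delta'_{\zeta}$. For this I would combine $\delta_{\zeta}\le 4d$ from Lemma~\ref{deltabound} (using $[k(\lambda):k]=1$ as $\lambda$ is rational), the inequality $q_{\zeta}\ge\min\{q_X,\delta_{\zeta}/4\}$ (the positive self-intersections entering $q_{\zeta}$ form a subset of those entering $q_X$), and the elementary observation that $\delta'_{\zeta}=(\delta_{\zeta}/4+q_{\zeta})^{2}/q_{\zeta}$ is non-increasing in $q_{\zeta}$ on $(0,\delta_{\zeta}/4]$; casing on whether $q_X\le\delta_{\zeta}/4$ or not, and using $\delta_{\zeta}/4\le d$, one gets $\delta'_{\zeta}\le q_{X,d}(d/q_{X,d}+1)^{2}<D^2$. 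Thus Theorem~\ref{negsemidefnthm} yields a negative semi-definite effective divisor $B>0$ with $B\cap\mathfrak{a}_{\lambda}\ne\emptyset$, $0<(D-B)B\le\delta_{\zeta}/4\le d$, and $D-2B$ big.

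Next I would upgrade $B$ to the required $B_{\lambda}$. Two elementary remarks: no prime component of $B$ is a component of $D$ (those have positive self-intersection, while $B$ is negative semi-definite), so $B$ and $D$ share no component and $D\cdot B\le(D-B)B+B^{2}\le d$ since $B^{2}\le 0$; and since $\mathfrak{a}_{\lambda}\subset D$, the curve $B$ genuinely meets $D$. To pin down both $B_{\lambda}^{2}=0$ and the scheme-theoretic containment $D\cap B_{\lambda}\subseteq\mathfrak{a}_{\lambda}$ I would not treat Theorem~\ref{negsemidefnthm} as a black box but rerun the proof of Theorem~\ref{sep} using the \emph{explicit} resolution $\pi\colon X'\to X$ and cycle $Z$ constructed in Lemma~\ref{deltabound}, which is legitimate by the remark following Theorem~\ref{negsemidefnthm}. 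For that resolution $\Delta=-Z$, so $D':=\pi^{*}D+\Delta-Z=\pi^{*}D-2Z$, and since $D$ is nef the (integral) Zariski decomposition of $D'$ is essentially explicit: its negative curves all lie over $|\mathfrak{a}_{\lambda}|$, and a non-$\pi$-exceptional component $C'$ of the $\Z$-negative part $B'$ of $D'$ satisfies $D\cdot\pi_{*}C'\le 2\,Z\cdot C'$, which is small because the coefficients of $Z$ near $|\mathfrak{a}_{\lambda}|$ are small; pushing forward, every component of $B=\pi_{*}B'$ meets $D$ only inside $|\mathfrak{a}_{\lambda}|$, and the length bound $D\cdot B\le d=\mathrm{length}(\mathfrak{a}_{\lambda})$ turns this into the scheme-theoretic inclusion $D\cap B\subseteq\mathfrak{a}_{\lambda}$. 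The divisor $B$ cannot be negative definite — this I would deduce from the bigness of $D-2B$ together with $D^2>\delta'_{\zeta}$ and the explicit shape of $B'$ — so Lemma~\ref{linalg} provides an effective nef divisor $B_{\lambda}$ with $\Supp B_{\lambda}=\Supp B$, which is automatically negative semi-definite with $B_{\lambda}^{2}=0$ (nef and supported on a negative semi-definite configuration) and satisfies $D\cap B_{\lambda}\subseteq D\cap B\subseteq\mathfrak{a}_{\lambda}$.

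The hard part will be the second paragraph: turning the purely numerical output of Theorem~\ref{negsemidefnthm} into the scheme-theoretic assertion $D\cap B_{\lambda}\subseteq\mathfrak{a}_{\lambda}$ and guaranteeing $B_{\lambda}^{2}=0$ exactly. Concretely this is a bookkeeping-heavy reprise of the proof of Theorem~\ref{sep} specialized to the resolution of Lemma~\ref{deltabound}; the genuinely delicate point is controlling not merely the support of $D\cap B_{\lambda}$ but its ideal, for which one must track how the cycle $Z$ on $X'$ — equivalently the ideal sheaf $\mathcal{I}_{\mathfrak{a}_{\lambda}/X}$ — sits relative to the $\Z$-negative part $B'$ of $D'=\pi^{*}D-2Z$. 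Everything else (the nef-and-bigness of $D$, the non-surjectivity via Lemma~\ref{notsep}, and the numerical inequality $D^2>\delta'_{\zeta}$) is routine.
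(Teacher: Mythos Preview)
Your setup in the first paragraph is fine and matches the paper. The trouble is in the second paragraph, where there are two real gaps.

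First, the step ``$B$ cannot be negative definite, so Lemma~\ref{linalg} provides an effective nef $B_{\lambda}$ with $\Supp B_{\lambda}=\Supp B$ and $B_{\lambda}^{2}=0$'' does not do what you want. Even granting that $B$ is not negative definite (which you have not justified --- ``bigness of $D-2B$ together with $D^{2}>\delta'_{\zeta}$ and the explicit shape of $B'$'' is not an argument), replacing $B$ by a different divisor $B_{\lambda}$ with the same support but different multiplicities destroys the scheme-theoretic inclusion: if some component of $B_{\lambda}$ has larger coefficient than in $B$, then $\mathcal{I}_{B_{\lambda}}\subset\mathcal{I}_{B}$ and hence $D\cap B_{\lambda}\supseteq D\cap B$, the wrong direction. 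Second, your route to the scheme inclusion $D\cap B\subseteq\mathfrak{a}_{\lambda}$ (``support is in $|\mathfrak{a}_{\lambda}|$ and $D\cdot B\le d$'') is not enough either: $\mathfrak{a}_{\lambda}$ has length exactly $m_{i}[k(p_{i,j}):k]$ at $p_{i,j}$, and a length bound on the total does not control each local piece.

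The paper avoids both problems by a single squeeze. With $\pi$, $Z$ as in Lemma~\ref{deltabound} and $D'=\pi^{*}D-2Z$, write the integral Zariski decomposition $D'=P'_{\Z}+N'_{\Z}$ and $N'_{\Z}=\pi^{*}B_{\lambda}+\sum_{i,j,k}a^{k}_{i,j}E^{k}_{i,j}$ with $B_{\lambda}:=\pi_{*}N'_{\Z}$ (this is your $B$, not a replacement). Then expand
\[
0\ \ge\ P'_{\Z}N'_{\Z}\ =\ (D-B_{\lambda})B_{\lambda}+\sum_{i,j,k}(a^{k}_{i,j}+2)a^{k}_{i,j}[k(p_{i,j}):k]
\]
and chase a chain of inequalities down to $0$, using that $B_{\lambda}$ is negative semi-definite (so $B_{\lambda}^{2}\le 0$ and $D,B_{\lambda}$ share no components), that $(a+2)a\ge -1$ for $a\in\Z$, and that there are exactly $m_{i}$ exceptional divisors $E^{k}_{i,j}$ over each $p_{i,j}$. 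Equality throughout then forces, all at once, $B_{\lambda}^{2}=0$, $a^{k}_{i,j}=-1$ for $p_{i,j}\in B_{\lambda}$ and $a^{k}_{i,j}\ge 0$ otherwise, $(D_{i}B_{\lambda})_{p_{i,j}}=[k(p_{i,j}):k]$, and $D\cap B_{\lambda}\subseteq\mathfrak{a}_{\lambda}$. No replacement of $B_{\lambda}$ is needed.
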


\begin{proof}
Let us put $D':=\pi^{*}D+\Delta-Z=\pi^{*}D-2\sum_{i,j,k}E^{k}_{i,j}$, where $\pi\colon X'\to X$ is as in the proof of Lemma~\ref{deltabound}.
Then $D'$ is big since $D'^{2}= D^2-4d>0$.
Let $D'=P'_{\Z}+N'_{\Z}$ be the integral Zariski decomposition of $D'$.
Then we can write $N'_{\Z}=\pi^{*}B_{\lambda}+\sum_{i,j,k}a^{k}_{i,j}E^{k}_{i,j}$ and $P'_{\Z}=\pi^{*}(D-B_{\lambda})-\sum_{i,j,k}(a^{k}_{i,j}+2)E^{k}_{i,j}$ for some $a^k_{i,j}\in \Z$, where we recall that the curve $B_{\lambda}=\pi_{*}N'_{\Z}$ on $X$ is nothing but the curve $B$ obtained by applying Theorem~\ref{negsemidefnthm} to $D$ and $\zeta=\mathfrak{a}_{\lambda}$ (see the proof of Theorem~\ref{sep}).
Then we have
\begin{align*}
0&\ge P'_{\Z}N'_{\Z} \\
&= (D-B_{\lambda})B_{\lambda}+\sum_{i,j,k}(a^k_{i,j}+2)a^k_{i,j}[k(p_{i,j}):k] \\
&\ge DB_{\lambda}+\sum_{i,j,k}(a^k_{i,j}+2)a^k_{i,j}[k(p_{i,j}):k] \\
&\ge \sum_{i,j}m_i(D_iB_{\lambda})_{p_{i,j}}+\sum_{i,j,k}(a^k_{i,j}+2)a^k_{i,j}[k(p_{i,j}):k] \\
&\ge \sum_{p_{i,j}\in B_{\lambda}}m_i(D_iB_{\lambda})_{p_{i,j}}+\sum_{p_{i,j}\in B_{\lambda}, k}(a^k_{i,j}+2)a^k_{i,j}[k(p_{i,j}):k] \\
&\ge \sum_{p_{i,j}\in B_{\lambda}}m_i[k(p_{i,j}):k]+\sum_{p_{i,j}\in B_{\lambda}, k}(-1)[k(p_{i,j}):k] \\
&=0, 
\end{align*}
where the third and the forth inequalities follow from the fact that $B_{\lambda}$ is negative semi-definite by Theorem~\ref{negsemidefnthm} and thus $B_{\lambda}$ and $D$ have no common components,
and fifth inequality follows from $a^k_{i,j}\ge 0$ for any $i, j, k$ with $p_{i,j}\notin B_{\lambda}$.
Thus all inequalities as above are equalities and then we conclude that
$P'_{\Z}N'_{\Z}=0$, $B_{\lambda}^2=0$, $D\cap B_{\lambda}$ is a subscheme of $\mathfrak{a}_{\lambda}$,
$N'_{\Z}$ is the proper transform of $B_{\lambda}$, $D$ intersects $B_{\lambda}$ only at some points $p_{i,j}$ transversely and $p^k_{i,j}$ is contained in the smooth locus of the proper transform of $B_{\lambda}$ for any $i, j, k$ with $p_{i,j}\in B_{\lambda}$.
\end{proof}

%Let $\mathcal{E}_D$ be the set of prime divisors $C$ on $X$ such that $DC=0$.
%This is a finite set since any finite sum of elements of $\mathcal{E}_D$ is negative definite by Hodge index theorem.
%All effective divisors $B$ on $X$ such that $DB\le d$ and $B$ contains no elements of $\mathcal{E}_D$ belong to finitely many numerical equivalence classes, say $B_{(1)},\ldots,B_{(m)}$ in $N(X)_{\Z}$, 
%since the multiplication of $D$ defines a positive function on the polyhedral cone $\Eff(X)/\R_{\ge 0}\mathcal{E}_D$.

Let $\mathcal{B}$ be the set of all prime divisors $C$ on $X$ which is a component of $B_{\lambda}$ for some rational point $\lambda\in \Lambda$ and satisfies $DC>0$.
Then it is an infinite set since $B_{\lambda}\cap D\subset \mathfrak{a}_{\lambda}$ and $\mathfrak{a}_{\lambda}\cap \mathfrak{a}_{\mu}= \emptyset$ for $\lambda\neq \mu$.
Since any $C\in \mathcal{B}$ satisfies $0<DC\le DB_{\lambda}\le d$, the set $\mathcal{B}$ has  finitely many numerical equivalence classes (see the proof of Lemma~\ref{wmapencil}), say $B_{(1)},\ldots,B_{(m)}$ in $N(X)_{\Z}$.
Let us put $\mathcal{B}_{(i)}:=\{C\in \mathcal{B}\ |\ C\equiv B_{(i)}\}$.
%On the other hand, the set $\mathcal{B}$ of all prime divisors $C$ which is a component of $B_{\lambda}$ for some rational point $\lambda\in \Lambda$ and does not belong to $\mathcal{E}_D$ has infinite elements
%since $B_{\lambda}\cap D\subset \mathfrak{a}_{\lambda}$ and $\mathfrak{a}_{\lambda}\cap \mathfrak{a}_{\mu}= \emptyset$ for $\lambda\neq \mu$.
%By definition, any $C\in \mathcal{B}$ satisfies $DC\le DB_{\lambda}\le d$ and $C\notin \mathcal{E}_D$, whence $C\equiv B_{(i)}$ holds for some $i$.
%Then the set $\mathcal{B}$ is decomposed into $\mathcal{B}_{(1)},\ldots,\mathcal{B}_{(m)}$, where $\mathcal{B}_{(i)}:=\{C\in \mathcal{B}\ |\ C\equiv B_{(i)}\}$.
When $B_{(i)}^2<0$, the set $\mathcal{B}_{(i)}$ consists of only one element 
since two other elements $C$ and $C'$ in $\mathcal{B}_{(i)}$ would satisfy $CC'\ge 0$.
Since $\mathcal{B}$ is infinite and any $C\in \mathcal{B}$ satisfies $C^2\le 0$, there exist at least one $B_{(i)}$ such that $B_{(i)}^2=0$ and $\mathcal{B}_{(i)}$ is infinite.
We choose such a $B_{(i)}$ and put $\mathcal{B}:=\mathcal{B}_{(i)}$ again.
Now we use the following result (for $k=\overline{k}$ case, see also \cite{BPS}).

\begin{proposition} \label{fiblem}
Let $X$ be a normal complete surface over a perfect field $k$.
If $X$ has an infinite family $\mathcal{B}=\{C_{\mu}\}_{\mu}$ of integral curves on $X$ which have the same numerical equivalence class $B$ with $B^2=0$, 
then $X$ admits a fibration $f\colon X\to Y$ onto a smooth curve $Y$ such that any member of $\mathcal{B}$ is a fiber of $f$.
\end{proposition}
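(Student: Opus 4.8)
The plan is to construct the fibration from an algebraic family containing the curves of $\mathcal{B}$, after first putting the surface in good shape. I would begin by recording the elementary geometry of $\mathcal{B}$: for distinct members $C_{\mu},C_{\nu}\in\mathcal{B}$ one has $C_{\mu}C_{\nu}=B^{2}=0$ and the two integral curves share no component, so they are disjoint; consequently no fixed curve contains infinitely many members of $\mathcal{B}$, and the union $\Sigma:=\bigcup_{\mu}C_{\mu}$ is Zariski dense in $X$. Next I would reduce to the case that $X$ is smooth, hence projective: for a resolution $\pi\colon X'\to X$, pairwise disjointness forces all but finitely many members of $\mathcal{B}$ to avoid the finitely many blown-up points, so after discarding finitely many curves their strict transforms $\tilde{C}_{\mu}$ form a family on $X'$ with the same properties (integral, pairwise disjoint, numerically equivalent with square $0$, sweeping out $X'$). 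If I produce a fibration $g'\colon X'\to Y$ having these $\tilde{C}_{\mu}$ among its fibres, then a general $\pi$-exceptional curve $E$ satisfies $E\cdot\tilde{C}_{\mu}=0$, so $E$ is disjoint from a general fibre and thus contracted by $g'$; since $\pi_{*}\O_{X'}=\O_{X}$, the map $g'$ factors through $\pi$ and descends to a fibration $g\colon X\to Y$.

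So assume $X$ is a smooth projective surface and fix an ample class. All members of $\mathcal{B}$ have the same Hilbert polynomial $P$, since their degree and arithmetic genus $p_{a}=1+\tfrac12(B^{2}+BK_{X})$ are numerical invariants, so they give infinitely many $k$-points of the projective scheme $\mathrm{Hilb}^{P}(X)$; hence some irreducible component contains infinitely many of them and has positive dimension. Pick an integral curve $T$ through infinitely many of these points, normalize $\nu\colon\tilde{T}\to T$, and pull back the universal family to get $\mathcal{C}\subset X\times\tilde{T}$ flat over $\tilde{T}$ with projections $p\colon\mathcal{C}\to\tilde{T}$ and $q\colon\mathcal{C}\to X$. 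A dense set of fibres of $p$ are the integral curves $C_{\mu}$, so a general fibre of $p$ is integral (geometric reducedness and irreducibility of fibres being open conditions), and two general fibres are distinct effective divisors of the common numerical class $[B]$, so they share no component and meet in $B^{2}=0$ points. Hence $q$ is surjective (its image is closed and contains $\Sigma$) and generically injective (a general point of $X$ on two distinct members would be a point of their intersection), i.e.\ birational; so $p\circ q^{-1}$ is a rational map $X\dashrightarrow\tilde{T}$, which resolves and Stein-factorizes to a fibration $g\colon X\to Y$ onto a smooth curve whose general fibre is an integral curve of class $[B]$. Finally, every $C\in\mathcal{B}$ satisfies $C\cdot(\text{general fibre})=B^{2}=0$, so $C$ is contained in a fibre of $g$, and being integral with $C^{2}=0$ it is a whole fibre by Zariski's lemma.

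I expect the main difficulty to be this passage from "infinitely many numerically equivalent curves" to "an algebraic family whose general member is itself an integral curve of class $[B]$": one must exclude the possibility that the general member degenerates (e.g.\ becomes a non-reduced multiple, or a proper subsum, of the numerical class $[B]$), which would make $q$ of degree $>1$ and invalidate the argument. This is governed by constancy of the numerical class in a flat family together with openness of geometric integrality of fibres; over a non-closed perfect field $k$ the integrality statements must be read geometrically, but no separate Galois descent of $g$ is required since $\mathrm{Hilb}^{P}(X)$ and its universal family are already defined over $k$. When the common arithmetic genus of the $C_{\mu}$ is $0$ there is a shortcut avoiding Hilbert schemes: Riemann--Roch on $C_{1}+\cdots+C_{n}$ for $n\gg0$ gives $h^{0}(\O_{X}(C_{1}+\cdots+C_{n}))\ge\chi(\O_{X})+n$, so this system is at least a pencil, its moving part $M$ satisfies $M^{2}=0$ (being linearly equivalent to a subsum of the $C_{i}$) and has no fixed part, and $|M|$ already defines the desired fibration; for positive genus the family argument seems unavoidable.
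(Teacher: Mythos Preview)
Your overall architecture matches the paper's: reduce to the smooth projective case via a resolution (using that only finitely many $C_\mu$ meet the singular locus, and that the eventual fibration contracts exceptional curves because $E\cdot[B]=0$), then use the Hilbert scheme to produce a one-parameter family. The genuine difference is in how the fibration is extracted from that family. You pull back the universal family over the normalization $\tilde T$ of an integral curve in $\mathrm{Hilb}^P(X)$, argue that the second projection $q\colon\mathcal C\to X$ is birational, and take $p\circ q^{-1}$. The paper instead chooses a curve $Y'\subset\mathrm{Hilb}^P(X)$ through a \emph{smooth rational} point corresponding to some $C_\mu$, picks two linearly equivalent very ample divisors on $Y'$, pushes the corresponding sums of fibres to $X$ to obtain a linear pencil, takes its movable part $M$, and shows $M$ is base-point-free by combining $M\cdot C_\mu=0$ with the Hodge index theorem to force $M^2=0$. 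The pencil approach buys a morphism on $X$ outright, whereas your rational map needs a descent step: after resolving indeterminacy on a blowup $\tilde X\to X$, you must argue the map drops to $X$. (In fact $q$ is finite: for any $x\in X$ the set $\{t:x\in\mathcal C_t\}$ is closed in $\tilde T$ and misses all but one of the dense $t_\mu$, hence is finite; then $q$ finite, birational, with normal target is an isomorphism, so there is no indeterminacy. But you do not say this.)

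There is one genuine gap. Your last sentence, ``being integral with $C^{2}=0$ it is a whole fibre by Zariski's lemma,'' overclaims: Zariski's lemma only yields that the fibre $F\supseteq C$ is irreducible with $F=mC$ for some integer $m\ge 1$; it does not exclude $m>1$ (an irreducible multiple fibre). This is exactly where the paper invokes the hypothesis that $k$ is perfect, which you never use: over a perfect base field a fibration with connected fibres has only finitely many non-integral fibres (Corollary~7.3 in B\u{a}descu), so all but finitely many $C_\mu$ coincide with their fibres, and then the common numerical class $[B]$ rules out the multiple-fibre case for the remaining members. Without this step the conclusion that \emph{every} member of $\mathcal B$ is a full fibre does not follow. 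Relatedly, your appeal to ``openness of geometric integrality of fibres'' presumes the $C_\mu$ are geometrically integral; over a non-closed perfect field they are automatically geometrically reduced but need not be geometrically irreducible, so that passage also wants more care.
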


\begin{proof}
Note that infinite curves in $\mathcal{B}$ have no singular points of $X$ since $X$ has finitely many singular points and any two distinct curves $C_{\mu}, C_{\nu} \in \mathcal{B}$ are disjoint.
Thus we may assume that each member $C_{\mu}$ in $\mathcal{B}$ has no singular points of $X$.
We take a resolution $\pi\colon X'\to X$.
Then $X'$ is a smooth projective surface and
the family $\pi^{*}\mathcal{B}=\{\pi^{*}C_{\mu}\}_{\mu}$ also satisfies the assumption of Proposition~\ref{fiblem}.
If $X'$ admits a fibration $f'\colon X'\to Y$ onto a smooth curve such that any $\pi^{*}C_{\mu}$ is a fiber of $f'$, then the induced rational map $f\colon X\dasharrow Y$ becomes a morphism since $\pi^{*}C_{\mu}$ is disjoint from the $\pi$-exceptional curves.
Thus we may assume that $X$ is smooth projective.

Let us fix a closed embedding $X\subset \mathbb{P}^{n}_{k}$ by a very ample divisor $\O_{X}(1)=\O_{\mathbb{P}^n}(1)|_{X}$.
Note that any member $C_{\mu}\in \mathcal{B}$ has the same Hilbert polynomial with respect to $\O_X(1)$, which is denoted by $P(t)$, since the numerical equivalence class determines the Hilbert polynomial.
Let $\mathcal{H}$ be the Hilbert scheme of closed subschemes of $X$ with the Hilbert polynomial $P(t)$.
Then it is projective and has positive dimension since it has infinite points corresponding to $C_{\mu}\in \mathcal{B}$.
Then we can take an irreducible subcurve $Y'\subset \mathcal{H}$ such that it has a rational point $y_0$ corresponding to some $C_{\mu}\in \mathcal{B}$ as a smooth point.
Indeed, if there exists a smooth point $y_0$ of the maximal reduced subscheme $\mathcal{H}'$ of $\mathcal{H}$ corresponding to some $C_{\mu}\in \mathcal{B}$, then we can take general hyperplanes in $\mathcal{H}'$ passing through $y_0$ as a smooth point and define $Y'$ as these intersection.
If all points corresponding to elements of $\mathcal{B}$ are contained in the singular locus $\mathcal{H}'_{\mathrm{sing}}$ (with the reduced induced structure) of $\mathcal{H}'$, then the dimension of $\mathcal{H}'_{\mathrm{sing}}$ is positive and it has an irreducible component $\mathcal{H}_1$ which contains infinite points corresponding to elements of $\mathcal{B}$.
Then we can repeat this procedure to $\mathcal{H}_1$ and obtain a sequence of reduced subschemes
$\mathcal{H}_m\subset \mathcal{H}_{m-1} \subset \cdots \subset \mathcal{H}_1$
such that the smooth locus of $\mathcal{H}_m$ contains a point $y_0$ corresponding to some $C_{\mu}\in \mathcal{B}$.
Thus we can take $Y'$ as the intersection of general hyperplanes in $\mathcal{H}_m$ or $\mathcal{H}_m$ itself.
Let $u\colon U\to Y'$ be the restriction of the universal family on $\mathcal{H}$.
Let $\tau\colon U\to X$ be the composition of the closed embedding $U\hookrightarrow X\times Y'$ and the first projection $X\times Y'\to X$.
For a closed point $y\in Y'$, let $F_y$ denote the push-forward of $U_y$ by $\tau$ as $1$-cycles, where $U_y$ is a fiber of $u$ at $y$ which is the closed subscheme of $X_{k(y)}:=X\times \mathrm{Spec}(k(y))$ with the Hilbert polynomial $P(t)$ with respect to $\O_{X_{k(y)}}(1)$.
Note that $F_{y_0}$ is equal to $C_{\mu}$ by definition.
For a smooth point $y\in Y'$, we have $F_y\equiv [k(y):k]C_{\mu}$ 
since the degree of $y$ is equal to $[k(y):k]$ and thus $U_y$ is numerically equivalent to $[k(y):k]U_{y_0}$ as Cartier divisors on $U$.
Then we have $F_yC_{\mu}=[k(y):k]C_{\mu}^2=0$.
Since $C_{\mu}$ is irreducible, we can write $F_y=a_yC_{\mu}+F'_y$ for some $a_y\in \Z_{\ge 0}$ with $F'_y\cap C_{\mu}=\emptyset$.

Now we take a sufficiently ample linear system $|\mathfrak{a}|$ on $Y'$ and two general members $\sum_{i}y_i$ and $ \sum_{i}y'_i$ of $|\mathfrak{a}|$ whose supports are contained in the smooth locus of $Y'$.
Then $\sum_{i}F_{y_i}$ is linearly equivalent to $\sum_{i}F_{y'_i}$ and define a linear pencil $\Gamma$ on $X$.
Let $\Gamma=M+Z$ denote the decomposition into the movable part $M$ and the fixed part $Z$.
Then $M$ has no base points lying on $C_{\mu}$ since any irreducible component of $\sum_{i}F_{y_i}$ other than $C_{\mu}$ is disjoint from $C_{\mu}$.
Since $MC_{\mu}=(\sum_{i}F_{y_i}-Z)C_{\mu}=0$ by the nefness of $C_{\mu}$ and $F_{y_i}C_{\mu}=0$, it follows from the Hodge index theorem that $M^2=0$ and then $M$ has no base points.
Thus it defines a fibration $f\colon X\to Y$ which contracts $C_{\mu}$ to a point on $Y$.
Taking the Stein factrization, we may assume that $f$ has connected fibers and $Y$ is smooth.
Since $C_{\mu}$ is integral and $C_{\mu}^2=0$, it is a fiber of $f$ or the multiple component of an irreducible multiple fiber of $f$.
Note that $f$ has only finitely many non-integral fibers because the base field $k$ is perfect (cf.\ Corollary~7.3 in \cite{Bad}).
Since all members of $\mathcal{B}$ contract to points on $Y$ and have the same numerically equivalence class, all of them are fibers of $f$.
\end{proof}

Let us continue the proof of Theorem~\ref{introextnthm}.
By Proposition~\ref{fiblem}, there exists a fibration $f\colon X\to Y$ onto a smooth curve $Y$ such that any member $C\in \mathcal{B}$ is a fiber of $f$.
Let $\overline{D}$ denote the scheme-theoretic image of $(f|_{D},\varphi)\colon D\to Y\times \mathbb{P}^1$ and $h\colon \overline{D}\to Y$ the restriction to $\overline{D}$ of the first projection of $Y\times \mathbb{P}^1$.

\begin{lemma}\label{isom}
In the situation as above, $h\colon \overline{D}\to Y$ is an isomorphism.
\end{lemma}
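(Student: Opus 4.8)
The plan is to reduce the lemma to the single assertion that $\varphi$ is constant along the fibres of $f|_{D}\colon D\to Y$; once this is known, faithfully flat descent along $f|_{D}$ produces a morphism $\gamma\colon Y\to\mathbb{P}^1$ with $\varphi=\gamma\circ f|_D$, and then $\overline{D}$ is the graph of $\gamma$, so $h$ is an isomorphism.

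First I would set up the combinatorial input. Since $f|_{D}$ is finite, flat and surjective (the same reasoning as in the proof of Lemma~\ref{wmapencil}: every component of $D$ has positive self-intersection, hence is not a fibre, $D$ is Cohen--Macaulay and $Y$ is smooth), it is an fppf covering; consequently $W:=D\times_{Y}D$, with its projections $p_{1},p_{2}$, is finite and flat over $Y$. Because $\mathcal{B}$ is infinite and each of its members occurs as a component of $B_{\lambda}$ for some rational $\lambda\in\Lambda$, there are infinitely many rational $\lambda\in\Lambda$ admitting a component $C_{\lambda}\in\mathcal{B}$ of $B_{\lambda}$; fixing one such $C_{\lambda}$ for each, these curves are pairwise distinct, for $C_{\lambda}\cap D\subseteq\mathfrak{a}_{\lambda}$ with the $\mathfrak{a}_{\lambda}$ pairwise disjoint and $C_{\lambda}\cap D\neq\emptyset$ (as $DC_{\lambda}>0$). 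By Proposition~\ref{fiblem} each $C_{\lambda}$ is a fibre of $f$, so the $y_{\lambda}:=f(C_{\lambda})$ are infinitely many distinct closed points of $Y$. Discarding the finitely many $\lambda$ for which $f^{-1}(y_{\lambda})$ is not reduced (there are only finitely many non-integral fibres, $k$ being perfect), we get $(f|_{D})^{-1}(y_{\lambda})=D\cap C_{\lambda}$, which is a closed subscheme of $\mathfrak{a}_{\lambda}$ by Lemma~\ref{fiberexist}; since $\mathfrak{a}_{\lambda}=\varphi^{-1}(\lambda)$ with $\lambda$ rational, $\varphi$ restricted to $(f|_{D})^{-1}(y_{\lambda})$ factors through the reduced point $\{\lambda\}\subset\mathbb{P}^1$.

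Next I would prove $\varphi\circ p_{1}=\varphi\circ p_{2}$ on $W$. On the fibre $W_{y_{\lambda}}=W\times_{Y}k(y_{\lambda})=(f|_{D})^{-1}(y_{\lambda})\times(f|_{D})^{-1}(y_{\lambda})$ both morphisms restrict to the constant morphism onto $\lambda$, so the equalizer $E\subset W$ (a closed subscheme, $\mathbb{P}^1$ being separated) contains every $W_{y_{\lambda}}$; hence its ideal satisfies $\mathcal{I}_{E}\subset\bigcap_{\lambda}\mathfrak{m}_{y_{\lambda}}\mathcal{O}_{W}$. As $W$ is finite and flat over the smooth curve $Y$, $\mathcal{O}_{W}$ is locally free over $\mathcal{O}_{Y}$, and on each affine open $U\subset Y$ a regular function vanishing at the infinitely many $y_{\lambda}$ it contains must vanish, so this intersection is $0$. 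Therefore $E=W$, that is $\varphi\circ p_{1}=\varphi\circ p_{2}$, and by fppf descent of morphisms along $f|_{D}$ (cf.\ VIII, 5.2 in \cite{SGA}) $\varphi$ descends to a morphism $\gamma\colon Y\to\mathbb{P}^1$ with $\varphi=\gamma\circ f|_{D}$.

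Finally, $(f|_{D},\varphi)=(\mathrm{id}_{Y},\gamma)\circ f|_{D}$, and since $f|_{D}$ is faithfully flat its scheme-theoretic image is all of $Y$; composing with the closed immersion $(\mathrm{id}_{Y},\gamma)\colon Y\hookrightarrow Y\times\mathbb{P}^1$, whose image is the graph $\Gamma_{\gamma}$, identifies $\overline{D}$ with $\Gamma_{\gamma}$, so $h=\mathrm{pr}_{1}|_{\Gamma_{\gamma}}$ is an isomorphism (and incidentally $\psi:=\gamma\circ f$ is the extension sought in Theorem~\ref{introextnthm}). The main obstacle is the passage in the third paragraph from ``$\varphi$ constant on finitely many special fibres'' to the scheme-theoretic identity $\varphi\circ p_{1}=\varphi\circ p_{2}$: it needs both the flatness of $W$ over $Y$ to run the ideal-sheaf argument and the care that $\varphi$ factors through a \emph{reduced} point on each $(f|_{D})^{-1}(y_{\lambda})$, which forces one to restrict to those $\lambda$ with $\mathfrak{a}_{\lambda}$ reduced and $f^{-1}(y_{\lambda})$ reduced.
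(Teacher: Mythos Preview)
Your proof is correct and takes a genuinely different route from the paper's.

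The paper first observes that $h$ is finite, then shows it is birational by invoking the pointwise criterion Lemma~\ref{ptbir}: for infinitely many rational $\lambda\in\Lambda$ one finds a closed point $z_{\lambda}=(y_{\lambda},\lambda)\in\overline{D}$ with $k(z_{\lambda})\cong k(y_{\lambda})$ and $(f|_D,\varphi)^{-1}(z_{\lambda})=(f|_D)^{-1}(y_{\lambda})$; Lemma~\ref{ptbir} then forces $h$ to be birational, hence an isomorphism onto the smooth curve $Y$. You instead bypass $\overline{D}$ at first: using the same geometric input $(f|_D)^{-1}(y_{\lambda})\subseteq\mathfrak{a}_{\lambda}$ you run an equalizer argument on $W=D\times_Y D$ (exploiting that $W$ is finite flat over $Y$ so that an ideal contained in infinitely many fibre ideals vanishes) to prove $\varphi\circ p_1=\varphi\circ p_2$, and then fppf descent along $f|_D$ produces $\gamma\colon Y\to\mathbb{P}^1$ directly; identifying $\overline{D}$ with the graph $\Gamma_{\gamma}$ gives the isomorphism. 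Your approach avoids the somewhat delicate Lemma~\ref{ptbir} (which must handle the possibly non-reduced $D$) and delivers $\gamma$ immediately---precisely the map the paper constructs \emph{after} Lemma~\ref{isom} to finish Theorem~\ref{introextnthm}. The paper's route, in turn, isolates the ``birationality from pointwise data'' step as a reusable lemma. One small remark: your discarding of $\lambda$ with $f^{-1}(y_{\lambda})$ non-reduced is harmless but unnecessary, since Proposition~\ref{fiblem} already guarantees that every $C_{\lambda}\in\mathcal{B}$ is a full integral fibre of $f$.
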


\begin{proof}
Note that $h$ is finite since any prime component of $D$ has positive self-intersection number and so there are no components of $D$ which are contracted by $f$.
Thus it suffices to show that $h$ is birational.
By Lemma~\ref{fiberexist}, there are infinite rational points $\lambda$ in $\Lambda$ such that $f^{-1}(y_{\lambda})\cap D$ is a non-empty subscheme of $\mathfrak{a}_{\lambda}$ for some closed point $y_{\lambda}$ in $Y$.
For such a $\lambda\in \Lambda$, let $z_{\lambda}$ be the closed point of $Y\times \mathbb{P}^1$ corresponding to $(y_{\lambda},\lambda)$.
The fiber of $(f_{D},\varphi)$ at $z_{\lambda}$ is 
$$
(f|_{D},\varphi)^{-1}(z_{\lambda})=(f|_{D})^{-1}(y_{\lambda})\cap \mathfrak{a}_{\lambda}=(f|_{D})^{-1}(y_{\lambda}).
$$
Thus $z_{\lambda}$ is contained in $\overline{D}$ and $h^{-1}(y_{\lambda})=z_{\lambda}$ holds.
Since $y_{\lambda}$ and $z_{\lambda}$ have the same residue field, the assertion holds by Lemma~\ref{ptbir}.
\end{proof}

Let $\gamma\colon Y\to \mathbb{P}^1$ denote the composition of $h^{-1}\colon Y\to \overline{D}$ and the restriction of the second projection $\overline{D}\to \mathbb{P}^1$.
Then the morphism $\varphi$ decomposes into $f|_{D}$ and $\gamma$.
Hence $\psi:=\gamma\circ f\colon X\to \mathbb{P}^1$ is an extension of $\varphi$, which completes the proof of Theorem~\ref{introextnthm}.

\begin{remark}
When $X$ is a complex analytic normal complete surface, the proof of Theorem~\ref{extnthm} also works with a slight modification.
Indeed, Lemma~\ref{notsep}, Lemma~\ref{deltabound}, Lemma~\ref{fiberexist} and Lemma~\ref{isom} completely work in the analytic settings.
Note that $X$ is Moishezon by the assumption of $D$.
Then Proposition~\ref{fiblem} also works because any resolution of $X$ is projective.
Finally, we can prove Lemma~\ref{wmapencil} similarly to Lemma~\ref{isom} in the analytic settings.
\end{remark}

\subsection{Extension theorem with base points}

The following is a generalization of Theorem~3.8 in \cite{Ser}.
\begin{theorem}[Extension theorem with base points] \label{basepointextnthm}
Let $X$ be a normal proper surface over a field $k$ of characteristic $0$.
Let $D>0$ be an effective divisor on $X$ all of whose prime components have positive self-intersection numbers.
Let $\varphi\colon D\to \mathbb{P}^1$ be a finite morphism of degree $d$ which can not be extended to a morphism on $X$.
We assume that $D^2=q_X(d/q_X+1)^2$ and $q_X<d$.
Then there exists a linear pencil $\{F_{\lambda}\}_{\lambda\in \mathbb{P}^1}$ of Weil divisors with $F_{\lambda}^2=q_X$ and no fixed parts such that the induced rational map $\psi\colon X\dasharrow \mathbb{P}^1$ satisfies $\psi|_{D}=\varphi$.
\end{theorem}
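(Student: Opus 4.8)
The strategy is to run the proof of Theorem~\ref{introextnthm} almost verbatim, the only change being that the strict inequality on $D^{2}$ is replaced by equality, so that Theorem~\ref{negsemidefnthm} must be invoked in its boundary form. First I would reduce, exactly as in the proof of Theorem~\ref{introextnthm}, to the case that $X$ is geometrically connected, introduce the dense set $\Lambda\subset\mathbb{P}^{1}$ of closed points $\lambda$ for which $\mathfrak{a}_{\lambda}:=\varphi^{-1}(\lambda)$ is reduced and contained in the smooth loci of $X$ and $D_{\mathrm{red}}$, and recall from Lemma~\ref{notsep} that $H^{0}(X,\O_{X}(K_{X}+D))\to H^{0}(\mathfrak{a}_{\lambda},\O_{X}(K_{X}+D)|_{\mathfrak{a}_{\lambda}})$ is not surjective, and from Lemma~\ref{deltabound} that the resolution $(\pi,Z)$ built there satisfies $\delta_{\mathfrak{a}_{\lambda}}(\pi,Z)=4d$ for every rational $\lambda$. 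Since each prime component of $D$ has positive self-intersection, $D$ is nef and big; since $q_{X}<d$ we have $D^{2}-4d=(d-q_{X})^{2}/q_{X}>0$, and $D^{2}=q_{X}(4d/4q_{X}+1)^{2}\ge\delta'_{\mathfrak{a}_{\lambda}}$, with equality exactly when the minimal-square invariant of $\mathfrak{a}_{\lambda}$ equals $q_{X}$. Hence, for every rational $\lambda\in\Lambda$, Theorem~\ref{negsemidefnthm} --- applied with $\delta_{\mathfrak{a}_{\lambda}}$ replaced by $4d$, as the remark following it permits --- produces an effective divisor $B_{\lambda}>0$ meeting $\mathfrak{a}_{\lambda}$ with $0<(D-B_{\lambda})B_{\lambda}\le d$ and $D-2B_{\lambda}$ big, and moreover either $B_{\lambda}$ is negative semi-definite, or $D^{2}=\delta'_{\mathfrak{a}_{\lambda}}$, $B_{\lambda}^{2}=q_{X}$ and $D\equiv(d/q_{X}+1)B_{\lambda}$.

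Now a dichotomy. If $B_{\lambda}$ is negative semi-definite for infinitely many rational $\lambda\in\Lambda$, then for each such $\lambda$ the proof of Lemma~\ref{fiberexist} goes through unchanged --- it uses only that $B_{\lambda}$ is negative semi-definite and that $D':=\pi^{*}D+\Delta-Z$ is big, and here $D'^{2}=D^{2}-4d>0$ --- yielding $B_{\lambda}^{2}=0$ and $D\cap B_{\lambda}\subseteq\mathfrak{a}_{\lambda}$; then Proposition~\ref{fiblem} and Lemma~\ref{isom} produce a genuine morphism $X\to\mathbb{P}^{1}$ restricting to $\varphi$ on $D$, contradicting the hypothesis. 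Consequently, for all but finitely many rational $\lambda\in\Lambda$ one has $B_{\lambda}^{2}=q_{X}$ and $D\equiv(d/q_{X}+1)B_{\lambda}$; in particular all such $B_{\lambda}$ lie in the single numerical class $\frac{q_{X}}{d+q_{X}}[D]$, and the Hodge index theorem forces $B_{\lambda}B_{\mu}=q_{X}$ and $B_{\lambda}\equiv B_{\mu}$ for any two of them, while Remark~\ref{subcurve} controls their proper subcurves meeting $\mathfrak{a}_{\lambda}$.

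The remaining task is to organize these infinitely many numerically equivalent big curves into the required pencil --- morally, $\{B_{\lambda}\}$ is itself that pencil. I would adapt the Hilbert-scheme argument of Proposition~\ref{fiblem} to self-intersection $q_{X}$ in place of $0$: working on a resolution of $X$, the (suitably transformed) $B_{\lambda}$ share one Hilbert polynomial with respect to a fixed polarization, hence give infinitely many points of a fixed projective Hilbert scheme, from which one extracts a linear pencil $\{F_{\mu}\}_{\mu\in\mathbb{P}^{1}}$, with no fixed part and $F_{\mu}^{2}=q_{X}$, containing $B_{\lambda}$ for infinitely many good rational $\lambda$. The base scheme of this pencil has length $q_{X}$; using $D\equiv(d/q_{X}+1)B_{\lambda}$ together with $B_{\lambda}\cap\mathfrak{a}_{\lambda}\ne\emptyset$ one checks that it is supported on $D$, so that the moving part of $F_{\mu}|_{D}$ has degree $(d+q_{X})-q_{X}=d$. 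Writing $\psi\colon X\dasharrow\mathbb{P}^{1}$ for the rational map attached to $\{F_{\mu}\}$, the morphism $\psi|_{D}$ and $\varphi$ are then finite of degree $d$ and their fibres agree on the infinitely many members with $F_{\mu}=B_{\lambda}$, so the two pencils they cut out on $D$ coincide up to an automorphism $\sigma$ of $\mathbb{P}^{1}$; replacing $\psi$ by $\sigma\circ\psi$ gives $\psi|_{D}=\varphi$, which is the desired extension.

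I expect this third step to be the crux and the main obstacle. In contrast to the negative semi-definite case, we have no a priori control on $B_{\lambda}\cap D$ --- the computation of Lemma~\ref{fiberexist} breaks precisely because $B_{\lambda}^{2}=q_{X}>0$ reverses one of its inequalities --- so the identification of the resulting pencil with $\varphi$ over all of $D$ has to be wrung out of the numerical equality $D\equiv(d/q_{X}+1)B_{\lambda}$, the minimality defining $q_{X}$ (needed to exclude the family $\{B_{\lambda}\}$ degenerating onto a single divisor or acquiring large common subcurves), and the rigidity of finite degree-$d$ morphisms to $\mathbb{P}^{1}$ (cf.\ Lemma~\ref{mono} and Lemma~\ref{isom}). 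A secondary point, handled by the same Hilbert-scheme bookkeeping, is the upgrade from numerical to linear equivalence among the $B_{\lambda}$, where $\mathrm{Pic}$-scheme considerations enter exactly as in the proof of Proposition~\ref{fiblem}.
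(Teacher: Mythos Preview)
Your setup and dichotomy are correct and match the paper. The divergence, and the genuine gap, is in the third step.

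You say that in the case $B_{\lambda}^{2}=q_{X}$ ``the computation of Lemma~\ref{fiberexist} breaks'' and that ``we have no a priori control on $B_{\lambda}\cap D$''. This is where you miss the key idea. The paper reruns the chain of inequalities from Lemma~\ref{fiberexist}, but now using the exact value $(D-B_{\lambda})B_{\lambda}=d$ (which follows from $D\equiv(d/q_{X}+1)B_{\lambda}$ and $B_{\lambda}^{2}=q_{X}$). The resulting estimate
\[
0\ge P'_{\Z}N'_{\Z}=d+\sum_{i,j,k}(a^{k}_{i,j}+2)a^{k}_{i,j}[k(p_{i,j}):k]\ge d+\sum_{i,j,k}(-1)[k(p_{i,j}):k]=0
\]
forces $a^{k}_{i,j}=-1$ for all $i,j,k$; this yields the \emph{reversed} inclusion $\mathfrak{a}_{\lambda}\subset B_{\lambda}\cap D$ (and that $B_{\lambda}$ shares no component with $D$). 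So there is control on $B_{\lambda}\cap D$, just in the opposite direction.

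The paper then avoids Hilbert schemes entirely. Since $D-B_{\lambda}\equiv(d/q_{X})B_{\lambda}$ is nef and big, Theorem~\ref{van} gives $H^{0}(X,\O_{X}(B_{\lambda}-D))=H^{1}(X,\O_{X}(B_{\lambda}-D))=0$, hence the restriction isomorphism $H^{0}(X,\O_{X}(B_{\lambda}))\cong H^{0}(D,\mathcal{L}_{D,B_{\lambda}})$. Because $\mathfrak{a}_{\lambda}\subset B_{\lambda}\cap D$, there is an inclusion $\O_{D}(\mathfrak{a}_{\lambda})\hookrightarrow\mathcal{L}_{D,B_{\lambda}}$, so the given pencil $\{\mathfrak{a}_{\mu}\}_{\mu\in\mathbb{P}^{1}}$ on $D$ lifts directly to a linear subpencil $\{F_{\mu}\}$ of $|B_{\lambda}|$ on $X$, with $\psi|_{D}=\varphi$ built in by construction. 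Absence of a fixed part follows from Remark~\ref{subcurve}: a fixed part $Z>0$ would give a movable part $M$ with $M^{2}>0$ meeting $\mathfrak{a}_{\lambda}$, contradicting that every proper subcurve of $B_{\lambda}$ through $\mathfrak{a}_{\lambda}$ has nonpositive self-intersection.

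Your proposed route---extracting a linear pencil from the Hilbert scheme of the numerically equivalent $B_{\lambda}$'s, locating its base locus on $D$, and then matching $\psi|_{D}$ with $\varphi$---would require separate arguments for (i) upgrading numerical to linear equivalence, (ii) showing the resulting family is a \emph{pencil} (a Hilbert-scheme curve need not sit in a single linear system when $B_{\lambda}^{2}>0$), and (iii) identifying the restriction with $\varphi$, none of which you supply. The vanishing-theorem lift bypasses all three at once.
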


\begin{proof}
We freely use the notation in the proof of Theorem~\ref{introextnthm}.
For a rational point $\lambda\in \Lambda$, we also consider the birational morphism $\pi\colon X'\to X$ with center $\mathfrak{a}_{\lambda}$.
Recall that 
$$
q_{\mathfrak{a}_{\lambda}}:=\mathrm{min}\{E^2, d \ |\ \text{$E$ is an effective divisor on $X$ with $E\cap \mathfrak{a}_{\lambda} \neq \emptyset$ and $E^2>0$}\},
$$
(where we may assume that $\delta_{\mathfrak{a}_{\lambda}}=4d$ by replacing the birational morphism used in the proof of Theorem~\ref{negsemidefnthm} to $\pi$, or we can check directly that $\delta_{\mathfrak{a}_{\lambda}}=4d$).
Then we have $q_X\le q_{\mathfrak{a}_{\lambda}}\le d$.
Since $D^2>4d$, we can apply Theorem~\ref{negsemidefnthm} and then there exists a curve $B_{\lambda}$ intersecting $\mathfrak{a}_{\lambda}$ such that $0<(D-B)B\le d$ holds and $D-2B$ is big.
If moreover $q_X<q_{\mathfrak{a}_{\lambda}}$, then the curve $B_{\lambda}$ becomes negative semi-definite by $D^2>q_{\mathfrak{a}_{\lambda}}(d/q_{\mathfrak{a}_{\lambda}}+1)^2$.
If $q_X=q_{\mathfrak{a}_{\lambda}}$, then $B_{\lambda}$ is negative semi-definite or $B_{\lambda}^2=q_X$ and $D\equiv (d/q_X+1)B$ hold.
If $B_{\lambda}$ is negative semi-definite, then we have $B_{\lambda}^2=0$ and $D\cap B_{\lambda}\subset \mathfrak{a}_{\lambda}$ by the same proof of Lemma~\ref{fiberexist}.
If $B_{\lambda}^2=q_X$ and $D\equiv (d/q_X+1)B$, then we can estimate as in the proof of Lemma~\ref{fiberexist} as follows:
\begin{align*}
0&\ge P'_{\Z}N'_{\Z} \\
&= (D-B_{\lambda})B_{\lambda}+\sum_{i,j,k}(a^k_{i,j}+2)a^k_{i,j}[k(p_{i,j}):k] \\
&= d+\sum_{i,j,k}(a^k_{i,j}+2)a^k_{i,j}[k(p_{i,j}):k] \\
&\ge d+\sum_{i,j,k}(-1)[k(p_{i,j}):k] \\
&=0.
\end{align*}
Thus all inequalities as above are equalities and then we have $P'_{\Z}N'_{\Z}=0$ and $a^k_{i,j}=-1$ for all $i,j,k$.
In particular, $D\cap B_{\lambda}$ contains $\mathfrak{a}_{\lambda}$ as a subscheme.
Moreover, $B_{\lambda}$ and $D$ have no common components.
Indeed, if $B_{\lambda}$ has a prime component $D_i$ of $D$, then it follows from Remark~\ref{subcurve} and $D_i^2>0$ that $B_{\lambda}=D_i$.
Since $\mathfrak{a}_{\lambda}$ is a subscheme of $B_{\lambda}$, then $D=D_i$ holds, which contradicts $D\equiv (d/q_X+1)B$.

Assume that there exist infinitely many rational points $\lambda\in \Lambda$ such that $B_{\lambda}$ is negative semi-definite.
Then the same proof of Theorem~\ref{introextnthm} works in this situation and thus there exists a morphism $\psi\colon X\to \mathbb{P}^1$ such that $\psi|_{D}=\varphi$, which contradicts the assumption of Theorem~\ref{basepointextnthm}.
Hence there exist (infinitely many) rational points $\lambda\in \Lambda$ such that $B_{\lambda}^2=q_X$ and $D\equiv (d/q_X+1)B$ hold.
We fix such a $\lambda\in \Lambda$ arbitrarily.
Let $\mathcal{L}_{D,B_{\lambda}}$ be the cokernel of the natural homomorphism $\O_{X}(B_{\lambda}-D)\to \O_{X}(B_{\lambda})$.
Then we have $H^{0}(X,\O_{X}(B_{\lambda}))\cong H^{0}(D,\mathcal{L}_{D,B_{\lambda}})$ since $H^{0}(X,\O_X(B_{\lambda}-D))=H^1(X,\O_X(B_{\lambda}-D))=0$ by Theorem~\ref{van}.
Moreover, there is a natural inclusion $\O_D(\mathfrak{a}_{\lambda})\subset \mathcal{L}_{D,B_{\lambda}}$.
Indeed, we consider the exact sequence
$$
0\to \O_{X}(B_{\lambda}-D)\to \mathcal{I}_{\mathfrak{a}_{\lambda}/X}\O_X(B_{\lambda})\to \mathcal{L}_{D,B_{\lambda}}(-\mathfrak{a}_{\lambda})\to 0.
$$
Then we have $H^{0}(X,\mathcal{I}_{\mathfrak{a}_{\lambda}/X}\O_{X}(B_{\lambda}))\cong H^{0}(D,\mathcal{L}_{D,B_{\lambda}}(-\mathfrak{a}_{\lambda}))$.
Since $\mathfrak{a}_{\lambda}\subset B_{\lambda}$, there is a narural section $s$ in $H^0(X, \mathcal{I}_{\mathfrak{a}_{\lambda}/X}\O_{X}(B_{\lambda}))$.
This section $s$ defines a desired inclusion $\O_D(\mathfrak{a}_{\lambda})\subset \mathcal{L}_{D,B_{\lambda}}$.
Let $\{F_{\mu}\}_{\mu \in \mathbb{P}^1}$ be the linear subpencil of $|B_{\lambda}|$ obtained by the pull-back of the pencil $\{\mathfrak{a}_{\mu}\}_{\mu\in \mathbb{P}^1}$.
Then $\mathfrak{a}_{\mu} \subset F_{\mu}\cap D$ holds for any $\mu$ and $F_{\lambda}=B_{\lambda}$.
If the pencil $\{F_{\mu}\}_{\mu \in \mathbb{P}^1}$ has the fixed part $Z>0$, then the movable part $M>0$ with $B_{\lambda}=M+Z$ has $M^2>0$ and intersects $\mathfrak{a}_{\lambda}$, which contradicts Remark~\ref{subcurve}.
Hence $\{F_{\mu}\}_{\mu \in \mathbb{P}^1}$ has no fixed parts and we complete the proof.
Note that the pencil $\{F_{\mu}\}_{\mu \in \mathbb{P}^1}$ defines a rational map $\psi\colon X\dasharrow \mathbb{P}^1$ as the composition of the birational map $X\leftarrow \mathbb{P}_X(\O_X(B_{\lambda}))$ and $\mathbb{P}_X(\O_X(B_{\lambda}))\hookrightarrow \mathbb{P}_X(V\otimes \O_X)=\mathbb{P}_{k}(V)\times_{k} X\to \mathbb{P}_k(V)=\mathbb{P}^1$, where $V\subset H^0(X,\O_X(B_{\lambda}))$ is the subspace corresponding to $\{F_{\mu}\}_{\mu \in \mathbb{P}^1}$.
\end{proof}

\subsection{Extension theorem on movable divisors}

For a divisor $L$ on a normal complete surface $X$, we say {\em $L$ has a non-trivial numerical linear system} if the numerical linear system $|L|_{\mathrm{num}}:=\{D>0\ |\ D\equiv L\}$ consists of infinite members.
Let us define a similar invariant $q_{X,\infty}$ to $q_X$ as
$$
q_{X,\infty}:=\mathrm{min}\{E^2\ |\ \text{$E$ has a non-trivial numerical linear system with $E^2>0$} \}.
$$
Clearly, we have $q_{X,\infty}\ge q_X$.
The following is a variant of extension theorems:

\begin{theorem}[Extension theorem on movable divisors]\label{movableextnthm}
Let $X$ be a normal proper surface over a field $k$ of characteristic $0$.
Let $D>0$ be an effective divisor on $X$ all of whose prime components have non-trivial numerical linear systems and positive self-intersection numbers.
Let $\varphi\colon D\to \mathbb{P}^1$ be a finite morphism of degree $d$.
We put $q_{X,\infty,d}:=\mathrm{min}\{q_{X,\infty},d\}$.
If $D^2>q_{X, \infty,d}(d/q_{X,\infty,d}+1)^2$ $($resp.\ $D^2=q_{X, \infty}(d/q_{X,\infty}+1)^2$ and $q_{X,\infty}<d$$)$, then there exists a morphism $\psi\colon X\to \mathbb{P}^1$ $($resp.\ or a rational map $\psi\colon X\dasharrow \mathbb{P}^1$ induced by a linear pencil $\{F_{\lambda}\}_{\lambda\in \mathbb{P}^1}$ with no fixed parts and $F_{\lambda}^2=q_{X,\infty}$$)$ such that the restriction of $\psi$ to $D$ is equal to $\varphi$.
\end{theorem}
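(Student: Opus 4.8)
The plan is to run the proofs of Theorem~\ref{introextnthm} and Theorem~\ref{basepointextnthm} almost verbatim, the only genuinely new ingredient being a pigeonhole argument that converts the hypothesis ``every prime component of $D$ has a non-trivial numerical linear system'' into the invariant $q_{X,\infty}$ in place of $q_X$. First I would record that $D$ is nef and big (for a prime component $C$ of $D$ one has $DC>0$ because $C^2>0$ while $DC'\ge 0$ for the other components, and $DE\ge 0$ for curves $E$ not contained in $D$) and that no prime component of $D$ is contracted by a fibration of $X$ (a curve inside a fibre has non-positive self-intersection). As in the proof of Theorem~\ref{introextnthm} we may assume $X$ is geometrically connected and fix the dense set $\Lambda\subset\mathbb{P}^1$ of closed points $\lambda$ for which $\mathfrak{a}_\lambda:=\varphi^{-1}(\lambda)$ is reduced and supported on the smooth loci of $X$ and $D_{\mathrm{red}}$; Lemma~\ref{notsep} and Lemma~\ref{deltabound} go through unchanged, so for rational $\lambda\in\Lambda$ the map $H^0(X,\O_X(K_X+D))\to H^0(\mathfrak{a}_\lambda,\O_X(K_X+D)|_{\mathfrak{a}_\lambda})$ is not surjective and one may choose a resolution $\pi$ and a cycle $Z$ with $\delta_{\mathfrak{a}_\lambda}(\pi,Z)=4d$. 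Since $q_{X,\infty,d}(d/q_{X,\infty,d}+1)^2\ge 4d$ always, the hypothesis on $D^2$ gives $D^2>4d=\delta_{\mathfrak{a}_\lambda}(\pi,Z)$, so the first assertion of Theorem~\ref{negsemidefnthm} (equivalently Theorem~\ref{sep}) yields an effective divisor $B_\lambda>0$ meeting $\mathfrak{a}_\lambda$ with $0<(D-B_\lambda)B_\lambda\le d$ and $D-2B_\lambda$ big; by nefness of $D$ this forces $DB_\lambda\le\tfrac12 D^2$.

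The crucial step is to show that, for all $\lambda$ in a suitable infinite subset $\Lambda_0\subset\Lambda$ of rational points, $B_\lambda$ is negative semi-definite with $B_\lambda^2=0$ (in the strict case). Because $D$ is big, the effective numerical classes $\beta$ with $0<D\cdot\beta\le\tfrac12 D^2$ are finite in number, so infinitely many of the $B_\lambda$ share one class $\beta$. I would argue $\beta^2\le 0$: if $\beta^2>0$, then either infinitely many of these $B_\lambda$ are distinct divisors, in which case $\beta$ carries a non-trivial numerical linear system, or a fixed curve $B$ equals $B_\lambda$ for infinitely many $\lambda$, and since the $\mathfrak{a}_\lambda$ ($\lambda\in\Lambda$) are pairwise disjoint with union dense in $D_{\mathrm{red}}$ while a curve not containing a component of $D$ meets $D_{\mathrm{red}}$ in a finite set, $B$ must contain a prime component $D_i$ of $D$, whence $B-D_i+D_i'$ for $D_i'\in|D_i|_{\mathrm{num}}$ again exhibits a non-trivial numerical linear system for $\beta$. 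Either way $\beta^2\ge q_{X,\infty}$ by definition of $q_{X,\infty}$, and feeding this into the Hodge-index estimate in the proof of Theorem~\ref{negsemidefnthm} (with $\delta_{\mathfrak{a}_\lambda}=4d$, which gives $D^2\le \beta^2(d/\beta^2+1)^2$ whenever $0<\beta^2<d$) contradicts $D^2>q_{X,\infty,d}(d/q_{X,\infty,d}+1)^2$, or, when $q_{X,\infty}>d$, contradicts the definition of $q_{X,\infty}$ directly. Applying the same pigeonhole to subcurves (using, as in the proof of Theorem~\ref{negsemidefnthm}, that every component of $B_\lambda$ meets $\mathfrak{a}_\lambda$) rules out a subcurve of positive self-intersection, so $B_\lambda$ is negative semi-definite for $\lambda\in\Lambda_0$; the estimate of Lemma~\ref{fiberexist} then yields $B_\lambda^2=0$ and that $D\cap B_\lambda$ is a subscheme of $\mathfrak{a}_\lambda$.

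From here the argument of Theorem~\ref{introextnthm} applies unchanged: the prime components $C$ of the $B_\lambda$, $\lambda\in\Lambda_0$, with $DC>0$ form an infinite family (since $B_\lambda\cap D\subset\mathfrak{a}_\lambda$ and the $\mathfrak{a}_\lambda$ are disjoint) lying in finitely many numerical classes, one of which, say $B_{(i)}$ with $B_{(i)}^2=0$, has infinitely many integral members; Proposition~\ref{fiblem} produces a fibration $f\colon X\to Y$ onto a smooth curve having these as fibres, and Lemma~\ref{isom}, whose only surface hypothesis is $D_i^2>0$, shows that the scheme-theoretic image $\overline{D}$ of $(f|_D,\varphi)$ maps isomorphically to $Y$, giving $\psi=\gamma\circ f$ with $\psi|_D=\varphi$. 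In the boundary case $D^2=q_{X,\infty}(d/q_{X,\infty}+1)^2$ with $q_{X,\infty}<d$, the same pigeonhole yields an infinite $\Lambda_0$ on which either $B_\lambda$ is negative semi-definite (and we finish as above) or $[B_\lambda]=\beta$ with $\beta^2=q_{X,\infty}$ and $D\equiv(d/q_{X,\infty}+1)B_\lambda$; in the latter situation I would copy the construction in the proof of Theorem~\ref{basepointextnthm}, namely the linear pencil cut on $|B_\lambda|$ by pulling back $\{\mathfrak{a}_\mu\}_{\mu\in\mathbb{P}^1}$, which is well defined since $H^0(X,\O_X(B_\lambda-D))=H^1(X,\O_X(B_\lambda-D))=0$ by Theorem~\ref{van} (as $D-B_\lambda=(D-2B_\lambda)+B_\lambda$ is big), has no fixed part by Remark~\ref{subcurve}, and induces a rational map $\psi\colon X\dasharrow\mathbb{P}^1$ extending $\varphi$.

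The main obstacle I expect is the second paragraph: making rigorous that the obstructing curves of positive self-intersection organise into a single numerical class carrying a non-trivial numerical linear system — this is precisely where the hypothesis on the components of $D$ is consumed, through the ``one curve recurs'' case — and then threading $\beta^2\ge q_{X,\infty}$ correctly through the monotonicity of $x\mapsto x(d/x+1)^2$ so as to close the contradiction uniformly whether $q_{X,\infty}\le d$ or $q_{X,\infty}>d$. A secondary point is verifying, as in the original proofs, that $B_\lambda$ (and its subcurves) are genuinely negative semi-definite rather than merely of non-positive self-intersection, so that Lemma~\ref{fiberexist} and the disjointness $B_\lambda\cap D\subset\mathfrak{a}_\lambda$ remain available; in the complex-analytic setting the usual minor adjustments suffice, noting that $X$ is Moishezon because $D_i^2>0$, so any resolution of $X$ is projective and Proposition~\ref{fiblem} still applies.
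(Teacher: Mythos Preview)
Your argument is correct, but the paper takes a shorter route at the key step. Rather than pigeonholing the $B_\lambda$ into a single numerical class $\beta$ and then splitting into the ``infinitely many distinct divisors'' versus ``one recurring curve containing a $D_i$'' cases, the paper observes directly that the set $\mathcal{E}_N$ of \emph{prime} divisors $C$ with $|C|_{\mathrm{num}}$ finite and $DC\le N$ is finite (finitely many numerical classes with $DC\le N$ since $D$ is big, and each such class with trivial numerical linear system has finitely many effective representatives). If $B_\lambda$ fails to be negative semi-definite, the proof of Theorem~\ref{negsemidefnthm} yields a subcurve $C_\lambda\le B_\lambda$ with $0<C_\lambda^2<q_{X,\infty,d}$ meeting $\mathfrak{a}_\lambda$; since $C_\lambda^2<q_{X,\infty}$, every prime component of $C_\lambda$ has finite numerical linear system and lies in $\mathcal{E}_N$ for $N=d+q_{X,\infty,d}$. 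As no component of $D$ lies in $\mathcal{E}_N$ (by hypothesis), the finite set $\varphi(D\cap\bigcup\mathcal{E}_N)$ contains all the bad $\lambda$'s, leaving infinitely many good ones.

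The trade-off: the paper's finiteness-of-$\mathcal{E}_N$ argument is cleaner and avoids your two-level pigeonhole (on $B_\lambda$, then on subcurves) and the case analysis on whether a curve recurs. Your approach, however, makes more transparent exactly where the hypothesis on the components of $D$ enters --- it is precisely what forces any recurring obstructing curve to carry a non-trivial numerical linear system --- and it handles the boundary case $D^2=q_{X,\infty}(d/q_{X,\infty}+1)^2$ with the same machinery rather than by the paper's one-line ``in the same way''. Both proofs rely on the same facts from Theorems~\ref{sep} and~\ref{negsemidefnthm} (including that every subcurve of $B_\lambda$ meets $\mathfrak{a}_\lambda$), Lemma~\ref{fiberexist}, Proposition~\ref{fiblem}, and the construction in Theorem~\ref{basepointextnthm}, so your ``main obstacle'' paragraph correctly identifies the only genuinely new content.
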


\begin{proof}
We freely use the notations in the proof of Theorem~\ref{introextnthm}.
First, we note that the set $\mathcal{E}_N$ of prime divisors $C$ on $X$ with $|C|_{\mathrm{num}}$ finite and $DC\le N$ is a finite set for any $N>0$.
If $D^2>q_{X, \infty,d}(d/q_{X,\infty,d}+1)^2$, then the set $\Lambda'$ of rational points $\lambda$ in $\Lambda$ such that the curve $B_{\lambda}$ obtained as in Lemma~\ref{fiberexist} is not negative semi-definite is a finite set.
Indeed, if $B_{\lambda}$ is not negative semi-definite, then the proof of Theorem~\ref{negsemidefnthm} says that $q_{\mathfrak{a}_{\lambda}}<q_{X,\infty,d}$ and there exists a subcurve $0<C_{\lambda}\le B_{\lambda}$ such that  $C_{\lambda}\cap \mathfrak{a}_{\lambda}\neq \emptyset$ and $C_{\lambda}\in \mathcal{E}_N$, where $N:=d+q_{X,\infty, d}$.
Since $D$ has no elements of $\mathcal{E}_N$, the set $\varphi(D\cap (\cup \mathcal{E}_N))$ containing $\Lambda'$ is finite.
Hence there exist infinite rational points $\lambda$ in $\Lambda$ with $B_{\lambda}$ is negative semi-definite and then the proof of Theorem~\ref{introextnthm} works.
The assertion for $D^2=q_{X, \infty}(d/q_{X,\infty}+1)^2$ can also be proved in the same way.
\end{proof}

%%%%%%%%%%%%%%%%%%%%%%%%%%%%%%%%%%%%%%%%%%%%%%%%%%%%%%%%%%%%%%%%%%%%%%%%%%%%

\section{Applications}
\label{sec:Applications}

%%%%%%%%%%%%%%%%%%%%%%%%%%%%%%%%%%%%%%%%%%%%%%%%%%%%%%%%%%%%%%%%%%%%%%%%%%%%%

In this section, we give examples and applications of extension theorems.
Throughout this section, we work over an arbitrary field $k$ of characteristic $0$ unless otherwise stated.

\begin{example} \label{planecurve}
Let $D\subset \mathbb{P}^2$ be a plane curve of degree $m\ge 2$.
As a corollary of Theorem~\ref{introextnthm} and Theorem~\ref{basepointextnthm}, 
we can show easily that for any (possibly singular, non-reduced or reducible) plane curve $D$ of degree $m$, we obtain $\mathrm{pgon}(D)=m-1$ if $D$ has a smooth rational point or $\mathrm{pgon}(D)=m$ otherwise (the equality follows by the existence of the inner or outer projection from a rational point in $\mathbb{P}^2$).
Moreover, any (pseudo-)gonal pencil of $D$ is obtained by the inner projection from a smooth rational point of $D$ when $\mathrm{pgon}(D)=m-1$.
This generalizes the classical result (Theorem~2.3.1 in \cite{Nam}) for the case of smooth plane curves over $\C$.
For example, if $F_m=\{x^m+y^m-z^m=0\} \subset \mathbb{P}^2_{\Q}$ is the Fermat curve of degree $m\ge 3$, then Fermat's last theorem (\cite{Wil}) says that all base point free pencils of $F_m$ of degree $m-1$ are obtained by the inner projections from trivial rational points with $xyz=0$.
In particular, we have $\mathrm{gon}(F_m)=m-1$ and $\mathrm{gon}_{\infty}(F_m)=\mathrm{s}_2(F_m)=m$.
Moreover, Faltings's theorem (\cite{Fal}) says that any integral plane curve $D$ of degree $m$ and of geometric genus greater than $1$ over a number field has finitely many base point free pencils of degree $m-1$, that is, $\mathrm{pgon}_{\infty}(D)=m$.
\end{example}

More generally, the following can be shown:
\begin{proposition}\label{pgonpic1}
Let $X$ be a normal proper surface of Picard number $1$ over a field of characteristic $0$.
Let $H>0$ be an effective divisor on $X$ $($resp.\ with a non-trivial numerical linear system$)$ such that $H^2$ is minimal among all non-zero effective divisors, that is, $q_X=H^2$ $($resp.\ $q_{X,\infty}=H^2$$)$.
Then for any effective divisor $D>0$ on $X$ $($resp.\ all of prime components have non-trivial numerical linear systems$)$, one of the following holds:

\smallskip

\noindent
$(\mathrm{i})$ $D^2/4\le \mathrm{pgon}(D)\le H^2$, or

\smallskip

\noindent
$(\mathrm{ii})$ $\mathrm{pgon}(D)\ge H(D-H)$ and $\mathrm{pgon}(D)> H^2$.

\smallskip

\noindent
Moreover, if $\mathrm{pgon}(D)=H(D-H)$ holds in the case of $(\mathrm{ii})$, then any pseudo-gonal morphism $\varphi\colon D\to \mathbb{P}^1$ extends to a rational map $\psi\colon X\dasharrow \mathbb{P}^1$ defined by a linear pencil $\{F_{\lambda}\}_{\lambda\in \mathbb{P}^1}$  on $X$ with $F_{\lambda}^2=q_X$ $($resp.\ $F_{\lambda}^2=q_{X,\infty}$$)$.
\end{proposition}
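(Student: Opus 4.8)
The plan is to apply the extension theorems of this section directly to a pseudo-gonal morphism $\varphi\colon D\to\mathbb{P}^1$, exploiting that a surface of Picard number one admits no non-constant morphism to $\mathbb{P}^1$. First I would record the numerical input coming from $\rho(X)=1$: since $q_X$ (resp.\ $q_{X,\infty}$) is finite, $X$ carries an effective divisor of positive self-intersection, so the intersection form on $N^1(X)_{\mathbb{R}}\cong\mathbb{R}$ is positive definite. Consequently every non-zero effective divisor has positive self-intersection, and $E\cdot F>0$ for any two non-zero effective divisors $E,F$ (in the product of the prime decompositions every off-diagonal term $\Gamma_i\Delta_j$ is $\ge 0$ and every diagonal term $\Gamma_i^2$ is $>0$). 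In particular $D^2>0$, $H^2>0$, $DH>0$, and $D^2H^2=(DH)^2$ because the form lives on a one-dimensional space. The key consequence I will use repeatedly is that there is no non-constant morphism $X\to\mathbb{P}^1$, since a general fibre would be a non-zero effective divisor of self-intersection $0$.

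Next I would set $d:=\mathrm{pgon}(D)$, fix a finite morphism $\varphi\colon D\to\mathbb{P}^1$ of degree $d$ computing it, put $q:=q_X=H^2$ (resp.\ $q:=q_{X,\infty}=H^2$), and let $\phi:=q_{X,d}(d/q_{X,d}+1)^2$ with $q_{X,d}=\min\{q,d\}$. If $D^2>\phi$ held, Theorem~\ref{introextnthm} (resp.\ Theorem~\ref{movableextnthm}) would extend $\varphi$ to a morphism $X\to\mathbb{P}^1$, contradicting the previous paragraph; hence $D^2\le\phi$. If $d\le q$ then $\phi=4d$, so $D^2\le 4d$ and therefore $D^2/4\le\mathrm{pgon}(D)\le q=H^2$, which is case $(\mathrm{i})$. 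If $d>q$ then $\phi=(d+q)^2/q$, so $q\,D^2\le(d+q)^2$; since $q\,D^2=H^2D^2=(DH)^2$ and $DH>0$ this yields $DH\le d+q$, i.e.\ $\mathrm{pgon}(D)=d\ge DH-H^2=H(D-H)$, together with $\mathrm{pgon}(D)=d>q=H^2$, which is case $(\mathrm{ii})$. These two cases are exhaustive.

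For the last assertion, suppose $(\mathrm{ii})$ holds with $\mathrm{pgon}(D)=H(D-H)$. Then $DH=d+H^2$, so $D^2=(DH)^2/H^2=(d+q)^2/q=q(d/q+1)^2$; that is, $D^2=q_X(d/q_X+1)^2$ (resp.\ $D^2=q_{X,\infty}(d/q_{X,\infty}+1)^2$) and $q<d$. By the first paragraph $\varphi$ cannot be extended to a morphism on $X$, so Theorem~\ref{basepointextnthm} (resp.\ the pencil alternative of Theorem~\ref{movableextnthm}) applies and produces a fixed-part-free linear pencil $\{F_{\lambda}\}_{\lambda\in\mathbb{P}^1}$ with $F_{\lambda}^2=q_X$ (resp.\ $F_{\lambda}^2=q_{X,\infty}$) whose associated rational map $\psi\colon X\dasharrow\mathbb{P}^1$ satisfies $\psi|_D=\varphi$, as required.

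I do not expect any serious obstacle: once the Picard-number-one input of the first paragraph is in place, everything reduces to the elementary arithmetic of $\phi$. The one point demanding care is bookkeeping — checking that the integer $d=\mathrm{pgon}(D)$ is exactly the parameter ``$d$'' appearing in the hypotheses of Theorems~\ref{introextnthm}, \ref{basepointextnthm} and \ref{movableextnthm} (for a finite surjection onto $\mathbb{P}^1$ this parameter is just the degree, since $\mathrm{pgon}_\infty(\mathbb{P}^1)=1$), and that the divisorial hypotheses of those theorems — the prime components of $D$ have positive self-intersection, resp.\ also have non-trivial numerical linear systems — are satisfied here; positivity of $D_i^2$ is automatic by the first paragraph.
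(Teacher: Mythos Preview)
Your proof is correct and follows essentially the same approach as the paper's: both exploit that $\rho(X)=1$ forces every nonzero effective divisor to have positive self-intersection and rules out morphisms $X\to\mathbb{P}^1$, then apply Theorem~\ref{introextnthm} (resp.\ Theorem~\ref{movableextnthm}) to obtain $D^2\le q_{X,d}(d/q_{X,d}+1)^2$ and split into cases according to whether $d\le q$ or $d>q$, with the boundary case handled by Theorem~\ref{basepointextnthm}. Your write-up is slightly more explicit in deriving $H(D-H)\le d$ from $qD^2=(DH)^2\le(d+q)^2$ where the paper just writes $D\equiv rH$ and asserts case~(ii), but the argument is the same.
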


\begin{proof}
We only prove the assertion for $H^2=q_X$.
By using Theorem~\ref{movableextnthm} instead of Theorem~\ref{introextnthm} and Theorem~\ref{basepointextnthm}, we can prove the assertion for $H^2=q_{X,\infty}$ similarly. 
Let $D$ be an effective divisor on $X$ and take a morphism $\varphi\colon D\to \mathbb{P}^1$ of degree $d:=\mathrm{pgon}(D)$.
Note that every prime component of $D$ has positive self-intersection number and $X$ has no base point free pencils since the Picard number of $X$ is $1$.
By the same reason, we can write $D\equiv rH$ for some rational number $r\ge 1$.
Hence by Theorem~\ref{introextnthm}, we have $D^2\le q_{X,d}(d/q_{X,d}+1)^2$.
If $d\le H^2=q_X$, then $q_{X,d}=d$ and thus the above inequality implies $D^2\le 4d$, which is the case (i).
Otherwise, we have $q_{X,d}=q_X$ and so $D^2\le q_{X}(d/q_X+1)^2$ holds.
Hence the case (ii) holds.
The last assertion is due to Theorem~\ref{basepointextnthm}.
\end{proof}

\begin{example}
Let $X$ be a projective cone over a rational normal curve of degree $e$.
It is a proper normal surface of Picard number $1$ and obtained by the contraction $\pi\colon \Sigma_e\to X$ of the $(-e)$-section $C_0$ of the ruling of the $e$-th Hirzebruch surface $p\colon \Sigma_e\to \mathbb{P}^1$.
Let $\Gamma$ be the numerical equivalence class of a fiber of $p$.
Then the push-forward $\overline{\Gamma}:=\pi_{*}\Gamma$ generates $\N(X)_{\Z}$ and computes $q_X=\overline{\Gamma}^2=1/e$.
Let us take a member $D'\in |dC_0+(de+1)\Gamma|$ arbitrarily for $d\ge 1$.
Then $D'$ intersects $C_0$ transversely at one point.
Thus the push-forward $D:=\pi_{*}D'$ is isomorphic to $D'$ as schemes and then $D$ has a base point free pencil of degree $d$ induced by $p|_{D'}\colon D'\to \mathbb{P}^1$.
This pencil can not extend to $X$ because the Picard number of $X$ is $1$.
Since $D\equiv (de+1)\overline{\Gamma}$, we have $D^2=q_X(d/q_X+1)^2$.
This shows that the lower bound of $D^2$ in Theorem~\ref{extnthm} is sharp even when $X$ is singular.
\end{example}

\begin{example}
More generally, we consider a weighted projective space $X=\mathbb{P}(a_0,a_1,a_2)$.
It is a normal complete surface of Picard number $1$ with cyclic quotient singularities.
We may assume that $\mathrm{gcd}(a_i,a_j)=1$ for any $i\neq j$ and $a_0\le a_1\le a_2$.
Then we have $\mathrm{Cl}(X)=\Z\O_X(1)$, $\mathrm{Pic}(X)=\Z\O_X(a_0a_1a_2)$ and $\O_X(1)^2=(a_0a_1a_2)^{-1}$ (for a general reference for weighted projective spaces, see \cite{Dol}).
The Poincar\'{e} series $P_{(a_0,a_1,a_2)}(t)$ of the weighted polynomial ring $k[x_0,x_1,x_2]\cong \bigoplus_{m\in \Z}H^{0}(X,\O_X(m))$ with $\mathrm{deg}(x_i)=a_i$ is $(1-t^{a_0})^{-1}(1-t^{a_1})^{-1}(1-t^{a_2})^{-1}$.
Let $L_i$ $(i=0,1,2)$ denote the weighted projective lines defined by $x_i=0$ in $X$.
Let $m_{(a_0,a_1,a_2)}$ be the minimum of the number $m$ such that the coefficient of $t^{m}$ in $P_{(a_0,a_1,a_2)}(t)$ is greater than $1$, that is, $\O_X(m)$ has a non-trivial numerical linear system.
Then we have $q_X=a_0/a_1a_2$ and $q_{X,\infty}=m_{(a_0,a_1,a_2)}^2/a_0a_1a_2$.
It is easy to see that $m_{(a_0,a_1,a_2)}=a_1$ if $a_0=1$ and $m_{(a_0,a_1,a_2)}>a_1$ if $a_0>1$.
Let $D$ be an effective divisor $D$ of degree $m$ on $X$, that is, $D\in |\O_X(m)|$.
Then its arithmetic genus $p_a(D)$ is equal to the coefficient of $t^{m-a_0-a_1-a_2}$ in $P_{(a_0,a_1,a_2)}(t)$
since $\O_X(K_X)=\O_X(-a_0-a_1-a_2)$, $H^{1}(X,\O_X(n))=0$ for any $n\in \Z$ and the exact sequence \eqref{restex}.
From Proposition~\ref{pgonpic1}, for any effective divisor $D$ of degree $m$ which does not contain $L_i$ with $a_i<m_{(a_0,a_1,a_2)}$, one of the following holds (which is a generalization of the results in \cite{Bal}):

\smallskip

\noindent
$(\mathrm{i})$ $m^2/4a_0a_1a_2\le \mathrm{pgon}(D)\le m_{(a_0,a_1,a_2)}^2/a_0a_1a_2$, or

\smallskip

\noindent
$(\mathrm{ii})$ $\mathrm{pgon}(D)\ge m_{(a_0,a_1,a_2)}(m-m_{(a_0,a_1,a_2)})/a_0a_1a_2$ and $\mathrm{pgon}(D)> m_{(a_0,a_1,a_2)}^2/a_0a_1a_2$.

\smallskip

\noindent
In particular, $D$ is non-hyperelliptic if $m>2a_0a_1a_2/m_{(a_0,a_1,a_2)}+m_{(a_0,a_1,a_2)}$.
Moreover, the above bound is sharp.
Indeed, if $a_0=1$, then the divisor $D$ of degree $m=a_2d+a_1$ defined by the equation
$$
\sum_{i+a_1j=m}c_{ij}x_0^ix_1^j+x_1x_2^d=0
$$
(with $c_{ij}$ general) has pseudo-gonality $d$ which is computed by the morphism defined by the projection $X\dasharrow \mathbb{P}(1,a_1)\cong \mathbb{P}^1; (x_0:x_1:x_2)\mapsto (x_0^{a_1}:x_2)$.
The simplest examples of the case $a_0\ge 2$ are as follows.
Any general effective divisor $D$ of degree $17$ in $X=\mathbb{P}(2,3,5)$ is hyperelliptic  since $p_a(D)=2$.
On the other hand, $m_{(2,3,5)}=5$ and then the lower bound of $\mathrm{pgon}(D)$ in (ii) is $2$.
\end{example}

By using Theorem~\ref{introextnthm} instead of Serrano--Paoletti's extension theorem (\cite{Ser}, \cite{Pao}), 
Marten's result \cite{Mar} can be generalized as follows.

\begin{proposition}
Let $D\in |\alpha C_0+\beta \Gamma|$ be an effective member on the Hirzebruch surface $\Sigma_e$ which has no fibers of the rulings and no negative section $($if $e=0$, we may assume that $\alpha\le \beta$$)$.
Then the following holds.

\smallskip

\noindent
$(1)$ We have $\mathrm{pgon}(D)=\alpha$ unless $e=1$, $\beta=\alpha\ge 2$ and $D$ has at least one smooth rational point, where in this case $D$ is isomorphic to a plane curve of degree $\alpha$ and $\mathrm{pgon}(D)=\alpha-1$.

\smallskip

\noindent
$(2)$ If there exists a morphism $\varphi\colon D\to \mathbb{P}^1$ of degree $\alpha$ which is not obtained as a restriction of the ruling $p\colon \Sigma_e\to \mathbb{P}^1$, 
then $D$ is one of the following:

\begin{enumerate}
\item[(i)]
$e=1$ and $\beta=\alpha\ge 2$. In this case, $D$ is isomorphic to a plane curve of degree $\alpha$ and has infinite base point free pencils of degree $\alpha$.
\item[(ii)]
$e=1$, $\beta=\alpha+1\ge 3$ and $D$ has at least one smooth rational point.
In this case, $D$ is isomorphic to a plane curve of degree $\alpha+1$.
\item[(iii)]
$e=0$ and $(\alpha,\beta)=(2,2)$, that is, $D$ is a member of $|-K_{\Sigma_0}|$.
\item[(iv)]
$e=2$ and $(\alpha,\beta)=(2,4)$, that is, $D$ is a member of $|-K_{\Sigma_2}|$.
\end{enumerate}

\end{proposition}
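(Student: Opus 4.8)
The plan is to reduce both statements to the extension theorems Theorem~\ref{introextnthm} and Theorem~\ref{basepointextnthm}, applied with $X=\Sigma_e$. First I record the relevant numerics on $\Sigma_e$. With $C_0$ the negative section and $\Gamma$ a fibre of $p$ (so $C_0^2=-e$, $C_0\cdot\Gamma=1$, $\Gamma^2=0$), one has $D\equiv \alpha C_0+\beta\Gamma$, hence $D^2=\alpha(2\beta-e\alpha)$; since $D$ contains neither $C_0$ nor a fibre we get $\alpha=D\cdot\Gamma\ge 1$, $\beta-e\alpha=D\cdot C_0\ge 0$, and every prime component of $D$ has positive self-intersection, so that Theorem~\ref{introextnthm} and Theorem~\ref{basepointextnthm} apply to $D$. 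A direct computation gives $q_{\Sigma_e}=1$ when $e$ is odd and $q_{\Sigma_e}=2$ when $e$ is even (the minimum is realized by $C_0+\lceil(e+1)/2\rceil\Gamma$, while classes $aC_0+b\Gamma$ with $a\ge 2$ contribute self-intersection at least $2$). Finally $p$ restricts to a finite morphism $p|_D\colon D\to\mathbb{P}^1$ of degree $D\cdot\Gamma=\alpha$ (for $e=0$ choose the ruling whose fibre $F$ has $D\cdot F=\alpha$, which exists because $\alpha\le\beta$), so $\mathrm{pgon}(D)\le\alpha$ always.

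The main mechanism is the following dichotomy. If $\varphi\colon D\to\mathbb{P}^1$ is finite of degree $d$ and extends to a morphism $\psi\colon X\to\mathbb{P}^1$, then the Stein factorization of $\psi$ is a fibration on $\Sigma_e$; for $e\ge 1$ this fibration must be $p$, and for $e=0$ it is one of the two rulings, so $d$ is a positive multiple of $\alpha$ (resp.\ of $\alpha$ or $\beta$), and when $d=\alpha$ the map $\varphi$ is a ruling restriction up to an automorphism of $\mathbb{P}^1$. Consequently, if $d<\alpha$, or if $d=\alpha$ but $\varphi$ is not a ruling restriction, then $\varphi$ does not extend, and the contrapositive of Theorem~\ref{introextnthm} yields
$$
D^2\le q_{X,d}\Bigl(\frac{d}{q_{X,d}}+1\Bigr)^2,\qquad q_{X,d}:=\min\{q_X,\,d\}.
$$
Since $q_X\in\{1,2\}$ and $d\le\alpha$, the right-hand side is at most $(\alpha+1)^2$ when $q_X=1$ and at most $(\alpha+2)^2/2$ when $q_X=2$ and $d\ge 2$, whereas $D^2=\alpha(2\beta-e\alpha)\ge e\alpha^2$ (and $\ge 2\alpha^2$ when $e=0$, by $\alpha\le\beta$); the resulting inequalities among $\alpha$, $\beta$, $e$ are the heart of the argument.

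For part~(1), take $d=\mathrm{pgon}(D)$ and suppose $d<\alpha$. The displayed inequality then forces $e=1$ (no $e\ge 2$ or $e=0$ survives the comparison $e\alpha^2\le D^2\le$ bound), and using $D^2=\alpha(2\beta-\alpha)\le(d+1)^2\le\alpha^2$ together with $\beta\ge\alpha$ one gets $\beta=\alpha$ and $d=\alpha-1$, so $\alpha\ge 2$. For part~(2), take $d=\alpha$ with $\varphi$ not a ruling restriction; the inequality now leaves exactly $e=1$ with $\beta\in\{\alpha,\alpha+1\}$, $e=0$ with $(\alpha,\beta)=(2,2)$, and $e=2$ with $(\alpha,\beta)=(2,4)$, while $\alpha=1$ is excluded since degree-$1$ morphisms always extend. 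Translating along the contraction $\Sigma_1\to\mathbb{P}^2$ (under which $D\equiv\beta H-(\beta-\alpha)E$ becomes a plane curve of degree $\beta$ with a point of multiplicity $\beta-\alpha$ at the blown-up point), and identifying $-K_{\Sigma_0}=2F_1+2F_2$ and $-K_{\Sigma_2}=2C_0+4\Gamma$, these are precisely the four cases (i)--(iv): $D$ a plane curve of degree $\alpha$ when $\beta=\alpha$, a plane curve of degree $\alpha+1$ passing smoothly through one point when $\beta=\alpha+1$, or an anticanonical member on $\Sigma_0$ or $\Sigma_2$.

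The only points requiring more than the crude numerical inequality are the borderline equality cases, and this is where I expect the work to concentrate. In the exceptional family of part~(1) ($e=1$, $\beta=\alpha\ge 2$) one must show that $\mathrm{pgon}(D)=\alpha-1$ holds exactly when $D$ has a smooth rational point: the inner projection from such a point realizes degree $\alpha-1$, and conversely if $\mathrm{pgon}(D)<\alpha$ then $D^2=\alpha^2=q_X(d/q_X+1)^2$ with $q_X=1<d=\alpha-1$ (for $\alpha\ge 3$), so Theorem~\ref{basepointextnthm} produces a fixed-part-free pencil $\{F_\lambda\}$ with $F_\lambda^2=1$ whose induced rational map restricts to $\varphi$; on $\Sigma_1$ such a pencil is the pencil of lines through a rational point $q$ away from the blown-up point, and $d=\alpha-\mult_q D$ then forces $q$ to be a smooth rational point of $D$ (the case $\alpha=2$, where $D$ is a conic, being immediate). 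The same borderline analysis—verifying that each of (i)--(iv) genuinely admits a non-ruling degree-$\alpha$ morphism, via inner projection or the pencil $\{F_\lambda\}$ of Theorem~\ref{basepointextnthm}—completes part~(2); everything else is bookkeeping on the Hirzebruch surface.
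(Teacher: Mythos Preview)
Your proposal is correct and follows essentially the same strategy as the paper: both arguments compute $q_{\Sigma_e}$, observe that a non-extending morphism forces $D^2\le q_{X,d}(d/q_{X,d}+1)^2$ via Theorem~\ref{introextnthm}, and then reduce the resulting inequalities on $(\alpha,\beta,e)$ to the listed cases. The only organizational difference is in the borderline case of part~(1) ($e=1$, $\beta=\alpha$, $d=\alpha-1$): the paper contracts $C_0$ and invokes Example~\ref{planecurve} on $\mathbb{P}^2$, whereas you apply Theorem~\ref{basepointextnthm} directly on $\Sigma_1$ to produce a pencil in $|C_0+\Gamma|$ with a single base point $q$, and read off $\mathrm{mult}_q D=1$ from the degree. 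Since Example~\ref{planecurve} is itself derived from Theorem~\ref{basepointextnthm}, the two routes are equivalent. One small remark: your last paragraph suggests verifying that cases (i)--(iv) actually admit such morphisms, but the proposition only asserts the necessary direction, so this is not required (though the descriptive clauses, e.g.\ ``infinite base point free pencils'' in (i), do follow from the plane-curve picture you describe).
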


\begin{proof}
Note that $\alpha>0$ and $\beta\ge e\alpha$ hold by the nefness of $D$.
Clearly we have $\mathrm{pgon}(D)\le \alpha$ by the existence of $p|_D\colon D\to \mathbb{P}^1$.
Let us put $d:=\mathrm{pgon}(D)$.
Since $q_{\Sigma_e}=1$ if $e$ is odd and $q_{\Sigma_e}=2$ if $e$ is even, we have
$$
q_{\Sigma_e,d}\left(\frac{d}{q_{\Sigma_e,d}}+1\right)^2=\left\{\begin{array}{l}
(d+2)^2/2\quad \text{(if $e$ is even and $d\ge 2$)}, \\
(d+1)^2\quad\quad \text{(otherwise)}.
\end{array}
\right.
$$
In particular, $q_{\Sigma_e,d}(d/q_{\Sigma_e,d}+1)^2\le (d+1)^2$ holds.

Suppose that $d<\alpha$. 
Then any morphism $D\to \mathbb{P}^1$ of degree $d$ can not be extended to a morphism on $\Sigma_e$ because any morphism from $\Sigma_e$ to $\mathbb{P}^1$ factors through the ruling $p$ (one of two rulings if $e=0$).
Then by Theorem~\ref{introextnthm}, we have 
$$
\alpha^2(e+2\delta_{e,0})\le D^2\le q_{\Sigma_e,d}(d/q_{\Sigma_e,d}+1)^2\le (d+1)^2\le \alpha^2,
$$
where $\delta_{e,0}=1$ if $e=0$ and $\delta_{e,0}=0$ otherwise.
Thus we have $e=1$ and $\beta=\alpha\ge 2$.
Since $D$ does not contain $C_0$ and $DC_0=0$, it sends isomorphically to a plane curve of degree $\alpha$ by the contraction $\pi\colon \Sigma_1\to \mathbb{P}^2$ of $C_0$.
By the assumption of $d<\alpha$, it follows from Example~\ref{planecurve} that $d=\alpha-1$ and $D$ has a smooth rational point.

Next, we assume that $d=\alpha$ and there exists a morphism $\varphi\colon D\to \mathbb{P}^1$ of degree $\alpha$ which can not be extended to $\Sigma_e$.
By using Theorem~\ref{introextnthm} similarly, we have 
$$
\alpha^2(e+2\delta_{e,0})\le D^2\le q_{\Sigma_e,d}(d/q_{\Sigma_e,d}+1)^2=\left\{\begin{array}{l}
(\alpha+2)^2/2\quad \text{(if $e$ is even and $\alpha\ge 2$)}, \\
(\alpha+1)^2\quad\quad \text{(otherwise)}.
\end{array}
\right.
$$
Hence we conclude that $D$ is one of (i), (ii), (iii) and (iv) by a simple computation.
\end{proof}

The following is a partial generalization of the results in \cite{Haru} and \cite{ChKi}.
%For the exceptional cases of smooth curves on an elliptic ruled surface, see \cite{Haru}.

\begin{proposition}\label{geomruled}
Let $p\colon X\to C$ be a geometrically ruled irrational surface over a smooth connected projective curve $C$.
Let $C_0$ be a minimal section of $p$ and put $e:=-C_0^2$.
Assume that 
$$
e>\left\{\begin{array}{l}

\mathrm{gon}(C)^2/2+\mathrm{gon}(C)/2+1/8 \quad \text{$($if $e$ is even$)$}, \\
\mathrm{gon}(C)^2\quad\quad\quad\quad\quad\quad\quad\quad\quad\quad \text{$($if $e$ is odd$)$}.
\end{array}
\right.
$$
Then for any effective divisor $D>0$ on $X$ all of whose prime components have positive self-intersection numbers, we have $\mathrm{pgon}(D)=\deg(p|_D)\mathrm{gon}(C)$.
\end{proposition}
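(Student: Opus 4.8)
The plan is to establish the two inequalities $\mathrm{pgon}(D)\le \deg(p|_D)\,\mathrm{gon}(C)$ and $\mathrm{pgon}(D)\ge \deg(p|_D)\,\mathrm{gon}(C)$ separately; write $n:=\deg(p|_D)$. Since every prime component of $D$ has positive self-intersection, no component is contracted by $p$, so $p|_D\colon D\to C$ is finite; in particular $C_0$ is not a component of $D$, because $C_0^2=-e<0$ (note $e>0$ by the hypothesis). For the upper bound I would simply compose $p|_D$ with a finite morphism $\gamma\colon C\to\mathbb{P}^1$ of degree $\mathrm{gon}(C)$: then $\gamma\circ p|_D\colon D\to\mathbb{P}^1$ is finite of degree $n\,\mathrm{gon}(C)$, so $\mathrm{pgon}(D)\le n\,\mathrm{gon}(C)$ by definition.

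For the lower bound, suppose $d:=\mathrm{pgon}(D)<n\,\mathrm{gon}(C)$ and fix a finite $\varphi\colon D\to\mathbb{P}^1$ of degree $d$. If $n=1$ there is nothing to prove: $p|_D$ is then finite and birational onto the normal curve $C$, hence an isomorphism, so $\mathrm{pgon}(D)=\mathrm{gon}(C)$; so assume $n\ge 2$. Writing the numerical class of $D$ as $nC_0+m\ell$ with $\ell$ a ruling fibre, and using $D\cdot C_0\ge 0$ (valid since $C_0\not\subset\Supp(D)$), one gets $m=D\cdot C_0+ne\ge ne$ and hence $D^2=-n^2e+2nm\ge n^2e$. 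Moreover $X$ is smooth (a $\mathbb{P}^1$-bundle), so $q_X\ge 1$, and a short computation with adjunction and parity shows that $K_X$ is numerically even exactly when $e$ is even, so $q_X\ge 2$ when $e$ is even. Combining $D^2\ge n^2e$, the bound $d\le n\,\mathrm{gon}(C)-1$, and the stated inequality on $e$, one checks $D^2>q_{X,d}(d/q_{X,d}+1)^2$ (the case $n=2$ being the tight one), so Theorem~\ref{introextnthm} yields a morphism $\psi\colon X\to\mathbb{P}^1$ with $\psi|_D=\varphi$.

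Finally, I would show no such $\psi$ can exist. Take the Stein factorization $X\xrightarrow{g}Z\xrightarrow{h}\mathbb{P}^1$ of the surjective morphism $\psi$; here $Z$ is a smooth curve, $g$ has connected fibres, and $h$ is finite. Either $g$ contracts the general ruling fibre, in which case $g$ factors as $g=\bar g\circ p$ for a finite surjection $\bar g\colon C\to Z$, so $\psi|_D=(h\circ\bar g)\circ p|_D$ has degree $n\cdot\deg(h\circ\bar g)\ge n\,\mathrm{gon}(C)$, contradicting $\deg(\psi|_D)=d$; or the general ruling fibre dominates $Z$, forcing $Z=\mathbb{P}^1$. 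In the latter case a general fibre $F$ of $g$ is reduced with $F^2=0$ and $a:=F\cdot\ell\ge 1$, its numerical class is $aC_0+(ae/2)\ell$, and $\deg(h)\cdot(D\cdot F)=\deg(\psi|_D)=d$; a direct intersection computation gives $D\cdot F=a(m-ne/2)\ge a\cdot ne/2\ge ne/2$, so $ne/2\le d<n\,\mathrm{gon}(C)$ and hence $e<2\,\mathrm{gon}(C)$. But the hypothesis on $e$, together with $e\in\Z$ and $\mathrm{gon}(C)\ge 2$ (as $C$ is irrational), forces $e\ge 2\,\mathrm{gon}(C)$, a contradiction. This completes the argument.

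I expect the main obstacle to be the numerical verification in the second paragraph that the stated bound on $e$ is exactly strong enough to guarantee $D^2>q_{X,d}(d/q_{X,d}+1)^2$: this requires pinning down $q_X$ correctly, in particular the parity statement $q_X\ge 2$ when $e$ is even, which is what produces the two branches of the hypothesis, and the estimate is tight for $n=2$. A secondary subtlety is that this same bound must simultaneously rule out the ``transverse'' alternative in the last paragraph, where $\psi$ is a fibration onto $\mathbb{P}^1$ that does not factor through $p$; one should keep in mind that such fibrations really can occur on a ruled surface over a base of positive genus, so this case cannot simply be dismissed.
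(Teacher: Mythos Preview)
Your proof is correct and follows the same overall strategy as the paper: bound $\mathrm{pgon}(D)$ above by composition with a gonal map of $C$, and for the reverse inequality assume $d<n\,\mathrm{gon}(C)$, check the numerical hypothesis of Theorem~\ref{introextnthm}, and derive a contradiction from the existence of an extension $\psi\colon X\to\mathbb{P}^1$. Your explicit treatment of the case $n=1$ is a good addition; the paper leaves this implicit.

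The one place where the paper is cleaner is your ``transverse'' case. You allow for the possibility of a fibration $g\colon X\to\mathbb{P}^1$ not factoring through $p$, compute $D\cdot F\ge ne/2$, and then invoke the hypothesis on $e$ a second time to reach $e<2\,\mathrm{gon}(C)$. The paper observes instead that this case simply cannot occur once $e>0$: a general fibre $F$ of such a $g$ is integral, so $F\cdot C_0\ge 0$; writing $F\equiv aC_0+b\Gamma$ with $a=F\cdot\Gamma>0$, the conditions $F^2=0$ and $F\cdot C_0\ge 0$ give $2b=ae$ and $b\ge ae$, forcing $e\le 0$. Thus every morphism $X\to\mathbb{P}^1$ factors through $p$ when $e>0$, and your final paragraph's worry that ``such fibrations really can occur'' is unfounded under the stated hypothesis. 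Your longer argument is valid, but this shortcut makes the second appeal to the bound on $e$ unnecessary.
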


\begin{proof}
Note that $\mathrm{pgon}(D)\le \deg(p|_D)\mathrm{gon}(C)$ is clear.
Since the Picard group of $X$ is generated by $C_0$ and $p^{*}\mathrm{Pic}(C)$, 
we can write $D\equiv \alpha C_0+\beta \Gamma$ for some integers $\alpha, \beta \in \Z$,
where $\Gamma$ is the numerically equivalence class of a fiber of $p$ at a ``rational point'', that is, $\Gamma=[k(x):k]^{-1}p^{-1}(x) \in \N(X)_{\Q}$ for $x \in C$.
Note that when $e>0$, any morphism $X\to \mathbb{P}^1$ factors through $p\colon X\to C$.
Indeed, if there exists a morphism $q\colon X\to \mathbb{P}^1$ which does not factor through $p$, 
then a general fiber $F$ of $q$ satisfies $F\Gamma>0$, $FC_0\ge 0$ and $F^2=0$.
When writing $F\equiv aC_0+b\Gamma$, we therefore have $a>0$, $b\ge ae$ and $2b=ae$, which imply $e\le 0$.

Now we assume that $d:=\mathrm{pgon}(D)<\alpha \mathrm{gon}(C)$.
Then any pseudo-gonal morphism $\varphi\colon D\to \mathbb{P}^1$ can not be extended to a morphism $X\to \mathbb{P}^1$ because this morphism would factor though the ruling $p$.
Thus by Theorem~\ref{introextnthm}, we have $D^2\le q_{X,d}(d/q_{X,d}+1)^2$.
Note that when $e$ is even, $q_X\ge 2$ holds since $K_X$ is numerically even.
Hence we have
$$
q_{X,d}\left(\frac{d}{q_{X,d}}+1\right)^2\le \left\{\begin{array}{l}
(d+2)^2/2\quad \text{(if $e$ is even)}, \\
(d+1)^2\quad\quad \text{(if $e$ is odd)}.
\end{array}
\right.
$$
From these, it follows that
$$
-\alpha^2e+2\alpha\beta=D^2\le \left\{\begin{array}{l}
(\alpha \mathrm{gon}(C)+1)^2/2\quad \text{(if $e$ is even)}, \\
\alpha^2\mathrm{gon}(C)^2 \quad\quad\quad\quad \text{(if $e$ is odd)}.
\end{array}
\right.
$$
Thus we have
$$
\beta \le \left\{\begin{array}{l}
(\mathrm{gon}(C)^2+2e)\alpha/4+\mathrm{gon}(C)/2+1/4\alpha \quad \text{(if $e$ is even)}, \\
(\mathrm{gon}(C)^2+e)\alpha/2 \quad\quad\quad\quad\quad\quad\quad\quad\quad\quad\  \text{(if $e$ is odd)}.
\end{array}
\right.
$$
On the other hand, $\beta\ge \alpha e$ holds since $D$ does not contain $C_0$ and hence $DC_0\ge 0$.
Thus we obtain
$$
e\le \left\{\begin{array}{l}

\mathrm{gon}(C)^2/2+\mathrm{gon}(C)/2+1/8 \quad \text{$($if $e$ is even$)$}, \\
\mathrm{gon}(C)^2\quad\quad\quad\quad\quad\quad\quad\quad\quad\quad \text{(if $e$ is odd)},
\end{array}
\right.
$$
which contradicts the assumption in Proposition~\ref{geomruled}.
\end{proof}

\begin{proposition}[Invariance of the pseudo-gonality] \label{invpgon}
Let $X$ be a normal proper surface over a field of characteristic $0$.
Let $L$ be a Weil divisor on $X$ and $|L|_{\mathrm{num}}:=\{D>0\ |\ D\equiv L\}$ the numerical linear system associated to $L$.
If there is a member $D\in |L|_{\mathrm{num}}$ all of whose prime components have positive self-intersection numbers $($resp.\ and non-trivial numerical linear systems$)$ such that the following condition $(C)$ $($resp.\ $(C_{\infty})$$)$ holds:

\smallskip

\noindent
$(C)$ $q_X\le \mathrm{pgon}(D) <\sqrt{q_XL^2}-q_X$ or $\mathrm{pgon}(D)<\min\{q_X,L^2/4\}$.

\smallskip

\noindent
$($resp.\ $(C_{\infty})$ $q_{X,\infty}\le \mathrm{pgon}(D) <\sqrt{q_{X,\infty}L^2}-q_{X,\infty}$ or $\mathrm{pgon}(D)<\min\{q_{X,\infty},L^2/4\}$.$)$

\smallskip

\noindent
Then any member $D\in |L|_{\mathrm{num}}$ all of whose prime components have positive self-intersection numbers has the same pseudo-gonality and it is realized as a restriction $\psi|_D$ of a morphism $\psi\colon X\to \mathbb{P}^1$.
\end{proposition}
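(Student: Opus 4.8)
The plan is to reduce the statement to the extension theorem of Section~6 together with a monotonicity remark, so that essentially no new computation is required. The starting point is a reformulation: writing $\phi(x,d):=q_{x,d}(d/q_{x,d}+1)^{2}$ as in Remark~\ref{lowerbound}, the hypothesis $(C)$ (resp.\ $(C_{\infty})$) on the distinguished member $D$ is \emph{exactly} the inequality $L^{2}>\phi(q_{X},\mathrm{pgon}(D))$ (resp.\ $L^{2}>\phi(q_{X,\infty},\mathrm{pgon}(D))$); one checks this by treating the cases $\mathrm{pgon}(D)\ge q_{X}$ and $\mathrm{pgon}(D)<q_{X}$ separately, exactly as in the computation in the proof of Proposition~\ref{pgonpic1}. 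The second ingredient is elementary: if $\psi\colon X\to\mathbb{P}^{1}$ is a morphism and $E>0$ is an effective divisor on $X$ none of whose prime components is contracted by $\psi$, then $\psi|_{E}\colon E\to\mathbb{P}^{1}$ is finite of degree $E\cdot\psi^{*}\O_{\mathbb{P}^{1}}(1)$, a number depending only on the numerical class of $E$; and a prime divisor of positive self-intersection can never be contracted by $\psi$, since by passing to the Stein factorization it would otherwise lie in a fiber of a fibration onto a smooth curve and hence have non-positive self-intersection by Zariski's lemma.

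With these in hand I would first fix the distinguished member $D$ satisfying $(C)$, put $d:=\mathrm{pgon}(D)$, and pick a finite morphism $\varphi\colon D\to\mathbb{P}^{1}$ of degree $d$. Since $D^{2}=L^{2}>\phi(q_{X},d)$, Theorem~\ref{introextnthm} (resp.\ Theorem~\ref{movableextnthm} under $(C_{\infty})$) produces a morphism $\psi\colon X\to\mathbb{P}^{1}$ with $\psi|_{D}=\varphi$. Then, for any member $D'\in|L|_{\mathrm{num}}$ all of whose prime components have positive self-intersection (and, in the $(C_{\infty})$ case, non-trivial numerical linear systems), the second ingredient gives that $\psi|_{D'}\colon D'\to\mathbb{P}^{1}$ is finite of degree $D'\cdot\psi^{*}\O_{\mathbb{P}^{1}}(1)=D\cdot\psi^{*}\O_{\mathbb{P}^{1}}(1)=\deg(\psi|_{D})=d$, so $\mathrm{pgon}(D')\le d$, and moreover this value is realized by the restriction of the single morphism $\psi$.

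For the reverse inequality I would argue by contradiction: suppose $d':=\mathrm{pgon}(D')<d$ and choose a finite $\varphi'\colon D'\to\mathbb{P}^{1}$ of degree $d'$. Since $\phi(x,\cdot)$ is monotonically increasing (Remark~\ref{lowerbound}), one has $\phi(q_{X},d')\le\phi(q_{X},d)<L^{2}=(D')^{2}$ (resp.\ with $q_{X,\infty}$), so $\varphi'$ extends to a morphism $\psi'\colon X\to\mathbb{P}^{1}$ by Theorem~\ref{introextnthm} (resp.\ Theorem~\ref{movableextnthm}). Applying the second ingredient to $\psi'$ and $D$ shows that $\psi'|_{D}$ is finite of degree $D\cdot(\psi')^{*}\O_{\mathbb{P}^{1}}(1)=D'\cdot(\psi')^{*}\O_{\mathbb{P}^{1}}(1)=\deg(\psi'|_{D'})=d'$, whence $\mathrm{pgon}(D)\le d'<d$, which is absurd. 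Combining the two inequalities gives $\mathrm{pgon}(D')=d=\mathrm{pgon}(D)$.

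The only delicate point I foresee is bookkeeping rather than substance: one must be careful that the degree of a finite morphism of (possibly non-reduced, reducible) curves over a non-algebraically closed field agrees with the intersection number against $\psi^{*}\O_{\mathbb{P}^{1}}(1)$, using the geometric-multiplicity and residue-field-degree conventions fixed in Section~2 and already used in the proof of Proposition~\ref{geomruled}; granting that identification, the argument is just the reformulation of $(C)$, the monotonicity of $\phi$, and at most two applications of the extension theorem. It is also worth recording that in the $(C_{\infty})$ case the reverse-inequality step invokes Theorem~\ref{movableextnthm} on $D'$, so there the members $D'$ under consideration should additionally be assumed to have prime components with non-trivial numerical linear systems.
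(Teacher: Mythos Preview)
Your proposal is correct and follows essentially the same approach as the paper: reformulate $(C)$ as $L^{2}>\phi(q_{X},\mathrm{pgon}(D))$, apply the extension theorem to obtain $\psi$, and use that $\deg(\psi|_{D'})$ depends only on the numerical class. The only cosmetic difference is that the paper avoids your second application of the extension theorem by choosing $D$ at the outset to have \emph{minimal} $\mathrm{pgon}$ among the admissible members of $|L|_{\mathrm{num}}$; the monotonicity of $\phi$ in $d$ (your Remark~\ref{lowerbound} observation) then gives the reverse inequality immediately, and your closing caveat about the $(C_{\infty})$ case is well taken.
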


\begin{remark}
The condition $(C)$ (resp.\ $(C_{\infty})$) for $d:=\mathrm{pgon}(D)$ is equivalent to $D^2>q_{X,d}(d/q_{X,d}+1)^2=\phi(q_X,d)$ (resp.\ $D^2>q_{X,\infty,d}(d/q_{X,\infty,d}+1)^2=\phi(q_{X,\infty},d)$), which is nothing but the lower bound of $D^2$ in Theorem~\ref{introextnthm}.
Thus in order to confirm $(C)$ (resp.\ $(C_{\infty})$), it suffices to check the following condition $(C)_x$ for some $0<x\le q_X$ (resp.\ $0<x\le q_{X,\infty}$) (Remark~\ref{lowerbound}):

\smallskip

\noindent
$(C)_{x}$ $x\le \mathrm{pgon}(D) <\sqrt{xL^2}-x$ or $\mathrm{pgon}(D)<\min\{x,L^2/4\}$.

\smallskip

\noindent
For example, when $X$ is smooth (resp.\ and $K_X$ is numerically even),  
$\mathrm{pgon}(D) <\sqrt{L^2}-1$ (resp.\ $\mathrm{pgon}(D) <\sqrt{2L^2}-2$) implies the condition $(C)$.
\end{remark}

\begin{proof}
Let $D$ be as in Proposition~\ref{invpgon}.
We may assume that $d:=\mathrm{pgon}(D)$ is minimal among such divisors in $|L|_{\mathrm{num}}$ satisfying the condition $(C)$ (resp.\ $(C_{\infty})$) (see Remark~\ref{lowerbound}).
We take a pseudo-gonal morphism $\varphi\colon D\to \mathbb{P}^1$ of degree $d$.
Then by Theorem~\ref{introextnthm} (resp.\ Theorem~\ref{movableextnthm}), there exists a morphism $\psi\colon X\to \mathbb{P}^1$ such that $\psi|_D=\varphi$.
Thus for any member $D'\in |L|_{\mathrm{num}}$ all of whose prime components have positive self-intersection numbers, the restriction $\psi|_{D'}$ is finite and hence $\mathrm{pgon}(D')\le \deg(\psi|_{D'})=d$, where the last equality follows from $D'\equiv D$.
By the minimality of $\mathrm{pgon}(D)$, we have $\mathrm{pgon}(D')=\mathrm{pgon}(D)$, which completes the proof.
\end{proof}

\begin{example}
Let $\pi\colon X\to \mathbb{P}^2$ be a $K3$ surface of genus $2$ over $\C$, that is, a double cover of $\mathbb{P}^2$ branched along a smooth plane sextic curve.
Then the linear system $|\pi^{*}\O_{\mathbb{P}^2}(3)|$ of genus $10$ curves on $X$ consists of plane sextics (gonality $5$) as general members and bielliptic curves (gonality $4$) as special members (cf.\ p.53 in \cite{DoMo}).
Thus the invariance of the gonality does not hold in this example.
Indeed, one can see that $q_X=2$, $L^2=18$ and then the condition $(C)$ in Proposition~\ref{invpgon} is equivalent to $\mathrm{pgon}(D)=1$, $2$ or $3$, where $L:=\pi^{*}\O_{\mathbb{P}^2}(3)$.
On the other hand, if a $K3$ surface $X$ has such a linear system $|L|$, that is, its general members have gonality $5$ and the invariance of the pseudo-gonality does not hold, then $X$ must contain a genus $2$ curve since $q_X$ has to be $2$ from Proposition~\ref{invpgon}.
\end{example}

\appendix

%%%%%%%%%%%%%%%%%%%%%%%%%%%%%%%%%%
\section{Intersection theory on a normal surface}
\label{sec:Intersection theory on a normal surface}
%%%%%%%%%%%%%%%%%%%%%%%%%%%%%%%%%%

In this appendix, we recall some fundamental properties of Weil divisors and Mumford's intersection theory \cite{Mum} on normal surfaces.
%All results in this appendix are well known to experts.

Let $X$ be a normal surface (algebraic over a field $k$ or complex analytic) and $f\colon X\to Y$ a proper surjective morphism to a variety $Y$.
Let $\WDiv(X)$ denote the group of Weil divisors on $X$. 
Let $\WDiv(X/Y)$ denote the subgroup of $\WDiv(X)$ consisting of $f$-exceptional Weil divisors on $X$.
Let $\CDiv(X)$ denote the group of Cartier divisors on $X$, that is, $\CDiv(X)=H^{0}(X,\mathcal{K}_{X}^{*}/\O_{X}^{*})$, where $\mathcal{K}_{X}^{*}$ is the sheaf of invertible rational (or meromorphic) functions on $X$.
For a Cartier divisor $D$ on $X$ and an $f$-exceptional integral curve $C$ on $X$, the intersection number of $D$ and $C$ is defined as $DC:=\mathrm{deg}(\nu^{*}\O_X(D))$, where $\nu \colon \widetilde{C}\to C$ is the normalization of $C$.
Note that the intersection number $DC$ depends on the base field $k$ (see also Definition~\ref{degreedef}).
By this intersection pairing, we can define the intersection form $\CDiv(X)\times \WDiv(X/Y)\to \Z$.
On the other hand, since $X$ is normal, the natural cycle map
$$
\CDiv(X)\to \WDiv(X);\ D \mapsto \sum_{C:\text{prime divisor}}\mathrm{ord}_{C}(D)C
$$
is injective.
%where $\mathrm{ord}_{C}(f_i)$ is independent of the choice of $f_i\in \mathcal{K}_{X}^{*}(U_i)$ with $U_i\cap C\neq \emptyset$. 

\begin{definition}[Mumford's intersection form \cite{Mum}] 
We can define the extended intersection form 
$$
\WDiv(X)\times \WDiv(X/Y)\to \Q
$$
as follows.
Let $\pi\colon X'\to X$ be a resolution of singularities (for existence, see \cite{Lip}).
For a Weil divisor $D$ on $X$, we define the {\em Mumford pull-back} $\pi^{*}D$ as a $\Q$-divisor $\widehat{D}+\sum_{i}a_{i}E_{i}$, where $\widehat{D}$ is the proper transform of $D$ and the rational coefficients $a_{i}$ of the $\pi$-exceptional prime divisors $E_i$ are uniquely determined by the equations $\widehat{D}E_{j}+\sum_{i}a_{i}E_{i}E_{j}=0$ for each $j$ since $\pi$-exceptional divisors $E_i$ form a negative definite matrix (cf.\ \cite{Mum}, \cite{Ish}).
For a Weil divisor $D$ and an $f$-exceptional Weil divisor $E$, we define the intersection number $DE$ as $DE:=\pi^{*}D\pi^{*}E$.
In general, the Mumford pull-back is defined for any proper birational morphism $\pi\colon X'\to X$ from a normal surface $X'$ as a group homomorphism $\pi^{*}\colon \WDiv(X)\to \WDiv(X')\otimes\Q$ by the same way.
\end{definition}

\begin{definition}[Numerical class groups]
Let
$$
N^{1}(X/Y)_{\Z}:=\WDiv(X)/\{D\in \WDiv(X)\ |\ \text{$DE=0$ for $\forall E\in \WDiv(X/Y)$}\}
$$
and
$$
N_{1}(X/Y)_{\Z}:=\WDiv(X/Y)/\{E\in \WDiv(X/Y)\ |\ \text{$DE=0$ for $\forall D\in \WDiv(X)$}\}
$$
denote the groups consisting of the numerical equivalence classes of Weil divisors and of $f$-exceptional Weil divisors, respectively.
Let us denote 
$$
N^{1}(X/Y)_{\Q}:=N^{1}(X/Y)_{\Z}\otimes \Q,\quad N^{1}(X/Y):=N^{1}(X/Y)_{\Z}\otimes \R
$$
and
$$
N_{1}(X/Y)_{\Q}:=N_{1}(X/Y)_{\Z}\otimes \Q,\quad N_{1}(X/Y):=N_{1}(X/Y)_{\Z}\otimes \R.
$$
Note that these notations are not standard because we use Mumford's intersection form instead of the usual intersection form.
When $\dim(Y)=0$, we denote by $N(X)=N^{1}(X)=N_{1}(X)$ instead of $N^{1}(X/Y)$ and $N_{1}(X/Y)$.
The real vector spaces $\N^{1}(X/Y)$ and $\N_{1}(X/Y)$ are finite dimensional (\cite{Kle}, Ch. IV, Section 1, Proposition 4) and the intersection form induces a non-degenerate bilinear form $\N^{1}(X/Y)\times \N_{1}(X/Y)\to \R$.
For a proper birational morphism $\pi\colon X'\to X$, the Mumford pull-back induces injective homomorphisms $\pi^{*}\colon \N^{1}(X/Y)\into \N^{1}(X'/Y)$ and $\pi^{*}\colon \N_{1}(X/Y)\into \N_{1}(X'/Y)$ and we can write
\begin{equation}\label{numrelation}
\N^{1}(X'/Y)_{\Q}=\pi^{*}\N^{1}(X/Y)_{\Q}\oplus (\oplus_{i}\Q E_{i}),\quad \N_{1}(X'/Y)_{\Q}=\pi^{*}\N_{1}(X/Y)_{\Q}\oplus (\oplus_{i}\Q E_{i}),
\end{equation}
where $E_i$ are $\pi$-exceptional prime divisors.
\end{definition}

\begin{definition}[Numerical properties of divisors]
Let $\Eff^{1}(X/Y)$ (resp.\ $\Eff_{1}(X/Y)$) denote the subgroup of $N^{1}(X/Y)$ (resp.\ $N_{1}(X/Y)$) generated by numerical equivalence classes of effective (resp.\ $f$-exceptional effective) divisors on $X$.
We define $\PE^{1}(X/Y)$ (resp.\ $\PE_{1}(X/Y)$) as the closure of $\Eff^{1}(X/Y)$ in $\N^{1}(X/Y)$ (resp.\ $\Eff_{1}(X/Y)$ in $\N_{1}(X/Y)$).
An $\R$-divisor (resp.\ $f$-exceptional $\R$-divisor) $D$ on $X$ is said to be {\em $f$-pseudo-effective} (resp.\ {\em $f$-exceptional pseudo-effective}) if the numerical equivalence class of $D$ belongs to $\PE^{1}(X/Y)$ (resp.\ $\PE_{1}(X/Y)$).

An $\R$-divisor (resp.\ $f$-exceptional $\R$-divisor) $D$ on $X$ is called {\em $f$-nef} (resp.\ {\em nef}) if $DC\ge 0$ (resp.\ $CD\ge 0$) holds for any $f$-exceptional curve (resp.\ any curve) $C$ on $X$.
Let $\Nef^{1}(X/Y)$ (resp.\ $\Nef_{1}(X/Y)$) be the subgroup of $N^{1}(X/Y)$ (resp.\ $N_{1}(X/Y)$) generated by numerical equivalence classes of $f$-nef divisors on $X$ (resp.\ nef divisors on $X$).
Then $\PE^{1}(X/Y)$ and $\Nef^{1}(X/Y)$ (resp.\ $\PE_{1}(X/Y)$ and $\Nef_{1}(X/Y)$) are closed cones in $\N^{1}(X/Y)$ (resp.\ $\N_{1}(X/Y)$) and we have the duality of cones (cf.\ \cite{Laz} Proposition~1.4.28)
$$
\PE^{1}(X/Y)=\Nef_{1}(X/Y)^{*},\quad \PE_{1}(X/Y)=\Nef^{1}(X/Y)^{*}.
$$

Recall that for an $n$-dimensional normal complete variety $X$, a $\Q$-Cartier $\Q$-divisor $D$ on $X$ is {\em big} if there is a positive number $\alpha$ such that $h^{0}(X,\O_{X}(mD))\ge \alpha m^n$ for all sufficiently large and divisible integer $m$ (cf.\ \cite{Laz} Definition~2.2.1).
%A $\Q$-Cartier $\Q$-divisor $D$ on $X$ is big if $mD$ is a big Cartier divisor for some positive integer $m$.
For a proper morphism $f\colon X\to Y$ between varieties, a $\Q$-Cartier $\Q$-divisor $D$ on $X$ is called $f$-big if the restriction of $D$ to the normalization of the generic fiber of the Stein factorization of $f\colon X\to f(X)$ is big.
For a proper surjective morphism $f\colon X\to Y$ from a normal surface $X$ to a variety $Y$, a Weil divisor $D$ on $X$ is called $f$-big if the pull-back $\pi^{*}D$ is $(f\circ \pi)$-big for some (or equivalently, any) resolution $\pi \colon X'\to X$.
Let $\Big(X/Y)$ be the cone in $\N^{1}(X/Y)$ generated by $f$-big divisors on $X$.
An $\R$-divisor $D$ on $X$ is called $f$-big if the numerical class of $D$ is contained in $\Big(X/Y)$. %(this is compatible with the usual definition of bigness for $\R$-Cartier divisors).

An $\R$-divisor $D$ is called {\em numerically $f$-ample} if $DC>0$ for any $C \in \PE_{1}(X/Y)\setminus \{0\}$.
Let $\Amp(X/Y)$ be the cone in $\N^{1}(X/Y)$ consisting of numerical equivalence classes of numerically $f$-ample divisors.
\end{definition}

\begin{lemma} \label{pushpull}
Let $\pi\colon X'\to X$ be a proper birational morphism between normal surfaces and $f':=f\circ \pi\colon X'\to Y$.

\smallskip

\noindent
$(1)$ The push-forward of any $f'$-pseudo-effective $($resp.\ $f'$-nef, $f'$-big, numerically $f'$-ample, $f'$-exceptional pseudo-effective, $f'$-exceptional nef$)$ $\R$-divisor on $X'$ by $\pi$ is $f$-pseudo-effective $($resp.\ $f$-nef, $f$-big, numerically $f$-ample, $f$-exceptional pseudo-effective, $f$-exceptional nef$)$. 

\smallskip

\noindent
$(2)$ The pull-back of any $f$-pseudo-effective $($resp.\ $f$-nef, $f$-big, $f$-exceptional pseudo-effective, $f$-exceptional nef$)$ $\R$-divisor on $X$ by $\pi$ is $f'$-pseudo-effective $($resp.\ $f'$-nef, $f'$-big, $f'$-exceptional pseudo-effective, $f'$-exceptional nef$)$. 
\end{lemma}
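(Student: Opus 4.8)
The plan is to deduce every one of the listed assertions from three ingredients that are already available: the projection formulas $(\pi_{*}D')\cdot E = D'\cdot(\pi^{*}E)$ for $E\in\WDiv(X/Y)$ and $(\pi^{*}D)\cdot E' = D\cdot(\pi_{*}E')$ for $E'\in\WDiv(X'/Y)$ (Appendix~A), together with $\pi_{*}\pi^{*}=\mathrm{id}$; the formal properties of the Mumford pull-back, namely that it is $\R$-linear and continuous, preserves effectivity, is killed by $\pi_{*}$ on $\pi$-exceptional classes, and carries $f$-exceptional classes to $f'$-exceptional ones (while $\pi_{*}$ goes the other way), where $f':=f\circ\pi$; and the cone duality $\PE^{1}=\Nef_{1}^{*}$, $\PE_{1}=(\Nef^{1})^{*}$ of the excerpt. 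Two remarks are used repeatedly: if $C'$ is an integral curve on $X'$ which is not $\pi$-exceptional, then $\pi_{*}C'$ is an integral curve on $X$, whereas if $C'$ is $\pi$-exceptional then $(\pi^{*}D)\cdot C'=D\cdot(\pi_{*}C')=0$; and for an integral curve $C$ on $X$ the Mumford pull-back $\pi^{*}C=\widehat{C}+\sum a_{i}E_{i}$ is effective, the coefficients $a_{i}$ being non-negative.

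\emph{Nef and pseudo-effective.} If $D'$ is $f'$-nef and $C$ is an $f$-exceptional curve on $X$, then $(\pi_{*}D')\cdot C=D'\cdot(\pi^{*}C)\ge 0$ because $\pi^{*}C$ is an effective $f'$-exceptional divisor; hence $\pi_{*}D'$ is $f$-nef, and the same computation with $C$ an arbitrary curve gives the absolute ``nef'' variant. Dually, if $D$ is $f$-nef and $C'$ is an $f'$-exceptional curve on $X'$, then $(\pi^{*}D)\cdot C'$ is either $0$ (if $C'$ is $\pi$-exceptional) or $D\cdot(\pi_{*}C')\ge 0$, so $\pi^{*}D$ is $f'$-nef, and again the absolute variant follows with $C'$ arbitrary. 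For $\pi^{*}$ and pseudo-effectivity, approximate $D$ (resp.\ an $f$-exceptional pseudo-effective divisor) by effective (resp.\ $f$-exceptional effective) $\R$-divisors $D_{n}\to D$; then $\pi^{*}D_{n}$ is effective and $\pi^{*}D_{n}\to\pi^{*}D$, so $\pi^{*}D$ is $f'$-pseudo-effective (resp.\ $f'$-exceptional pseudo-effective). For $\pi_{*}$ and pseudo-effectivity, use duality: it suffices that $(\pi_{*}D')\cdot M\ge 0$ for every $f$-nef $M$ on $X$ (resp.\ every nef $M$, in the exceptional case), and $(\pi_{*}D')\cdot M=D'\cdot(\pi^{*}M)\ge 0$ since $\pi^{*}M$ was just shown to be $f'$-nef (resp.\ nef) and $D'$ is $f'$-pseudo-effective.

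\emph{Numerically ample and big.} If $D'$ is numerically $f'$-ample, then for every nonzero $C\in\PE_{1}(X/Y)$ the class $\pi^{*}C$ is nonzero (injectivity of $\pi^{*}$ on $\N_{1}$) and $f'$-exceptional pseudo-effective, so $(\pi_{*}D')\cdot C=D'\cdot(\pi^{*}C)>0$; hence $\pi_{*}D'$ is numerically $f$-ample. For bigness, when $\dim Y\ge 1$ one restricts to the normalized generic fibre of the Stein factorization, on which $\pi$ induces an isomorphism of curves compatibly with the Mumford pull-back (the $\pi$-exceptional part of $\pi^{*}D$ is $f'$-exceptional, hence does not meet the generic fibre), so $f$-bigness is visibly preserved in both directions. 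When $\dim Y=0$, so $X$ is proper, use Kodaira's lemma: $D$ is $f$-big iff $D$ is $f$-pseudo-effective and $D\equiv A+E$ with $A$ numerically ample and $E\ge 0$, equivalently $D$ lies in the interior of $\PE^{1}(X)$. Then if $D'$ is big, writing $D'\equiv A'+E'$ gives $\pi_{*}D'\equiv\pi_{*}A'+\pi_{*}E'$ with $\pi_{*}A'$ numerically ample (previous case) and $\pi_{*}E'\ge 0$, hence $\pi_{*}D'$ big; and if $D$ is big, choosing numerically ample $H$ with $D-\varepsilon H$ pseudo-effective gives $\pi^{*}D=(\pi^{*}D-\varepsilon\pi^{*}H)+\varepsilon\pi^{*}H$ with $\pi^{*}D-\varepsilon\pi^{*}H$ pseudo-effective and $\pi^{*}H$ nef with $(\pi^{*}H)^{2}=H^{2}>0$, hence $f'$-big, so $\pi^{*}D$ is $f'$-big.

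I expect the big case to be the only genuine subtlety: one must remember that $\pi_{*}$ does \emph{not} preserve ampleness, so one cannot merely transport an ample divisor; one must reconcile the two definitions of $f$-bigness for Weil divisors — via a resolution and via the cone $\Big(X/Y)$ — and one must separate the relative case $\dim Y\ge 1$, where bigness is read off the generic fibre, from the absolute case, where it is governed by the interior of the pseudo-effective cone. All the remaining cases are formal manipulations with the projection formulas and cone duality.
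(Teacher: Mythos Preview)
Your treatment of nefness, pseudo-effectivity, numerical ampleness, and the $\dim Y\ge 1$ big case is correct and agrees with the paper; the only cosmetic difference is that for $\pi_*$ on pseudo-effective classes you invoke cone duality, whereas the paper just observes that $\pi_*$ preserves effectivity and is continuous.

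The bigness arguments diverge more substantially. For \emph{pull-back} of a big divisor the paper says it holds ``by definition'': $f$-bigness of a Weil divisor on a normal surface is defined via Mumford pull-back to a resolution, so for any proper birational $\pi\colon X'\to X$ one takes a further resolution $\rho\colon X''\to X'$ and reads the claim off $(\pi\circ\rho)^*D=\rho^*(\pi^*D)$. Your route instead asserts that $\pi^*H$ is $f'$-big because it is nef with $(\pi^*H)^2=H^2>0$. In the paper's logical order, ``nef with positive self-intersection $\Rightarrow$ big'' is Lemma~\ref{positive} (via Corollary~\ref{reflection}), and the proof of Lemma~\ref{positive} explicitly invokes Lemma~\ref{pushpull}~(1) for bigness to reduce to the regular case. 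So as written your pull-back argument is circular; the fix is simply to use the definitional argument above.

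For \emph{push-forward} of a big divisor when $\dim Y=0$, the paper argues directly with sections: using Lemma~\ref{proj} one has $\pi_*\O_{X'}(\llcorner\pi^*\pi_*mD\lrcorner+E)\cong\O_X(m\pi_*D)$, and since $mD\le\llcorner\pi^*\pi_*mD\lrcorner+E$ for $E$ sufficiently $\pi$-effective this transports the growth of $h^0$. Your route via Kodaira's lemma (write $D'\equiv A'+E'$, push forward, use that $\pi_*A'$ is numerically $f$-ample) also works, but note the forward reference: the proof of Lemma~\ref{kodairalem} reduces to the regular case by a step that uses the numerical-ampleness clause of Lemma~\ref{pushpull}~(1). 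Since you have already established that clause independently, there is no genuine circularity here, but the paper's section-count argument is more self-contained at this point of the appendix.
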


\begin{proof}
The claims about nefness in (1) and (2) and numerically ampleness in (1) are easy to check.
The claims about pseudo-effectiveness in (1) and (2) follows from the fact that the effectivity preserves under the push-forward and the pull-back, which are continuous maps.
The claim about bigness in (2) follows by definition.
The claim about bigness in (1) with $\dim(Y)\ge 1$ is trivial since the generic fibers of $f$ and $f'$ are isomorphic.
We assume that $\dim(Y)=0$. Let $D$ be a big divisor on $X'$.
Then, by Lemma~\ref{proj}, we have
$$
\pi_{*}\O_{X'}(\llcorner \pi^{*}\pi_{*}mD \lrcorner+E)\cong \O_{X}(m\pi_{*}D)
$$
for any $\pi$-exceptional effective divisor $E$ on $X'$ and any positive interger $m$.
Since $D$ is big and $mD\le \llcorner \pi^{*}\pi_{*}mD \lrcorner+E$ for sufficiently effective $E$, it follows that $\pi_{*}D$ is big.
\end{proof}

\begin{lemma}\label{inclusion}
The following hold.

\smallskip

\noindent
$(1)$ $\NA(X/Y)\subset \Big(X/Y)\subset \Eff^{1}(X/Y) \subset \PE^{1}(X/Y)$.

\smallskip

\noindent
$(2)$ $\NA(X/Y) \subset \Nef^{1}(X/Y)\subset \PE^{1}(X/Y)$.
\end{lemma}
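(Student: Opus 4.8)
The plan is to verify the four nontrivial inclusions separately, noting first that two links of the chains are formal. In (1), $\Eff^1(X/Y)\subseteq\PE^1(X/Y)$ holds by definition, since $\PE^1(X/Y)$ is the closure of $\Eff^1(X/Y)$. In (2), $\NA(X/Y)\subseteq\Nef^1(X/Y)$ is immediate from the definitions: a numerically $f$-ample $D$ satisfies $DC>0$ for every nonzero class in $\PE_1(X/Y)$, and the class of any $f$-exceptional curve lies in $\PE_1(X/Y)$, so $DC\ge 0$ for all such curves, i.e.\ $D$ is $f$-nef. For $\Big(X/Y)\subseteq\Eff^1(X/Y)$ it suffices to show that an $f$-big Weil divisor $D$ has $mD$ numerically equivalent over $Y$ to an effective divisor for some $m>0$. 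When $\dim Y=0$, pick a resolution $\pi\colon X'\to X$; then $\pi^{*}D$ is big, so $h^{0}(X',\O_{X'}(\llcorner m\pi^{*}D\lrcorner))$ grows like $m^{2}$, and by Lemma~\ref{proj} this number equals $h^{0}(X,\O_X(mD))$, which is therefore positive for $m\gg 0$. When $\dim Y\ge 1$, $f$-bigness means $\pi^{*}D$ has positive degree on the generic fibre, so for $m\gg 0$ there is a nonzero element of $f_{*}\O_X(mD)$ over a dense open of $Y$, hence an effective divisor numerically equivalent over $Y$ to $mD$. In either case $[D]=\tfrac1m[mD]\in\Eff^1(X/Y)$.

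For $\NA(X/Y)\subseteq\Big(X/Y)$, let $A$ be numerically $f$-ample. If $\dim Y\ge 1$, a general fibre $F$ of $f$ is an $f$-exceptional curve whose class in $N_1(X/Y)$ is nonzero (it pairs positively with an $f$-big divisor, which exists as $X$ is projective over the base by Goodman's theorem), so $AF>0$; this says exactly that $A$ has positive degree on the generic fibre, i.e.\ $A$ is $f$-big. If $\dim Y=0$, then $A$ is $f$-nef by the observation in the first paragraph, and pairing $A$ with itself gives $A^{2}>0$ since $A\ne 0$ has nonzero image in $\PE_1(X)$; a nef divisor with positive self-intersection on a proper surface is big (Riemann--Roch on a resolution, using $h^{2}=0$), so $A$ is $f$-big.

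The main obstacle is the remaining inclusion $\Nef^1(X/Y)\subseteq\PE^1(X/Y)$, which I would treat via the duality $\PE^1(X/Y)=\Nef_1(X/Y)^{*}$ from the excerpt: it is enough to prove $DE\ge 0$ whenever $D$ is $f$-nef and $E$ is a nef $f$-exceptional divisor, i.e.\ it suffices to show $\Nef_1(X/Y)\subseteq\PE_1(X/Y)$ (and then apply $\PE_1(X/Y)=\Nef^1(X/Y)^{*}$). Since $E$ is nef on the proper surface $X$, the Hodge index theorem for Mumford's intersection form (Appendix~A) forces $E^{2}\ge 0$. If $\dim Y=2$ the $f$-exceptional divisors form a negative definite lattice, so $E^{2}\ge 0$ gives $E=0$. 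If $\dim Y=1$, Zariski's lemma makes them negative semidefinite with radical the fibre class, so $E^{2}=0$ yields $E\equiv\lambda F$ for a fibre $F$; here $\lambda\ge 0$, because $EC\ge 0$ for a curve $C$ dominating $Y$ while $FC>0$, hence $[E]=\lambda[F]\in\PE_1(X/Y)$. If $\dim Y=0$ then $N_1(X/Y)=N^1(X)$ and the claim is the standard fact that $\Nef(X)\subseteq\overline{\mathrm{Eff}}(X)$ for a normal proper (hence projective) surface, again proved by Hodge index and Riemann--Roch. Granting $\Nef_1(X/Y)\subseteq\PE_1(X/Y)$, we conclude $DE\ge 0$ for every generator $E$ of $\Nef_1(X/Y)$, and the duality then gives $D\in\PE^1(X/Y)$, completing (2). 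I expect the care needed in the $\dim Y=1$ and analytic non-projective cases of this last step, rather than any deep new idea, to be the only delicate point.
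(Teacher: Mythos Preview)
Your overall architecture---reduce $\Nef^{1}\subseteq\PE^{1}$ to $\Nef_{1}\subseteq\PE_{1}$ via the cone dualities and then analyse a nef $f$-exceptional class $E$ case by case on $\dim Y$---is exactly the route the paper takes. The paper is simply terser: it disposes of $\dim Y=2$ in one line by observing $\PE^{1}(X/Y)=N^{1}(X/Y)$ (twist any $D$ by $f^{*}A$ with $A$ ample on $Y$ to make it effective without changing its class in $N^{1}(X/Y)$), and for $\dim Y=0$ it quotes Kleiman's inequality $DD'\ge 0$ for nef $D,D'$ on a complete scheme rather than invoking the Hodge index theorem.

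There is one genuine gap. You justify $E^{2}\ge 0$ for a nef $f$-exceptional $E$ by ``the Hodge index theorem on the proper surface $X$.'' But for $\dim Y\ge 1$ the surface $X$ need not be proper over $k$ (only $f$ is proper), so Hodge index is unavailable; and even for $\dim Y=0$, a normal proper surface is not ``hence projective.'' The paper avoids this entirely for $\dim Y=1$: since $E$ is vertical one has $E\cdot F=\sum_i m_i(E\cdot E_i)=0$ with all $m_i>0$ and all $E\cdot E_i\ge 0$ by nefness, so $E\cdot E_i=0$ for every fibre component $E_i$; Zariski's lemma then places $E$ in the radical, i.e.\ $E\equiv aF$. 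One must also treat the possibility that $X$ carries no $f$-horizontal curve (then $F\equiv 0$ in $N_{1}(X/Y)$, $\Nef_{1}=0$, and $\PE^{1}=N^{1}$); the paper does this explicitly, while you tacitly assume such a curve exists. For $\dim Y=2$ your attempt to show $\Nef_{1}=\{0\}$ via $E^{2}\ge 0$ is likewise unjustified; the clean argument is again that $\PE^{1}=N^{1}$ forces $\Nef_{1}=(\PE^{1})^{*}=\{0\}$.

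Two smaller points. In $\Big\subseteq\Eff^{1}$ for $\dim Y=1$, a nonzero section of $f_{*}\O_{X}(mD)$ over a dense open of $Y$ does not directly produce an effective divisor on $X$; you must twist by $f^{*}\mathfrak{a}$ for $\mathfrak{a}$ of large degree on $Y$ (this does not change the class in $N^{1}(X/Y)$), which is what the paper does. And in your argument that a numerically $f$-ample $A$ has $A^{2}>0$ when $\dim Y=0$, you use that $A$ itself lies in $\PE_{1}(X)\setminus\{0\}$; this is precisely $\Nef\subseteq\PE$, so the reasoning is circular as written---cite Kleiman's theorem directly instead.
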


\begin{proof}
Note that $\Big(X/Y)=\Eff^{1}(X/Y)=\PE^{1}(X/Y)=\N^{1}(X/Y)$ for $\dim(Y)=2$.
For $\dim(Y)=0$, the claim (1) can be shown straightforward.
For $\dim(Y)=1$, the claim (1) follows from the fact that $f_{*}\O_{X}(mD)\otimes \O_{Y}(\mathfrak{a})$ is globally generated for any $f$-big divisor $D$ on $X$, $m\gg 0$ and a divisor $\mathfrak{a}$ on $Y$ of sufficiently high degree.
The claim (2) in $\dim(Y)=0$ follows from the fact that $DD'\ge 0$ holds for any nef divisors $D$, $D'$ on $X$ (cf.\ \cite{Kle}).
We assume $\dim(Y)=1$. Let $E$ be an $f$-exceptional nef $\R$-divisor on $X$.
Since $FE=0$ for a fiber $F$ of $f$, we can write $E=aF$ in $N_{1}(X/Y)$ for some $a\in \R$.
If there exists an $f$-horizontal curve on $X$, we have $a\ge 0$ by the nefness of $E$.
Then any $f$-nef divisor $D$ is $f$-pseudo-effective since $DF\ge 0$.
If there exist no $f$-horizontal curves on $X$, then $N^{1}(X/Y)$ is generated by $f$-exceptional curves.
Thus we have $F=0$  in $N_{1}(X/Y)$ and then $\Nef_{1}(X/Y)=0$.
This implies $\PE^{1}(X/Y)=N^{1}(X/Y)$, whence (2) follows.
\end{proof}

\begin{lemma}\label{numample}
The following hold.

\smallskip

\noindent
$(1)$ If $\dim(Y)=0$, an $\R$-divisor $D$ on $X$ is numerically ample if and only if $D^2>0$ and $DC>0$ for any curve $C$ on $X$, that is, $D$ satisfies Nakai-Moishezon's condition.

%$\NA(X/Y)\neq \emptyset$ if and only if there is a resolution $\pi \colon X'\to X$ such that $f\circ \pi$ is projective.

\smallskip

\noindent
$(2)$ A $\Q$-divisor $D$ on $X$ is $f$-ample if and only if $D$ is numerically $f$-ample and $\Q$-Cartier.

\smallskip

\noindent
$(3)$ Assume $\dim(Y)=0$. 
Then $\NA(X)\neq \emptyset$ if and only if $X$ is algebraic or Moishezon.
In this case, the signature of the intersection form on $N(X)$ is $(1,\rho-1)$, where $\rho:=\dim N(X)$.
Otherwise, the intersection form on $N(X)$ is negative definite.

\smallskip

\noindent
$(4)$ $\NA(X/Y)=\emptyset$ if and only if $\Big(X/Y)=\emptyset$.
%$(5)$ If $\dim(Y)=2$, then $\NA(X/Y)\neq \emptyset$.
\end{lemma}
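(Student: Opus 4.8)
The plan is to reduce all four statements to three ingredients: the classical Hodge index theorem on a resolution, Kodaira's projectivity criterion for surfaces, and the Zariski decomposition of Theorem~\ref{Zardecomp}, together with Kleiman's ampleness criterion for part~$(2)$. I would prove $(3)$ first, since its signature assertion is used in $(1)$ and $(4)$. In the algebraic setting the first assertion of $(3)$ is immediate: a complete normal surface over $k$ is projective (resolve, then push forward a projective embedding), so $\NA(X)\neq\emptyset$ and there is nothing more to show. In the analytic setting, if $D\in\NA(X)$, then by Lemma~\ref{inclusion}~(1) and $\dim(Y)=0$ the class $D$ is a nonzero element of $\PE_1(X)=\PE^1(X)$, so $D^2=D\cdot D>0$; hence $(\pi^{*}D)^2>0$ on a resolution $\pi\colon X'\to X$, and $X'$, so also $X$, is Moishezon by Kodaira's criterion. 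Conversely a Moishezon or algebraic $X$ is projective and so carries an ample, hence numerically ample, divisor.

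For the signature part of $(3)$, I would invoke the orthogonal splitting \eqref{numrelation}: $N(X')_{\Q}=\pi^{*}N(X)_{\Q}\oplus\bigoplus_i\Q E_i$ with the exceptional block negative definite. The classical Hodge index theorem gives signature $(1,\rho'-1)$ on $N(X')$ with $\rho'=\rho+\#\{E_i\}$, and removing the $(0,\#\{E_i\})$ contributed by the $E_i$ leaves signature $(1,\rho-1)$ on $N(X)$. If instead $X$ is not Moishezon, no divisor has positive self-intersection (Kodaira again), so the Mumford intersection form on $N(X)$ is negative semidefinite; being non-degenerate, it is negative definite, and then $\NA(X)=\emptyset$ since any element of $\NA(X)\subset\PE_1(X)$ would have positive square.

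For $(1)$: if $D$ is numerically ample then, as above, $D$ is a nonzero pseudo-effective class so $D^2>0$, and every curve is pseudo-effective so $DC>0$. For the converse assume $D^2>0$ and $DC>0$ for all curves $C$; then by $(3)$ the form on $N(X)$ has signature $(1,\rho-1)$, so $D^{\perp}$ is negative definite. Let $E\in\PE_1(X)\setminus\{0\}$ with Zariski decomposition $E=P_E+N_E$ (Theorem~\ref{Zardecomp}). Then $DN_E\ge 0$, with equality only if $N_E=0$; and $DP_E\ge 0$, with equality only if $P_E=0$, because $P_E$ is nef with $P_E^2\ge 0$, and if $DP_E=0$ then $P_E\in D^{\perp}$ forces $P_E^2\le 0$, hence $P_E=0$. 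Thus $DE>0$ for all nonzero $E\in\PE_1(X)$, i.e.\ $D$ is numerically ample. For $(2)$: $f$-ampleness obviously forces $\Q$-Cartier and, by restriction to the fibres of $f$, numerical $f$-ampleness; conversely, $X$ being projective makes $f$ projective, so Kleiman's ampleness criterion applies, and since $\PE_1(X/Y)$ is by construction the closure of the cone generated by $f$-exceptional curve classes, a $\Q$-Cartier divisor that is numerically $f$-ample is $f$-ample.

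Finally $(4)$: one inclusion is $\NA(X/Y)\subset\Big(X/Y)$ from Lemma~\ref{inclusion}~(1). For the converse, if $\dim(Y)\ge 1$ an $f$-ample divisor exists (as $f$ is projective) and lies in both cones, so both are nonempty; if $\dim(Y)=0$, then $\Big(X)\neq\emptyset$ means $X$ carries a big divisor, which by $(3)$ forces $X$ to be projective or Moishezon, hence it carries an ample and so numerically ample divisor. The step I expect to be the main obstacle is transporting Kodaira's projectivity criterion and the Hodge index theorem through the Mumford pull-back in the non-projective analytic case and controlling the boundary of the positive cone (the case $P_E^2=0$ above); the orthogonality in \eqref{numrelation} and the non-degeneracy of the Mumford intersection form are exactly what make these go through, but they require a careful write-up.
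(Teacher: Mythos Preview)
Your strategy is broadly correct and close to the paper's, but there is a genuine gap in your treatment of (2) and (4): you repeatedly invoke ``$X$ is projective'' or ``$f$ is projective'' without justification. In the algebraic category a complete normal surface is indeed projective, so this is harmless there; but the paper works equally in the complex analytic category, where a compact normal surface need not be projective, and where for $\dim Y\ge 1$ the morphism $f$ is not known a priori to be projective. For (2) the paper sidesteps this by using the Nakai--Moishezon criterion (valid for proper schemes and analytic spaces, no projectivity hypothesis) rather than Kleiman's: once (1) is in hand, a $\Q$-Cartier numerically ample $D$ has $D^2>0$ and $DC>0$ for every curve, hence is ample by Nakai--Moishezon. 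For (4) with $\dim Y\ge 1$ the paper does not assert that an $f$-ample divisor exists; instead it \emph{constructs} a numerically $f$-ample class directly. When $\dim Y=2$, negative definiteness of the exceptional locus (Lemma~\ref{linalg}) produces an effective $Z$ with $ZC<0$ for every exceptional $C$, so $-Z\in\NA(X/Y)$. When $\dim Y=1$, one starts from an $f$-big $D$ (so $DF>0$) and subtracts fundamental cycles supported on the bad components of reducible fibers to get $mD-\sum_i Z_i$ numerically $f$-ample for $m\gg 0$. You should replace your projectivity appeals with these explicit constructions.

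Your handling of (1) and (3) is fine and in fact differs from the paper in instructive ways. For (1), the paper says only ``the Hodge index theorem implies $C$ is numerically trivial''; your Zariski-decomposition argument ($E=P_E+N_E$, with $DN_E>0$ unless $N_E=0$, and $DP_E=0$ forcing $P_E\in D^{\perp}$ with $P_E^2\ge 0$, hence $P_E=0$) makes the mechanism explicit and is a clean alternative. For (3), instead of your Kodaira-criterion route, the paper builds an explicit numerically ample Cartier divisor $m\pi^{*}D-Z$ on a resolution $X'$ (with $Z$ the fundamental cycle of $\pi$), then applies (2) to conclude $X'$ is projective; your approach via $(\pi^{*}D)^2>0$ and Kodaira reaches the same end with less bookkeeping, though you should note that one must first approximate the $\R$-class $\pi^{*}D$ by a $\Q$-class of positive square before invoking the criterion.
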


\begin{proof}
If $\dim(Y)=0$ and $D$ is numerically ample, then $D^2>0$ holds since $D$ is pseudo-effective and not numerically trivial.
Thus $D$ satisfies Nakai-Moishezon's condition.
Conversely, assume that $D$ satisfies Nakai-Moishezon's condition and $DC=0$ for some pseudo-effective divisor $C$ on $X$.
Then the Hodge index theorem implies that $C$ is numerically trivial.
Hence $D$ is numerically ample, which completes the proof of (1).

The claim (2) with $\dim(Y)>0$ is easy to prove. %本当に？
The claim (2) with $\dim(Y)=0$ follows from (1) and the Nakai-Moishezon criterion for ampleness (cf.\ \cite{Kle} Chapter 3).

We assume $\dim(Y)=0$.
Note that a resolution of $X$ is projective if and only if $X$ is algebraic or Moishezon.
In this case, Lemma~\ref{pushpull} (1) implies $\NA(X)\neq \emptyset$.
Conversely, let $D$ be a numerically ample divisor on $X$ and take a resolution $\pi\colon X'\to X$.
Then $m\pi^{*}D-Z$ is numerically ample for sufficiently large $m\gg0$, where $Z$ is the fundamental cycle of $\pi$ on $X'$.
Since $X'$ is regular, it is $\Q$-Cartier.
Then it is ample by (2), whence $X'$ is projective.
The rest of claim (3) follows from \eqref{numrelation}.

The claim (4) with $\dim(Y)=0$ follows from (3).
Indeed, if there exists a big divisor $D$ on $X$, then the movable part $M$ of $mD$, $m\gg 0$ has $M^2>0$.
Assume that $\dim(Y)=1$.
Let $D$ be an $f$-big divisor on $X$.
Then $DF>0$ holds, where $F$ is a fiber of $f$.
For any reducible fiber $F_i$ of $f$, let $Z_i$ be the fundamental cycle the support on which is the union of irreducible components $C$ of $F_i$ with $DC\le 0$.
Then $mD-\sum_{i}Z_i$ is numerically $f$-ample for $m\gg 0$, which implies (4).
The claim (4) with $\dim(Y)=2$ follows from the fact that $\NA(X/Y)$ contains the fundamental cycle of $f$ and $\Big(X/Y)=N^{1}(X/Y)$.
\end{proof}

\begin{lemma}[Kodaira's lemma. cf.\ \cite{Laz} Corollary~2.2.7]\label{kodairalem}
Let $D$ be an $\R$-divisor on $X$ and assume that $\Amp(X/Y)\neq \emptyset$.
Then the following are equivalent.

\smallskip

\noindent
$(1)$ $D$ is $f$-big.

\smallskip

\noindent
$(2)$ $D=A+E$ for some numerically $f$-ample $\Q$-divisor $A$ and effective $\R$-divisor $E$ on $X$.

\smallskip

\noindent
$(3)$ There exists a positive number $\alpha>0$ such that $\rank f_{*}\O_{X}(\llcorner mD \lrcorner)\ge \alpha m^{\dim X-\dim Y}$ for sufficiently large and divisible $m\gg 0$.
\end{lemma}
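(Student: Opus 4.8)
The plan is to reduce the statement, in all three possibilities $\dim Y\in\{0,1,2\}$, either to the classical Kodaira lemma on a smooth projective surface or to Riemann--Roch on the generic fibre. When $\dim Y=2$ the morphism $f$ is generically finite, so $\dim X-\dim Y=0$, $\rank f_*\O_X(\llcorner mD\lrcorner)$ is the constant generic degree of $f$, and $\Big(X/Y)=\Eff^{1}(X/Y)=\PE^{1}(X/Y)=N^{1}(X/Y)$ (see the proof of Lemma~\ref{inclusion}); hence all three conditions hold for trivial reasons once one notes that $\Amp(X/Y)\neq\emptyset$ already contains the fundamental cycle, which is effective. When $\dim Y=1$ let $X_{\eta}$ be the normalization of the generic fibre of the Stein factorization; it is a smooth projective curve over the function field of $Y$, and by definition $D$ is $f$-big iff $D\cdot F>0$ for a general fibre $F$. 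Since $\rank f_{*}\O_X(\llcorner mD\lrcorner)$ equals $h^{0}$ of the restriction of $\llcorner mD\lrcorner$ to $X_{\eta}$, Riemann--Roch on the curve $X_{\eta}$ gives $(1)\Leftrightarrow(3)$; for $(1)\Rightarrow(2)$ I would re-run the construction in the proof of Lemma~\ref{numample}~(4), writing $D=\tfrac1m\bigl(mD-\sum_iZ_i\bigr)+\tfrac1m\sum_iZ_i$ where the $Z_i$ are the fundamental cycles on the reducible fibres supported along the components met non-positively by $D$, so that $mD-\sum_iZ_i$ is numerically $f$-ample for $m\gg0$; and $(2)\Rightarrow(1)$ is immediate from $D\cdot F=A\cdot F>0$, using $F\in\PE_{1}(X/Y)\setminus\{0\}$.

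The substantive case is $\dim Y=0$, so $X$ is a normal projective (equivalently Moishezon, by Lemma~\ref{numample}~(3) and $\Amp(X/Y)\neq\emptyset$) surface. I would fix a resolution $\pi\colon X'\to X$, which is then smooth projective. By Lemma~\ref{pushpull} together with $\pi_{*}\pi^{*}=\mathrm{id}$ on $\WDiv(X)\otimes\R$, bigness of $D$ is equivalent to bigness of $\pi^{*}D$, while Lemma~\ref{proj} identifies $f_{*}\O_{X}(\llcorner mD\lrcorner)$ with $(f\circ\pi)_{*}\O_{X'}(\llcorner \pi^{*}(mD)+Z\lrcorner)$ for any effective $\pi$-exceptional $Z$. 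Combining these, $(1)\Leftrightarrow(3)$ reduces to the statement on $X'$ that $h^{0}\bigl(\O_{X'}(\llcorner m\pi^{*}D\lrcorner)\bigr)$ grows like $m^{2}$ iff $\llcorner m\pi^{*}D\lrcorner$ is big for $m\gg0$ (using that $\tfrac1m\llcorner m\pi^{*}D\lrcorner\to\pi^{*}D$ in $N^{1}(X')$ and that the big cone is open to pass between $\pi^{*}D$ and its roundings), which is the definition of big supplemented by asymptotic Riemann--Roch and the vanishing of $h^{2}$ once the relevant adjoint class leaves $\PE^{1}$. For $(1)\Rightarrow(2)$ I would apply the classical Kodaira lemma on the smooth projective surface $X'$ to the big integral divisor $L:=\llcorner m_{0}\pi^{*}D\lrcorner$: restricting $\O_{X'}(m_{1}L)$ to a very ample curve $H$ and comparing the $O(m_{1}^{2})$ growth of $h^{0}(m_{1}L)$ with the $O(m_{1})$ growth of $h^{0}(\O_{H}(m_{1}L))$ produces a nonzero section of $\O_{X'}(m_{1}L-H)$ for some $m_{1}$, hence $m_{1}L=A'+E'$ with $A'\sim H$ numerically ample ($\Z$-Cartier) and $E'\ge0$ effective; pushing forward and using $\pi_{*}(m_{1}L)=m_{0}m_{1}D-m_{1}\pi_{*}\{m_{0}\pi^{*}D\}$ gives $D=\tfrac1{m_{0}m_{1}}\pi_{*}A'+\tfrac1{m_{0}m_{1}}\bigl(\pi_{*}E'+m_{1}\pi_{*}\{m_{0}\pi^{*}D\}\bigr)$, where $\pi_{*}A'$ is numerically ample by Lemma~\ref{pushpull}~(1) and the remaining summand is effective. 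Finally $(2)\Rightarrow(1)$: a numerically ample $\Q$-divisor $A$ on $X$ is $f$-big because $m\pi^{*}A-Z$ is numerically ample, hence ample on the smooth $X'$ (Lemma~\ref{numample}~(2)), hence big for $m\gg0$, so $\pi^{*}A$ is big (big plus effective) and therefore $D=A+E$ is big (again big plus effective, via multiplication by a section of $mE$).

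The step I expect to be the main obstacle is the bookkeeping of the round-down operators: relating $\llcorner m\pi^{*}D\lrcorner$, $\pi^{*}\llcorner mD\lrcorner$ and $\llcorner\pi^{*}(mD)+Z\lrcorner$ cleanly, and making sure the ``remainder'' divisor produced on $X'$ stays effective after applying $\pi_{*}$ and dividing by $m$. This is precisely what Lemma~\ref{proj} is designed for, so the argument should go through, but it needs to be carried out carefully. A secondary point is verifying that a numerically $f$-ample $\Q$-divisor — which is \emph{not} assumed $\Q$-Cartier on the normal surface $X$ — is genuinely $f$-big; the resolution reduces this to the Cartier case on $X'$, where it is standard.
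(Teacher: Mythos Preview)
Your argument is correct and follows essentially the same strategy as the paper: pass to a resolution and run the classical Kodaira lemma via restriction to an ample curve. The paper's organization is a bit more economical: after reducing to a $\Z$-divisor on a regular surface, it treats $\dim Y\le 1$ uniformly by the single exact sequence $0\to\O_X(mD-A)\to\O_X(mD)\to\O_A(mD)\to 0$ to prove $(1)\Rightarrow(2)$ and $(3)\Rightarrow(2)$, citing Lemma~\ref{inclusion}~(1) for $(2)\Rightarrow(1),(3)$ --- whereas you split off $\dim Y=1$ and use the fundamental-cycle construction from Lemma~\ref{numample}~(4) instead. One small point: in that $\dim Y=1$ step your ample summand $\tfrac1m(mD-\sum_iZ_i)$ is a priori only an $\R$-divisor, so you still need the easy decomposition of a numerically $f$-ample $\R$-divisor into a numerically $f$-ample $\Q$-divisor plus an effective $\R$-divisor --- exactly the reduction the paper makes at the outset.
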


\begin{proof}
If $\dim(Y)=2$, all the conditions (1), (2) and (3) are satisfied by any $\R$-divisor $D$ on $X$.
Then we may assume $\dim(Y)\le 1$.
By Lemma~\ref{inclusion} (1), numerically $f$-ample divisors are $f$-big. %since the pull-back of a numerically $f$-ample divisor by a resolution $\pi\colon X'\to X$ is clearly $(f\circ \pi)$-big.
Thus (2) implies (1) and (3).
Conversely, we will show (2) under the assumption of (1).
We may assume $D$ is a $\Z$-divisor since a numerically $f$-ample $\R$-divisor can be decomposed into the sum of a numerically $f$-ample $\Q$-divisor and an effective $\R$-divisor.
By taking a resolution of $X$, we may assume that $X$ is regular.
Take an $f$-ample effective divisor $A$ on $X$ and consider the exact sequence
\begin{equation} \label{bigex}
0\to \O_{X}(mD-A)\to \O_{X}(mD)\to \O_{A}(mD)\to 0.
\end{equation}
Note that $A$ maps onto $Y$.
Since $D$ is $f$-big and $A$ is $1$-dimensional, we have $f_{*}\O_{X}(mD-A)\neq 0$ for $m\gg 0$.
Thus we can write $mD=A+E$ by taking $E\in |mD-A|$ if $\dim(Y)=0$, or taking $E\in |mD-A+f^{*}\mathfrak{a}|$ for a divisor $\mathfrak{a}$ on $Y$ of sufficiently high degree and replacing $A-f^{*}\mathfrak{a}$ by $A$ if $\dim(Y)=1$, whence (2) follows.
Replacing $mD$ with $\llcorner mD \lrcorner$ in \eqref{bigex} and tracing the same proof as above, we can prove that (3) implies (2).
\end{proof}

\begin{lemma}\label{open}
The following hold.

\smallskip

\noindent
$(1)$ For any divisor $B$ and any numerically $f$-ample $($resp.\ $f$-big$)$ divisor $D$ on $X$, the divisor $B+nD$ is numerically $f$-ample $($resp.\ $f$-big$)$ for sufficiently large $n\gg0$.
In particular, $\Amp(X/Y)$ $($resp.\ $\Big(X/Y)$$)$ is an open subset of $N^{1}(X/Y)$.

\smallskip

\noindent
$(2)$ If $\Amp(X/Y)\neq \emptyset$, 
then an $\R$-divisor $D$ is $f$-nef $($resp.\ $f$-pseudo-effective$)$ if and only if 
$D+A$ is numerically $f$-ample $($resp.\ $f$-big$)$ for any numerically $f$-ample $\R$-divisor $A$ on $X$.

\smallskip

\noindent
$(3)$ If $\Amp(X/Y)\neq \emptyset$, it follows that
$$
\overline{\Amp(X/Y)}=\Nef^{1}(X/Y),\quad \Amp(X/Y)=\Int(\Nef^{1}(X/Y)),
$$
and
$$
\overline{\Big(X/Y)}=\PE^{1}(X/Y),\quad \Big(X/Y)=\Int(\PE^{1}(X/Y)).
$$
\end{lemma}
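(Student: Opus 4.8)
The plan is to reduce all three parts to a single compactness argument for numerical ampleness, together with Kodaira's lemma (Lemma~\ref{kodairalem}) and the fact that $\Nef^1(X/Y)$ and $\PE^1(X/Y)$ are closed cones. A preliminary remark I would make is that in every assertion about bigness one may assume $\Amp(X/Y)\neq\emptyset$, since otherwise $\Big(X/Y)=\emptyset$ by Lemma~\ref{numample}~(4) and the assertion is vacuous; under this assumption Kodaira's lemma is available.

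For (1) I would first treat numerical ampleness. Fixing a norm on the finite-dimensional vector space $\N_1(X/Y)$, the slice $K=\{C\in\PE_1(X/Y):\|C\|=1\}$ is compact; if $D$ is numerically $f$-ample then $C\mapsto DC$ attains a strictly positive minimum on $K$ while $C\mapsto BC$ is bounded below there, so for $n$ large enough $B+nD$ pairs positively with every nonzero class of $\PE_1(X/Y)$, i.e.\ is numerically $f$-ample. For the bigness case I would write $D=A+E$ with $A$ a numerically $f$-ample $\Q$-divisor and $E$ effective (Kodaira), note $B+nD=(B+nA)+nE$, apply the ampleness case to $B+nA$, and invoke Kodaira's lemma once more. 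Openness of $\Amp(X/Y)$ and of $\Big(X/Y)$ then follows formally: each is a convex cone, and (1) says one may perturb any class in the cone by a small multiple of each basis vector of $\N^1(X/Y)$ in either direction and remain inside, so the cone contains a full-dimensional neighborhood of each of its points.

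For (2), with $\Amp(X/Y)\neq\emptyset$, the forward implication is immediate: if $D$ is $f$-nef and $A$ numerically $f$-ample then $DC\ge0$ and $AC>0$ give $(D+A)C>0$; if $D$ is $f$-pseudo-effective, pick effective $\R$-divisors $D_n\to D$ in $\N^1(X/Y)$, write $D+A=D_n+\bigl(A-(D_n-D)\bigr)$, observe via openness that the second summand is numerically $f$-ample for $n\gg0$, and conclude $D+A$ is $f$-big by Kodaira. For the converse I would fix one numerically $f$-ample $A$, apply the hypothesis to $tA$ for every $t>0$, and let $t\to0^+$: positivity of $(D+tA)C$ on $f$-exceptional curves degenerates to $DC\ge0$, proving $f$-nefness, and membership of the $f$-big (hence $f$-pseudo-effective) class $D+tA$ in the closed cone $\PE^1(X/Y)$ degenerates to $D\in\PE^1(X/Y)$, proving $f$-pseudo-effectiveness.

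Finally (3) is organizational. From $\Amp(X/Y)\subseteq\Nef^1(X/Y)$ with the latter closed we get $\overline{\Amp(X/Y)}\subseteq\Nef^1(X/Y)$, while the reverse inclusion uses that any $f$-nef $D$ equals $\lim_n(D+\tfrac1nA)$ for a fixed numerically $f$-ample $A$, with each term in $\Amp(X/Y)$ by (2). Since $\Amp(X/Y)$ is open by (1) and sits inside $\Nef^1(X/Y)$, it lies in $\Int(\Nef^1(X/Y))$; conversely an interior point $D$ of $\Nef^1(X/Y)$ can be written $(D-\varepsilon A)+\varepsilon A$ with $D-\varepsilon A$ still $f$-nef, hence numerically $f$-ample by (2). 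The two statements about $\Big(X/Y)$ and $\PE^1(X/Y)$ are proved verbatim the same way, replacing ``$f$-nef'' by ``$f$-pseudo-effective'', ``numerically $f$-ample'' by ``$f$-big'', $\Nef^1$ by $\PE^1$, and $\Amp$ by $\Big$, and invoking $\Big(X/Y)\subseteq\PE^1(X/Y)$ from Lemma~\ref{inclusion}~(1) together with the closedness of $\PE^1(X/Y)$. I expect the only genuine content to be the compactness step of (1); everything else is handling limits, and the one point to watch is to remember the reduction to $\Amp(X/Y)\neq\emptyset$ whenever bigness appears.
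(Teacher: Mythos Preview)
Your proof is correct and follows essentially the same route as the paper: a compactness argument for part~(1) in the ample case (which the paper simply calls ``easy''), Kodaira's lemma for the big case, and then limiting arguments for (2) and (3). Your organization of (2) in the pseudo-effective direction differs slightly---you approximate $D$ directly by effective classes and use openness of $\Amp(X/Y)$, whereas the paper instead shows that the set $\{D: D+A\text{ is $f$-big for all ample }A\}$ is closed and contains the effective cone---but these are two phrasings of the same limit, not different ideas.
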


\begin{proof}
The claim (1) for numerically ampleness is easy to prove. %(it is essential that $\PE_{1}(X/Y)$ is finite dimensional).
The claim (1) for bigness follows from Lemma~\ref{kodairalem}. 

The claim (2) for the nef case is easy to show.
We will prove (2) for the pseudo-effective case.
Let $D$ be an $\R$-divisor on $X$ such that $D+A$ is $f$-big for any numerically $f$-ample $\R$-divisor $A$ on $X$.
Take numerically $f$-ample $\Q$-divisors $A_n$ on $X$ such that $A_n\rightarrow 0$ in $\N^{1}(X/Y)$ $(n\rightarrow \infty)$.
Then $D+A_n$ is $f$-big and in particular effective.
Hence $D$ is $f$-pseudo-effective.

Let $D$ be an effective divisor on $X$. Then $D+A$ is $f$-big for any numerically $f$-ample divisor $A$ on $X$ from Lemma~\ref{kodairalem}.
Hence, it suffices to show that the property of $D$ that $D+A$ is $f$-big for any numerically $f$-ample $\R$-divisor $A$ is a closed condition.
Let $D$ be an $\R$-divisor on $X$ and assume that there exists a numerically $f$-ample $\R$-divisor $A$ such that $D+A$ is not $f$-big.
In particular, $D+(1/2)A$ is not $f$-big.
We consider $D+\varepsilon B+(1/2)A$ for any divisor $B$ and $\varepsilon >0$.
Since $A$ is numerically $f$-ample, $(1/2)A-\varepsilon B$ is numerically $f$-ample for sufficiently small $\varepsilon$ from (1), which implies that $D+\varepsilon B+(1/2)A$ is not $f$-big.
Hence (2) follows.

The claim (3) follows from (1), (2) and the standard argument of the topology.
Note that $\Big(X/Y)\neq \emptyset$ from Lemma~\ref{numample} (4).
\end{proof}

\begin{corollary} \label{fib}
If $\dim(Y)=1$, then an $\R$-divisor $D$ on $X$ is $f$-pseudo-effective $($resp.\ $f$-big$)$
 if and only if $DF\ge 0$ $($resp.\ $DF>0$$)$ for a fiber $F$ of $f$.
\end{corollary}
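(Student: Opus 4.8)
The plan is to identify the two cones in question explicitly. Since $\dim(Y)=1$, any two fibers of $f$ are non-negative multiples of one another in $\N^{1}(X/Y)$, so for an $\R$-divisor $\delta$ the sign of $\delta F$ is independent of the chosen fiber $F$. I will show $\Big(X/Y)=\{\delta\in\N^{1}(X/Y)\mid \delta F>0\}$ and $\PE^{1}(X/Y)=\{\delta\in\N^{1}(X/Y)\mid \delta F\ge 0\}$; both equivalences of the corollary then follow at once. The first of these I would prove directly by restricting to a general fibre, the second via the duality of cones $\PE^{1}(X/Y)=\Nef_{1}(X/Y)^{*}$ after pinning down $\Nef_{1}(X/Y)$.

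For the bigness statement, take a resolution $\pi\colon X'\to X$ and a general closed point $y\in Y$, so that $\pi$ is an isomorphism over $y$ and $F':=(f\circ\pi)^{-1}(y)$ maps isomorphically to $F:=f^{-1}(y)$; then $\pi_{*}F'=F$ and, by the projection formula for Mumford pull-backs, $(\pi^{*}D)F'=DF$. Now $D$ is $f$-big if and only if $\pi^{*}D$ restricts to a divisor of positive degree on the generic fibre of the Stein factorization of $f\circ\pi$ (a smooth projective curve), equivalently on the general fibre $F'$ of $f\circ\pi$ up to the positive degree of the Stein map, i.e.\ if and only if $(\pi^{*}D)F'>0$. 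Combining, $D$ is $f$-big if and only if $DF>0$, which is exactly the bigness half of the statement.

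For the pseudo-effective statement, note first that a fibre $F$ is an $f$-exceptional nef divisor: $CF=0$ for a vertical curve $C$ by moving to a general fibre, and $CF\ge 0$ for a horizontal curve $C$. If $X$ carries no $f$-horizontal curve, then $\Nef_{1}(X/Y)=0$ and $F\equiv 0$ in $\N_{1}(X/Y)$ (as in the proof of Lemma~\ref{inclusion}), so $\PE^{1}(X/Y)=\N^{1}(X/Y)$ and $\delta F=0$ for all $\delta$, and the equivalence is vacuous; moreover no divisor is $f$-big in this case, so the bigness equivalence is also vacuous, consistently with the previous paragraph. Otherwise fix an $f$-horizontal curve $C_{0}$, so $FC_{0}>0$ and $F\not\equiv 0$ in $\N_{1}(X/Y)$; running the Zariski's-lemma computation from the proof of Lemma~\ref{inclusion}~(2) on each fibre supporting a given $f$-exceptional nef divisor $E$ shows $E\equiv aF$ for some $a\in\R$, and $a\ge 0$ because $aFC_{0}=EC_{0}\ge 0$. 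Hence $\Nef_{1}(X/Y)=\R_{\ge 0}F$, and the duality $\PE^{1}(X/Y)=\Nef_{1}(X/Y)^{*}$ gives $\PE^{1}(X/Y)=\{\delta\mid \delta F\ge 0\}$.

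The only genuine obstacle is the identification $\Nef_{1}(X/Y)=\R_{\ge 0}F$, i.e.\ that a single fibre already spans the whole cone of $f$-exceptional nef classes; but this is precisely the computation carried out in Lemma~\ref{inclusion}, and once it is in hand the reverse implications of the corollary — which are the nontrivial ones — cost nothing, while the forward implications follow formally from the cone descriptions (or, for bigness, directly from the generic-fibre argument above).
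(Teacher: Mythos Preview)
Your argument is correct and reaches the right conclusion, but takes a different route from the paper's one-line proof. The paper deduces both equivalences from Lemma~\ref{kodairalem} and Lemma~\ref{open}: once $f$-bigness is identified with $DF>0$ via Kodaira's lemma, Lemma~\ref{open}~(2) says $D$ is $f$-pseudo-effective iff $D+A$ is $f$-big for every numerically $f$-ample $A$, iff $(D+A)F>0$ for all such $A$, iff $DF\ge 0$. You instead compute $\Nef_{1}(X/Y)=\R_{\ge 0}F$ directly (recycling the Zariski's-lemma step from the proof of Lemma~\ref{inclusion}) and invoke the duality $\PE^{1}(X/Y)=\Nef_{1}(X/Y)^{*}$. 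This is arguably more transparent, since it names the cone explicitly rather than passing through the ``$D+A$ big for all ample $A$'' detour, and it makes the degenerate case (no horizontal curve) visible.

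One small wrinkle: your bigness paragraph appeals to the generic-fibre description of $f$-big, but in this paper $f$-big for $\R$-divisors is \emph{defined} as membership in the cone $\Big(X/Y)$ generated by $f$-big $\Z$-divisors, so your argument as written only handles $\Q$-divisors. This is harmless in the end---once your duality argument gives $\PE^{1}(X/Y)=\{\delta F\ge 0\}$, Lemma~\ref{open}~(3) yields $\Big(X/Y)=\Int(\PE^{1}(X/Y))=\{\delta F>0\}$ and the $\R$-divisor statement follows---but you should either reorder (pseudo-effective first, then big via the interior) or say a word about why the open convex cones $\Big(X/Y)$ and $\{\delta F>0\}$ agree once they agree on $\Q$-points.
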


\begin{proof}
This follows from Lemmas~\ref{kodairalem} and \ref{open}.
\end{proof}

Let us assume that $\dim(Y)=0$ and $X$ is algebraic or Moishezon.
Let $\Cpp(X)$ be the cone in $\N(X)$ consisting of numerical equivalence classes of $\R$-divisors $D$ on $X$ with $D^2>0$ and $DF>0$ for some nef divisor $F$ on $X$.

%\begin{lemma}
%$\Big(X)\cap \{D\in N^{1}(X)\ |\ D^{2}>0\}=\Cpp(X)$.
%\end{lemma}

\begin{lemma}\label{positive}
Let $D$ be an $\R$-divisor on $X$ with $D^2> 0$ $($resp.\ $D^{2}\ge 0$$)$. 
The following are equivalent.

\smallskip

\noindent
$(1)$ $D$ is big $($resp.\ pseudo-effective$)$.

\smallskip

\noindent
$(2)$ There exists a nef and big divisor $F$ such that $DF>0$ $($resp.\ $DF\ge 0$$)$.

\smallskip

\noindent
$(3)$ There exists a nef divisor $F$ on $X$ such that $DF>0$ $($resp.\ or $D\equiv 0$$)$.

\smallskip

\noindent
In particular, $\Big(X)\cap \{D\in N(X)\ |\ D^{2}>0\}=\Cpp(X)$ holds.
\end{lemma}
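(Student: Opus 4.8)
The plan is to reduce everything to the Hodge index theorem. Since $\dim(Y)=0$ and $X$ is algebraic or Moishezon, Lemma~\ref{numample}~(3) says that the intersection form on $N(X)$ has signature $(1,\rho-1)$; concretely, for any class $E$ with $E^2>0$ the orthogonal complement $E^\perp\subset N(X)$ is negative definite. I also invoke, from Lemma~\ref{open}~(3) (applicable because $\Amp(X)\neq\emptyset$ by Lemma~\ref{numample}~(3)), the identities $\overline{\Amp(X)}=\Nef^1(X)$ and $\Big(X)=\Int(\PE^1(X))$, together with the cone duality $\PE^1(X)=\Nef^1(X)^\ast$ from the text (using $N^1(X)=N_1(X)=N(X)$). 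Since $\Nef^1(X)$ is full-dimensional and salient, the duality yields the usable description
\[
\Big(X)=\{D\in N(X)\mid DN>0\ \text{for all}\ N\in\Nef^1(X)\setminus\{0\}\},\qquad \PE^1(X)=\{D\mid DN\ge 0\ \text{for all}\ N\in\Nef^1(X)\}.
\]
Finally, I use that a nef $\R$-divisor $F$ has $F^2\ge 0$, and that a nef and big class has $F^2>0$ (else $N=F$ contradicts the description of $\Big(X)$).

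The first step is to isolate two rigidity consequences of the Hodge index theorem. \emph{(a)} If $D^2\ge 0$ and $DH=0$ for some ample $H$, then $D\in H^\perp$ is negative definite, so $D^2\le 0$, hence $D^2=0$ and $D\equiv 0$. Consequently, for any class $D$ with $D^2\ge 0$, the linear function $H\mapsto DH$ is either identically zero on the connected cone $\Amp(X)$ (so $D\equiv 0$) or nowhere zero there and hence of constant sign; and if moreover $D\not\equiv 0$ and $DF>0$ for some nef $F=\lim_n H_n$ with the $H_n$ ample, then $DH_n>0$ for $n\gg 0$, so that sign is positive, i.e.\ $DH>0$ for every ample $H$, whence $DN\ge 0$ for every nef $N$ by passing to limits. \emph{(b)} If $D^2>0$ and $N$ is nef with $DN=0$, then $N\in D^\perp$ is negative definite, so $N^2\le 0$; since $N^2\ge 0$, this forces $N\equiv 0$.

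Now I run the cyclic implications, treating in parallel the case $D^2>0$ (where $(1)$ asserts $D$ big and $(3)$ asserts $DF>0$ for some nef $F$) and the case $D^2\ge 0$ (where $(1)$ asserts $D$ pseudo-effective and $(3)$ asserts $DF>0$ for some nef $F$, or $D\equiv 0$). For $(1)\Rightarrow(2)$, take $F$ numerically ample: then $F$ is nef and big by Lemma~\ref{inclusion}~(1), and $D\in\Big(X)$ (resp.\ $D\in\PE^1(X)$) gives $DF>0$ (resp.\ $DF\ge 0$) by the description above. For $(2)\Rightarrow(3)$: this is immediate when $D^2>0$; when $D^2\ge 0$ and $DF=0$, fact (b) applied to the nef and big $F$ (which has $F^2>0$) forces $D\equiv 0$. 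For $(3)\Rightarrow(1)$: if $D\equiv 0$, it is pseudo-effective; otherwise fact (a) gives $DH>0$ for all ample $H$, hence $DN\ge 0$ for all nef $N$, so $D\in\PE^1(X)$, and when $D^2>0$ fact (b) upgrades this to $DN>0$ for all nef $N\neq 0$, so $D\in\Big(X)$. The final identity $\Big(X)\cap\{D\mid D^2>0\}=\Cpp(X)$ is then a direct reading of $(1)\Leftrightarrow(3)$ in the case $D^2>0$, since $\Cpp(X)$ is by definition $\{D\mid D^2>0,\ DF>0\ \text{for some nef}\ F\}$.

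The calculations are all one-line; the only points needing care are the sign bookkeeping in fact (a) --- getting from $DF>0$ against a single nef $F$ to $DH>0$ against the whole ample cone, via connectedness of $\Amp(X)$ and the rigidity that a class of positive square cannot be orthogonal to an ample class --- and keeping the degenerate alternative $D\equiv 0$ in play throughout the pseudo-effective half of the statement. A less soft route to $(3)\Rightarrow(1)$ when $D^2>0$ would be asymptotic Riemann--Roch on a resolution $X'$ of $X$, using that $h^2(mD')=h^0(K_{X'}-mD')=0$ for $m\gg 0$ because $D'$ meets an ample divisor positively; but that input is already subsumed in Lemma~\ref{kodairalem} and Lemma~\ref{open}, so the argument above need not repeat it.
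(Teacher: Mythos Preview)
Your proof is correct and takes a genuinely different route from the paper's. For the crucial implication $(3)\Rightarrow(1)$ in the big case, the paper passes to a resolution, applies Riemann--Roch together with $h^2(mD)=0$ (using $DF>0$) to get $\dim|mD|\to\infty$, hence pseudo-effectivity, and then concludes bigness by noting that $\Cpp(X)$ is an open subset of $\PE^1(X)$ and invoking Lemma~\ref{open}~(3). Your argument instead extracts from Lemma~\ref{open}~(3) and cone duality the explicit description $\Big(X)=\{D:DN>0\text{ for all nef }N\neq 0\}$ and then verifies this condition directly via the Hodge index theorem: fact~(a) gives $D\in\PE^1(X)$, and fact~(b) upgrades to strict positivity using $D^2>0$. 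Both routes ultimately rest on the same infrastructure (Kodaira's lemma feeding into Lemma~\ref{open}), as you note; your packaging is cleaner in that it works uniformly for $\R$-divisors in $N(X)$ without the reduction to $\Z$-divisors, while the paper's Riemann--Roch step is more self-contained but requires passing to a resolution.

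One small wording issue: in $(2)\Rightarrow(3)$ for the pseudo-effective case you invoke ``fact~(b) applied to the nef and big $F$'' to conclude $D\equiv 0$, but fact~(b) as stated would need $D$ to be nef (to supply $D^2\ge 0$). Of course $D^2\ge 0$ is already given by hypothesis, so the Hodge-index argument (\,$D\in F^\perp$ negative definite, $D^2\le 0$, hence $D\equiv 0$) goes through unchanged; you are really re-using the mechanism of fact~(a) with $F$ in place of $H$. This is a labeling quibble, not a mathematical gap.
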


\begin{proof}
We may assume that $D$ is a $\Z$-divisor.
First, we show the equivalence for bigness.
Assume that $D$ is big. 
Then $DF>0$ holds for a numerically ample divisor $F$ on $X$ (note that $\NA(X)\neq \emptyset$ from Lemma~\ref{numample}~(4)).
Then $D$ satisfies (2).
The implication from (2) to (3) is clear.
We assume that $D$ satisfies the condition (3).
By taking a resolution of $X$, we may assume that $X$ is regular by Lemma~\ref{pushpull} (1).
By the Riemann-Roch theorem on $X$ and the condition (3), we have $\dim |mD|\to \infty$ ($m\to \infty$).
In particular, $D$ is pseudo-effective and then $\Cpp(X)$ is contained in $\PE^{1}(X)$.
Since $\Cpp(X)$ is open in $N(X)$, we have $\Cpp(X)\subset \Big(X)$ by Lemma~\ref{open} (3).
Hence $D$ satisfies (1).

Next, we show the equivalence for pseudo-effectiveness.
If $D\equiv 0$, then the claim is trivial.
Thus we may assume that $D$ is numerically non-trivial.
The proof that (1) implies (2) (or (3)) is similar to the big case.
The implication from (2) (or (3)) to (1) follows from the equivalence for bigness by replacing $D$ with $D+A_n$ for numerically ample $\Q$-divisors $A_n$ with $A_n\to 0$ in $N(X)$ ($n\to \infty$).
\end{proof}

\begin{corollary} \label{reflection}
Let $D$ be a pseudo-effective $\R$-divisor on $X$ with $D=P+N$ the Zariski decomposition in Theorem~\ref{Zardecomp}.
Then the following hold.

\smallskip

\noindent
$(1)$ $D$ is nef and big if and only if $D$ is nef and $D^2>0$.
In particular, $D$ is big if and only if $P^2>0$.

\smallskip

\noindent
$(2)$ If further assume that $D^2>0$.
Then $D':=P-N$ is big and $D'^{2}>0$.
In particular, this operation gives a map $\Cpp(X)\to \Cpp(X);\ D\mapsto D'$.
\end{corollary}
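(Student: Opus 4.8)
The plan is to run everything off the two defining features of the Zariski decomposition $D=P+N$: the divisor $P$ is nef, and $PC=0$ for every prime component $C$ of $N$, so that $PN=0$ and hence $D^{2}=(P+N)^{2}=P^{2}+N^{2}$, with $N^{2}<0$ unless $N=0$. Recall that throughout this passage we are in the setting $\dim(Y)=0$ with $X$ algebraic or Moishezon, so $\Amp(X)\neq\emptyset$ and Lemmas~\ref{positive}, \ref{numample}, \ref{kodairalem} and \ref{open} are available.

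For the first equivalence in $(1)$, suppose $D$ is nef; then $N=0$ and $P=D$. If in addition $D^{2}>0$, I would apply Lemma~\ref{positive} with the auxiliary nef divisor $F=D$, for which $DF=D^{2}>0$, to conclude that $D$ is big. Conversely, if $D$ is nef and big, Kodaira's lemma (Lemma~\ref{kodairalem}) writes $D=A+E$ with $A$ a numerically ample $\Q$-divisor and $E\geq0$; then $DE\geq0$ by nefness and $DA=A^{2}+AE>0$ since $A^{2}>0$ by Lemma~\ref{numample} and $AE\geq0$, whence $D^{2}=DA+DE>0$. For the ``in particular'' clause, applying the equivalence just proved to the nef divisor $P$ gives $P$ big $\iff P^{2}>0$, so it remains to show $D$ is big $\iff P$ is big. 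If $P$ is big then $[P]\in\Big(X)=\Int(\PE^{1}(X))$ by Lemma~\ref{open}, while $[N]\in\PE^{1}(X)$ since $N$ is effective, and the sum of an interior point and a point of the convex cone $\PE^{1}(X)$ is interior, so $[D]=[P]+[N]\in\Big(X)$. For the converse I would argue by contradiction: if $D$ is big but $P^{2}=0$ (the only remaining case, since $P$ nef forces $P^{2}\geq0$), then $PN=0$ gives $DP=P^{2}+NP=0$; on the other hand, writing $D=A+E$ as above, $DP=AP+EP$ with $EP\geq0$, and if $[P]\neq0$ then $AP>0$ because $A$ is numerically ample and $[P]\in\PE^{1}(X)\setminus\{0\}$, forcing $DP>0$, a contradiction. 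Hence $[P]=0$ and $D\equiv N$ with $N\neq0$ effective and negative definite; but such an $N$ cannot be big (otherwise Kodaira's lemma applied to $N$ gives $N=A'+E'$ with $A'$ numerically ample, and intersecting with the prime components of $N$ and using negative definiteness forces $E'\geq N$, hence $A'\leq0$, contradicting numerical ampleness), so $D$ is not big, a contradiction. Therefore $P^{2}>0$.

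For $(2)$, assume moreover $D^{2}>0$. Then $P^{2}=D^{2}-N^{2}\geq D^{2}>0$, so by $(1)$ the nef divisor $P$ is big. Setting $D':=P-N$ and using $PN=0$, one computes $D'^{2}=P^{2}-2PN+N^{2}=P^{2}+N^{2}=D^{2}>0$ and $D'P=P^{2}-NP=P^{2}>0$, so Lemma~\ref{positive} with the nef divisor $F=P$ shows $D'$ is big. Finally, if $[D]\in\Cpp(X)$ then by Lemma~\ref{positive} $D$ is a big $\R$-divisor with $D^{2}>0$, in particular pseudo-effective, so its Zariski decomposition $D=P+N$ is defined and $D'=P-N$ is big with $D'^{2}>0$ by what was just shown; hence $[D']\in\Big(X)\cap\{E\in\N(X)\mid E^{2}>0\}=\Cpp(X)$. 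The assignment descends to numerical classes because the Zariski decomposition depends only on the numerical equivalence class of $D$.

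The one step needing care is the implication ``$D$ big $\Rightarrow P^{2}>0$'', and inside it the degenerate case $[P]=0$, which rests on the auxiliary observation that a nonzero effective negative definite divisor is never big; everything else is a one-line intersection computation or a direct appeal to Lemmas~\ref{positive}, \ref{numample}, \ref{kodairalem} and \ref{open}.
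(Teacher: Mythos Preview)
Your proof is correct and follows essentially the same approach as the paper, which simply cites Lemmas~\ref{kodairalem} and \ref{positive} for part (1) and then carries out the identical computations $D'P=P^{2}>0$, $D'^{2}=P^{2}+N^{2}=D^{2}>0$ for part (2). Your expansion of part (1) is valid; in particular, the step ``$E'\geq N$'' in your treatment of the degenerate case $[P]=0$ is exactly Lemma~\ref{easy} applied to the two decompositions $N=A'+E'=0+N$ (with $F=A'$ nef over $N$ and $-0$ nef over $N$), so you may want to cite that lemma explicitly rather than leave the verification implicit.
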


\begin{proof}
The claim (1) follows from Lemmas~\ref{kodairalem} and \ref{positive}.
Suppose $D^2>0$. 
Note that $P$ is nef and big since $D$ is big.
Hence the first half of the claim (2) follows from Lemma~\ref{positive}, $D'P=(P-N)P=P^2>0$ and $D'^{2}=P^2+N^2=D^2>0$.
Since the negative part $N$ of $D$ is determined by its numerical equivalence class, this operation defines a self-map on $\Cpp(X)$.
\end{proof}

%色々

The following lemma is purely linear-algebraic and easy to show.
\begin{lemma} \label{linalg}
Let $A=\cup_{i=1}^{n}A_{i}$ be the irreducible decomposition of a connected complete curve $A$ on $X$.
Then the following are equivalent.

\smallskip

\noindent
$(1)_{<0}$ $($resp.\ $(1)_{=0}$, $(1)_{>0})$ There is an effective divisor $Z=\sum_{i=1}^{n}a_iA_i$ with $a_i\in \Q_{>0}$ such that $A_{j}Z<0$ $($resp.\ $A_{j}Z=0,\ A_{j}Z>0)$ for any $j$.

\smallskip

\noindent
$(2)_{<0}$ $($resp.\ $(2)_{=0}$, $(2)_{>0})$ The matrix $(A_{i}A_{j})_{ij}$ is negative definite $($resp.\ negative semi-definite and not negative definite, not negative semi-definite$)$.
\end{lemma}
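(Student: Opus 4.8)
The plan is to treat Lemma~\ref{linalg} as a statement about the symmetric matrix $M:=(A_iA_j)_{ij}$, which has non-negative off-diagonal entries ($A_iA_j\geq 0$ for $i\neq j$ in Mumford's intersection theory) and whose associated graph — with an edge joining $i$ and $j$ whenever $A_iA_j>0$ — is connected because $A$ is connected. First I would prove the three implications $(1)_{\bullet}\Rightarrow(2)_{\bullet}$ simultaneously. Suppose $Z=\sum_i a_iA_i$ with $a_i\in\Q_{>0}$ and $A_jZ$ of the prescribed sign for every $j$, and let $W=\sum_i x_iA_i$ be an arbitrary $\R$-combination. Putting $y_i:=x_i/a_i$, expanding $W^2=\sum_{i,j}a_ia_jy_iy_j(A_iA_j)$ and using $2y_iy_j=y_i^2+y_j^2-(y_i-y_j)^2$ together with the symmetry of $M$ gives
$$
W^2=\sum_i a_iy_i^2(A_iZ)-\sum_{i<j}a_ia_j(A_iA_j)(y_i-y_j)^2 .
$$
Since $a_i>0$ and $A_iA_j\geq 0$, the last sum is $\leq 0$. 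Hence: if $A_jZ<0$ for all $j$ then $W^2<0$ whenever $W\neq 0$, so $M$ is negative definite; if $A_jZ=0$ for all $j$ then $W^2\leq 0$ while $Z^2=\sum_i a_i(A_iZ)=0$ with $Z\neq 0$, so $M$ is negative semi-definite but not negative definite; if $A_jZ>0$ for all $j$ then $Z^2>0$, so $M$ is not negative semi-definite.

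For the converses I would use that $(2)_{<0}$, $(2)_{=0}$, $(2)_{>0}$ are mutually exclusive and exhaust all symmetric matrices, so it is enough to show that for every connected $A$ at least one of $(1)_{<0}$, $(1)_{=0}$, $(1)_{>0}$ holds; the implications above then pin down the correct one (if, say, $(2)_{<0}$ holds while the witness were of type $(1)_{=0}$ or $(1)_{>0}$, we would get $(2)_{=0}$ or $(2)_{>0}$, a contradiction). To produce the witness, pick $c>0$ with $N:=cI+M$ entrywise $\geq 0$; since $A$ is connected, $N$ is non-negative and irreducible, so by the Perron--Frobenius theorem it has a strictly positive eigenvector $v>0$, say $Nv=\rho v$, whence $Mv=\lambda v$ with $\lambda:=\rho-c$. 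Then $Z:=\sum_i v_iA_i$ satisfies $A_jZ=(Mv)_j=\lambda v_j$ for all $j$, which is $<0$, $=0$, or $>0$ uniformly in $j$ according to the sign of $\lambda$; since $M$ has rational entries one may take $v$ with positive rational coordinates (density of $\Q$ when $\lambda\neq 0$, the relevant strict inequalities being open, and $\ker M$ being a rational line when $\lambda=0$). The case $n=1$ is immediate, $M=(A_1^2)$, $Z=A_1$.

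The only genuinely non-routine ingredient is the Perron--Frobenius step (equivalently, the theory of $M$-matrices), and the sole place connectedness of $A$ enters is in guaranteeing that $N$ is irreducible, hence has a \emph{strictly positive} Perron eigenvector — which is exactly what the lemma needs and what fails in general for disconnected $A$. The quadratic-form identity in the first paragraph is the standard ``Zariski's lemma'' computation, and the case bookkeeping is elementary, so I expect the write-up to be short; the main (mild) subtlety to get right is the passage from the real Perron eigenvector to one with positive rational coordinates.
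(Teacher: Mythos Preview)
Your proof is correct and, for the direction $(1)_\bullet\Rightarrow(2)_\bullet$, essentially identical to the paper's: your displayed identity is precisely what underlies the paper's inequality $B^2\le\sum_i(b_i^2/a_i)\,A_iZ$.

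For the converse $(2)_\bullet\Rightarrow(1)_\bullet$ the two arguments are the same idea packaged differently. The paper works directly with the maximal eigenvalue $\alpha$ of $M=(A_iA_j)$, picks an eigenvector $b$, and shows by the inequality $B^2\le B_+^2$ (valid because off-diagonal entries are $\ge 0$) that the entrywise absolute value $|b|$ is again an $\alpha$-eigenvector; connectedness then forces all entries of $|b|$ to be nonzero. This is, in effect, a self-contained proof of the piece of Perron--Frobenius you quote. Your route---shift to $N=cI+M\ge 0$, invoke Perron--Frobenius for irreducible non-negative matrices to get a strictly positive eigenvector, then use the trichotomy argument---is cleaner and more conceptual, at the cost of importing a named theorem. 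Both approaches rationalize the witness the same way (perturb when the inequalities are strict; use rationality of $\ker M$ when $\lambda=0$, which is indeed one-dimensional here by simplicity of the Perron eigenvalue). Your trichotomy bookkeeping (show some $(1)_\bullet$ always holds, then let exclusivity of the $(2)_\bullet$ cases pick the right one) is a nice touch that avoids having to match the sign of $\lambda$ to the hypothesis case by case.
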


\begin{proof}
It is clear that $(1)_{>0}$ implies $(2)_{>0}$ because $Z^{2}>0$.
We assume $(1)_{<0}$ (resp.\ $(1)_{=0}$).
%For simplicity, put $\widetilde{A}_{i}:=a_{i}A_{i}$ and write $Z=\sum_{i=1}^{n}\widetilde{A}_{i}$.
In order to show $(2)_{<0}$ (resp.\ $(2)_{=0}$), it suffices to prove that for any non-zero $\Q$-divisor $B=\sum_{i=1}b_iA_i$, it holds $B^{2}<0$ (resp.\ $B^{2}\le 0$). 
One can check that
$$
B^2\le  \sum_{i}\frac{b_{i}^{2}}{a_i}A_iZ<0\ (\text{resp.\ $\le 0$})
$$
by a direct computation and $A_{i}A_{j}\ge 0$ for $i\neq j$. 

Next, we assume $(2)_{<0}$ (resp.\ $(2)_{=0}$, $(2)_{>0}$).
For an $\R$-divisor $B=\sum_{i=1}^{n}b_{i}A_{i}$, we write $B_{+}:=\sum_{i=1}^{n}|b_{i}|A_{i}$.
Then one can check $B^2\le B_{+}^{2}$ since $A_{i}A_{j}\ge 0$ for $i\neq j$.
Let $\alpha$ be the maximal eigenvalue of $(A_{i}A_{j})_{ij}$ and $V_{\alpha}\subset \R^{n}$ the corresponding eigenspace.
Note that $\alpha$ is not necessarily rational.
For a non-zero vector $b=\t (b_i\ldots,b_n)\in V_{\alpha}$, we put $B:=\sum_{i=1}^{n}b_{i}A_{i}$.
Since $A_{j}B=\alpha b_{j}$ for any $j$, we have $B^2=\alpha\sum_{j=1}|b_j|^2$.
On the other hand, it follows that $B_{+}^2=\sum_{i,j}(A_{i}A_{j})|b_{i}||b_{j}|\le \alpha\sum_{i=1}^{n}|b_{i}|^{2}=B^{2}$, whence $B^{2}=B_{+}^{2}$ and $|b|=\t (|b_1|,\ldots,|b_n|)\in V_{\alpha}$ hold.
Since $A$ is connected, we can check that $b_{i}\neq 0$ holds for any $i$.
Then we may assume that each element of $b$ is positive.
Thus we have $A_{j}B=\alpha b_{j}<0\ \text{(resp.\ $=0$, $>0$)}$ by the condition $(2)_{<0}$ (resp.\ $(2)_{=0}$, $(2)_{>0}$).
In the case of $(2)_{=0}$ (that is, $\alpha=0$), we can take all the coefficients $b_i$ are rational because $\alpha$ is rational. Then $Z:=B$ satisfies $(1)_{=0}$.
In the case of $(2)_{<0}$ (resp.\  $(2)_{>0}$), then we can shift the coefficients $b_{i}$ to rational numbers preserving the condition $A_{j}B=A_{j}(\sum_{i=1}^{n}b_{i}A_{i})<0$ (resp.\ $>0$) for each $j$.
Hence we may assume all the $b_i$ are positive rational.
Then $Z:=B$ satisfies $(1)_{<0}$ (resp.\ $(1)_{>0}$).
\end{proof}

%%%%%%%%%%%%%%%%%%%%%%%%%%%%%%%%%%%%%%%%%%%%%%%%%%%%%%%%%%%%%%%%%%%%%%%%%%%%

\section{Morphisms on proper curves}
\label{sec:Morphisms on proper curves}

%%%%%%%%%%%%%%%%%%%%%%%%%%%%%%%%%%%%%%%%%%%%%%%%%%%%%%%%%%%%%%%%%%%%%%%%%%%%%

In this appendix, we collect definitions and basic properties of some invariants of proper curves over a field $k$.

\begin{definition}[Degree for finite morphisms]
Let $f\colon X\to Y$ be a finite morphism between schemes with the scheme-theoretic image $f(X)$ irreducible.
Then let us define the {\em degree} of $f$ as follows.
First, we may assume that $Y=f(X)$.
Let us recall that if $X$ is integral, the degree of $f$ means the degree of the field extension $[K(X):K(Y)]$ of the function fields $K(Y)\subset K(X)$.
If $X$ is irreducible, the degree of $f$ is defined as $\deg(f):=(m_X/m_Y)\deg(f_{\red})$,
where $f_{\red}\colon X_{\red}\to Y_{\red}$ is the induced morphism between the maximal reduced subschemes and for an irreducible scheme $X$ with the generic point $\eta_X$, $m_X:=\length(\O_{X,\eta_X})$ denotes the geometric multiplicity of $X$.
Note that if $f$ is flat at the generic point, then we have $m_{f^{-1}(Y_{\red})}=m_X/m_Y$ and $\deg(f)$ coincides with the degree of $f|_{f^{-1}(Y_{\red})}$. 
If $Y$ is irreducible, then we define $\deg(f):=\sum_{i}\deg(f|_{X_i})$, where $X=\cup_{i}X_i$ is the irreducible decomposition of $X$ and $f|_{X_i}\colon X_i\to Y$ is the restriction of $f$ to $X_i$ (note that the scheme-theoretic image of $f|_{X_i}$ is not necessarily coincide with $Y$).
Note that if $f$ is flat and of finite presentation, $\deg(f)$ coincides with the rank of the locally free $\O_Y$-module $f_{*}\O_X$.

Let $f\colon X\to Y$ be a finite morphism of schemes such that $X$ has finitely many irreducible components.
Then the {\em maximum degree} of $f$ is defined to be
$\maxdeg(f):=\max_{i}\{\deg(f|_{f^{-1}(Y_i)})\}$, where $f(X)=\cup_{i}Y_i$ is the irreducible decomposition of the scheme-theoretic image of $f$.
\end{definition}

The following lemma easily follows by definition.
\begin{lemma} \label{findeg}
Let $f\colon X\to Y$ and $g\colon Y\to Z$ be two scheme-theoretically surjective finite morphisms between schemes.

\smallskip

\noindent
$(1)$ If $X$ is irreducible, then $\deg(g\circ f)=\deg(f)\deg(g)$ holds.

\smallskip

\noindent
$(2)$ If $Z$ is integral, then $\deg(g\circ f)\le \maxdeg(f)\deg(g)$ holds.

\end{lemma}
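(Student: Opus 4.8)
The plan is to prove part (1) first and then bootstrap to part (2).

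First I would prove (1). Since $f$ is scheme-theoretically surjective and $X$ is irreducible, the scheme-theoretic image $Y=f(X)$ is irreducible, and likewise $Z=g(Y)$ is irreducible; hence all three schemes are irreducible and the definition of the degree through geometric multiplicities applies throughout. Reduction commutes with composition, so $(g\circ f)_{\red}=g_{\red}\circ f_{\red}\colon X_{\red}\to Z_{\red}$, and the tower law for function fields of integral schemes gives $\deg\big((g\circ f)_{\red}\big)=[K(X_{\red}):K(Z_{\red})]=[K(X_{\red}):K(Y_{\red})]\,[K(Y_{\red}):K(Z_{\red})]=\deg(f_{\red})\deg(g_{\red})$. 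Combining this with the trivial cancellation $m_X/m_Z=(m_X/m_Y)(m_Y/m_Z)$ of geometric multiplicities, and using that scheme-theoretic surjectivity forces $\overline{f(X)}=Y$ and $\overline{(g\circ f)(X)}=\overline{g(\overline{f(X)})}=Z$ (so that the multiplicities appearing in the three definitions are precisely the ones just written), I obtain $\deg(g\circ f)=(m_X/m_Z)\deg((g\circ f)_{\red})=(m_X/m_Y)\deg(f_{\red})\cdot(m_Y/m_Z)\deg(g_{\red})=\deg(f)\deg(g)$, which is (1).

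For (2), write $X=\bigcup_i X_i$ for the decomposition into irreducible components. As $Z$ is integral, the definition gives $\deg(g\circ f)=\sum_i\deg\big((g\circ f)|_{X_i}\big)$, and each $(g\circ f)|_{X_i}$ factors through $\overline{f(X_i)}$; applying (1) to $f|_{X_i}$ and to $g|_{\overline{f(X_i)}}$ (after restricting their targets to the respective scheme-theoretic images, so that both are scheme-theoretically surjective onto irreducible schemes and $X_i$ is irreducible) yields $\deg((g\circ f)|_{X_i})=\deg(f|_{X_i})\cdot\deg(g|_{\overline{f(X_i)}})$. Since $f$ is scheme-theoretically surjective, $f(X)=Y$, so the irreducible components $W_\alpha$ of $f(X)$ are exactly those of $Y$ and $\deg(g)=\sum_\alpha\deg(g|_{W_\alpha})$. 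Grouping the $X_i$ by an index $\alpha(i)$ chosen so that the generic point of $\overline{f(X_i)}$ lies on $W_{\alpha(i)}$, it remains to check the two monotonicity/additivity facts $\deg(g|_{\overline{f(X_i)}})\le\deg(g|_{W_{\alpha(i)}})$ and $\sum_{i\colon\alpha(i)=\alpha}\deg(f|_{X_i})\le\deg(f|_{f^{-1}(W_\alpha)})\le\maxdeg(f)$; granting these,
$$\deg(g\circ f)=\sum_\alpha\ \sum_{i\colon\alpha(i)=\alpha}\deg(f|_{X_i})\,\deg(g|_{\overline{f(X_i)}})\ \le\ \maxdeg(f)\sum_\alpha\deg(g|_{W_\alpha})=\maxdeg(f)\deg(g),$$
which is (2).

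Part (1) is routine. The work in (2) is the verification of the two displayed facts, which amounts to unwinding the (somewhat ad hoc) definition of $\deg$ via geometric multiplicities and field extensions and checking that passing to a closed irreducible subscheme, respectively to $f^{-1}(W_\alpha)$, moves these quantities in the expected direction. The one genuine subtlety is embedded (non-reduced) structure — for instance an $X_i$ whose scheme-theoretic image is a reduced component of $Y$ decorated with an embedded fat point straddling another component, so that $\overline{f(X_i)}$ need not be contained in any single $W_\alpha$ — which I would dispatch by reducing everything to the reduced parts (where the field-theoretic picture is clean) carried along with the multiplicity ratios, using that the scheme-theoretic image of an irreducible scheme is generically reduced. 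I expect essentially all of the (minor) effort to sit there.
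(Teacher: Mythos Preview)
Your argument is correct and is precisely the unwinding of the definitions that the paper has in mind; the paper itself gives no proof, stating only that the lemma ``easily follows by definition.'' Your identification of the one genuine subtlety (embedded structure and components whose scheme-theoretic image does not sit cleanly inside a single $W_\alpha$) is apt, and your proposed fix via passing to reductions together with the multiplicity ratios is exactly what is needed in the generality stated.
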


\begin{definition}[Degree for line bundles on curves] \label{degreedef}
Let us recall that for a regular proper curve $C$ over a field $k$ and a line bundle $\mathcal{L}\cong \O_C(\sum_{i}a_ix_i)$ on $C$, the degree of $\mathcal{L}$ is defined to be $\deg(\mathcal{L}):=\sum_{i}a_i[k(x_i):k]$.
Note that the degree depends on the base field $k$.
For a line bundle $\mathcal{L}$ on an integral proper curve $C$, the degree of $\mathcal{L}$ is defined as $\deg(\mathcal{L}):=\deg(\nu^{*}\mathcal{L})$, where $\nu\colon C'\to C$ is the normalization.
Let $C$ be a proper curve over a field $k$ and $\mathcal{L}$ a line bundle on $C$.
Then the degree of $\mathcal{L}$ is defined to be $\deg(\mathcal{L}):=\sum_{i}m_i\deg(\mathcal{L}|_{C_{i,\red}})$,
where $C=\cup_{i}C_i$ is the irreducible decomposition of $C$ and $m_i$ is the geometric multiplicity of $C_i$.
The definition of degree is equivalent to Definition~1.4 in \cite{Ful}
and Definition~33.43.1 in \cite{Sta}, that is, $\deg(\mathcal{L})=\chi(\mathcal{L})-\chi(\O_C)$ (cf.\ Lemma~33.43.6 in \cite{Sta}).
For a Cartier divisor $\mathfrak{a}$ on $C$, the degree of $\mathfrak{a}$ is defined as $\deg(\mathfrak{a}):=\deg(\O_{C}(\mathfrak{a}))$.
Note that if $\mathfrak{a}$ is effective, then $\deg(\mathfrak{a})=h^{0}(\O_{\mathfrak{a}})$ holds from the exact sequence
$$
0\to \O_C\to \O_C(\mathfrak{a})\to \O_C(\mathfrak{a})|_{\mathfrak{a}}\to 0.
$$
\end{definition}

The following lemma is easy to show by definition.

\begin{lemma} \label{lbdeg}
Let $\varphi \colon D\to C$ be a scheme-theoretically surjective finite morphism of proper curves over a field $k$ such that $D$  is irreducible or $C$ is integral.
Let $\mathcal{L}$ be a line bundle on $C$.
Then $\deg(\varphi^{*}\mathcal{L})=\deg(\varphi)\deg(\mathcal{L})$ holds.
\end{lemma}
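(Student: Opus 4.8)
The plan is to unwind the definitions of Appendix~B and reduce, in two steps, to the classical fact that a finite morphism of smooth projective curves multiplies the degree of a line bundle by the degree of the map. In the first step I would dispose of the two alternatives in the hypothesis. Assume $D$ is irreducible. As $\varphi$ is finite and scheme-theoretically surjective, $C$ is the scheme-theoretic image of the irreducible scheme $D$, hence irreducible, and $\varphi$ is surjective on underlying spaces; consequently the induced morphism $\varphi_{\red}\colon D_{\red}\to C_{\red}$ is a dominant, hence (since $C_{\red}$ is reduced) scheme-theoretically surjective, finite morphism of integral proper curves. Writing $m_D,m_C$ for the geometric multiplicities of $D,C$, the definitions in Appendix~B give $\deg(\varphi)=(m_D/m_C)\deg(\varphi_{\red})$, $\deg(\mathcal{L})=m_C\deg(\mathcal{L}|_{C_{\red}})$, and $\deg(\varphi^{*}\mathcal{L})=m_D\deg\big((\varphi^{*}\mathcal{L})|_{D_{\red}}\big)=m_D\deg\big(\varphi_{\red}^{*}(\mathcal{L}|_{C_{\red}})\big)$, the last equality because $D_{\red}\to C$ factors through $C_{\red}$; hence the identity for $\varphi$ follows once it is known for $\varphi_{\red}$. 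Assume instead that $C$ is integral, and let $D=\bigcup_i D_i$ be the irreducible decomposition with $m_i:=\mathrm{length}(\O_{D,\eta_i})$. Since $\varphi$ is finite and $C$ is irreducible of dimension $1$, every $\varphi|_{D_i}\colon D_i\to C$ is surjective and hence (as $C$ is integral) scheme-theoretically surjective, with $\deg(\varphi|_{D_i})=m_i\deg(\varphi|_{D_{i,\red}})$; thus $\deg(\varphi)=\sum_i m_i\deg(\varphi|_{D_{i,\red}})$, while $\deg(\varphi^{*}\mathcal{L})=\sum_i m_i\deg\big((\varphi|_{D_{i,\red}})^{*}\mathcal{L}\big)$, so the claim again reduces to the finite morphisms $\varphi|_{D_{i,\red}}\colon D_{i,\red}\to C$ of integral curves.

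In the second step I would prove the lemma for a dominant finite morphism $\psi\colon D\to C$ of \emph{integral} proper curves over $k$. Let $\nu_D\colon\widetilde D\to D$ and $\nu_C\colon\widetilde C\to C$ be the normalizations. Since $\widetilde D$ is normal, $\psi\circ\nu_D$ factors uniquely as $\nu_C\circ\widetilde\psi$ for a finite morphism $\widetilde\psi\colon\widetilde D\to\widetilde C$ of regular proper curves, and $\deg(\widetilde\psi)=[K(\widetilde D):K(\widetilde C)]=[K(D):K(C)]=\deg(\psi)$. By the definition of the degree of a line bundle on an integral curve, $\deg(\mathcal{L})=\deg(\nu_C^{*}\mathcal{L})$ and $\deg(\psi^{*}\mathcal{L})=\deg(\nu_D^{*}\psi^{*}\mathcal{L})=\deg(\widetilde\psi^{*}\nu_C^{*}\mathcal{L})$, so it remains to show $\deg(\widetilde\psi^{*}N)=\deg(\widetilde\psi)\deg(N)$ for a line bundle $N$ on the regular proper curve $\widetilde C$. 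This is the standard multiplicativity of degree under finite pullback of smooth curves (cf.\ \cite{Ful}, \cite{Sta}); if one prefers a self-contained argument, note that $\widetilde\psi_{*}\O_{\widetilde D}$ is torsion-free coherent on the regular curve $\widetilde C$, hence locally free of rank $r:=[K(\widetilde D):K(\widetilde C)]=\deg(\widetilde\psi)$, and then combine the formula $\deg(\mathcal{F})=\chi(\mathcal{F})-\rank(\mathcal{F})\,\chi(\O_{\widetilde C})$ for locally free $\mathcal{F}$ (the rank-one case being the definition, and $\chi$ additive) with $\det(N\otimes\mathcal{F})\cong N^{\otimes r}\otimes\det\mathcal{F}$, the projection formula $\chi(\widetilde\psi^{*}N)=\chi(N\otimes\widetilde\psi_{*}\O_{\widetilde D})$ (valid since $\widetilde\psi$ is affine), and $\chi(\O_{\widetilde D})=\chi(\widetilde\psi_{*}\O_{\widetilde D})$ to obtain $\deg(\widetilde\psi^{*}N)=r\deg(N)$. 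Taking $N=\nu_C^{*}\mathcal{L}$ concludes the integral case, and with the first step the lemma.

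No essential difficulty is expected: the whole argument is a matter of bookkeeping with the geometric multiplicities of Appendix~B — above all, verifying that the scheme-theoretic image of each $\varphi|_{D_i}$ (resp.\ of $\varphi_{\red}$) is all of $C$ (resp.\ $C_{\red}$), so that the factors $m_D/m_C$ in $\deg(\varphi)$ and $m_C$ in $\deg(\mathcal{L})$ cancel — plus the single classical input, the multiplicativity of degree for finite morphisms of regular proper curves recalled above.
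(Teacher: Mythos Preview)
The paper does not actually give a proof of this lemma; it merely records ``The following lemma is easy to show by definition.'' Your proposal is correct and is precisely the routine unwinding of the Appendix~B definitions that the paper is gesturing at: stripping off the geometric multiplicities in each of the two hypotheses to reduce to finite dominant morphisms of integral curves, then passing to normalizations and invoking the classical multiplicativity of degree for finite morphisms of regular proper curves.
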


\begin{definition}[Rational maps]
Let us recall that a {\em rational map} $f\colon X\dasharrow Y$ between algebraic schemes means the equivalence class of a pair $(U,f_U\colon U\to Y)$ such that $U$ is a dense open subset of $X$ and $f_U\colon U\to Y$ is a morphism, where two pair $(U,f_U)$ and $(V,f_V)$ are equivalent if $f_U$ coincides with $f_V$ on a dence open subset $W\subset U\cap V$.

A rational map $f\colon X \dasharrow Y$ is said to be {\em birational} if $f$ is dominant and there is a dominant rational map $g\colon Y\dasharrow X$ which is inverse as dominant rational maps.
It is equivalent to that $f$ induces a bijection between the set of generic points of $X$  and that of $Y$ and the induced local homomorphism $\O_{Y,f(\eta)}\to \O_{X,\eta}$ is an isomorphism for any generic point $\eta$ of $X$ (cf.\ Section 29.49 in \cite{Sta}).
Note that if $\varphi \colon D\to C$ is a birational morphism between proper curves, then $\varphi$ is finite.
\end{definition}

\begin{lemma} \label{birchar}
Let $f\colon X\to Y$ be a scheme-theoretically surjective morphism between algebraic schemes.
Then $f$ is birational if and only if $f_{\red}\colon X_{\red}\to Y_{\red}$ is birational and $X\times_{Y}Y_{\red}\cong X_{\red}$.
\end{lemma}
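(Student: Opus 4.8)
The plan is to translate the condition $X\times_{Y}Y_{\red}\cong X_{\red}$ into a reducedness statement and then to compare local rings at generic points, using the criterion for birationality recalled just above.

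First I would record the preliminary reductions. Since $Y_{\red}\hookrightarrow Y$ is a homeomorphism, the schemes $X$, $X_{\red}$ and $X':=X\times_{Y}Y_{\red}=f^{-1}(Y_{\red})$ all have the same underlying space; in particular $f$ and $f_{\red}$ induce the same map on generic points, so the condition that this map be a bijection is shared by $f$ and $f_{\red}$. Because the pull-back of a nilpotent section is nilpotent, the ideal sheaf of $X'$ in $X$ is contained in the nilradical of $\mathcal{O}_{X}$, so there is a canonical closed immersion $X_{\red}\hookrightarrow X'$ which is a homeomorphism; hence $X'\cong X_{\red}$ over $X$ if and only if $X'$ is reduced, that is, writing $f$ on an affine patch as a ring map $A\to B$ (injective, by scheme-theoretic surjectivity), if and only if $\mathfrak{n}_{A}B=\mathfrak{n}_{B}$ on each such patch, where $\mathfrak{n}$ denotes the nilradical. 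Note that $\mathfrak{n}_{A}B\subseteq\mathfrak{n}_{B}$ always holds, and $\mathfrak{n}_{A}B\cap A=\mathfrak{n}_{A}$ since $A\hookrightarrow B$; thus in all cases $A_{\red}\hookrightarrow B_{\red}$, so $f_{\red}$ is scheme-theoretically surjective.

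For the implication ``$\Leftarrow$'', suppose $f_{\red}$ is birational and $\mathfrak{n}_{A}B=\mathfrak{n}_{B}$ locally. The map on generic points is then a bijection; fix a generic point $\eta$ of $X$, i.e.\ a minimal prime $\mathfrak{q}\subset B$, put $\mathfrak{p}:=\mathfrak{q}\cap A$ (a minimal prime of $A$, by the bijection) and $S:=A\setminus\mathfrak{p}$, so $A_{\mathfrak{p}}=\mathcal{O}_{Y,\xi}$ where $\xi=f(\eta)$. By flatness of localization $A_{\mathfrak{p}}\hookrightarrow S^{-1}B$, and $\mathfrak{n}_{S^{-1}B}=S^{-1}(\mathfrak{n}_{A}B)=\mathfrak{m}_{A_{\mathfrak{p}}}\cdot S^{-1}B$ is nilpotent; moreover $(S^{-1}B)_{\red}=S^{-1}(B_{\red})$ is a domain with a unique minimal prime, squeezed between the field $\mathcal{O}_{Y_{\red},\xi}$ and its own fraction field $\mathcal{O}_{X_{\red},\eta}$, hence equal to the field $\mathcal{O}_{Y_{\red},\xi}=\mathcal{O}_{X_{\red},\eta}$ by birationality of $f_{\red}$. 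Therefore $S^{-1}B$ is Artinian local, so $S^{-1}B=B_{\mathfrak{q}}=\mathcal{O}_{X,\eta}$, and we obtain a local inclusion $\mathcal{O}_{Y,\xi}\hookrightarrow\mathcal{O}_{X,\eta}$ of Artinian local rings which induces an isomorphism on residue fields and satisfies $\mathfrak{m}_{\mathcal{O}_{Y,\xi}}\mathcal{O}_{X,\eta}=\mathfrak{n}_{\mathcal{O}_{X,\eta}}=\mathfrak{m}_{\mathcal{O}_{X,\eta}}$. Iterating the identity $\mathcal{O}_{X,\eta}=\mathcal{O}_{Y,\xi}+\mathfrak{m}_{\mathcal{O}_{Y,\xi}}\mathcal{O}_{X,\eta}$ and using that $\mathfrak{m}_{\mathcal{O}_{Y,\xi}}$ is nilpotent shows this inclusion is onto, so $\mathcal{O}_{Y,\xi}\xrightarrow{\sim}\mathcal{O}_{X,\eta}$ for every generic point, and $f$ is birational by the recalled criterion.

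For ``$\Rightarrow$'', suppose $f$ is birational. The bijection on generic points passes to $f_{\red}$, and reducing the isomorphisms $\mathcal{O}_{Y,\xi}\xrightarrow{\sim}\mathcal{O}_{X,\eta}$ modulo nilpotents yields $\mathcal{O}_{Y_{\red},\xi}\xrightarrow{\sim}\mathcal{O}_{X_{\red},\eta}$, so $f_{\red}$ is birational. It then remains to prove that $X'$ is reduced, i.e.\ that $\mathfrak{n}_{B}\subseteq\mathfrak{n}_{A}B$ on each affine patch. After localizing at a minimal prime $\mathfrak{q}$ this is immediate from the isomorphism $A_{\mathfrak{p}}\xrightarrow{\sim}B_{\mathfrak{q}}$; the point is to propagate this equality of ideals over all of $\mathrm{Spec}\,B$, which is where one has to use that $f$ is scheme-theoretically surjective, so that the non-generic (in particular embedded) behaviour of $B$ is tightly controlled by $A$. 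This globalization step is the main obstacle; the remaining arguments are formal bookkeeping.
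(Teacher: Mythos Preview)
Your argument for the direction ``$\Leftarrow$'' is correct and follows the same line as the paper's: both reduce to checking that $\mathcal{O}_{Y,\xi}\to\mathcal{O}_{X,\eta}$ is an isomorphism at each pair of generic points, extract from the reducedness of $X\times_{Y}Y_{\red}$ the relation $\mathfrak{m}_{\mathcal{O}_{Y,\xi}}\mathcal{O}_{X,\eta}=\mathfrak{m}_{\mathcal{O}_{X,\eta}}$, combine it with the residue-field isomorphism coming from $f_{\red}$ birational, and finish with Nakayama. The paper streamlines the bookkeeping by first restricting to an irreducible component of $Y$, but the substance is identical.

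For ``$\Rightarrow$'' you are right to be uneasy: the ``globalization step'' you flag is not merely an obstacle, it is an actual obstruction, because this implication is \emph{false} as stated. Take $Y=\mathrm{Spec}\,k[t]$ and $X=\mathrm{Spec}\,k[t,\epsilon]/(\epsilon^{2},t\epsilon)$ with $f$ induced by the inclusion $k[t]\hookrightarrow k[t,\epsilon]/(\epsilon^{2},t\epsilon)$. Then $f$ is scheme-theoretically surjective; $X$ has the unique minimal prime $(\epsilon)$, and localizing there inverts $t$, which kills $\epsilon$, so $\mathcal{O}_{X,\eta}=k(t)=\mathcal{O}_{Y,\xi}$ and $f$ is birational in the paper's sense. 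Yet $Y$ is already reduced, so $X\times_{Y}Y_{\red}=X$, which is not reduced (the embedded point at $t=0$ survives). Thus $X\times_{Y}Y_{\red}\not\cong X_{\red}$. The paper's one-word justification ``clearly'' for this direction is therefore unjustified; fortunately the paper only ever uses the ``$\Leftarrow$'' implication (in the proof of Lemma~\ref{ptbir}), so nothing downstream is affected. Your instinct that scheme-theoretic surjectivity alone does not control embedded components of $X$ was exactly the right diagnosis.
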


\begin{proof}
If $f$ is birational, then clearly $f_{\red}$ is birational and $X\times_{Y}Y_{\red}$ is reduced.
Conversely, assume that $f_{\red}$ is birational and $X\times_{Y}Y_{\red}$ is reduced.
Restricting any irreducible component of $Y$, we may assume that $Y$ is irreducible.
Let $\eta_X$ and $\eta_Y$ respectively denote the generic points of $X$ and $Y$.
Since $f$ is scheme-theoretically surjective, the local homomorphism $\O_{Y,\eta_Y}\to \O_{X,\eta_X}$ is injective.
By the assumption of $f$, we have $\O_{Y,\eta_Y}/\mathfrak{m}_Y\cong \O_{X,\eta_X}/\mathfrak{m}_X$ and $\mathfrak{m}_Y\O_{X,\eta_X}=\mathfrak{m}_X$,
where $\mathfrak{m}_X$ and $\mathfrak{m}_Y$ are the maximal ideals of $\O_{X,\eta_X}$ and $\O_{Y,\eta_Y}$, respectively.
Thus Nakayama's lemma says that $\O_{Y,\eta_Y}$ is isomorphic to $\O_{X,\eta_X}$ and so $f$ is birational.
\end{proof}

\begin{definition}[Finite rational maps of curves]
Let $\varphi \colon D\dasharrow C$ be a rational map between proper curves over a field $k$.
Then $\varphi$ is called {\em finite} if there is a finite morphism $\varphi'\colon D'\to C$ and a birational map $\pi\colon D'\dasharrow D$ such that $\varphi \circ \pi=\varphi'$, 
or equivalently, $\varphi$ sends any generic point of $D$ to a generic point of $C$.
The degree of $\varphi$ is defined as that of $\varphi'$ when $C$ is irreducible.
Note that any birational map between proper curves is finite of degree $1$.
\end{definition}

The following lemma includes the point-wise characterization of the birationality of proper curves  as a special case ($\varphi=\mathrm{id}$).

\begin{lemma} \label{ptbir}
Let $\varphi \colon D\to C$ and $\psi\colon C\to B$ be scheme-theoretically surjective finite morphisms between proper curves over a field $k$.
Assume that $B$ is integral or $\varphi$ is flat, and 
for any irreducible component $B_i$ of $B$, there exist closed regular points $\lambda_i\in B_{i,\red}$ and $z_i\in \psi^{-1}(B_i)_{\red}$  such that $(\psi\circ \varphi)^{-1}(\lambda_i)=\varphi^{-1}(z_i)$ holds scheme-theoretically and $k(z_i)\cong k(\lambda_i)$ via $\psi$. 
Then $\psi$ is birational.
\end{lemma}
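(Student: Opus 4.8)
The plan is to reduce to the case where $B$ is integral, carry out a length/degree count at the distinguished point, and then read off birationality from Lemma~\ref{birchar}.

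First I would reduce to $B$ integral. Restricting $\psi$ and $\varphi$ over a single irreducible component $B_i$ of $B$ (replacing $C$ and $D$ by $\psi^{-1}(B_i)$ and $(\psi\varphi)^{-1}(B_i)$), all hypotheses persist: $B_i$ with its reduced structure is an integral curve regular at $\lambda_i$, flatness of $\varphi$ survives base change, and the scheme-theoretic fibre over $\lambda_i$ is unchanged; since $\psi$ is birational iff it is birational over each $B_i$, we may assume $B$ integral and regular at the closed point $\lambda:=\lambda_i$, with $z:=z_i$ a regular closed point of $C_{\red}$. As $\varphi$ is surjective (finite and dominant from a curve), applying $\varphi$ to the equality $(\psi\varphi)^{-1}(\lambda)=\varphi^{-1}(z)$ already forces $\psi^{-1}(\lambda)=\{z\}$ set-theoretically, so $z$ lies on a unique component $C'$ of $C$ and $\psi(z)=\lambda$.

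The numerical heart is a double count of the length of this fibre. On one side, $\lambda$ is an effective Cartier divisor on $B$ (as $B$ is integral and regular at $\lambda$), so $(\psi\varphi)^{-1}(\lambda)=(\psi\circ\varphi)^{*}\lambda$ is an effective Cartier divisor on $D$ and Lemma~\ref{lbdeg} gives $\length_{k}\bigl((\psi\varphi)^{-1}(\lambda)\bigr)=\deg(\psi\circ\varphi)\cdot[k(\lambda):k]$. On the other side, $\length_{k}(\varphi^{-1}(z))=\mu_{z}\cdot[k(z):k]$, where $\mu_{z}$ is the minimal number of generators of the finite $\mathcal{O}_{C,z}$-module $(\varphi_{*}\mathcal{O}_{D})_{z}$; using scheme-theoretic surjectivity of $\varphi$ (so $\mathcal{O}_{C,z}\hookrightarrow(\varphi_{*}\mathcal{O}_{D})_{z}$), and flatness in the flat case, one identifies $\mu_{z}$ with the local degree $\deg(\varphi|_{\varphi^{-1}(C')})$. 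Since $k(z)\cong k(\lambda)$, comparing the two expressions and invoking $\deg(\psi\circ\varphi)=\sum_{j}\deg(\psi|_{C_{j}})\,\deg(\varphi|_{\varphi^{-1}(C_{j})})$ (from Lemma~\ref{findeg}(1) and additivity of degree over components, valid since $B$ is integral) together with $\deg(\psi|_{C_{j}})\ge 1$, one is forced to conclude that $C'$ is the only component of $C$ dominating $B$ and that $\deg(\psi|_{C'})=1$, i.e. the geometric multiplicity of $C'$ is $1$ and $[K(C'):K(B)]=1$. Feeding this back, a Cayley--Hamilton/Nakayama argument applied to $(\varphi_{*}\mathcal{O}_{D})_{z}$ upgrades the fibre identity to $\mathfrak{m}_{\lambda}\mathcal{O}_{C,z}=\mathfrak{m}_{z}$, so $\mathcal{O}_{C,z}$ is a DVR and $\psi^{-1}(\lambda)=\mathrm{Spec}\,k(z)=\mathrm{Spec}\,k(\lambda)$.

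Finally, since $C'$ is the only component of $C$ dominating $B$, has multiplicity $1$, and satisfies $[K(C'):K(B)]=1$, the reduced morphism $C_{\red}\to B$ is birational and $C\times_{B}B_{\red}$ has generic length $1$, hence is reduced; Lemma~\ref{birchar} then yields that $\psi$ is birational (equivalently, directly: there is a single generic point of $C$ over $\eta_{B}$ and $\mathcal{O}_{B,\eta_{B}}=K(B)\cong\mathcal{O}_{C,\eta_{C'}}=K(C')$), and undoing the reduction over the $B_i$ finishes the proof. The step I expect to be the real obstacle is the identity $\length_{k}(\varphi^{-1}(z))=\deg(\varphi|_{\varphi^{-1}(C')})\cdot[k(z):k]$ in the non-flat case ``$B$ integral'': bounding the minimal number of generators of $(\varphi_{*}\mathcal{O}_{D})_{z}$ when $C$ may be non-reduced at $z$ has to genuinely exploit that $z$ is a regular point of $C_{\red}$ and that $\varphi$ is scheme-theoretically surjective, rather than a purely formal manipulation.
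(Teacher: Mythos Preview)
Your overall strategy—reduce to $B$ integral and compare the $k$-length of the single scheme $(\psi\varphi)^{-1}(\lambda)=\varphi^{-1}(z)$ computed two different ways—is exactly the paper's. The reduction step and the computation $\length_k((\psi\varphi)^{-1}(\lambda))=\deg(\psi\circ\varphi)\cdot[k(\lambda):k]$ via Lemma~\ref{lbdeg} are fine.

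The genuine gap is precisely the step you flag: the identity $\mu_z=\deg(\varphi|_{\varphi^{-1}(C')})$ in the non-flat ``$B$ integral'' case. This is not just hard to justify—it can fail. Nothing so far forces $C$ to be reduced at $z$; you only know $z$ is regular on $C_{\red}$. When $m_C>1$, the minimal number of generators $\mu_z$ of $(\varphi_*\mathcal O_D)_z$ over $\mathcal O_{C,z}$ need not equal $\deg(\varphi)=\sum_i (m_{D_i}/m_C)[K(D_i):K(C)]$; scheme-theoretic surjectivity of $\varphi$ does not pin it down. So the chain $\deg(\psi\varphi)=\mu_z=\deg(\varphi)\Rightarrow\deg(\psi)=1$ breaks at the middle equality.

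The paper avoids this by never comparing $\mu_z$ to $\deg(\varphi\colon D\to C)$. Instead it introduces $D'':=D\times_C C_{\red}$ and $D':=D\times_B B_{\red}$. Since $z$ is Cartier on $C_{\red}$, Lemma~\ref{lbdeg} gives $\length_k(\varphi^{-1}(z))=\deg(\varphi|_{D''})\cdot[k(z):k]$, and likewise $\length_k((\psi\varphi)^{-1}(\lambda))=\deg(\psi\varphi|_{D'})\cdot[k(\lambda):k]$. Using $\deg(\psi_{\red}\circ\varphi|_{D''})=\deg(\psi_{\red})\cdot\deg(\varphi|_{D''})$ and the inclusion $D''\subset D'$, equality of the two lengths forces \emph{both} $\deg(\psi_{\red})=1$ and $D'=D''$. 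In the $B$ integral case $D'=D$, so $D=D\times_C C_{\red}$; since $\varphi$ is scheme-theoretically surjective this forces $C=C_{\red}$, and then Lemma~\ref{birchar} finishes. In the flat case, faithful flatness of $\varphi$ makes $D'\to C'$ scheme-theoretically surjective, whence $C'=C_{\red}$ and again Lemma~\ref{birchar} applies. The point is that reducedness of $C$ is a \emph{consequence} of the degree comparison (via $D'=D''$), not an input to it—so the identity you were trying to prove directly is obtained only after the fact.
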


\begin{proof}
Restricting any irreducible component of $B$, we may assume that $B$ is irreducible.
By assumption, we can take closed regular points $\lambda\in B_{\red}$ and $z\in C_{\red}$ such that $(\psi\circ \varphi)^{-1}(\lambda)=\varphi^{-1}(z)$ and $k(z)\cong k(\lambda)$ hold.
In particular, $\psi^{-1}(\lambda)=z$ holds as a set and thus $C$ is irreducible.
Let us put $C':=C\times_{B} B_{\red}$, $D':=D\times_{B} B_{\red}$ and $D'':=D\times_{C} C_{\red}$.
Then $(\psi\circ \varphi)^{-1}(\lambda)$ and $\varphi^{-1}(z)$ can be viewed as Cartier divisors on $D'$ and $D''$, respectively.
By Lemma~\ref{lbdeg}, we have $\deg((\psi\circ \varphi)^{-1}(\lambda))=\deg(\psi \circ \varphi|_{D'})[k(\lambda):k]$ and $\deg(\varphi^{-1}(z))=\deg(\varphi|_{D''})[k(z):k]=\deg(\psi_{\red})^{-1}\deg(\psi_{\red}\circ \varphi|_{D''})[k(z):k]$.
Thus $(\psi\circ \varphi)^{-1}(\lambda)=\varphi^{-1}(z)$ and $k(z)\cong k(\lambda)$ imply that 
$\psi_{\red}$ is birational and $D'=D''$.
Assume that $B$ is integral.
Then we have $C'=C$ and $D'=D$. 
Thus $C'$ is reduced since $C$ is the scheme-theoretic image of $\varphi$.
If $\varphi$ is flat, then $D'\to C'$ is also scheme-theoretically surjective and then $C'=C_{\red}$ holds.
Hence $\psi$ is birational by Lemma~\ref{birchar}.
\end{proof}

\begin{definition}[Invariants of proper curves] \label{gonmd}
Let $C$ be a proper curve over a field $k$.
Recall that the number
$$
\mathrm{gon}(C):=\min\{s \ |\ \text{$\exists$ a finite rational map $C\dasharrow \mathbb{P}^1$ of degree $s$}\}
$$
is called the {\em gonality} of $C$.
Let us define some similar invariants as follows.
The {\em pseudo-gonality} of $C$ is defined as
$$
\mathrm{pgon}(C):=\min\{s \ |\ \text{$\exists$ a finite morphism $C\to \mathbb{P}^1$ of degree $s$}\}.
$$

For a proper curve $C$ which is not birational (resp.\ isomorphic) to $\mathbb{P}^1$, we say that $C$ {\em has sufficiently many $($resp.\ base point free$)$ pencils of degree} $\le s$
 if there exists an infinite family $\{\alpha_i\colon C\dasharrow \mathbb{P}^1\}_{i\in I}$ of finite rational maps (resp.\ morphisms) of degree $\le s$ satisfying the following condition $(A)$:

\smallskip

\noindent
$(A)$ For any infinite distinct sequence $\{\alpha_{i_k}\}_{k=1}^{\infty}$, there exists an integer $l>0$ such that $(\alpha_{i_k})_{k=1}^{l}\colon C\dasharrow \prod_{k=1}^{l}\mathbb{P}^1$ is birational onto image.

\smallskip

The numbers
$$
\mathrm{gon}_{\infty}(C):=\min\{s \ |\ \text{$C$ has sufficiently many pencils of degree $\le s$}\}
$$
and
$$
\mathrm{pgon}_{\infty}(C):=\min\{s \ |\ \text{$C$ has sufficiently many base point free pencils of degree $\le s$}\}
$$
are respectively called the {\em $\infty$-gonality} of $C$ and the {\em pseudo-$\infty$-gonality} of $C$.
If $C$ is birational (resp.\ isomorphic) to $\mathbb{P}^1$, we define $\mathrm{gon}_{\infty}(C):=1$ (resp.\ $\mathrm{pgon}_{\infty}(C):=1$).

For a proper curve $C$, we define the {\em minimal degree of $C$ as a $($resp.\ pseudo-$)$plane curve model}, denoted by $\mathrm{s}_2(C)$ (resp.\ $\mathrm{ps}_2(C)$), as
the minimal number $s$ such that $C$ is birational to (resp.\ has birational morphism onto) a component-wise non-degenerate plane curve $\overline{C}\subset \mathbb{P}^2$ of degree $s$.
Here, a plane curve is {\em component-wise non-degenerate} if each irreducible component (may be non-reduced) is not a line in $\mathbb{P}^2$.
For example, $\mathrm{s}_2(\mathbb{P}^1)=\mathrm{ps}_2(\mathbb{P}^1)=2$ and $\mathrm{s}_2(2L)=\mathrm{ps}_2(2L)=2$, where $L$ is a line in $\mathbb{P}^2$.
\end{definition}

\begin{proposition} \label{invariants}
Let $C$ be a proper curve over a field $k$.
Then the following hold.

\smallskip

\noindent
$(1)$ $\mathrm{pgon}(C)< \infty$. %$\mathrm{pgon}_{\infty}(C)< \infty$??

\smallskip

\noindent
$(2)$ If $C$ is reduced, then $\mathrm{ps}_2(C)< \infty$.

\smallskip

\noindent
$(3)$ $\mathrm{gon}(C)$, $\mathrm{gon}_{\infty}(C)$ and $\mathrm{s}_2(C)$ are birational invariants of $C$.
 
\smallskip

\noindent
$(4)$ $\mathrm{gon}(C)\le \mathrm{pgon}(C)$, $\mathrm{gon}_{\infty}(C)\le \mathrm{pgon}_{\infty}(C)$, and $\mathrm{s}_2(C)\le \mathrm{ps}_2(C)$.

\smallskip

\noindent
$(5)$ $\mathrm{gon}(C)\le \mathrm{gon}_{\infty}(C)$ and $\mathrm{pgon}(C)\le \mathrm{pgon}_{\infty}(C)$.

\smallskip

\noindent
$(6)$ Assume that the base field $k$ is infinite.Then $\mathrm{pgon}_{\infty}(C)\le \mathrm{ps}_2(C)$ and $\mathrm{gon}_{\infty}(C)\le \mathrm{s}_2(C)$ hold.
If further assume that $C$ has an irreducible component with geometric multiplicity $1$ which has infinitely many rational points, then $\mathrm{pgon}_{\infty}(C)\le \mathrm{ps}_2(C)-1$ and
$\mathrm{gon}_{\infty}(C)\le \mathrm{s}_2(C)-1$ hold.

\smallskip

\noindent
$(7)$ If $C=\cup_{i}C_i$ is the irreducible decomposition, then 
$$
\mathrm{gon}(C)=\sum_{i}\mathrm{gon}(C_i),\quad \mathrm{gon}_{\infty}(C)=\sum_{i}\mathrm{gon}_{\infty}(C_i),\quad \mathrm{s}_2(C)=\sum_{i}\mathrm{s}_2(C_i),
$$
$$
\mathrm{pgon}(C)\ge \sum_{i}\mathrm{pgon}(C_i),\quad \mathrm{pgon}_{\infty}(C)\ge \sum_{i}\mathrm{pgon}_{\infty}(C_i),\quad \mathrm{ps}_2(C)\ge \sum_{i}\mathrm{ps}_2(C_i).
$$

\smallskip

\noindent
$(8)$ $\mathrm{gon}(C)=\min_{C'}\{\mathrm{pgon}(C')\}$ and $\mathrm{s}_2(C)=\min_{C'}\{\mathrm{ps}_2(C')\}$, where $C'$ runs through all proper curves birational to $C$.
\end{proposition}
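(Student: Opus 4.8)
The plan is to establish the eight assertions one at a time; most are formal consequences of the definitions together with the elementary facts of this appendix, and only a couple require a genuine geometric construction. For the formal parts: in (4), a finite morphism is in particular a finite rational map, and a birational morphism onto a component-wise non-degenerate plane curve is in particular a birational map onto such a curve, so every witness for the ``pseudo'' invariant witnesses the plain one, which also yields the three inequalities of the $\infty$-variant and of $\mathrm{s}_2$ versus $\mathrm{ps}_2$. In (5), a family witnessing $\mathrm{gon}_{\infty}(C)\le s$ (resp.\ $\mathrm{pgon}_{\infty}(C)\le s$) contains at least one finite rational map (resp.\ morphism) $C\dashrightarrow\mathbb{P}^1$ of degree $\le s$. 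In (3), given a birational map $\beta\colon C\dashrightarrow C'$, postcomposition with $\beta^{-1}$ carries a competitor on $C'$ to one on $C$ of the same degree by Lemma~\ref{findeg}~(1) (as $\deg\beta=1$), and it preserves condition $(A)$ since ``birational onto image'' is invariant under composing with a birational map; the bookkeeping for $\mathrm{s}_2$ is identical.

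\emph{Parts (1) and (2).} A proper curve $C/k$ is projective, so fix an embedding $C\hookrightarrow\mathbb{P}^N$ by a very ample line bundle $L$; after replacing $L$ by $L^{\otimes 2}$ we may assume $\deg(L|_{C_{i,\mathrm{red}}})\ge 2$ for every irreducible component $C_i$. A general linear projection $\mathbb{P}^N\dashrightarrow\mathbb{P}^1$ from a general centre disjoint from $C$ restricts on $C$ to a finite morphism, giving (1). If $C$ is reduced, a general projection $\mathbb{P}^N\dashrightarrow\mathbb{P}^2$ from a centre disjoint from $C$ restricts on $C$ to a birational morphism onto its image (the classical fact that a curve becomes birational to a plane curve under generic projection; in characteristic $0$ a general projection from $\mathbb{P}^3$ is already birational onto the image), and this image is component-wise non-degenerate because the restriction to each $C_i$ has degree $\ge 2$; this proves (2).

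\emph{Part (6).} Assume $k$ infinite and let $\overline C\subset\mathbb{P}^2$ be a component-wise non-degenerate plane curve of degree $s=\mathrm{ps}_2(C)$ admitting a birational morphism $C\to\overline C$. For each of the infinitely many $k$-rational points $p\in\mathbb{P}^2\setminus\overline C$, the projection $\pi_p\colon\overline C\to\mathbb{P}^1$ is a finite morphism of degree $s$ (by Lemma~\ref{lbdeg}, $\deg(\pi_p)=\deg\mathcal{O}_{\overline C}(1)=s$). This family satisfies condition $(A)$: for $p\ne q$ the map $(\pi_p,\pi_q)\colon\mathbb{P}^2\dashrightarrow\mathbb{P}^1\times\mathbb{P}^1$ is birational, hence so is its restriction to $\overline C$ and a fortiori $(\pi_{p_1},\dots,\pi_{p_l})$ for $l\ge 2$ and distinct $p_1,\dots,p_l$. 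Composing with $C\to\overline C$ gives $\mathrm{pgon}_{\infty}(C)\le s$, and applying the same to a plane model realizing $\mathrm{s}_2(C)$ together with (3) gives $\mathrm{gon}_{\infty}(C)\le\mathrm{s}_2(C)$. If moreover a component $C_0$ of geometric multiplicity $1$ has infinitely many rational points, then cofinitely many of their images are smooth points of $\overline C$ lying on a multiplicity-one component; the inner projection from such a point $p$ extends to a morphism on $\overline C$ (a rational map from the smooth point $p$ of a proper curve extends) of degree $s-1$, and the same $(A)$-argument with $(\pi_p,\pi_q)$ applies, yielding $\mathrm{pgon}_{\infty}(C)\le s-1$ and $\mathrm{gon}_{\infty}(C)\le\mathrm{s}_2(C)-1$.

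\emph{Parts (7), (8), and the main obstacle.} Write $C=\cup_i C_i$. Restricting a finite rational map $C\dashrightarrow\mathbb{P}^1$, a birational map onto a plane curve, or a family with $(A)$, to each $C_i$ and using additivity of degree gives the inequalities ``$\ge$'' in all six identities of (7); for $\mathrm{gon}$ and $\mathrm{s}_2$ one gets equality by gluing, since the $C_i$ meet in finitely many points so a collection of (birational) rational maps on the components defines a (birational) rational map on $C$ of the expected degree — for $\mathrm{s}_2$ one first moves the plane models into general position by $\mathrm{PGL}_3$ so that their union is again a component-wise non-degenerate plane curve. For (8), $\mathrm{gon}(C)=\mathrm{gon}(C')\le\mathrm{pgon}(C')$ for every birational model $C'$ by (3) and (4), while a degree-$\mathrm{gon}(C)$ finite rational map is by definition realized by a finite morphism from some $C'$, whence $\mathrm{pgon}(C')\le\mathrm{gon}(C)$; for $\mathrm{s}_2$ the optimal plane model is its own $C'$ via the identity morphism. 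The main obstacle is the $\mathrm{gon}_{\infty}$/$\mathrm{pgon}_{\infty}$ part of (7): for ``$\ge$'' one must pass to a subfamily on which the degrees $\deg(\gamma|_{C_i})$ are constant and treat the case where the restricted family collapses on some $C_i$ (which forces that component to be rational, hence $\mathrm{gon}_{\infty}(C_i)=1$); and for ``$\le$'' one must assemble a componentwise family whose members have pairwise distinct images on distinct components, which needs a genericity argument to avoid the accidental coincidence of image components that would destroy birationality onto image.
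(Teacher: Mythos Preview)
Your argument is correct, and for the trivial parts (3)--(5), (8) it matches the paper's one-word dismissal. The substantive difference is in part~(6). The paper verifies condition~$(A)$ via the point-wise birationality criterion of Lemma~\ref{ptbir}: fixing a first centre $p_1$ and a test point $x\in C$, it compares the local intersection number $(L_1\cdot L_m)_{\pi(x)}$ of two projection fibres with the global product $L_1\cdot L_m$ to force $\alpha_{p_1}^{-1}(\lambda_1)\cap\alpha_{p_m}^{-1}(\lambda_m)=\{x\}$ scheme-theoretically, and then invokes Lemma~\ref{ptbir}. You instead use the global fact that for any two distinct centres $p\neq q$ the joint projection $(\pi_p,\pi_q)\colon\mathbb{P}^2\dashrightarrow\mathbb{P}^1\times\mathbb{P}^1$ is itself birational, so its restriction to $\overline{C}$ is birational onto the scheme-theoretic image; this works uniformly for the non-reduced case because $(\pi_p,\pi_q)$ is an isomorphism off the line $\overline{pq}$, and component-wise non-degeneracy guarantees every component of $\overline{C}$ meets that open set. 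This is shorter and bypasses Lemma~\ref{ptbir} entirely; the paper's route has the minor advantage of treating the outer and inner cases uniformly by placing all centres on a single auxiliary curve $E$ (a general line or a multiplicity-one component), but nothing is lost by handling the two cases separately as you do. You should make explicit the step ``restriction of a birational map of the ambient plane to $\overline{C}$ is birational onto image'', since this is exactly where the non-reduced case could go wrong and where the non-degeneracy hypothesis is used.

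For (7), which the paper calls trivial, your discussion is actually more honest: the pigeonhole reduction to constant componentwise degree, and the observation that a collapsed restriction forces $C_i$ to be rational (so $\mathrm{gon}_\infty(C_i)=1\le d_i$ anyway), are the right ingredients for ``$\ge$''. The ``$\le$'' direction for $\mathrm{gon}_\infty$ you leave as a genericity sketch; one clean way to finish is to index the family on $C$ by $\prod_i I_i$ and note that for any two distinct tuples one already has, on the component where they differ, a birational map onto image by condition~$(A)$ on that component, while on the remaining components a further choice from the family separates the images---so condition~$(A)$ holds with $l$ bounded by the number of components plus the maximum of the componentwise $l$'s.
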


\begin{proof}
(1) follows from the fact that any proper curve over a field $k$ is projective (cf.\ Lemma~33.42.4 in \cite{Sta}).
This fact also implies (2) since we can show that any reduced projective curve is birational to a plane curve by taking projections from general rational points in the projective space repeatedly.
(3), (4), (5), (7) and (8) are trivial.

We will show (6).
Let us put $s:=\mathrm{ps}_2(C)$ and take a birational morphism $\pi\colon C\to \overline{C}\subset \mathbb{P}^2$ onto a component-wise non-degenerate plane curve $\overline{C}$ of degree $s$, where we may assume $s<\infty$ in order to prove (6).
Let $E\subset \mathbb{P}^2$ be an irreducible component of $\overline{C}$ of geometric multiplicity $1$ with infinitely many rational points if $C$ has such a component, or a general line otherwise.
Let $I$ be the infinite set of rational points $p$ in the regular locus of $E$ such that $p\not\in \overline{C}\setminus E$ and $\pi$ is an isomorphism over $p$.
For any $p\in I$, let $\overline{\alpha}_p\colon \overline{C}\to \mathbb{P}^1$ denote the projection from $p$ and write $\alpha_p:=\overline{\alpha}_p\circ \pi$.
Then $\alpha_p$ is a finite morphism of degree $s-1$ (resp.\ $s$) if $E\subset \overline{C}$ (resp.\ otherwise).
Now we take an infinite distinct sequence $p_1,p_2,\ldots,$ of rational points in $I$ arbitrarily.
Let $S$ be the dense subset of $C$ consisting of closed points $x$ such that $\alpha_{p_1}(x)$ is a rational point of $\mathbb{P}^1$ and $\pi$ is an isomorphism at $x$.
For any $x\in S$, let $L_1$ denote the line passing through $\pi(x)$ and $p_1$, that is, the closure of the fiber of the projection $\mathbb{P}^2\dasharrow \mathbb{P}^1$ from $p_1$ at the rational point $\lambda_1:=\alpha_{p_1}(x)$.
Since the intersection of $E$ and $L_1$ is finite, we can take $p_m$ such that the ``fat'' line $L_m$ passing through $\pi(x)$ and $p_m$ is not equal to $L_1$, where $L_m$ is defined by the closure of the fiber of the projection $\mathbb{P}^2\dasharrow \mathbb{P}^1$ from $p_m$ at the  point $\lambda_m:=\alpha_{p_m}(x)$ (when $\lambda_m$ is not a rational point, $L_m$ has exactly one rational point $p$ as a singularity).
Since $L_1$ and $L_m$ respectively belong to  $|\O_{\mathbb{P}^2}(1)|$ and $|\O_{\mathbb{P}^2}([k(\lambda_m):k])|$, we have
$$
[k(\pi(x)):k]\le (L_1L_m)_{\pi(x)}\le L_1L_m=[k(\lambda_m):k],
$$
which implies $k(\pi(x))\cong k(\lambda_m)$ and $L_1\cap L_m=\pi(x)$ scheme-theoretically,
where $(L_1L_m)_{\pi(x)}$ is the local intersection number of $L_1$ and $L_m$ at $\pi(x)$ (cf.\ Definition~9.1.1 in \cite{Liu}).
Thus we have $\alpha_{p_1}^{-1}(\lambda_1)\cap \alpha_{p_m}^{-1}(\lambda_m)=x$ scheme-theoretically.
Hence we can take $l\gg 0$ such that $(\alpha_{p_k})_{k=1}^{l}\colon C\to \prod_{k=1}^{l}\mathbb{P}^1$ is birational onto image by using Lemma~\ref{ptbir}, which implies the claim of (6) for $\mathrm{ps}_2(C)$.
The claim of (6) for $\mathrm{s}_2(C)$ follows easily by combining the claim  (6) for $\mathrm{ps}_2(C)$ with (4) and (8).
\end{proof}

\begin{lemma}\label{mono}
Let $\{\alpha_i\colon C\to \mathbb{P}^1\}_{i\in I}$ be an infinite family of finite morphisms on a proper curve $C$ satisfying the condition $(A)$ in Definition~\ref{gonmd}.
Then it satisfies the following condition $(B)$:

\smallskip

\noindent
$(B)$ For any infinite subset $J\subset I$, any proper curve $D$ and any two finite morphisms $\varphi_1, \varphi_2\colon D\to C$, the condition that $\alpha_i\circ \varphi_1=\alpha_i\circ \varphi_2$ for any $i\in J$ implies $\varphi_1=\varphi_2$.
\end{lemma}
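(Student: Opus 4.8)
The plan is to collapse the infinite family down to a single morphism and then exploit birationality at the level of scheme structure. So, let $J\subseteq I$ be infinite, let $D$ be a proper curve, and let $\varphi_1,\varphi_2\colon D\to C$ be finite morphisms with $\alpha_i\circ\varphi_1=\alpha_i\circ\varphi_2$ for every $i\in J$. First I would extract from $\{\alpha_i\}_{i\in J}$ an infinite sequence of pairwise distinct morphisms $\alpha_{i_1},\alpha_{i_2},\dots$ with $i_k\in J$ and apply the defining property $(A)$: there is an $l>0$ such that $\beta:=(\alpha_{i_1},\dots,\alpha_{i_l})\colon C\to (\mathbb{P}^1)^l$ is birational onto its scheme-theoretic image $C'$, which is again a proper curve; since a birational morphism of proper curves is automatically finite, $\beta\colon C\to C'$ is finite and birational. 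Because $i_1,\dots,i_l\in J$ we have $\beta\circ\varphi_1=\beta\circ\varphi_2=:g\colon D\to C'$, so the whole problem reduces to showing that $\beta\circ\varphi_1=\beta\circ\varphi_2$ already forces $\varphi_1=\varphi_2$ for the single finite birational morphism $\beta$.

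For that step I would use that a finite birational morphism is an isomorphism away from a finite set: $\mathcal{O}_{C'}\hookrightarrow\beta_*\mathcal{O}_C$ has torsion cokernel supported on a finite set $S$ of closed points, so $\beta$ restricts to an isomorphism $\beta^{-1}(U')\to U'$ over the dense open $U':=C'\setminus S$, and $U'$ contains every generic point of $C'$. Put $V:=g^{-1}(U')\subseteq D$. Over $V$ both $\varphi_j$ factor through the open subscheme $\beta^{-1}(U')$, and post-composing with the isomorphism $\beta|_{\beta^{-1}(U')}$ turns the equality $g|_V=\beta\circ(\varphi_1|_V)=\beta\circ(\varphi_2|_V)$ into $\varphi_1|_V=\varphi_2|_V$. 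It then remains to see that $V$ is schematically dense in $D$: every irreducible component of $D$ is one-dimensional, so no $\varphi_j$ can contract a component, hence $\varphi_j$ sends the generic points of $D$ to generic points of $C$ and $g$ sends them into $U'$; thus $V$ contains every generic, and for a curve without embedded points (in particular for the Cohen--Macaulay curves $D\times_YD$ occurring in the applications, every associated) point of $D$. Since $C$ is separated, the equalizer of $\varphi_1$ and $\varphi_2$ is a closed subscheme of $D$ containing the schematically dense open $V$, hence equals $D$, so $\varphi_1=\varphi_2$; this is exactly condition $(B)$.

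The place where care is genuinely needed is this last reduction: a finite birational morphism need not be a monomorphism — the normalisation of a nodal curve glues two points — so $\beta\circ\varphi_1=\beta\circ\varphi_2$ does not \emph{formally} yield $\varphi_1=\varphi_2$. The argument works only because the non-injectivity locus of $\beta$ is a finite set of closed points, because finite morphisms do not contract curve components, and because the relevant curves have no embedded points, which together make the ``agreement on a schematically dense open plus separatedness of $C$'' mechanism go through. A secondary, purely bookkeeping point is that ``infinite subset $J$'' must be read as supplying infinitely many distinct $\alpha_i$, since otherwise $(B)$ already fails for a single $\alpha_i$ of degree $\geq 2$.
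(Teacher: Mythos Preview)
Your proof is correct and follows essentially the same line as the paper's: apply condition $(A)$ to a distinct sequence drawn from $J$ to obtain a finite birational $\beta\colon C\to C'\subset(\mathbb{P}^1)^l$, deduce $\varphi_1=\varphi_2$ on the dense open preimage of the locus where $\beta$ is an isomorphism, and conclude by separatedness of $C$. The paper phrases the last step as an appeal to the valuative criterion of separatedness rather than your equalizer argument, and is less explicit about the schematic-density/embedded-points caveat you flag, but the substance is identical.
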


\begin{proof}
Let $J\subset I$, $D$ and $\varphi_1, \varphi_2\colon D\to C$ be as in the condition $(B)$
and assume that $\alpha_i\circ \varphi_1=\alpha_i\circ \varphi_2$ for any $i\in J$.
Take an infinite distinct sequence $i_1,i_2,\ldots$ in $J$ arbitrarily.
Then $(\alpha_{i_k})_{k=1}^{l}\colon C\to \prod_{k=1}^{l}\mathbb{P}^1$ is birational onto image for some $l>0$ by the condition $(A)$.
Let $U\subset C$ be the dense open subset over which $(\alpha_{i_k})_{k=1}^{l}$ is an  isomorphism onto image.
Then $\varphi_1=\varphi_2$ holds on a dense open subset $\varphi_1^{-1}(U)\cap \varphi_2^{-1}(U)$ of $D$.
By the valuative criterion of separatedness (cf.\ Chapter II, Theorem~4.3 in \cite{Har}), we have $\varphi_1=\varphi_2$.
\end{proof}

\bigskip

\end{document}